\definecolor{refkey}{gray}{.75}
\definecolor{labelkey}{gray}{.5}
\colorlet{DarkGreen}{green!50!black}
\colorlet{DarkGray}{gray!60!black}
\numberwithin{equation}{section}
 \definecolor{refkey}{gray}{.5}
 \definecolor{labelkey}{gray}{.5}
\definecolor{light}{gray}{.9}
\newtheorem{theorem}{Theorem}[section]
\newtheorem*{theorem*}{Theorem}
\newtheorem{lemma}[theorem]{Lemma}
\newtheorem{claim}[theorem]{Claim}
\newtheorem{proposition}[theorem]{Proposition}
\newtheorem{observation}[theorem]{Observation}
\newtheorem{fact}[theorem]{Fact}
\newtheorem{corollary}[theorem]{Corollary}
\theoremstyle{definition}{

\newtheorem{definition}[theorem]{Definition}

\newtheorem*{definition*}{Definition}

\newtheorem{remark}[theorem]{Remark}
\newtheorem*{remark*}{Remark}

}
\newcommand{\bin}{\operatorname{Bin}}
\renewcommand{\epsilon}{\varepsilon}
\newcommand{\given}{\;\big|\;}
\newcommand{\one}{\mathbf{1}}
 \definecolor{refkey}{gray}{.5}
 \definecolor{labelkey}{gray}{.5}
\definecolor{light}{gray}{.9}
\newcommand{\E}{\mathbb E}
\renewcommand{\P}{\mathbb P}
\newcommand{\R}{\mathbb R}
\newcommand{\Z}{\mathbb Z}
\newcommand{\cB}{\ensuremath{\mathcal B}}
\newcommand{\cC}{\ensuremath{\mathcal C}}
\newcommand{\cD}{\ensuremath{\mathcal D}}
\newcommand{\cE}{\ensuremath{\mathcal E}}
\newcommand{\cF}{\ensuremath{\mathcal F}}
\newcommand{\cG}{\ensuremath{\mathcal G}}
\newcommand{\cH}{\ensuremath{\mathcal H}}
\newcommand{\cI}{\ensuremath{\mathcal I}}
\newcommand{\cJ}{\ensuremath{\mathcal J}}
\newcommand{\cL}{\ensuremath{\mathcal L}}
\newcommand{\cP}{\ensuremath{\mathcal P}}
\newcommand{\cS}{\ensuremath{\mathcal S}}
\newcommand{\llb }{\llbracket}
\newcommand{\rrb }{\rrbracket}
\newcommand{\sfh}{\ensuremath{\mathsf{h}}}
\newcommand{\fC}{\mathfrak{C}}
\newcommand{\fm}{\mathfrak{m}}
\newcommand{\fG}{\mathfrak{G}}
\newcommand{\fI}{\mathfrak{I}}
\newcommand{\fW}{\mathfrak{W}}
\newcommand{\sC}{{\ensuremath{\mathscr C}}}
\newcommand{\sE}{{\ensuremath{\mathscr E}}}
\newcommand{\sF}{{\ensuremath{\mathscr F}}}
\newcommand{\fD}{\mathfrak{D}}
\newcommand{\Ifloor}[1][\sfh]{\bI^{\mathsf{fl}}_{#1}}
\newcommand{\st}{\ensuremath{\textsc{st}}}
\newcommand{\g}{{\ensuremath{\mathbf g}}}
\newcommand{\br}{{\ensuremath{\mathbf r}}}
\newcommand{\bC}{{\ensuremath{\mathbf C}}}
\newcommand{\bF}{{\ensuremath{\mathbf F}}}
\newcommand{\bH}{{\ensuremath{\mathbf H}}}
\newcommand{\bI}{{\ensuremath{\mathbf I}}}
\newcommand{\bJ}{{\ensuremath{\mathbf J}}}
\newcommand{\bV}{{\ensuremath{\mathbf V}}}
\newcommand{\bW}{{\ensuremath{\mathbf W}}}
 \renewcommand{\epsilon}{\varepsilon}
\DeclareMathOperator{\diam}{diam}
\newcommand{\Clust}{{\mathsf{Clust}}}
\DeclareMathOperator{\hgt}{ht}
\newcommand{\trivincr}{\varnothing}
\newcommand{\hull}[1]{{\mathbullet{#1}}}
\newcommand{\smhull}[1]{{\smash{\hull{#1}}\vphantom{#1}}}
\DeclareMathOperator{\isodim}{\dim_{\sf ip}}
\newcommand{\lowdag}{{\hbox{\raisebox{-4pt}{\scriptsize$\dagger$}}}}
\newcommand{\mufloor}{{\widehat\upmu}}
\newcommand{\superimpose}[2]{%
  {\ooalign{$#1\@firstoftwo#2$\cr\hfil$#1\@secondoftwo#2$\hfil\cr}}}
\newcommand{\sbullet}{%
  \hbox{\fontfamily{lmr}\fontsize{.4\dimexpr(\f@size pt)}{0}\selectfont\textbullet}}
\DeclareRobustCommand{\mathbullet}{\accentset{\sbullet}}
\newcommand{\udarrow}{{\ensuremath{\scriptscriptstyle \updownarrow}}}
\begin{document}

\title[Entropic repulsion of 3D Ising interfaces]{Entropic repulsion of 3D Ising interfaces \\ conditioned to stay above a floor}

\author{Reza Gheissari}
\address{R.\ Gheissari\hfill\break
Department of Mathematics \\ Northwestern University}
\email{gheissari@northwestern.edu}

\author{Eyal Lubetzky}
\address{E.\ Lubetzky\hfill\break
Courant Institute\\ New York University\\
251 Mercer Street\\ New York, NY 10012, USA.}
\email{eyal@courant.nyu.edu}

\vspace{-1cm}

\begin{abstract}
We study the interface of the Ising model in a box of side-length $n$ in $\mathbb Z^3$ at low temperature~$1/\beta$ under Dobrushin's boundary conditions, conditioned to stay in a half-space above height $-\sfh$ (a hard floor). 
Without this conditioning, Dobrushin showed in 1972 that typically most of the interface is flat at height~$0$.
With the floor, for small $\sfh$, the model is expected to exhibit \emph{entropic repulsion}, where the typical height of the interface lifts off of $0$. Detailed understanding of the SOS model---a more tractable height function approximation of 3D Ising---due to Caputo et al., suggests that there is a single integer value $-h_n^* \sim -c\log n$ of the floor height, delineating the transition between rigidity at height $0$ and entropic repulsion. 

We identify an explicit $h_n^*=( c_\star+o(1))\log n$ such that, for the typical Ising interface above a hard floor at~$-\sfh$, all but  an $\epsilon(\beta)$-fraction of  the sites are propelled to be above height~$0$ if  $\sfh < h_n^*-1$, whereas all but an $\epsilon(\beta)$-fraction of the sites remain at height $0$ if  $\sfh\geq h_n^*$.
Further, $c_\star$ is such that the typical height of the unconditional maximum is $(2c_\star + o(1))\log n$; this confirms scaling predictions from the SOS approximation. 
\end{abstract}

 {\mbox{}
 \vspace{-.8cm}
\maketitle
 }
\vspace{-0.9cm}

\section{Introduction}
Let $\mu_n^\mp$ be the distribution of the Ising model with Dobrushin's boundary conditions on the 3D cylinder, 
\[\Lambda_n=  \llb -\tfrac n2,\tfrac n2\rrb ^2 \times \mathbb Z = \{- \lfloor \tfrac n2 \rfloor,\ldots,\lceil\tfrac n2\rceil\}^2 \times \mathbb Z\,.\]
More precisely, let $\mu_n^\mp$ be the distribution over assignments of $\pm 1$ spins to the cells of $\Lambda_n$, denoted $\sC(\Z^3)$. The probability of a configuration $\sigma$ is proportional to $\exp(-\beta \cH(\sigma))$, where $\cH$ counts the number of disagreeing neighboring cells under $\mp$-boundary conditions that are minus in the upper-half space $\{(x_1,x_2,x_3):x_3>0\}$ and plus in the lower-half space. The parameter $\beta$ is an inverse temperature parameter which in our context will be a large enough constant. 
The weak limit of $\mu_n^\mp$ as $n\to\infty$ is denoted $\mu_{\Z^3}^\mp$, which was the famous example of Dobrushin~\cite{Dobrushin72a} for a Gibbs measure on $\Z^3$ which is not translational invariant in every direction. 

For an Ising configuration $\sigma\sim\mu_n^\mp$ with Dobrushin boundary conditions, its interface is defined as follows: consider  every $f$ in $\sF(\Z^3)$ (the faces of $\Z^3$) that separates two disagreeing spins of~$\sigma$, and let the \emph{interface} $\cI$ be the $*$-connected component of such faces incident to the $\{x_3=0\}$ plane outside $\Lambda_n$ (two faces are $*$-connected if they have a common bounding~vertex). Dobrushin~\cite{Dobrushin72a} showed that for $\beta>0$ large enough,  with high probability (w.h.p.) the interface $\cI$ is \emph{rigid} (hence the conclusion on $\mu_{\Z^3}^\mp$) at height $0$; that is, for some fixed $\epsilon_\beta>0$ going to $0$ as $\beta$ goes to infinity,
\[
\big|\cI\cap (\llb- \tfrac{n}{2},\tfrac{n}{2}\rrb^2\times\{0\})\big|
\geq (1-\epsilon_\beta) n^2\,,\qquad\mbox{w.h.p.\ over $\cI\sim\mu_n^\mp$}\,.
\]

In this work we consider the \emph{entropic repulsion} effect of a \emph{hard floor} constraint for $\mu_n^\mp$: namely, let 
\begin{align}\label{eq:fI-h-def}
    \Ifloor = \{\cI \subset \llb- \tfrac{n}{2},\tfrac{n}{2}\rrb^2 \times [-\sfh,\infty)\}
\end{align}
be the event that the interface is everywhere above height $-\sfh$, and define the measure ``with a floor at $-\sfh$,"
\begin{align}\label{eq:mufloor-def}
    \mufloor_n^\sfh = \mu_n^\mp \big( \cdot \mid \Ifloor\big)\quad\mbox{for}\quad \sfh\geq 0\,.
\end{align}
The hard constraint of confining $\cI$ to a half-space above height $-\sfh$ creates an energy vs.\ entropy competition: wherever the interface is to remain flat at height $0$,  its downward oscillations would be capped at depth~$\sfh$. Indeed, in several other more tractable models, a conditioning of this sort was shown to propel the interface above height $0$ (see~\S\ref{subsec:related-work} for related work). Of course, one expects this to hold only for small values of $\sfh$, whereas for large enough $\sfh$, the effect of a hard floor at height $-\sfh$ should be unnoticeable, and the interface should remain flat at height $0$ (this would certainly be the case if one should take $\sfh$ to be larger than the height of the global maximum in a typical interface $\cI\sim\mu_n^\mp$).

Our main result, Theorem~\ref{thm:1}, identifies the exact $\sfh$ delineating this phase transition: for any $\beta$ large enough, we identify a critical integer $h_n^*$ such that, for all $\sfh<h_n^*$,
 a typical interface $\cI \sim \mufloor_n^\sfh$ would have (say) at most a $0.01$ fraction of the faces of $\cI$ be at height $0$, whereas for $\sfh> h_n^*$ at least (say) $0.99$ of the points would be such.
This height is defined by 
\begin{equation}\label{eq:h-star-def}
h_n^* = h_n^* (\beta) : = \inf\{ h\geq 1 \,:\; \alpha_h > \log n - 2\beta\}\,,
\end{equation}
in which the quantities $\alpha_h=\alpha_h(\beta)$ are given by
\begin{equation}\label{eq:alpha-h-def}
\alpha_h = -\log \mu_{\Z^3}^\mp\Big((\tfrac12,\tfrac12,\tfrac12) \xleftrightarrow[\R^2\times [0,\infty)]{+} (\Z+\tfrac 12)^2\times\{h-\tfrac12\}\Big)\,,
\end{equation}
where  $v\xleftrightarrow[A~]{+}w$ denotes there is a path of adjacent or diagonally adjacent plus spins between $v$ and $w$ in~$A$.

\begin{theorem}\label{thm:1}
There exist $\beta_0,c_0>0$ so that, if $\epsilon_\beta = 1/(c_0 e^{\beta})$ and $h_n^*$ is as in~\eqref{eq:h-star-def}, then for every $\beta>\beta_0$, with probability at least $1-\exp(-\epsilon_\beta n)$ the interface $\cI\sim\mufloor_n^\sfh$ satisfies 
\begin{align}
\label{eq:A-subcritical-h}
\big|\cI \cap (\llb- \tfrac{n}{2},\tfrac{n}{2}\rrb^2\times\{0\})\big| &< \epsilon_\beta \, n^2   \qquad\qquad \mbox{if $\sfh < h_n^*-1$}\,, \\
\label{eq:A-supercritical-h} 
 \big|\cI\cap (\llb- \tfrac{n}{2},\tfrac{n}{2}\rrb^2\times\{0\})\big| &> (1-\epsilon_\beta)n^2 \qquad \mbox{if $\sfh \geq h_n^*$}\,.
\end{align}
Furthermore,~\eqref{eq:A-subcritical-h} also holds if we replace $\llb- \tfrac{n}{2},\tfrac{n}{2}\rrb^2\times\{0\}$  by $\llb- \tfrac{n}{2},\tfrac{n}{2}\rrb^2 \times (-\infty,h_n^*-\sfh-1)$.
\end{theorem}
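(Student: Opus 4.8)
The plan is to study $\mufloor_n^\sfh$ through the \emph{wall representation} of the Ising interface at large $\beta$: one writes $\cI$ as the flat interface perturbed by a family of walls of exponentially decaying statistical weight (Dobrushin's cluster expansion, refined by the pillar/wall estimates developed for $\mu_n^\mp$), which turns every question about $\cI$ into one about the combinatorics of downward excursions. The key analytic input is that, by up--down symmetry and a convergent expansion, the cost of a downward excursion of depth $d$ at a fixed column is $e^{-\alpha_d(1+o(1))}$, so that $\mu_n^\mp(\text{column }v\text{ dips below }-\sfh)=e^{-\alpha_{\sfh+1}(1+o(1))}$; one also needs that $h\mapsto\alpha_h$ is quasi-linear and, crucially, has increments $\alpha_{h+1}-\alpha_h\ge 3\beta$ (each further unit of a width-$1$ pillar costs four vertical faces minus bounded entropy). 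With these, the theorem becomes a free-energy competition among configurations where the interface sits at a constant ``base level'' $H$ in the bulk, joined to height $0$ at $\partial\Lambda_n$ by a staircase and dressed by local fluctuations: such a family contributes to $\mu_n^\mp(\Ifloor)$ a weight of order $\exp\big(-\Theta(\beta nH)-\Theta(n^2 e^{-\alpha_{\sfh+H+1}})\big)$ --- first the cost of the lift, then the residual cost of keeping the fluctuations above the floor --- and its minimizer is $H^*=h_n^*-\sfh-1+O(1)$, which is $\le 0$ exactly when $\sfh\ge h_n^*$ and is $\ge 1$ exactly when $\sfh<h_n^*-1$. The assertion is that $\mufloor_n^\sfh$ concentrates on configurations at this base level off an $\epsilon_\beta$-fraction of columns.

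\emph{Supercritical case \eqref{eq:A-supercritical-h} ($\sfh\ge h_n^*$).} Here conditioning on $\Ifloor$ is cheap. By FKG applied to the increasing event $\Ifloor$ together with the per-column bound, $\mu_n^\mp(\Ifloor)\ge(1-e^{-\alpha_{\sfh+1}})^{n^2}\ge e^{-2n^2 e^{-\alpha_{\sfh+1}}}$; and since $\sfh\ge h_n^*$ gives $\alpha_{\sfh+1}\ge\alpha_{h_n^*}+(\alpha_{h_n^*+1}-\alpha_{h_n^*})>(\log n-2\beta)+3\beta>\log n$, we get $\mu_n^\mp(\Ifloor)\ge e^{-2n}$. (It is exactly this use of one full $\alpha$-increment that forces the hypothesis $\sfh\ge h_n^*$ rather than $\sfh\ge h_n^*-1$, which is why a one-integer gap is left open.) On the other hand, a quantitative form of Dobrushin's rigidity shows that the event $\{$fewer than $\epsilon_\beta n^2$ faces of $\cI$ lie at height $0\}$ forces the interface to leave height $0$ over a region of area $>(1-\epsilon_\beta)n^2$ whose boundary inside $\Lambda_n$ has length $\gtrsim n$, hence carries excess interface area $\gtrsim n$; thus its $\mu_n^\mp$-probability is at most $e^{-c\beta n}$. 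Dividing, $\mufloor_n^\sfh(\text{fewer than }\epsilon_\beta n^2\text{ faces at height }0)\le e^{-c\beta n}/e^{-2n}=e^{-(c\beta-2)n}\le e^{-\epsilon_\beta n}$ for $\beta$ large.

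\emph{Subcritical case \eqref{eq:A-subcritical-h} ($\sfh<h_n^*-1$, set $H:=h_n^*-\sfh-1\ge 1$).} Here the content is that the interface is at level $\ge H$ off an $\epsilon_\beta$-fraction of columns, which requires \emph{sharp} free energies. The explicit staircase construction at base level $H^*$ gives $\mu_n^\mp(\Ifloor)\ge e^{-\Theta(n\log n)}$. For the bad event $B=\{\ge\epsilon_\beta n^2\text{ faces at height}<H\}$ one computes $\mu_n^\mp(B\cap\Ifloor)$ via a cluster expansion \emph{around the lifted profile}, distinguishing whether the interface outside the low region $R$ ($|R|\ge\epsilon_\beta n^2$) is essentially lifted --- then $\partial R$ carries a wall separating levels $<H$ from $\gtrsim H$, and the expansion charges this --- or whether a positive fraction of the interface is itself below $H$ --- then those $\gtrsim\epsilon_\beta n^2$ columns lie between the floor $-\sfh$ and level $h_n^*-\sfh-2$ and so, relative to level $H^*$, pay a residual confinement free energy at least $\epsilon_\beta n^2\cdot e^{-\alpha_{h_n^*-1}}\ge\epsilon_\beta n^2\cdot e^{2\beta}/n=(e^{\beta}/c_0)n$, using $\alpha_{h_n^*-1}\le\log n-2\beta$ from $\sfh<h_n^*-1$. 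In either case $-\log\mu_n^\mp(B\cap\Ifloor)\ge-\log\mu_n^\mp(\Ifloor)+\epsilon_\beta n$ for $\beta$ large, so $\mufloor_n^\sfh(B)\le e^{-\epsilon_\beta n}$; since both the construction and the upper bound pin the base level at $\ge H=h_n^*-\sfh-1$, this is precisely the half-space refinement, and the plain statement \eqref{eq:A-subcritical-h} for all $\sfh<h_n^*-1$ follows from the case $\sfh=h_n^*-2$ via the FKG monotonicity $\mufloor_n^{\sfh}\succeq\mufloor_n^{\sfh'}$ for $\sfh<\sfh'$ (a higher floor pushes the interface up).

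I expect the main obstacle to be exactly this sharpness in the subcritical regime: the lifted and the depressed scenarios have $-\log$-weights that agree to leading order $\Theta(n\log n)$, so one must control the next-order terms --- the precise cost of the lift (including the entropy of the staircase and of the fluctuations about the base level) and the precise confinement cost $\Theta(n^2 e^{-\alpha_{\sfh+H+1}})$ with its inter-column correlations --- to precision $o(\epsilon_\beta n)$. Since the lift is macroscopic ($H^*=\Theta(\log n)$), this is not a direct cluster expansion but a multi-scale scheme: coarse-grain $\Lambda_n$ into mesoscopic boxes, run the competition within each (where the conditioning is local enough for the estimates to be uniform), and expand the small-scale remainder; the entropy of the mesoscopic surgeries is what the expansion must absorb. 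Underpinning all of this is tight control of the exponents $\alpha_h$ --- their monotonicity, quasi-additivity, lower bounds on increments, and their equality (up to $1+o(1)$) with the finite-volume excursion costs --- which is where the fine pillar/wall combinatorics of the 3D Ising interface, pushed well past what mere rigidity requires, enters decisively.
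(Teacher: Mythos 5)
Your high-level picture---the competition between the $\Theta(\beta kn)$ cost of lifting the interface and the $\Theta(n^2e^{-\alpha_{\sfh+k+1}})$ confinement cost of keeping it above the floor, with the one-integer ambiguity at $\sfh=h_n^*-1$ traced to a single increment $\alpha_{h+1}-\alpha_h\approx4\beta$---matches the paper's, as does the global lift map used to lower-bound $\mu_n^\mp(\Ifloor)$. But there are genuine gaps. In the supercritical case you bound the wrong event: you estimate the probability that \emph{fewer than} $\epsilon_\beta n^2$ faces lie at height $0$, whose deviating region has area $(1-\epsilon_\beta)n^2$ and hence boundary $\gtrsim n$; what \eqref{eq:A-supercritical-h} requires is ruling out that fewer than $(1-\epsilon_\beta)n^2$ faces lie at height $0$, i.e.\ a deviating region of area only $\epsilon_\beta n^2$, whose supporting walls can have total excess as small as $\Theta(\sqrt{\epsilon_\beta}\,n)=\Theta(e^{-\beta/2}n)$ and which may be fragmented across many mesoscopic ceilings whose placement entropy competes with the Peierls cost (this is what Lemma~\ref{lem:small-ceil}'s dyadic analysis and Theorem~\ref{thm:upper-bound} handle). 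As written, your argument only shows that a positive fraction of the interface remains at height $0$. Moreover, your ratio bound leans on the per-column dip probability being $(1+o(1))e^{-\alpha_{\sfh+1}}$ in finite volume with \emph{multiplicative} precision; that sharpness is a main theorem of~\cite{GL19b,GL20}, not a consequence of symmetry and a convergent expansion. (The paper instead runs a Peierls map inside $\Ifloor$ that deletes wall clusters, which is why it needs the conditional lower bound~\eqref{eq:barI-conditional-lower-bound} and the monotone-conditioning extension of Section~\ref{sec:extension-estimates-in-ceiling}.)

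In the subcritical case your sketch does not engage with the central obstruction. The upper bound $\mu_n^\mp(\bar M^\downarrow_S<h)\lesssim\exp(-|S|e^{-\alpha_h})$ underlying your ``residual confinement free energy'' is only available~\eqref{eq:max-thick-upper-large-h} \emph{jointly with} the event $\cG^{\fD}$ that no wall in the bulk of $S$ has diameter exceeding $e^{\kappa_0h}$. This is not a technicality: a single wall of size $(c/\beta)n\log n$ can lift an order-$n^2$ area---this is precisely the mechanism of entropic repulsion---so such walls cannot be excluded a priori. The paper's resolution (conditioning on all mesoscopic walls $\bW^\dagger$, reconnecting the resulting ceiling boundaries by a TSP path of $n^{3/2+o(1)}$ faces to restore simple-connectivity while preserving isoperimetric dimension, backed by the a priori regularity estimates of Section~\ref{sec:rough-bounds-no-floor}) is the content of Lemma~\ref{lem:area-of-ceilings-I-dagger}; your ``multi-scale scheme'' paragraph names this difficulty but supplies no mechanism. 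Finally, your reduction of \eqref{eq:A-subcritical-h} to $\sfh=h_n^*-2$ by stochastic domination can at best transfer the height-$0$ statement, not the half-space refinement at level $h_n^*-\sfh-1$ (the target level moves with $\sfh$), so each $\sfh$ must be treated directly, as the paper does by lifting by $k$ for every relevant $k$.
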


While one may easily be convinced that the hard floor constraint in $\mufloor_n^\sfh$ will become unnoticeable once $\sfh$ exceeds the typical value of $M_n$, the maximum height of the unconstrained interface $\cI\sim\mu_n^\mp$ (which, by reflection symmetry, is the same as the typical value of the minimum height, whence $\cI$ will stay in $\llb-\tfrac n2,\tfrac n2\rrb^2\times[-\sfh,\infty)$ w.h.p.), in fact $h_n^*$ is asymptotically $1/2$ of that value, as the next remark states. 
\begin{remark}\label{rem:relation-to-maximum}
In~\cite{GL19a}, the authors derived a law of large numbers for the maximum height $M_n$ of~$\cI\sim\mu_n^\mp$: as $n\to\infty$ one has $M_n / \log n \to 2/\alpha$ in $\mu_n^\mp$-probability, for the quantity $\alpha=\alpha(\beta)$ defined there as
\begin{equation}\label{eq:alpha-def}
\alpha := \lim_{h\to\infty} \frac{\alpha_h}h \in[4\beta-C,4\beta+e^{-4\beta}]\,,
\end{equation}
with the $\alpha_h$'s as given in~\eqref{eq:alpha-h-def} (the tightness, and moreover Gumbel tails, of $M_n-\E[M_n]$ were proved in~\cite{GL19b}).
Theorem~\ref{thm:1} shows that the critical $\sfh$ for repulsion in $\mufloor_n^\sfh$ is $h_n^*=(\frac{1}{\alpha}+o(1))\log n$, which is $\frac{1+o(1)}2 \E[M_n]$ for $\cI\sim\mu_n^\mp$, in line with known scaling results for the SOS approximation of the 3D Ising model (see~\S\ref{subsec:related-work}).
\end{remark}

Akin to the behavior of the SOS model, the case $\sfh=h_n^*-1$ can depend on $n$, as the next remark details.
\begin{remark}\label{rem:h=h_n-1}
Theorem~\ref{thm:1} extends to treat $\sfh=h_n^*-1$ whenever the determinstic quantity $\lambda_n := \log n - \alpha_{h_n^*}$, which is known to belong to $[-2\beta-\epsilon_\beta,2\beta]$ for all $n$ (the upper bound is by definition~\eqref{eq:h-star-def} whereas the lower bound is by results in~\cite{GL19a}) does not fall in a certain $\epsilon_\beta$-fraction of this interval. For instance,~\eqref{eq:A-subcritical-h} extends to $\sfh=h_n^*-1$ for $\lambda_n \geq 2\log\beta$, while~\eqref{eq:A-supercritical-h} extends to $\sfh=h_n^*-1$ for $\lambda_n \leq \log\beta$ (see Remarks~\ref{rem:h=hn-1-lower} and~\ref{rem:h=hn-1-upper}). 
\end{remark}

\begin{figure}
\includegraphics[width=0.73\textwidth]{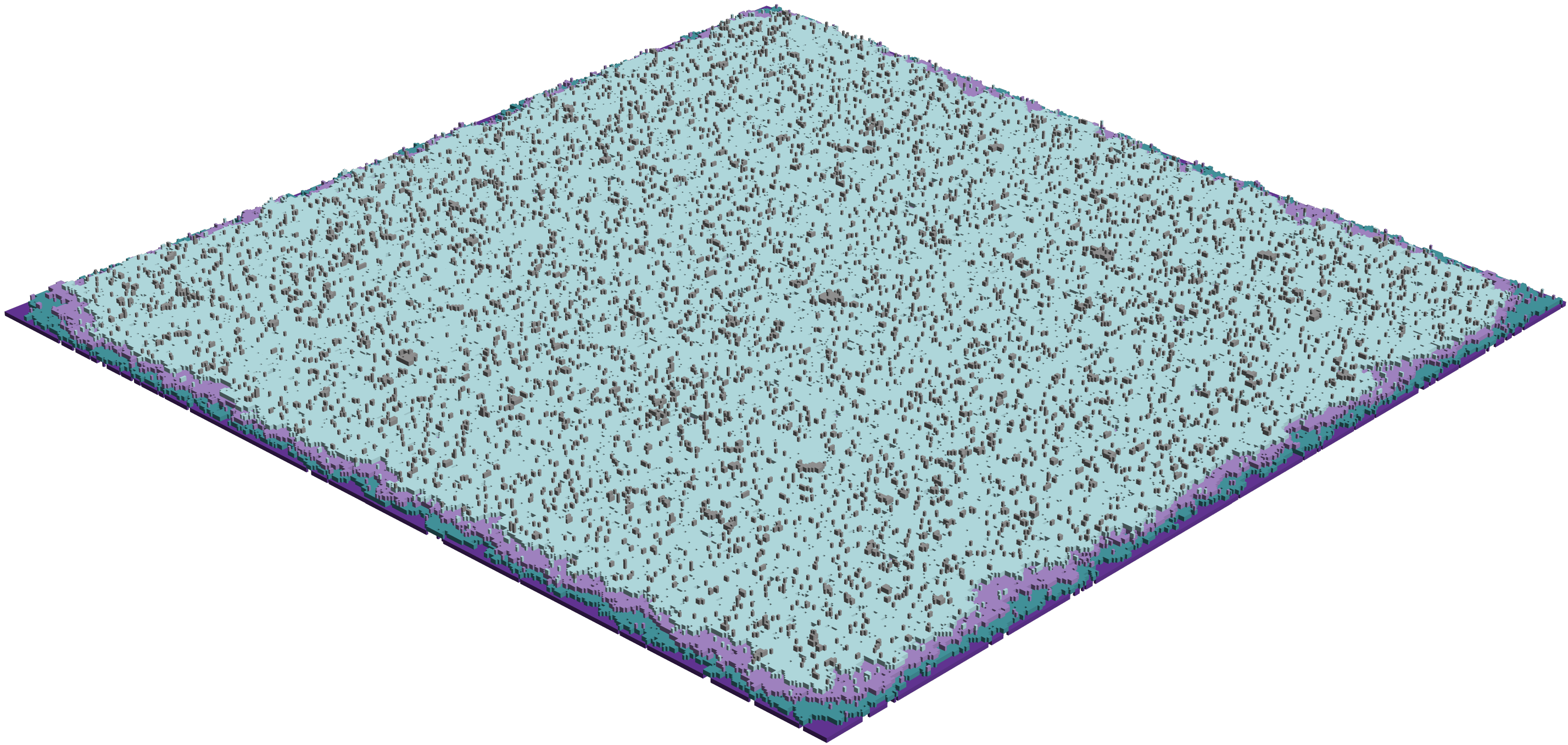}
\hspace{15pt}
\hspace{-0.25in}\raisebox{0.4in}{\includegraphics[width=0.25\textwidth]{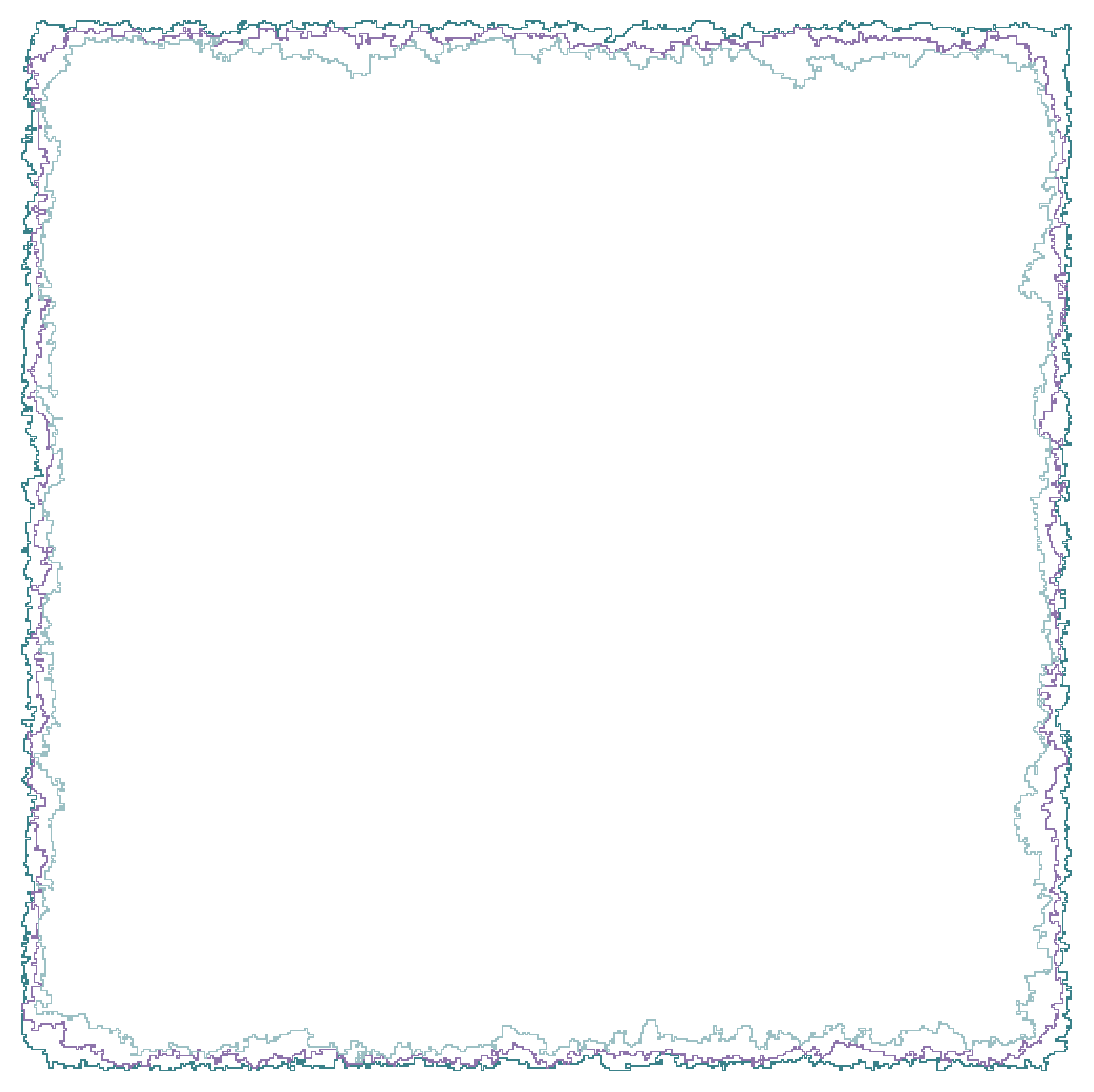}}
\vspace{-0.1in}
\caption{An Ising interface in $\Lambda_{n}$ for $n=500$ conditioned to lie above a floor at height $\sfh=0$. 
}
\vspace{-0.1in}
\end{figure}

In the special case $\sfh=0$, Theorem~\ref{thm:1}, together with a short and self-contained result (Claim~\ref{clm:upper-bound-1+eps}), translates to the following: with high probability, $\cI\sim \mufloor_n^0$ has
    \begin{align}\label{eq:h=0}
        |\cI\cap (\llb - \tfrac{n}{2},\tfrac{n}{2}\rrb^2 \times \llb h_n^* - 1, (1+\epsilon_\beta) h_n^*\rrb)| >(1-\epsilon_\beta)n^2\,.
    \end{align}

In view of Theorem~\ref{thm:1}, we conjecture the lower bound here is exact (once the interface has reached a height $h_n^*$ above the floor, it has no further incentive to rise), so that the upper bound $(1+\epsilon_\beta)h_n^*$ in~\eqref{eq:h=0} can be replaced by $h_n^*$. More generally, in Theorem~\ref{thm:1}, we conjecture that for every $\sfh$, the interface $\cI\sim \mufloor_n^\sfh$ has  $(1-\epsilon_\beta)n^2$ of its faces be at height either $h_n^*-\sfh-1$ or $ h_n^* - \sfh$.

\begin{remark}
While our results are stated for the cylinder $\Lambda_n= \llb -\tfrac n2,\tfrac n2\rrb ^2 \times \mathbb Z $, all proofs go through mutatis mutandis on the box $\Lambda_n= \llb -\tfrac n2,\tfrac n2\rrb ^3$ as the interface never feels the effect of the boundary conditions on the top and bottom, except with $e^{ - cn}$ probability.
\end{remark}

\begin{remark}\label{rem:extension-general-d}
Our results extend to any dimension $d\geq 3$ and are presented for $d=3$ to simplify the exposition. Theorem~\ref{thm:1} would be given analogously, with $n^2$ replaced by $n^{d-1}$, and with $h_n^*$ defined as in~\eqref{eq:h-star-def} with $2\beta$ replaced by $(d-1)\beta$ and with the natural generalization of $\alpha_h(d)$. Pertaining Remark~\ref{rem:relation-to-maximum}, when $d>3$, one has $M_n/\log n \xrightarrow[]{\;\textrm p\;} (d-1)/\alpha(d)$, whence the ratio $\E[M_n]/h_n^*$ goes to $d-1$ as $n\to\infty$. 
\end{remark}

\subsection{Related work}\label{subsec:related-work}
The study of entropic repulsion for interfaces separating stable phases of the Ising model has a long history. We first describe the picture in $\mathbb Z^2$. Without any floor (i.e., under $\mu_n^\mp$ as opposed to $\mufloor_n^\sfh$), the Ising interface in a strip of side-length $n$ with Dobrushin's boundary conditions is known to have $\sqrt{n}$ height fluctuations~\cite{DKS}, and converge to a Brownian bridge under a diffusive rescaling at all $\beta>\beta_c(d)$~\cite{Hryniv98,GreenbergIoffe}. In the presence of a floor at height zero, either by means of plus boundary conditions in the entire lower half-space or by means of conditioning on the interface being restricted to the upper half-space, one could easily deduce that the fluctuations remain $O(\sqrt{n})$; convergence to a Brownian excursion was recently shown in~\cite{IOVW-convergence-to-Brownian-excursion}. In particular, no matter the conditioning on a floor (at whatever negative height) the entropic repulsion effect does not change the order of the typical height of the interface. We also mention that in this two-dimensional setting,~\cite{IST-interaction-entropic-repulsion} established an equivalence between the floor effects induced by conditioning on the interface to be non-negative, and by plus boundary conditions in the lower half-space. 

In dimensions $d\ge 3$, the phenomenology is completely different. Recall that without any floor conditioning, the interface is rigid about its ground state, i.e., the interface is mostly flat at height zero, with $O(1)$ height oscillations about that with exponential tails~\cite{Dobrushin72a,vanBeijeren75}. In~\cite{GL19a,GL19b} the authors studied more refined features regarding the typical shape of this interface near its high points, and used that to deduce that the maximum height is tight and has uniform Gumbel tails about its median $m_n^*= (\frac{2}{\alpha}+o(1))\log n$. 

When introducing a hard floor, either via conditioning or via a set of plus spins in the lower half-space, far less is known, though this problem has been discussed in the physics literature at least since~\cite{BEF86}. That work proposed using
the solid-on-solid (SOS) model~\cite{Abraham}, a distribution over $\varphi: \llb -\tfrac{n}{2},\tfrac{n}{2}\rrb^2\to\mathbb Z$ with Hamiltonian, $\sum_{x\sim y} |\varphi_x - \varphi_y|$ 
as an approximation to the Ising interface on which such questions can be studied. 
The work~\cite{BEF86} identified the order of the typical height the SOS  surface rises to due to the entropic repulsion effect at sufficiently low temperatures, and in~\cite{CLMST12,CLMST14} the exact asymptotics of this height were determined. In particular, it was found in~\cite{CLMST12,CLMST14} that in the SOS model, the typical height of the surface above a floor at height zero, is exactly half of the maximum height in the absence of a floor (note this matches the scaling relation between $h_n^*$ and $m_n^*$ we found for the Ising model: see Remark~\ref{rem:relation-to-maximum}). Beyond that,~\cite{CLMST16} established that the macroscopic level lines converge to the Wulff shape, and have $n^{1/3+o(1)}$ fluctuations along the sides of the box. The work~\cite{LMS16} generalized the results of~\cite{CLMST12,CLMST14} of this entropic repulsion phenomenon to integer valued $|\nabla \varphi|^p$ interfaces (the SOS model being the $p=1$ case). Let us also mention that the question of entropic repulsion has been extensively studied in the context of the discrete Gaussian free field~\cite{BDG01,BDZ95,Deuschel96,DeuschelGiacomin99}. 
In all such height function approximations, identification of the height to which the interface rises given a floor at height zero is effectively equivalent to identification of the floor height $-\sfh$ at which the phase transition in the form of Theorem~\ref{thm:1} occurs. 

Despite this progress for understanding the entropic repulsion phenomenon on the SOS approximation, for the actual Ising interface, much less was known. A non-quantitative delocalization result that the typical height of the interface goes to infinity when the floor is at height zero was established by means of correlation inequalities in~\cite{FP87a,FP87b}. The works~\cite{melichervcik1991entropic,HZ93} sketched an argument that this interface rises to a height that is between $(c/\beta)\log n$ and $(C/\beta)\log n$ for two different constants $C>c>0$. However, we are not aware of any previous work on the 3D Ising model pinpointing the location of an entropic repulsion phenomenon to within $o(\log n)$, let alone identifying its exact height.

Understanding the entropic repulsion phenomenon also opens the door to studying \emph{wetting phenomena} for interfaces between phases: see~\cite{BEF86} as well as the surveys~\cite{Velenik06,IV18} for details on the below discussion. Here, one can for instance change the interaction strength for the edges along the floor from $1$ to some $J\ge 0$; the interface then undergoes a transition at some $J_w(\beta)\le 1$ between a \emph{partial wetting} regime ($J< J_w$) where the typical interface height is $O(1)$, and a \emph{complete wetting} regime ($J> J_w$) in which the typical interface height diverges. In $d=2$, this transition can be deduced for the Ising model by means of the machinery of~\cite{PV97,PV99,CIV03}. In $d\ge 3$, wetting for the SOS model was first studied in~\cite{Chalker}, and $J_w(\beta)$ was precisely identified in~\cite{Lacoin-wetting}; for the Ising model, some partial results can be obtained, as in~\cite{FP87b,Basuev,ADM-layering-Ising}. A related physical phenomenon occurs when instead keeping $J=1$, but introducing an external field $-\lambda<0$ which pushes the interface down, competing with the repulsion away from the floor. This competition induces a physical phenomenon known as \emph{pre-wetting}, whereby as $\lambda = \lambda_n \downarrow 0$, the interface interpolates between the delocalized one at $\lambda=0$ and one having uniformly bounded height oscillations when $\lambda$ is uniformly bounded away from zero. In $d= 2$, this interpolation is smooth, and the interface behavior is now well-understood, exhibiting KPZ fluctuation exponents and local convergence to the Ferrari--Spohn diffusion~\cite{Velenik04,GaGh20,IOSV21}. In the $d\ge 3$ case, less is known rigorously, aside from the first few jumps of the interface for $\lambda$ small but independent of the system size~\cite{Basuev}.

\subsection{Proof ideas}\label{subsec:proof-ideas}
The proof of Theorem~\ref{thm:1} proceeds in two parts corresponding to each of~\eqref{eq:A-subcritical-h} and~\eqref{eq:A-supercritical-h}.
A more detailed formulation of~\eqref{eq:A-subcritical-h} can be found in Theorem~\ref{thm:lower-gen-h}, and a more detailed formulation of~\eqref{eq:A-supercritical-h} can be found in Theorem~\ref{thm:upper-gen-h}. Unlike~\cite{Dobrushin72a,GL19a,GL19b,GL20} the proofs in this paper are primarily probabilistic in nature, with all combinatorial objects we use having already appeared in the previous work~\cite{GL20}.

\subsubsection{A priori regularity estimates}
The first step in our analysis consists of establishing that, typically,
\begin{enumerate}[(i)]
	\item all but an~$\epsilon_\beta$-fraction of the faces of~$\cI$ are horizontal faces for which the intersection of $\cI$ with the column through them is a singleton---call these \emph{ceiling faces} following Dobrushin~\cite{Dobrushin72a}; and
	\item  all but an $\epsilon_\beta$-fraction of the ceiling faces belong to connected components---referred to as \emph{ceilings}---that are reasonably large and regular, being of size at least $n^{1.9}$ and with a boundary of size at most $n^{1+o(1)}$. 
\end{enumerate} 
In light of these facts,  we may restrict our attention to the heights of faces of $\cI$ belonging to  ceilings as in~(ii).
This is crucial because the estimates we import from~\cite{GL20} on large deviations of the maximal height oscillations interior to a ceiling are only sharp inside ceilings with a uniformly bounded isoperimetric dimension.

Such a priori estimates are established using the basic fact that, if $\Gamma \subset \Ifloor$ is such that $\mu_n^\mp(\Gamma) \ll \mu_n^\mp(\Ifloor)$, then $\Gamma$ is also atypical under the conditional measure $\mufloor_n^\sfh=\mu_n^\mp(\cdot\mid\Ifloor)$. Concretely, the lower bound on $\mu_n^\mp(\Ifloor[0])$ given in the next proposition allows us to rule out any event $\Gamma$ which, e.g., has $\mu_n^\mp(\Gamma)\leq \exp(-1.1 n\log n)$.

\begin{proposition}
\label{prop:prob-positive-upper-lower}
There exists $\beta_0>0$ such that, for every fixed $\beta>\beta_0$ and every sufficiently large~$n$, the probability of the event $\Ifloor[0]$, i.e.\ that the 3D Ising interface $\cI$ is a subset of $\llb-\tfrac n2,\tfrac n2\rrb^2 \times \R_+$, satisfies
\[e^{-(1+\epsilon_\beta)n\log n} \leq \mu_n^\mp\left(\Ifloor[0]\right) \leq e^{-(1-\epsilon_\beta)n\log n}\,,\]
where $\epsilon_\beta$ is some sequence going to $0$ as $\beta$ increases.
\end{proposition}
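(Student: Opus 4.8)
The plan is to prove the two bounds separately, each by a direct construction-and-surgery argument on the interface, working with the Dobrushin representation of $\cI$ as a $*$-connected family of faces with a well-controlled distribution given by $\mu_n^\mp$. The key soft input is the standard fact that, up to an $\epsilon_\beta$ error in the exponent, the weight of a configuration whose interface is $\cI$ behaves like $\exp(-\beta|\cI|)$ times a cluster-expansion correction that is uniformly bounded (this is the classical Dobrushin/Minlos--Sinai machinery, already used in \cite{Dobrushin72a,GL19a,GL20}).

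\emph{Lower bound.} For $\mu_n^\mp(\Ifloor[0]) \geq e^{-(1+\epsilon_\beta)n\log n}$, I would exhibit a single ``cheap'' interface that stays in $\llb-\tfrac n2,\tfrac n2\rrb^2\times\R_+$ and estimate the probability of the event that $\cI$ equals it (or lies in a polynomial-size neighborhood of it). The natural candidate is the interface that agrees with the flat ground state at height $0$ except that it is forced up by one unit all along the boundary of the box to avoid the forbidden negative region near the walls; more precisely, one takes the minimal-area interface consistent with the Dobrushin boundary data that never dips below $-\sfh=0$. Its excess area over the flat interface is $O(n)$ vertical faces contributed by the boundary ``staircase,'' so its $\mu_n^\mp$-weight, relative to the flat interface, is $e^{-O(\beta n)}$. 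The subtlety is that $\mu_n^\mp(\cI = \cI_{\mathrm{flat}})$ is itself exponentially small in $n$: the flat interface is only the \emph{modal} interface and competes with $\approx e^{n\log n}$ comparably-weighted perturbations (raising/lowering $O(1)$ towers over each of the $n^2$ columns, each such local move costing $O(\beta)$ but there being $\Theta(n)$ rather than $\Theta(n^2)$ ``boundary'' columns whose flexibility is what governs this count). Carefully, the constrained partition function $Z_n^\mp(\Ifloor[0])$ and the unconstrained one $Z_n^\mp$ differ by a factor that accounts for the loss of one side of the local fluctuations along the $\Theta(n^2)$-many columns — but because a $\pm 1$ downward spike at a flat portion costs only $O(\beta)$, the effect of forbidding it is an $e^{-\Theta(n^2)}$ (not $e^{-\Theta(n^2\log n)}$) correction, which is dominated by the $e^{-\Theta(n\log n)}$ coming from the $\Theta(n)$ boundary columns each of which must rise by roughly $1$ to clear the floor. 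I would make this precise by a column-by-column decomposition: condition on the interface being a graph (ceiling faces) over most columns, factorize approximately over columns, and compare the per-column partition function with and without the floor, the ratio being $1-e^{-\Theta(\beta)}$ for interior columns (harmless in aggregate, contributing $e^{-\Theta(n^2 e^{-\beta})} = e^{-\epsilon_\beta n^2}$, which must be absorbed — and this is exactly why the statement carries an $e^{\pm\epsilon_\beta n\log n}$ slack rather than $e^{\pm o(n\log n)}$) and $e^{-\Theta(\log n)}$ for the $\Theta(n)$ boundary columns that are pinned near height $0$ by the walls and forced to find a positive value.

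\emph{Upper bound.} For $\mu_n^\mp(\Ifloor[0]) \leq e^{-(1-\epsilon_\beta)n\log n}$, the idea is a \emph{conditioning/entropy} argument: on $\Ifloor[0]$, the $\Theta(n)$ columns adjacent to the box walls carry Dobrushin boundary data that pins the interface at height $0$ on the outside, yet the interface is forbidden from going below $0$ on the inside; I would show that each such near-boundary column independently contributes a factor $e^{-(1-\epsilon_\beta)\log n}$ to the probability. Quantitatively: pick a sparse sub-collection of $\asymp n$ boundary columns (or short boundary segments, spaced $\Theta(1)$ apart so that a Markov/finite-energy argument makes them roughly independent given the bulk), and at each, compare the conditional law of the local interface configuration with versus without the floor. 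Near a straight wall with $\mp$ data, the interface would like to make a downward excursion of depth $\Theta(1)$ with probability $\Theta(1)$; being barred from going negative suppresses this, but more importantly, a careful accounting shows the relevant suppression at a length-$\Theta(1)$ boundary segment is only a constant factor, so to get $\log n$ one needs the \emph{global} mechanism. The cleanest route is via Proposition-style monotonicity and the observation (FKG or reflection) that $\mu_n^\mp(\Ifloor[0]) \leq \mu_n^\mp(\text{min height}\geq 0) = \tfrac12\mu_n^\mp(|\text{min height}|\text{ small})$-type bounds combined with the lower tail of the \emph{minimum} height: by \cite{GL19a}, the minimum height of $\cI$ is $\approx -(2/\alpha)\log n$ typically, and pushing it all the way up to $0$ is a large deviation whose cost I would extract — but the cleaner self-contained bound is the direct surgery: from any $\cI \in \Ifloor[0]$, produce a much more likely interface (violating the floor) by reflecting/shifting a localized downward finger, and count how many preimages map to a given image. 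The expected main obstacle is precisely the upper bound's bookkeeping: controlling the approximate independence of the $\Theta(n)$ boundary-column contributions rigorously (the Dobrushin interface is not a product measure, and one must quarantine the fluctuations of ceilings, using the a priori regularity — yet Proposition~\ref{prop:prob-positive-upper-lower} is invoked to \emph{prove} that regularity, so the argument here must be bootstrap-free and rely only on the crude cluster expansion). I would resolve this by a purely deterministic energy comparison: exhibit an injection from $\{\cI : \cI \subset \llb-\tfrac n2,\tfrac n2\rrb^2\times\R_+\}$ into general interfaces that decreases area by $\geq (1-\epsilon_\beta)(n/\beta)\log n$ — e.g., for each of the $\asymp n$ boundary columns independently, delete the forced positive ``ledge'' and let the interface settle to a configuration of geometric-type depth, which on average lowers the area by $\Theta((1/\beta)\log n)$ per column while the injectivity defect (number of interfaces mapping to the same target) is at most $e^{\epsilon_\beta n\log n}$ — then $\mu_n^\mp(\Ifloor[0]) \leq e^{\epsilon_\beta n\log n}\cdot e^{-\beta\cdot (1-\epsilon_\beta)(n/\beta)\log n} = e^{-(1-\epsilon_\beta')n\log n}$.
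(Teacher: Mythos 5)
There is a genuine gap in both directions, and in both cases it is the same missing idea: the mechanism producing the $n\log n$ in the exponent is the entropic repulsion itself (the interface being lifted wholesale to height $\approx\frac{1}{4\beta}\log n$), not a per-column accounting near height $0$.

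For the lower bound, your strategy keeps the interface based at height $0$ and compares the constrained and unconstrained partition functions column by column. The interior-column suppression you correctly identify, $(1-e^{-\Theta(\beta)})^{\Theta(n^2)}=e^{-\Theta_\beta(n^2)}$, is \emph{not} dominated by or absorbable into an $e^{\pm\epsilon_\beta n\log n}$ slack: since $n^2\gg n\log n$, it is the binding term, and your construction can only yield $\mu_n^\mp(\Ifloor[0])\geq e^{-c_\beta n^2}$, which is far weaker than the claim. (Relatedly, the ``minimal interface above the floor'' is just the flat one; there is no forced boundary staircase, and no mechanism by which $\Theta(n)$ boundary columns each contribute $e^{-\Theta(\log n)}$.) The paper's proof (Lemma~\ref{lem:prob-positive}) instead lifts the \emph{entire} interface by $h_0=(4\beta-C_0)^{-1}\log n$ via the map $\Phi_{h_0}^\uparrow$: the added boundary wall of $4h_0n$ faces costs $e^{-(4\beta+C)h_0n}=e^{-(1+\epsilon_\beta)n\log n}$, and after lifting, the floor constraint reduces to $\bar M^\downarrow_{\cL_{0,n}}<h_0$, whose probability is $\geq e^{-n^2e^{-(4\beta-C_0)h_0}}=e^{-n}$ by~\eqref{eq:max-lower-crude}. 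Without this lift, the bound is unobtainable.

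For the upper bound, your injection ``delete the forced positive ledge, gaining $\Theta(\beta^{-1}\log n)$ area per column'' presupposes that on $\Ifloor[0]$ the interface has already risen to height $\Theta(\beta^{-1}\log n)$ over most of the box. A priori it need not: it could hover at any intermediate height $1\leq j\leq h_n^*-2$ over macroscopic regions (where your map gains too little energy), trading a smaller wall cost against a larger floor penalty. Ruling this out is exactly the content of Theorem~\ref{thm:lower-bound}, the paper's main technical result for~\eqref{eq:A-subcritical-h}, which requires the conditioning on mesoscopic walls, the TSP connection of ceiling boundaries, and the sharp conditional bounds of Proposition~\ref{prop:max-thick}; it cannot be replaced by a one-paragraph surgery. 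The paper then deduces the upper bound as a corollary: conditionally on $\Ifloor[0]$, all but an $\epsilon_\beta$-fraction of the area lies in ceilings at height $\geq h_n^*-1$, which forces nested wall sequences of total excess energy at least $4\sqrt{A}\,(h_n^*-1)\geq(1-\epsilon_\beta)\beta^{-1}n\log n$, and Corollary~\ref{cor:nested-sequence-of-multiple-faces-in-a-ceiling} bounds this by $e^{-(\beta-C)(1-\epsilon_\beta)\beta^{-1}n\log n}=e^{-(1-\epsilon'_\beta)n\log n}$. Your final energy count matches this, but the justification that the interface must be high enough for the count to apply is the missing (and hardest) step.
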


The lower bound in this proposition is relatively straightforward, and we establish it in~\S\ref{sec:rough-bounds-no-floor} en route to deriving the mentioned preliminary regularity estimates. The upper bound is more delicate, and while not needed for our arguments, in~\S\ref{sec:pf-prop-positive} we obtain it as a corollary of our proof of~\eqref{eq:A-subcritical-h} (namely, from the more detailed result in Theorem~\ref{thm:lower-bound}, which rules out the existence of large ceilings below height $h_n^*-\sfh-1$ under $\mufloor_n^\mp$).

\subsubsection{Idea of proof of~\eqref{eq:A-subcritical-h}}
We begin with a natural approach for ruling out the existence of a ceiling $\cC$ (a connected component of (horizontal) ceiling faces of $\cI$), whose height $\hgt(\cC)$ is $h_n^*-\sfh-1-k$ for $k\geq1$ and whose projection on height $0$, together with every finite component it bounds (in these components the interface can exhibit local oscillations relative to $\cC$) is a set $S$ with area $|S|\geq \theta^{k} n^2$ for some~$0<\theta(\beta)<1$.
(The aim would be to show that such a $\cC$ is atypical under $\mufloor_n^\sfh$,  and then take a union bound over $k$.)
Denote the maximal downward oscillation inside $\cC$ by $\bar M_S^\downarrow = \hgt(\cC) - \min\{x_3: (x_1,x_2,x_3)\in\cI\,,\,(x_1,x_2)\in S\}$, and suppose for the moment that the following bound holds: 
$\mu_n^\mp(\bar M^\downarrow_S < h) \lesssim \exp(-|S|e^{-\alpha_h})$ for any $h$,
with~$\alpha_h$ from~\eqref{eq:alpha-h-def}. (The actual bound available to us---see~\eqref{eq:max-thick-upper-large-h}---has a few small differences (error terms, a restriction on~$h$) in addition to one major proviso we will soon describe.)
This estimate is quite intuitive in light of the fact (\cite{GL19a,GL19b}) that the $\mu_n^\mp$-probability of a local oscillation of height at least $h$ below a site in the bulk is approximately $e^{-\alpha_h}$; indeed, if the oscillations in every site of $S$ were mutually independent, we would get the  bound $(1-e^{-\alpha_h})^{|S|} \approx \exp(-|S|e^{-\alpha_h})$. 
In the event that a ceiling $\cC$ as described above exists, we are guaranteed to have $\bar M^\downarrow_S < h_n^*-k$ by the implicit conditioning on $\Ifloor$ in $\mufloor_n^\sfh$. To rule this out, we shift the interface up by $k$, effectively sending $\cC$ to height $h_n^*-\sfh-1$, which reduces the weight of the interface by about $\exp(-4(\beta-C) k n)$ due to the necessary addition of $4kn$ faces to the interface. The benefit of doing so is that, even though we can still only say that $\bar M^\downarrow_S < h_n^* - k$ (the local oscillations within $\cC$ are unaffected by the shift), the shifted interface is only guaranteed to have $\bar M^\downarrow_S < h_n^*$ by the implicit conditioning in $\mufloor_n^\sfh$, at which point one could utilize the fact that $\mu_n^\mp(\bar M^\downarrow_S < h_n^*-k \mid \bar M^\downarrow_S< h_n^*) \approx \mu_n^\mp(\bar M^\downarrow_S < h_n^*-k) \lesssim \exp(-|S|e^{-\alpha_{h_n^*-k}})$
(where one also uses a sharp \emph{lower} bound on the denominator in the ratio $\mu_n^\mp(\bar M^\downarrow_S < h_n^*-k) / \mu_n^\mp(\bar M^\downarrow_S < h_n^*)$).
In summary, the probability of encountering such a ceiling $\cC$ can be bounded from above by approximately
\[
  \exp\left( 4(\beta-C) k n  -|S| e^{ - \alpha_{h_n^* - k}}\right)\,.
\]
The sequence $(\alpha_h)$ is known~\cite{GL19b} to satisfy $\alpha_{h_n^*-k} \leq \alpha_{h_n^*} - (4\beta-C)k$, whereas $e^{-\alpha_{h_n^*}} \leq e^{2\beta}/n$ by the definition of $h_n^*$ in~\eqref{eq:h-star-def}. Plugging these in the bound above, together with the fact that $|S|\geq \theta^k n^2$, yields
\[\exp\left(4 (\beta-C) k n -  e^{-2\beta} (\theta e^{4\beta-C})^k n\right)\,,\] 
in which the second term in the exponent dominates the first term for $\theta:=e^{-\beta}$ when $\beta$ is large enough.

This approach highlights the competition between energy and entropy (the terms $4\beta kn$ and $|S|e^{-\alpha_{h_n^*-k}}$) propelling the interface from being rigid at height $0$. It is worthwhile comparing this argument to the simpler analysis of entropic repulsion in the SOS model: there, to rule out a large subset $\cC$ at height less than $h$, one can lift the configuration by $k$, and then plant downward spikes in an $\epsilon_\beta$-subset of $\cC$. In the 3D Ising model, the large deviation rate of the height above/below a site in the bulk is not governed by a deterministic shape (such as the single column spike in the SOS model), but rather by a distribution over complex oscillations; hence, we instead consider interfaces with $\bar M_S^\downarrow < h_n^*-k$, where no oscillation dipped below a certain height.

One obvious difficulty in carrying out the above approach is due to having, in lieu of the simplified case of one given ceiling~$\cC$, an abundance of randomly located ceilings. One needs to reveal (here and in what follows, we use the term reveal or expose to refer to conditioning on the realizations of the objects under consideration) their boundaries, but not their interior, in order to appeal to the bounds of~\cite{GL20}, and do so with care, as the boundary of one ceiling may overlap with that of another. Moreover, a single \emph{wall}---a connected component of non-ceiling faces---may give rise to multiple ceilings at different heights. However, these details can be handled by exposing the outermost walls and reevaluating the ceiling landscape given the exposed walls (and the ceilings they nest): we may identify a slab at height $h_n^*-\sfh-1-k$ where the total area in the ceilings $\cC_i$ exceeds $\theta^k n^2$, and attempt to apply the above argument to each of the $S_i$'s separately, for a bound of $\exp(-\sum |S_i|e^{-\alpha_{h_n^*-k}})$.
 
The main obstacle, as hinted above, is actually the upper bound we have for $\mu_n^\mp(\bar M_S^\downarrow < h)$, which is only valid in conjunction with an indicator that every wall in the bulk of $S$ must be relatively small---namely, the diameter of every such wall should be at most $\exp(c h)\lesssim n^{\epsilon_\beta}$ (see Proposition~\ref{prop:max-thick} and the event $\cG^\fD_{S^\circ}$ in the key estimate~\eqref{eq:max-thick-upper-large-h} in it). The source of this constraint is that, in order to prove said bound in~\cite{GL20}, the region $S$ was partitioned into boxes $B_i$ of side length $L$, whose interiors $B_i^\circ$ were inspected carefully (while ignoring $B_i\setminus B_i^\circ$); the constraint on the maximum diameter of any wall meant that no wall in $B_i^\circ$  could reach~$\partial B_i$, leading to the desired independence of local oscillations between $B_i^\circ$'s. We stress that this obstacle is real, rather than a limitation of the proof technique: a single wall of size $(c/\beta) n\log n$ can raise/lower an area of order $n^2$ by height $(c/\beta)\log n$, forming correlations between the heights in its interior as it nests sites in ceilings. The cost of such a wall is $\exp(-c'n\log n)$, which can still be larger than $\mu_n^\mp(\Ifloor)$, and indeed, this is precisely the mechanism of entropic repulsion, where most of the interface is raised by a single large wall...

To overcome this obstacle, we \emph{reveal every mesoscopic wall}---in the sense of having diameter larger than a threshold of $e^{c(h_n^*-k)}\vee \log n$; see~\eqref{eq:meso-wall}---in $\cI$ and carry the above approach in this conditional space. This is achieved in Lemma~\ref{lem:area-of-ceilings-I-dagger}, the key to showing~\eqref{eq:A-subcritical-h}. The following are worthwhile mentioning from its proof:
\begin{enumerate}[(1)]
\item Conditioning on all mesoscopic walls can reveal a complicated set of walls nesting one another, supporting a  collection of ceilings $\cC_i$ (the full interface $\cI$ will modify these $\cC_i$'s via nested microscopic walls). Whereas we would like to appeal to Proposition~\ref{prop:max-thick} simultaneously for every $\cC_i$ which is at height $h_n^*-\sfh-1-k$, there are two prerequisites on its interior $S_i$ to qualify an application of that proposition: (i) $S_i$ should be simply-connected;  (ii) the isoperimetric dimension of $S_i$ should be bounded by an absolute constant. Item~(ii) is handled by our a priori regularity estimates, as we only treat $S_i$'s with area at least $n^{1.9}$ and boundary at most $n^{1+o(1)}$. However,  Item~(i) is simply false when one mesoscopic wall nests another...  Our remedy for this is to further expose the walls along a minimal collection of faces that connect the boundaries of all $S_i$'s, which will make the unrevealed portion of the $S_i$'s simply-connected by definition. 
\item Controlling the set of faces that connect the $S_i$'s is imperative: too large of a set may potentially deform the regularity of the $S_i$'s, increasing their isoperimetric dimension beyond the scope of Proposition~\ref{prop:max-thick}. Here we rely on the fact that there is a total of $n^{1+o(1)}$ faces in the union of the boundaries of the~$S_i$'s.
A classical bound on the \emph{Euclidean Traveling Salesman Problem} (TSP) then shows that we can connect an arbitrary face from each one via a minimum set of size  at most $n^{3/2+o(1)}$ faces. Thus, revealing the (microscopic, by the conditioning) walls along said set modifies the area and boundary of each $S_i$ by at most $n^{3/2+o(1)}$, keeping its isoperimetric dimension (we had $|S_i|\geq n^{1.9}$ and $|\partial S_i|\leq n^{1+o(1)}$) in check.
\item  Recall that in the simplified outline one needed a sharp upper bound for $\mu_n^\mp(\bar M^\downarrow_S < h_n^*-k \mid \bar M^\downarrow_S< h_n^*) $.  In the actual proof, we must instead estimate
$\mu_n^\mp(\bar M^\downarrow_S < h_n^*-k \mid \bar M^\downarrow_S< h_n^*\,,\cG^\cD_{S^\circ}) $,
where $\cG^\cD_{S^\circ}$ is the event that no walls in $S^\circ$, the bulk of $S$, are mesoscopic. We infer this bound from a sharp upper bound on  $\mu_n^\mp(\bar M^\downarrow_S < h_n^*-k \,,\cG^\cD_{S^\circ})$ and a sharp lower bound on $\mu_n^\mp( \bar M^\downarrow_S< h_n^*\,,\cG^\cD_{S^\circ}) $. The former is provided by~\eqref{eq:max-thick-upper-large-h} in Proposition~\ref{prop:max-thick}, as discussed above. The latter is available from~\eqref{eq:max-thick-lower-large-h}, which is given in terms of a slightly different event $\cG^\fm_{S^\circ}$, yet in our application this event implies the required event $\cG^\cD_{S^\circ}$.	
\end{enumerate}

In Section~\ref{sec:lower-bound} we give a short proof (Claim~\ref{clm:lower-bound-1-eps}) for entropic repulsion at $\sfh<(1-\epsilon_\beta)h_n^*$ (away from the critical threshold); see the discussion following it for more on the obstacles that arise at $\sfh=(1+o(1))h_n^*$.

\subsubsection{Idea of proof of~\eqref{eq:A-supercritical-h}} 
We now describe our approach for showing that in the regime of $\sfh \ge h_n^*$, all but $\epsilon_\beta n^2$ faces of $\cI$ are \emph{at most} at height zero (the fact that all but $\epsilon_\beta n^2$ faces of $\cI$ are \emph{at least} at height zero follows straightforwardly from the FKG inequality and rigidity under the unconditional measure $\mu_n^\mp$). 

Consider a ceiling $\cC$ at height $\hgt(\cC)\geq 1$ whose interior $S$ has area at least $\theta n^2$. To demonstrate the basic idea that lets rigidity prevail over entropic repulsion in this range of $\sfh$, suppose that the interface gives rise to this ceiling by means of a cylindrical wall $W$ of the form $\partial S \times [0,\hgt(\cC)]$. In this situation, one can compare the original interface $\cI$ with the interface $\cI'$ obtained by truncating the height of the wall $W$ to be $\hgt(\cC)-1$: the weight of $\cI'$ will increase by about $\exp((\beta-C) |\partial S|)$, so this simple Peierls argument can rule out the existence of such a ceiling $\cC$
as long as it does not violate the hard floor constraint.
 By definition, $\cI'\in\Ifloor$ if and only if $\bar M_S^\downarrow$, the maximal downward oscillation inside $\cC$, satisfies $\bar M_S^\downarrow < \hgt(\cC)+\sfh$. The probability of the latter event is  exponentially small in $n$, yet it can still be outweighed by the energy gain due to truncating~$W$.

Concretely, let $\bI_W\subset\Ifloor $ be the set interfaces containing the above wall $W$, and further let $\tilde\bI_W\subset\bI_W$ be the subset of interfaces which in addition satisfy $\bar M_S^\downarrow \leq h_n^*$ (so that, in particular, $\bar M_S^\downarrow < \hgt(\cC)+\sfh$). We may reduce $\mufloor_n^\sfh(\cI\in\tilde\bI_W \mid \cI\in\bI_W)$, akin to the discussion in the previous section, to $\mu_n^\mp(\bar M_S^\downarrow \leq h_n^* \mid \bar M_S^\downarrow \leq \hgt(\cC)+\sfh)$, and appeal to Proposition~\ref{prop:max-thick}---however this time we are interested in a sharp lower bound on this quantity. To that end, we can use the sharp lower bound on $\mu_n^\mp(\bar M_S^\downarrow \leq h_n^*) \geq \exp(-(1+\epsilon_\beta)|S|e^{\alpha_{h_n^*+1}})$ from~\eqref{eq:max-thick-lower-large-h} in that proposition (only the upper bound~\eqref{eq:max-thick-upper-large-h} had the extra event $\cG^\cD_{S^\circ}$ that greatly complicated matters), and simply drop the conditioning (only decreasing the probability in doing so). By the definition of $h_n^*$ in~\eqref{eq:h-star-def} and the fact that the sequence is known~(\cite{GL19a}) to have $\alpha_{h+1}\geq \alpha_h + 4\beta-C$, we have $\alpha_{h_n^*+1} \geq e^{-2\beta+C} /n$, and combining this with the Peierls bound on $\mufloor_n^\sfh(\tilde\bI_W)$ shows that $\mufloor_n^\sfh(\bI_W)$ is bounded from above by approximately
\[ \exp\left(-(\beta-C)|\partial S| + |S|e^{-2\beta+C}/n\right) \leq 
\exp\big(-\big(\beta-C + e^{-2\beta+C'}\big)|\partial S|\big) \,,\]
using here  the isoperimetric inequality $|S| \leq |\partial S| n/4$. 
Notice that this argument did not actually need $S$ to have area $\theta n^2$; e.g., if the boundary of $S$ were at least $\log n$, one could already rule out $\cC$ via a union bound; our actual proof (applicable to a general wall $W$ as opposed to the  cylinder from the toy example above), along the same vein, will rule out any ceiling whose supporting wall $W$ contains at least~$n^{9/10}$ faces.

The main obstacle in using this approach to rule out general ceilings $\cC$ at positive height is that, in the 3D Ising model, the only (tractable) approach one has to shrinking the height of a subset of the interface is the deletion of a collection of walls: even a single wall may have a complicated landscape of overhangs and nested walls near its boundary, so devising a notion of truncating its height to arrive at a valid interface, while gaining the energy of $|\partial S|$ without an entropic cost, is highly nontrivial. (E.g., how do we reconcile the effect of having walls nested in $W$ shift due to the truncation and collide with $W$? Deleting those would impact the entropy, whereas modifying them in any other way can, in addition, lead to similar ripple effects.)

Furthermore, even if we decide to delete the entire wall $W$, our Peierls map cannot consist of that alone: 
  due to the long-range interactions that the deletion of $W$ and the resulting shifts induce between interface faces, this operation must also be followed by the deletion of certain other walls that interact too strongly with $W$---either the \emph{groups of walls} in Dobrushin's original work~\cite{Dobrushin72a}, or the one-sided \emph{wall clusters} in~\cite{GL20}. 
Roughly put, the wall cluster criterion says that if $W$ nests a wall $W'$ whose number of faces is larger than its distance to $W$, then we must also tag $W'$ for deletion, and process its inner walls recursively (see Def.~\ref{def:wall-cluster}). 
Hence, if we delete $W$ and its wall cluster $\bW$, and then aim to analyze $\bar M_S^\downarrow$, we have the following problem: whereas revealing only $W$ and its exterior gave no information on the interface in $S$ (so we were free to apply Proposition~\ref{prop:max-thick}), the act of revealing its wall cluster $\bW$ robs us of that feature, by imposing the constraint that the size of every wall $W'$ that remained unexposed in $S$  should not exceed its distance to~$\bW$... 

Our remedy for this problem is establishing a certain monotonicity principle, which may find other uses.
Intuitively, conditioning on wall sizes being below various thresholds, albeit complicated (here, the threshold for a given wall depends on its distance to the wall cluster), should only yield better control over  oscillations. Making this intuition precise is the observation that  results relying on Peierls maps that only \emph{deleted walls}---such were, e.g., key results in~\cite{GL20} that we use here---can also be established in the above conditional setting (if an interface satisfied the wall size constraints, so will the interface corresponding to a subset of its walls). As such, we can extend said results from~\cite{GL20}, and notably the lower bounds of Proposition~\ref{prop:max-thick}, to this setting (see Proposition~\ref{prop:max-wall-cluster}).
This observation and the extensions it gives rise to appear in Section~\ref{sec:extension-estimates-in-ceiling}.

In Section~\ref{sec:upper-bound} we give a short proof (Claim~\ref{clm:upper-bound-1+eps}) for rigidity at height zero for $\sfh\geq(1+\epsilon_\beta)h_n^*$ (away from the critical threshold); see the discussion following it for more on the obstacles that arise at $\sfh=(1+o(1))h_n^*$.

\subsection{Outline of paper}\label{subsec:outline-of-paper}
In Section~\ref{sec:prelim}, we overview the notation we use in the paper and recall many of the key definitions and tools from~\cite{Dobrushin72a} and~\cite{GL20} used in our analysis. In Section~\ref{sec:extension-estimates-in-ceiling}, we extend several of the preliminary estimates of~\cite{GL20} to also hold under a monotone conditioning event, which will later be needed for showing~\eqref{eq:A-supercritical-h}. In Section~\ref{sec:rough-bounds-no-floor}, we establish an a priori bound on the probability that the interface lies above the floor at height~$0$, and use it to prove preliminary regularity estimates for the interface above a floor at~$-\sfh$. In Section~\ref{sec:lower-bound}, we treat the case of $\sfh < h_n^*-1$, establishing~\eqref{eq:A-subcritical-h} of Theorem~\ref{thm:1}. Then in Section~\ref{sec:upper-bound}, we handle $\sfh \ge h_n^*$, establishing~\eqref{eq:A-supercritical-h} of Theorem~\ref{thm:1}. 

\section{Preliminaries}\label{sec:prelim}
In this section, we recall the decomposition of the interface into walls and ceilings and their groupings into wall clusters. We then recall the results of~\cite{GL20} that we will require in this paper regarding the law of the maximum oscillation inside a ceiling, conditionally on all walls outside that ceiling. Throughout this, we will also introduce any notation that will be used throughout the paper.  

\subsection{Notation}\label{subsec:notation} We begin by describing the underlying geometry on which we will be working, and any related graph notation we will use throughout the paper. 

\subsubsection*{Lattice notation}
The underlying graphs we consider throughout this paper are rectangular subsets of $\mathbb Z^3$. To be precise, $\Z^3$ is the integer lattice graph with vertices at $(x_1,x_2,x_3)\in \Z^3$ and edges between nearest neighbor vertices (at Euclidean distance one). A \emph{face} of $\Z^3$ is the open set of points bounded by four edges (or four vertices) forming a square of side length one, lying normal to one of the coordinate directions. A \emph{cell} of $\Z^3$ is the set of points bounded by six faces (or eight vertices) forming a cube of side length one. 

Because our interest is the behavior of the \emph{interface} separating plus and minus spins, it will be convenient for us to consider Ising configurations as assignments of $\pm 1$ spins to the vertices of the \emph{dual} graph $(\Z^3)^*= (\Z+\frac 12)^3$; these are naturally identified with the cells of $\mathbb Z^3$ for which they are the midpoint.

More generally, we will frequently identify edges, faces, and cells with their midpoints.
A subset $\Lambda \subset \Z^3$ specifies an edge, face, and cell collection via the edges, faces, and cells of $\mathbb Z^3$ all of whose bounding vertices are in $\Lambda$. We will denote the resulting edge set by $\sE(\Lambda)$, face set by $\sF(\Lambda)$, and cell set by $\sC(\Lambda)$.  

Two edges are adjacent if they share a vertex, two faces adjacent if they share a bounding edge, and two cells adjacent if they share a bounding face. 
We denote adjacency by the notation $\sim$.
It will also be useful to have a notion of connectivity in $\R^3$ (as opposed to $\Z^3$); we say that an edge/face/cell is $*$-adjacent, denoted $\sim^*$, to another edge/face/cell if they share a bounding vertex. A connected component of faces (respectively $*$-connected component of faces) is a maximal set of faces such that for any pair of faces in that set, there is a sequence of adjacent faces (resp., $*$-adjacent) faces starting at one face and ending at the other. 

We use the notation $d(A,B)=\inf_{x\in A, y\in B} d(x,y)$ to denote the Euclidean distance in $\mathbb R^3$ between two sets $A,B$. 
We then let $B_r(x) = \{y: d(y,x)\le r\}$. When these balls are viewed as subsets of edges/faces/cells, we include all those edges/faces/cells whose midpoint falls in $B_r(x)$. 

\subsubsection*{Subsets of $\Z^3$} As mentioned, the primary subsets of $\Z^3$ with which we will be concerned are of the form of cubes and cylinders. In view of that, define the centered $n\times m \times h$ box,  
\begin{align*}
\Lambda_{n,m,h} :=\llb -\tfrac n2,\tfrac n2\rrb \times \llb -\tfrac m2,\tfrac m2\rrb \times \llb -\tfrac h2,\tfrac h2\rrb \subset \Z^3\,,
\end{align*}
where
\begin{align*}
\llb a,b\rrb := \{\lfloor a\rfloor,\lfloor a\rfloor +1,\ldots,\lceil b\rceil-1,\lceil b\rceil\}\,.
\end{align*}
We can then let $\Lambda_n$ denote the special case of the cylinder $\Lambda_{n,n,\infty}$. The (outer) boundary $\partial \Lambda_n$ of the cell set $\sC(\Lambda_n)$ is the set of cells in $\sC(\Z^3)\setminus \sC(\Lambda_n)$ adjacent to a cell in $\sC(\Lambda_n)$. 

We are also going to use a dedicated notation for horizontal slices and half-spaces of $\mathbb Z^3$. For any $h \in \Z$ let $\cL_h$ be the subgraph of $\Z^3$ having vertex set $\Z^2\times \{h\}$  and correspondingly define edge and face sets $\sE(\cL_h)$ and $\sF(\cL_h)$. For a half-integer $h\in \Z + \frac 12$, let $\cL_h$ collect the faces and cells in $\sF(\Z^3) \cup \sC(\Z^3)$ whose midpoints have half-integer $e_3$ coordinate $h$. A certain such set which will recur is $\cL_0$ and its restriction to $\sF(\Lambda_{n})$ which we denote by $\cL_{0,n}$.  

Finally we use $\cL_{>h} = \bigcup_{h'>h} \cL_{h'}$ and $\cL_{<h} = \bigcup_{h'<h} \cL_{h'}$, and similarly $\cL_{\ge h}$ and $\cL_{\le h}$, for half-spaces. 

\subsubsection*{Projections onto $\cL_0$}
Throughout the paper, we will refer to a face as \emph{horizontal} if its normal vector is $\pm e_3$, and as \emph{vertical} if its normal vector is one of $\pm e_1$ or $\pm e_2$. 

For a face $f\in \sF(\Z^3)$, its \emph{projection} is the edge or face given by 
$$\rho(f)= \{(x_1,x_2,0):(x_1,x_2,s)\in f \mbox{ for some $s\in \R$}\}\subset \cL_0\,.$$ Specifically, the projection of a \emph{horizontal} face is a face in $\sF(\cL_0)$, while the projection of a \emph{vertical} face is an edge in $\sE(\cL_0)$. 
The projection of a collection of faces $F$ is given by $\rho(F) := \bigcup_{f\in F} \rho(f)$, which may consist both of edges and faces of $\cL_0$. 

\subsection{The Ising model}\label{subsec:Ising-model}
An Ising configuration $\sigma$ on a subset $\Lambda \subset \Z^3$ is an assignment of $\pm1$-valued spins to the cells of $\Lambda$, i.e., $\sigma \in \{\pm 1\}^{\sC(\Lambda)}$.  For a finite connected subset $\Lambda\subset \Z^3$, the Ising model on $\Lambda$ with boundary conditions $\eta \in \{\pm 1\}^{\sC(\Z^3)}$ is the probability distribution over $\sigma \in \{\pm 1\}^{\sC(\Lambda)}$ given by 
\begin{align}\label{eq:Ising-def}
\mu_{\Lambda}^{\eta} (\sigma) &  \propto \exp \left[ - \beta  \cH(\sigma)\right]\,, \qquad \mbox{where}\qquad \nonumber \\ \cH(\sigma) & =  \sum_{\substack{ v,w\in \sC(\Lambda) \\  v\sim w }} \one\{\sigma_v\neq \sigma_w\} +  \sum_{\substack{ v\in \sC(\Lambda), w\in \sC(\Z^3)\setminus \sC(\Lambda) \\  v\sim w}} \one\{\sigma_v\neq \eta_w\}\,. 
\end{align}
Throughout this paper, we will be considering the \emph{Dobrushin boundary conditions}, denoted $\eta = \mp$, where $\eta_w = -1$ if $w$ is in the upper half-space ($w_3 > 0$) and $\eta_w = +1$ if $w$ is in the lower half-space ($w_3 <0$). 

\subsubsection*{Infinite-volume measures} Though~\eqref{eq:Ising-def} is defined only on finite graphs, the definition can be extended to infinite graphs via a consistency criterion known as the \emph{DLR conditions} (see e.g., the book~\cite{Georgii} for a definition and discussion. 
On $\Z^d$, such \emph{infinite-volume measures} arise as weak limits of finite-volume measures, say $n\to\infty$ limits of the Ising model on boxes of side length $n$ with prescribed sequences of boundary conditions. At low temperatures $\beta>\beta_c(d)$, the Ising model on $\Z^d$ admits multiple infinite-volume Gibbs measures $\mu^+_{\Z^3}$ and $\mu^-_{\Z^3}$ obtained by taking plus and minus boundary conditions on boxes of side-length $n$ and sending $n\to\infty$. An important consequence of the work of~\cite{Dobrushin72a} was that when $d\ge 3$, the weak limit $\mu_{\Z^3}^{\mp}:= \lim_{n\to\infty} \mu_{n,n,n}^{\mp}$ is a non-translation-invariant DLR measure, and is thus distinct from any mixtures of $\mu_{\Z^32}^+$ and $\mu_{\Z^32}^-$.

\subsection{Interfaces under Dobrushin boundary conditions}\label{subsec:dobrushin-definitions}
Having defined Ising configurations and the Ising measure with Dobrushin boundary conditions, let us now formally define the \emph{interface} separating the plus and minus phases. 

\begin{definition}[Interfaces]\label{def:interface} For a domain $\Lambda_{n,m,h}$ with Dobrushin boundary conditions, and an Ising configuration
$\sigma$ on $\sC(\Lambda_{n,m,h})$, the \emph{interface} $\cI=\cI(\sigma)$
is defined as follows:  
\begin{enumerate}
\item Extend $\sigma$ to a configuration on $\sC(\mathbb Z^3)$ by taking $\sigma_v=+1$ if $v\in \cL_{<0}\setminus \sC(\Lambda_{n,m,h})$ and $\sigma_v = -1$ if  $v\in \cL_{>0}\setminus \cC(\Lambda_{n,m,h})$. 
\item Let $F(\sigma)$ be the set of faces in $\sF(\mathbb Z^3)$ separating cells with differing spins under $\sigma$.
\item  Let $\cI^{\infty}(\sigma)$ be the (maximal) $*$-connected component of faces in $F(\sigma)$ containing $\cL_0 \setminus \sF(\Lambda_{n,m,h})$ in $F(\sigma)$. (This is also the unique infinite $*$-connected component in $F(\sigma)$.)
\item The interface $\cI(\sigma)$  is the restriction of $\cI^\infty(\sigma)$ to $\sF(\Lambda_{n,m,h})$. 
\end{enumerate}
\end{definition}

Taking the $h\to\infty$ limit $\mu_{n,m,h}^\mp$ to obtain the infinite-volume measure $\mu_{n,m,\infty}^\mp$, the interface defined is easily seen to stay finite almost surely. Thus, $\mu_{n,m,\infty}^\mp$-almost surely, the above process also defines the interface for configurations on all of $\sC(\Lambda_{n,m,\infty})$. 

\begin{remark}\label{rem:interface-spin-config}
Every (finite) interface uniquely defines a configuration with exactly one $*$-connected plus component and exactly one $*$-connected minus component. For every $\cI$, we can obtain this configuration, which we call $\sigma(\cI)$, by iteratively assigning spins to $\cC(\Lambda_{n,m,h})$, starting from $\partial \Lambda_{n,m,h}$ and proceeding inwards, in such a way that adjacent sites have differing spins if and only if they are separated by a face in $\cI$. 
 Informally, $\sigma(\cI)$ is indicating the sites that are in the ``plus phase" and ``minus phase" given the interface~$\cI$.
\end{remark}

\subsection{The Ising interface conditioned on a floor}
In this paper, we study the behavior of the interface $\cI(\sigma)$ when conditioned on a \emph{floor event}, i.e., conditioned on the interface lying above a certain horizontal plane (the floor). For us, the floor will either reside at height zero, or into the lower half-space, so that we are examining the entropic repulsion of the interface into the upper half-space. More precisely, we use the following notation to denote the Ising measure with a floor at height $-\sfh$ for $\sfh\ge 0$:  
\begin{align*}
    \mufloor_n^{\sfh}(\sigma) = \mu_n^\mp(\sigma \mid \cI \subset \cL_{\ge -\sfh}) = \mu_n^\mp(\sigma \mid \Ifloor)\,.
\end{align*}

\subsection{Decomposition of the interface: walls and ceilings}\label{subsec:walls-ceilings}
Having defined the key objects of interest in this paper, we now begin to collect the main tools for its analysis. We begin by recalling the classical decomposition of~\cite{Dobrushin72a} of the Ising interface into \emph{walls} and \emph{ceilings}. 

\begin{definition}[Walls and ceilings]\label{def:ceilings-walls}
A face $f\in \cI$ is a \emph{ceiling face} if it is horizontal and there is no $f'\in \cI$, $f'\ne f$ such that $\rho(f)=\rho(f')$. A face $f\in \cI$ is a \emph{wall face} if it is not a ceiling face. 

A \emph{wall} is a $*$-connected component of wall faces and a \emph{ceiling} is a  $*$-connected component of ceiling faces. 
\end{definition}

Intuitively speaking, the walls capture all the oscillations of the interface off of the lowest energy (horizontally flat) interface, while the ceilings capture the flat stretches of the interface. In particular, every ceiling has a unique \emph{height}, denoted $\hgt(\cC)$, since all faces in the ceiling have the same $x_3$ coordinate.  

\begin{definition}
Throughout the paper, we let $\fC(\cI)$ be the collection of all ceilings of $\cI$, and for every height $h\in \mathbb Z$, we let $\fC_{h}(\cI) = \{\cC\in \fC(\cI): \hgt(\cC) = h\}$. 
\end{definition}

The projections of walls, like non-crossing loop collections in $\mathbb Z^2$, satisfy certain important nesting relations.    

\begin{definition}[Nesting of walls]\label{def:nesting-of-walls}
For a wall $W$, the complement (in $\cL_{0}$) of its projection, denoted
$$\rho(W)^c:= (\sE(\cL_0)\cup \sF(\cL_0)) \setminus \rho(W)\,,$$
splits into one infinite component, and some finite ones. An edge or face $u\in \sE(\cL_0)\cup \sF(\cL_0)$ is said to be \emph{interior} to (or \emph{nested} in) a wall $W$, denoted by $u\Subset W$, if $u$ is not in the infinite component of $\rho(W)^c$. 
A wall $W'$ is nested in a wall $W$, denoted $W'\Subset W$, if every element of $\rho(W')$ is interior to $W$. Similarly, a ceiling $\cC$ is nested in a wall $W$ if every element of $\rho(\cC)$ is interior to $W$. 
\end{definition}

We can then identify the connected components of $\rho(W)^c$ with the ceilings incident to $W$. 

\begin{lemma}[{\cite{Dobrushin72a}}]\label{lem:wall-ceiling-bijection}
For a projection of the walls of an interface, each connected component of that projection (as a subset of edges and faces) corresponds to a single wall. Moreover, there is a 1-1 correspondence between the ceilings adjacent to a standard wall $W$ and the connected components of $\rho(W)^c$. Similarly, for a wall $W$, all other walls $W'\neq W$ can be identified to the connected component of $\rho(W)^c$ they project into, and in that manner they can be identified to the ceiling of $W$ to which they are interior.
\end{lemma}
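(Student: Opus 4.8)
The plan is to reduce everything to the elementary combinatorics of $\cI$ viewed as a surface lying over $\cL_0$, which is exactly the route of Dobrushin's original argument. The one structural input I would isolate at the outset is the \emph{column parity} of $\cI$: since $\cI$ is a $*$-connected surface separating the (unique) plus and minus $*$-components of $\sigma(\cI)$, every face $\rho\in\cL_{0,n}$ lies below an odd, hence nonzero, number of horizontal faces of $\cI$, and a horizontal $f\in\cI$ is a ceiling face precisely when that number equals $1$ for $\rho(f)$. Consequently the horizontal faces of $\cI$ project onto all of $\cL_{0,n}$, and if some $\rho$ lies below two or more horizontal $\cI$-faces then \emph{all} of them are wall faces. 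The second elementary input is that $\rho$ respects $*$-adjacency: if $f\sim^* f'$ in $\cI$ then $\rho(f)$ and $\rho(f')$ coincide or share a bounding vertex of $\cL_0$, so the image under $\rho$ of any $*$-connected set of faces of $\cI$ is connected as a subset of $\sE(\cL_0)\cup\sF(\cL_0)$. In particular $\rho(W)$ is connected for every wall $W$, and $\rho(\cC)$ is connected for every ceiling $\cC$.

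The first assertion then follows once one shows that the projections of distinct walls are pairwise \emph{non-adjacent} in $\cL_0$ (a fortiori disjoint): combined with connectedness of each $\rho(W)$, this makes the sets $\{\rho(W_i)\}_i$ exactly the connected components of $\rho\!\left(\bigcup_i W_i\right)$, so $W\mapsto\rho(W)$ is the claimed bijection onto them. To see the separation, suppose wall faces $f\in W$ and $f'\in W'$ have $\rho(f)$ and $\rho(f')$ incident at a vertex $v\in\cL_0$. The faces of $\cI$ whose projection touches $v$ are finite in number and organize, column by column, into vertical stacks over the faces and edges meeting $v$; the parity constraint forces these stacks to be linked by $*$-adjacencies of wall faces, producing a $*$-path of wall faces from $f$ to $f'$ and hence $W=W'$.

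For the second assertion, fix a wall $W$. Given a ceiling $\cC$ that is $*$-adjacent to $W$, the set $\rho(\cC)$ is connected, and it is disjoint from $\rho(W)$ since a ceiling face is the unique horizontal $\cI$-face over its projection (so no horizontal wall face, and trivially no vertical one, shares that projected face); hence $\rho(\cC)$ lies in a single connected component of $\rho(W)^c$, which is finite exactly when $\cC$ is nested in $W$. Conversely, given a component $U$ of $\rho(W)^c$, look at the faces of $\cL_0$ in $U$ that touch $\rho(W)$: over each such face the horizontal $\cI$-face reached by ``stepping outward'' across the bordering wall faces of $W$ must be the unique horizontal $\cI$-face over its projection — otherwise it would be a wall face, by column parity, lying in a wall whose projection touches $\rho(W)$, contradicting the separation just proved — hence a ceiling face; these ceiling faces are $*$-adjacent to $W$ and $*$-link up, around the boundary of $U$, into a single ceiling $\cC_U$, and any further ceiling projecting into $U$ would be cut off from $\cC_U$ by a wall whose projection would again have to meet $\rho(W)$, which is impossible. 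This yields the bijection $\cC\leftrightarrow U$, and then the third assertion is immediate: for $W'\ne W$ the connected set $\rho(W')$ is non-adjacent to $\rho(W)$ by the first part, hence contained in one component $U$ of $\rho(W)^c$, i.e.\ $W'$ is nested in the ceiling $\cC_U$ of $W$ associated to $U$.

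The main obstacle is the local bookkeeping in the two ``forcing'' steps — that touching wall projections force the walls to coincide, and that a component of $\rho(W)^c$ cannot support two distinct adjacent ceilings. Both amount to tracking the $*$-adjacency pattern together with the parity constraint among the $O(1)$ faces of $\cI$ incident to a fixed vertical line, and this is precisely where Dobrushin's case analysis does its work; everything else — connectedness of projections, extracting the bijections from the separation statement, and matching finite components with nesting — is routine once that local picture is in place.
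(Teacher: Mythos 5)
The paper does not prove this lemma; it is imported verbatim from Dobrushin's 1972 work, so there is no in-paper argument to compare against. Your outline follows exactly the standard (Dobrushin) route: column parity of the horizontal faces of $\cI$ over each face of $\cL_{0,n}$, the fact that $\rho$ sends $*$-adjacent faces to coincident or vertex-sharing elements of $\sE(\cL_0)\cup\sF(\cL_0)$ (hence walls and ceilings have connected projections), and then the separation statement that distinct walls have non-$*$-adjacent projections, from which all three bijections follow formally. Those reductions are all correct, including the observation that a ceiling face can never project onto a face of $\rho(W)$ and that a face of $U$ adjacent to $\rho(W)$ must carry a unique horizontal face of $\cI$ (else the wall through it would have projection meeting $\rho(W)$ and hence be $W$ itself).

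The issue is that the two ``forcing'' steps you defer are not bookkeeping around an otherwise complete argument --- they \emph{are} the lemma. First, the claim that two wall faces whose projections share a vertex $v$ are joined by a $*$-path of wall faces among the faces of $\cI$ projecting near $v$ is precisely Dobrushin's local surface analysis; nothing in your write-up indicates how parity ``links the stacks,'' and this is where a wrong definition of $*$-adjacency or of wall faces would make the statement false. Second, and more substantively, your construction of $\cC_U$ asserts that the ceiling faces sitting over the faces of $U$ bordering $\rho(W)$ ``$*$-link up, around the boundary of $U$, into a single ceiling.'' As written this skips the key point: two such ceiling faces over adjacent faces $u,u'$ of $U$ are $*$-adjacent only if their heights agree (or differ by at most the reach of a shared bounding vertex), and ruling out a height jump between them requires noticing that a jump forces vertical faces of $\cI$ over the edge $u\cap u'$, which belong to a wall whose projection one must then argue meets $\rho(W)$ --- again the local case analysis. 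Until those two local verifications are actually carried out, the proposal is a correct and well-organized plan for Dobrushin's proof rather than a proof.
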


The above correspondence can be made more transparent by introducing the following notion. 

\begin{definition}
For a wall $W$, the ceilings incident to $W$ can be decomposed into \emph{interior ceilings} of $W$ (those ceilings identified with the finite connected components of $\rho(W)^c$), and a single exterior ceiling, called the $\emph{supporting ceiling}$ of $W$, identified with the infinite connected component of $\rho(W)^c$. 
\end{definition}

\begin{definition}\label{def:hull}
The hull of a ceiling $\cC$, denoted $\hull{\cC}$ is the minimal simply-connected set of horizontal faces containing $\cC$. The hull of a wall, $\hull{W}$ is the union of $W$ with the hulls of its interior ceilings. 
\end{definition}

\begin{observation}
For every interior ceiling $\cC$ of $W$, the projection of its hull, $\rho(\hull{\cC})$, is exactly the finite component of $\rho(W)^c$ it projects into. On the other hand, the projection of the hull of the floor of $W$ is all of $\cL_{0,n}$. Finally, the set $\rho(\hull{W})$ is the union of $\rho(W)$ with all the finite components of $\rho(W)^c$. 
\end{observation}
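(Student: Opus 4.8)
The plan is to reduce everything to the first assertion and then read the other two off it. All three statements concern the planar figures cut out at a fixed height, so I would work throughout with the convention that a $*$-connected set of horizontal faces at one height is identified with the closed planar region it tiles; with this convention $\hull{\cC}$ is obtained from $\cC$ by filling in all bounded components of $\rho(\cC)^c$, so that $\rho(\hull{\cC})=\rho(\cC)\cup\{\text{bounded components of }\rho(\cC)^c\}$, and the claim $\rho(\hull{\cC})=D$ amounts to saying that $\rho(\cC)$ is exactly the region $D$ with its interior holes removed. One elementary fact I would use repeatedly is that the interface projects onto all of $\sF(\cL_{0,n})$: in any column the spins of $\sigma(\cI)$ (Remark~\ref{rem:interface-spin-config}) pass from $+1$ far below to $-1$ far above and so must cross $\cI$ in a horizontal face, so every horizontal face of $\cL_{0,n}$ equals $\rho(f)$ for some horizontal $f\in\cI$.

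For the first assertion I would induct on the nesting depth of $W$ (the length of the longest chain $W_0\Subset W_1\Subset\cdots\Subset W$ of walls nested in $W$). Fix an interior ceiling $\cC$ and let $D$ be the bounded component of $\rho(W)^c$ it projects into (Lemma~\ref{lem:wall-ceiling-bijection}); $D$ is simply connected, being a bounded complementary component of the connected planar set $\rho(W)$, and $\rho(\cC)\subseteq D$. Since $\cL_{0,n}\setminus D$ is connected and meets $\partial\Lambda_n$, any bounded component of $\rho(\cC)^c$ lies inside $D$, which gives the inclusion $\rho(\hull{\cC})\subseteq D$. For the reverse inclusion take any horizontal face $\varphi\in D$; by the projection fact $\varphi=\rho(f)$ for a horizontal $f\in\cI$, and since $\rho(f)\in\rho(W)^c$, $f$ is not a face of $W$. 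If $f$ is a face of $\cC$ we are done; otherwise $f$ lies in some ceiling $\cC'\ne\cC$ or wall $W'\ne W$ projecting into $D$, and since by Lemma~\ref{lem:wall-ceiling-bijection} the ceiling $\cC$ is the \emph{unique} ceiling incident to $W$ that projects into $D$, $f$ must in fact be covered by $\hull{W'}$ for some outermost wall $W'\Subset W$, of strictly smaller nesting depth, with $\rho(W')\subseteq D$. Applying the inductive hypothesis (assertion (1) for $W'$, and hence also assertion (3) for $W'$, which follows from it) shows $\rho(\hull{W'})$ is a simply connected region contained in $D$ and disjoint from $\rho(\cC)$, so $\varphi$ cannot reach $\cL_{0,n}\setminus D$ without crossing $\rho(\cC)$ and thus lies in a bounded component of $\rho(\cC)^c$. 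This yields $\rho(\hull{\cC})=D$; the base case of nesting depth zero is the degenerate case where no faces $f$ of the second kind occur, so $\rho(\cC)=D$ and $\hull{\cC}=\cC$.

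The third assertion is then immediate: by Definition~\ref{def:hull}, $\hull{W}=W\cup\bigcup\hull{\cC}$ with the union over the interior ceilings of $W$, hence $\rho(\hull{W})=\rho(W)\cup\bigcup\rho(\hull{\cC})$, and by the first assertion together with the bijection of Lemma~\ref{lem:wall-ceiling-bijection} the sets $\rho(\hull{\cC})$ run exactly over the finite components of $\rho(W)^c$. For the second assertion I would run the same argument for the exterior (supporting) ceiling $\cC_0$ of $W$ — which at the top level of the decomposition is the floor of the interface — noting now that $\cC_0$ projects into the \emph{unbounded} component of $\rho(W)^c$: every horizontal face of $\cL_{0,n}$ not covered by $\cC_0$ is covered by $\hull{W}$, which by the third assertion is a bounded simply connected region and hence the unique bounded complementary component of $\rho(\cC_0)$ in $\cL_{0,n}$; filling it in gives $\rho(\hull{\cC_0})=\sF(\cL_{0,n})$.

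\textbf{Where the (modest) difficulty lies.} Being an Observation, this has no genuine obstacle, but the one point needing care — and the only place the structure theory really enters — is the reverse inclusion in the first assertion: one must exclude a ``collar'' of $D$ adjacent to $\partial D$ that is covered neither by $\cC$ nor by nested wall hulls. This is exactly what the uniqueness half of Lemma~\ref{lem:wall-ceiling-bijection} (one ceiling per complementary component) supplies, once combined with the fact that $\cI$ projects onto all of $\cL_{0,n}$; the induction on nesting depth is merely the device organizing the recursion through nested walls. Everything else — that bounded complementary components of a connected planar lattice set are simply connected, that distinct walls have disjoint projections, and that $\cL_{0,n}\setminus D$ is connected and reaches $\partial\Lambda_n$ — is routine planar topology already implicit in~\cite{Dobrushin72a}.
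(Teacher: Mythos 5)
Your argument is correct, and in fact the paper offers no proof of this Observation at all -- it is asserted as an immediate consequence of Definition~\ref{def:hull} and the bijection of Lemma~\ref{lem:wall-ceiling-bijection}, which are exactly the ingredients you assemble (together with the column-crossing fact that $\cI$ projects onto all of $\sF(\cL_{0,n})$ and the induction on nesting depth to handle nested walls). The only caveat is in the second assertion, where your parenthetical ``at the top level of the decomposition'' is doing real work: for the projection of the hull of the supporting ceiling of $W$ to be all of $\cL_{0,n}$ one needs that ceiling to be the boundary-touching one (equivalently, $W$ not nested in another wall), which is the reading the paper clearly intends.
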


Finally, we can assign the index points of $\cL_{0,n}$ the walls of an interface $\cI$ as follows. 

\begin{remark}\label{rem:indexing-walls}
Given an interface $\cI$, for every face $x\in \sF(\cL_0)$, assign $x$ the wall $W$ of $\cI$ if $x\Subset W$ and $x$ shares an edge with $\rho(W)$. If there is no $W$ for which this is the case, let $W_x = \trivincr$. Importantly, this labeling scheme is such that $x$ is only assigned one wall, but the same wall may be assigned to many index faces. 
\end{remark}

 We also introduce a notion of \emph{a nested sequence of walls} which will recur. 
 
 \begin{definition}\label{def:nested-sequence-of-walls}
To any edge/face/cell $x$, we can assign a \emph{nested sequence of walls} $\fW_x = \bigcup_{s} W_{u_s}$ consisting of all walls nesting $\rho(x)$ (by Definition~\ref{def:nesting-of-walls}, this forms a nested sequence of walls). 
\end{definition}

\subsection{The standard wall representation}\label{subsec:standard-wall-representation}
A key property of the wall and ceilings decomposition of~\cite{Dobrushin72a} is that neither the ceilings, nor crucially the vertical positions of the walls, are needed to reconstruct the interface. This is formalized by the following notion of standard walls, also defined in~\cite{Dobrushin68}. 

\begin{definition}[Standard walls]\label{def:standard-walls}
A wall $W$ is a \emph{standard wall} if there exists an interface $\cI_W$ such that $\cI_W$ has exactly one wall given by $W$.
A collection of standard walls is \emph{admissible} if any two standard walls in the collection have pairwise vertex disjoint projections. 
\end{definition}

\begin{definition}[Standardization of walls]\label{def:std-of-wall}
 For every wall $W$ of $\cI$ , we can define its \emph{standardization} $\Theta_{\textsc {st}}W$ which is the translate of the wall by $(0,0,-s)$ where $s$ is the height of its supporting ceiling. (We leave the dependence of $s$, and therefore the dependence of $\Theta_{\textsc{st}}W$ on the rest of $\cI$ to be contextually understood.)
\end{definition}

We then have the following important bijection between interfaces and their standard wall representation, defined as the collection of standard walls given by standardizing all walls of $\cI$.

\begin{lemma}[{\cite{Dobrushin72a}}]\label{lem:interface-reconstruction}
The map sending an interface to its \emph{standard wall representation} is a bijection between the set of all valid interfaces and the set of all admissible collections of standard walls. 
\end{lemma}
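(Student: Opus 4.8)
The plan is to follow Dobrushin's original argument: realize the standardization map $\Phi\colon \cI\mapsto \{\Theta_{\st}W : W\text{ a wall of }\cI\}$ as a well-defined map into the set of admissible collections of standard walls, construct an explicit inverse $\Psi$ by recursively stacking standard walls according to their nesting, and verify $\Psi\circ\Phi=\mathrm{id}$ and $\Phi\circ\Psi=\mathrm{id}$. First I would check that $\Phi$ is well-defined and lands in admissible collections. That each $\Theta_{\st}W$ is genuinely a standard wall in the sense of Definition~\ref{def:standard-walls} amounts to filling the complement of $\rho(W)$ by flat horizontal faces at the heights dictated by $W$ itself (the supporting ceiling at height $0$, each interior ceiling of $W$ at the height prescribed by $W$'s own geometry) and checking this is a valid interface whose only wall is $W$; this is precisely the base case of the reconstruction below. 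Admissibility, i.e.\ that distinct standard walls among the $\Theta_{\st}W$ have vertex-disjoint projections, is immediate from Lemma~\ref{lem:wall-ceiling-bijection}: distinct walls of $\cI$ are distinct $*$-connected components of wall faces, so their projections lie in distinct connected components of $\bigcup_W\rho(W)$ and thus share no vertex, and translation in the $e_3$ direction does not affect projections.

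For the inverse, fix an admissible collection $\mathcal W=\{W_\alpha\}$. Since the $\rho(W_\alpha)$ are pairwise vertex-disjoint, the relation $\Subset$ of Definition~\ref{def:nesting-of-walls} is a partial order on $\mathcal W$ whose Hasse diagram is a finite forest. Define heights $s_\alpha$ by induction from the roots inward: set $s_\alpha=0$ if $W_\alpha$ is $\Subset$-maximal; and if $W_\alpha$ is an immediate child of $W_\beta$ — so $\rho(W_\alpha)$ lies in a finite component of $\rho(W_\beta)^c$, corresponding by Lemma~\ref{lem:wall-ceiling-bijection} to a unique interior ceiling $\cC$ of the single-wall interface $\cI_{W_\beta}$ — set $s_\alpha = s_\beta + \hgt(\cC)$. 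Let $W_\alpha'$ be the translate of $W_\alpha$ by $(0,0,s_\alpha)$, and define $\Psi(\mathcal W)$ to be $\bigcup_\alpha W_\alpha'$ together with, over each face $x\in\sF(\cL_0)$ lying on no wall projection, a single horizontal ceiling face placed at the height determined by the innermost wall nesting $x$ (namely $s_\gamma+\hgt(\cC)$ where $W_\gamma$ is the $\Subset$-minimal wall with $x\Subset W_\gamma$ and $\cC$ the interior ceiling of $W_\gamma$ it projects into, and height $0$ if no wall nests $x$) — equivalently, over each column one reads off the face dictated by the innermost wall of the nested sequence $\fW_x$ of Definition~\ref{def:nested-sequence-of-walls}, shifted by the accumulated $s$'s. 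One must then verify that $\Psi(\mathcal W)$ is a valid interface in the sense of Definition~\ref{def:interface}: a finite face set, separating the spins of one globally consistent $\pm$-configuration (Remark~\ref{rem:interface-spin-config}), $*$-connected, and incident to $\cL_0$ outside $\Lambda_n$. All of this hinges on the single geometric fact that vertex-disjoint projections prevent any two $W_\alpha'$ from being $*$-adjacent and force each to meet the filled-in ceilings only along flat horizontal strips: the supporting ceiling of $W_\alpha'$ sits at height $s_\alpha$, which is exactly the height of the interior ceiling of its parent into which it projects, so the faces glue with no extra vertical faces required.

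Finally I would check the two compositions are identities. For $\Psi\circ\Phi=\mathrm{id}$: in an interface $\cI$, the height at which a wall $W$ sits equals the height of its supporting ceiling, which is an interior ceiling of its parent wall $W'$, and the height of that ceiling relative to $W'$'s supporting ceiling is intrinsic to $W'$ and hence unchanged by standardization; an induction on nesting depth shows the recursively defined $s_\alpha$ in $\Psi$ reproduce exactly the original positions, so $\Psi(\Phi(\cI))=\cI$. For $\Phi\circ\Psi=\mathrm{id}$: in $\Psi(\mathcal W)$ the non-ceiling faces are exactly $\bigcup_\alpha W_\alpha'$, and since the $\rho(W_\alpha')$ are pairwise vertex-disjoint no face of $W_\alpha'$ is $*$-adjacent to a face of $W_\beta'$ with $\alpha\neq\beta$; hence the $*$-connected components of wall faces of $\Psi(\mathcal W)$ are precisely the $W_\alpha'$, and standardizing $W_\alpha'$ (whose supporting ceiling lies at height $s_\alpha$) returns $W_\alpha$ because $W_\alpha$ was standard. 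Thus $\Phi$ and $\Psi$ are mutually inverse bijections.

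I expect the main obstacle to be making precise the claim that $\Psi(\mathcal W)$ is a valid interface — that the translated walls glued along the recursively placed ceilings separate one globally consistent spin configuration and form a single $*$-connected face set. This is exactly the point at which pairwise vertex-disjointness of the projections is indispensable (it decouples the walls and localizes all gluing to flat ceiling strips), and it also underlies the base case that $\Theta_{\st}W$ is a standard wall; once this geometric fact is in place, the height bookkeeping along the nesting forest and the verification of the two compositions are routine inductions.
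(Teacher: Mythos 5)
The paper does not prove this lemma at all—it is quoted verbatim from Dobrushin's work, so there is no internal argument to compare against. Your reconstruction (standardize each wall, rebuild via the recursion $s_\alpha=s_\beta+\hgt(\cC)$ along the nesting forest, and check that the two compositions are identities, with vertex-disjointness of projections doing the work of decoupling the walls) is a correct account of the classical argument and is exactly the proof the citation points to.
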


We note the following important observation based on the bijection given by Lemma~\ref{lem:interface-reconstruction}.  

\begin{observation}\label{obs:1-to-1-map-faces}
Consider interfaces $\cI$ and $\cJ$, such that the standard wall representation of $\cI$ contains that of $\cJ$ (and additionally has the standard walls $\Theta_{\textsc{st}}\bW=(\Theta_{\textsc{st}}W_1,\ldots,\Theta_{\textsc{st}}W_r)$). There is a 1-1 map between the faces of $\cI \setminus \bW$ and the faces of $\cJ \setminus \bH$ where $\bH$ is the set of faces in $\cJ$ projecting into $\rho (\bW)$. Moreover, this bijection can be encoded into a map $f\mapsto \theta_{\udarrow} f$ that only consists of vertical shifts, and such that all faces projecting into the same connected component of $\rho(\bW)^c$ undergo the same vertical shift. 
\end{observation}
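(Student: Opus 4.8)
The plan is to argue entirely through the standard‑wall bijection of Lemma~\ref{lem:interface-reconstruction}. Write $\cW_\cJ$ and $\cW_\cI$ for the standard wall representations of $\cJ$ and $\cI$; by hypothesis $\cW_\cI=\cW_\cJ\sqcup\{\Theta_{\textsc{st}}W_1,\dots,\Theta_{\textsc{st}}W_r\}$, both collections are admissible, and in particular the projections of the $\Theta_{\textsc{st}}W_i$ are pairwise vertex‑disjoint and vertex‑disjoint from every element of $\cW_\cJ$. I also use the explicit form of the reconstruction (going back to~\cite{Dobrushin72a}, see also~\cite{GL20}): given an admissible collection $\cW$, for a face $x\in\sF(\cL_0)$ lying in none of the wall projections of $\cW$, the unique face of the reconstructed interface above $x$ is a ceiling face, and its height is the sum, over the walls $W\in\cW$ with $x\Subset W$, of the height $\delta_W(x)$ of the ceiling of the single‑wall interface $\cI_W$ corresponding to the component of $\rho(W)^c$ containing $x$; here $\delta_W(\cdot)$ is constant on each component of $\rho(W)^c$ and vanishes on the unbounded one, and the heights at which the walls of $\cW$ themselves are placed (via Definition~\ref{def:std-of-wall}) are governed by the same additive rule applied to their supporting ceilings.

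First I would exhibit the shift. Set $\rho(\bW)^c:=(\sE(\cL_0)\cup\sF(\cL_0))\setminus\bigcup_{i\le r}\rho(W_i)$ and fix a connected component $C$ of $\rho(\bW)^c$. For any $x\in C$, the collection of walls of $\cW_\cI$ nesting $x$ splits, by disjointness of projections, as (the walls of $\cW_\cJ$ nesting $x$) together with (the walls among the $W_i$ nesting $x$); moreover the latter set, as well as the component of $\rho(W_i)^c$ relevant to each such $W_i$, depends only on $C$ and not on the choice of $x\in C$. By the additive reconstruction rule, the height above $x$ in $\cI$ therefore equals its height in $\cJ$ plus a constant $\delta(C)\in\Z$ (with $\delta(C)=0$ on the unbounded component), and the very same identity holds for the supporting‑ceiling height of any wall $V\in\cW_\cJ$ with $\rho(V)\subset C$. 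Consequently the faces of $V$ as placed in $\cI$ are those of $V$ as placed in $\cJ$ translated by $(0,0,\delta(C))$, and a further application of Lemma~\ref{lem:interface-reconstruction} transports the same rigid shift to all of the internal structure nested inside such a $V$. Hence the set of faces of $\cI$ projecting into $C$ is exactly the set of faces of $\cJ$ projecting into $C$, translated by $(0,0,\delta(C))$.

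Next I would identify the two sides. Since no wall of $\cW_\cI$ other than $W_1,\dots,W_r$ has a projection meeting $\rho(\bW)$, and since every face of $\cI$ projecting into $\rho(W_i)$ is a wall face of $W_i$ (any other such face would either be a wall face of a second wall with an intersecting projection, contradicting admissibility, or a ceiling face in a column that already contains a wall face of $W_i$), the faces of $\cI$ projecting into $\rho(\bW)$ are precisely the faces of $\bW=(W_1,\dots,W_r)$; so the faces of $\cI$ projecting into $\rho(\bW)^c$ are exactly $\cI\setminus\bW$. Symmetrically, no wall of $\cW_\cJ$ has a projection meeting $\rho(\bW)$, so above each face of $\rho(\bW)$ there is a single face of $\cJ$ and it is a ceiling face; these are exactly the faces of $\bH$, and the faces of $\cJ$ projecting into $\rho(\bW)^c$ are exactly $\cJ\setminus\bH$. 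Combining this with the previous paragraph, the map $f\mapsto\theta_\udarrow f:=f-(0,0,\delta(C_f))$, where $C_f$ is the component of $\rho(\bW)^c$ containing $\rho(f)$ (well defined because $\rho(f)\cap\rho(\bW)=\varnothing$ for $f\in\cI\setminus\bW$), is a bijection $\cI\setminus\bW\to\cJ\setminus\bH$ built from vertical shifts only, and it is constant on each connected component of $\rho(\bW)^c$ — exactly the last assertion of the Observation.

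The main obstacle is the middle paragraph: showing that deleting the subcollection $\{\Theta_{\textsc{st}}W_i\}$ translates the interface rigidly over each component $C$ of $\rho(\bW)^c$ by a shift depending only on $C$. This rests on the additivity of the height reconstruction together with the disjointness of the relevant projections, and the delicate point is to handle the case in which the deleted walls nest one another, or are themselves nested inside walls of $\cW_\cJ$, so that a component $C$ of $\rho(\bW)^c$ may enclose some $\rho(W_i)$ or lie inside some $\rho(\hull V)$. An equivalent route is induction on $r$, peeling off one $\Theta_{\textsc{st}}W_i$ at a time: the $r=1$ base case is the classical fact that the interface height jumps across a standard wall's projection by exactly that wall's interior‑ceiling heights, and composing the $r$ single‑wall shifts recovers $\theta_\udarrow$ (with a small extra check that the composition is constant on the coarser partition into components of $\rho(\bW)^c$). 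The remaining bookkeeping — which faces project into $\rho(\bW)$ and which into its complement — is entirely controlled by admissibility and is routine.
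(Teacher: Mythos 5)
Your proposal is correct, and it supplies precisely the argument the paper leaves implicit: the Observation is stated without proof as a consequence of the Dobrushin bijection (Lemma~\ref{lem:interface-reconstruction}), and your route through the additive height-reconstruction rule plus the admissibility (vertex-disjointness of projections) is the intended one. The identification of $\cI\setminus\bW$ and $\cJ\setminus\bH$ with the faces projecting into $\rho(\bW)^c$, and the fact that the shift $\delta(C)$ is the sum of the interior-ceiling increments of those $W_i$ nesting $C$, are both right. One sentence is loose: when some deleted $W_j$ is nested inside a retained wall $V$, the structure interior to $W_j$ does \emph{not} undergo the same shift as $V$ itself, so the shift is not ``transported to all of the internal structure nested inside $V$'' but only to the structure lying in the same component of $\rho(\bW)^c$ as $\rho(V)$; since your final formula assigns the shift by the component of $\rho(\bW)^c$ containing $\rho(f)$ (and you flag this case explicitly as the delicate point), this does not affect the conclusion.
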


\subsection{The induced distribution on interfaces}\label{subsec:cluster-expansion}
Aside from the decomposition of the Ising interface into walls and ceilings, the other key tool used by~\cite{Dobrushin72a} to establish rigidity of the interface was an expression for the Ising distribution over interfaces as a perturbation, by means of the cluster expansion~\cite{Minlos-Sinai}, of a Gibbs measure on interfaces with weight $\exp( - \beta |\cI|)$. The perturbative term takes into account the interaction of bubbles of the low-temperature Ising configurations in the minus phase above and plus phase below the interface, with the interface itself.  

Here and throughout the paper, let $\mu_n^{\mp} = \mu_{\Lambda_n}^\mp = \lim_{h\to\infty} \mu_{n,n,h}^\mp$.

\begin{theorem}[{\cite[Lemma 1]{Dobrushin72a}}]
\label{thm:cluster-expansion} There exist $\beta_0>0$, $\bar c>0$, $\bar K>0$, and a function $\g = \g_\beta$ such that the following holds for every $\beta>\beta_{0}$. For every interface $\cI$, 
\begin{align*}
\mu_n^\mp(\cI)& \propto \exp\bigg(-\beta |\cI| + \sum_{f\in\cI}\g(f,\cI)\bigg)\,,
\end{align*}
and for every $\cI, \cI'$ and $f\in \cI$ and $f'\in\cI'$,
\begin{align}
|\g(f,\cI)| & \leq\bar{K} \label{eq:g-uniform-bound} \\
|\g(f,\cI)-\g(f',\cI')| & \leq \bar K e^{-\bar{c}\br(f,\cI;f',\cI')} \label{eq:g-exponential-decay}
\end{align}
where $\br(f,\cI;f',\cI')$ is the largest radius around
the origin on which $\cI-f$ ($\cI$ shifted by the midpoint of the face $f$) is \emph{congruent} to $\cI'-f'$. That is to say,
\begin{align*}
\br(f,\cI; f', \cI') := \sup \big\{r: (\cI - f) \cap B_{r}(0) \equiv (\cI' - f') \cap B_r(0)\big\}\,,
\end{align*}
where the congruence relation $\equiv$ is equality as subsets of $\R^3$.  
\end{theorem}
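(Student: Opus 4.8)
The statement is Dobrushin's interface cluster expansion, so the plan is to reconstruct his argument rather than to look for a new proof. First I would fix an interface $\cI$ and, after removing the layer of cells whose spins $\cI$ pins, split the rest of $\sC(\Lambda_n)$ into the region $\Lambda^-$ of cells lying above $\cI$ (forced into the minus phase) and the region $\Lambda^+$ of cells lying below it (forced into the plus phase). Every configuration $\sigma$ with $\cI(\sigma)=\cI$ consists of an arbitrary Ising configuration on $\Lambda^-$ with minus boundary conditions together with one on $\Lambda^+$ with plus boundary conditions, subject only to the local requirement that the contours of these inner configurations do not merge with $\cI$ (so that the two $*$-connected monochromatic clusters stay separated by exactly $\cI$). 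Since $\cH(\sigma)$ restricted to such $\sigma$ equals $|\cI|$ plus the internal energies of the two inner configurations, summing $e^{-\beta\cH(\sigma)}$ over them gives $\mu_n^\mp(\cI)\propto e^{-\beta|\cI|}\,Z^-(\Lambda^-)\,Z^+(\Lambda^+)$, with $Z^\pm$ the low-temperature Ising partition functions on the indicated regions with $\pm$ boundary conditions and the no-merging constraint.

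The next step is to take logarithms and feed $\log Z^-(\Lambda^-)$ and $\log Z^+(\Lambda^+)$ into the convergent polymer (cluster) expansion of~\cite{Minlos-Sinai}, valid for $\beta$ large: each becomes a sum $\sum_C\Phi(C)$ over clusters of compatible polymers, with cluster weights satisfying $|\Phi(C)|\le e^{-\bar c'\,\diam(C)}$. Clusters far from both $\cI$ and $\partial\Lambda_n$ produce a fixed bulk free energy per cell, clusters near $\partial\Lambda_n$ a fixed boundary term; neither depends on $\cI$, so both are absorbed into the proportionality constant. The remaining $\cI$-dependent part I would resum by faces: assign each surviving cluster $C$ to the face $f\in\cI$ nearest its support (fixing a tie-break), and set $\g(f,\cI)$ to be the sum of $\Phi(C)$ over clusters (from either expansion) assigned to $f$, minus the value of that sum for the flat interface $\cL_0$. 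Calibrating this so that the flat-interface pieces account exactly for the $\cI$-independent bulk and boundary contributions, one gets $\sum_{f\in\cI}\g(f,\cI)$ equal to the full $\cI$-dependent remainder, and $\g(f,\cI)=0$ whenever $\cI$ agrees with $\cL_0$ in a large enough neighborhood of $f$; combined with the previous display this is the asserted identity.

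For the bounds I would argue as follows. Since the number of clusters of diameter $k$ through a fixed face is at most $e^{Ck}$ while $|\Phi(C)|\le e^{-\bar c' k}$, for $\beta>\beta_0$ the estimate $\sum_{C\mapsto f}|\Phi(C)|\le\sum_{k\ge1}e^{Ck}e^{-\bar c'k}$ converges, and its value serves as $\bar K$ in \eqref{eq:g-uniform-bound}. For \eqref{eq:g-exponential-decay}, set $r=\br(f,\cI;f',\cI')$, so $(\cI-f)$ and $(\cI'-f')$ are congruent inside $B_r(0)$: any cluster assigned to $f$ whose support lies in $B_{r/2}(f)$ sees exactly the same local interface geometry as the cluster it corresponds to under $(\cI',f')$, hence contributes identically to $\g(f,\cI)$ and $\g(f',\cI')$ (the flat-interface subtractions also agree), and all such terms cancel in the difference; what survives comes only from clusters of diameter $\gtrsim r$, whose total weight is at most $\sum_{k\ge cr}e^{Ck}e^{-\bar c'k}\le\bar K e^{-\bar c r}$ for a suitable $\bar c$.

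The hard part is concentrated in the middle step. One must formulate the polymer model so that the condition ``the inner contours do not merge with $\cI$'' is encoded in a \emph{local} incompatibility relation and therefore falls under the Minlos--Sinai hypotheses; one must separate the $\cI$-independent bulk and $\partial\Lambda_n$-boundary contributions cleanly enough that the proportionality in the statement is legitimate; and, most delicately, one must choose the cluster-to-face assignment and the flat-interface subtraction so that $\g(f,\cI)$ genuinely depends on $(f,\cI)$ only through the local geometry of $\cI$ near $f$, with constants $\bar K$ and $\bar c$ uniform as $\cI$ grows large or becomes irregular. This is exactly the content of the argument carried out in~\cite{Dobrushin72a}.
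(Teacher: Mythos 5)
Your proposal is correct in outline: the paper does not prove this statement but imports it verbatim as Lemma~1 of~\cite{Dobrushin72a}, and your sketch (factorizing $\mu_n^\mp(\cI)\propto e^{-\beta|\cI|}Z^-(\Lambda^-)Z^+(\Lambda^+)$, expanding $\log Z^\pm$ via the Minlos--Sinai polymer expansion, resumming clusters onto nearby interface faces, and reading off \eqref{eq:g-uniform-bound} and \eqref{eq:g-exponential-decay} from the exponential decay of cluster weights) is exactly the argument behind the cited result. You correctly flag that the genuinely delicate points---encoding the no-merging constraint as a local incompatibility and making the cluster-to-face assignment depend only on the local geometry of $\cI$ near $f$---are the content of Dobrushin's proof rather than something resolved in this paper.
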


We will say that the radius $\br(f,\cI; f',\cI')$ is \emph{attained by} a face $ g \in \cI$ (resp., $g' \in \cI'$) of minimal distance to $f$ (resp., $f'$) if the presence of that face prevents $\br(f, \cI; f',\cI')$ from being any larger.

\subsection{Excess energy of interfaces and walls}\label{subsec:excess-energies}
Given the representation of Theorem~\ref{thm:cluster-expansion}, we see that the ratio between the weight of $\cI$ and the weight of $\cI'$ will have a term of the form $\exp( - \beta (|\cI| - |\cI'|))$. This difference in sizes of the interfaces can be thought of as the \emph{excess energy} of one interface over the other, and will recur throughout the paper. 

\begin{definition}[Excess energy]
\label{def:excess-energy}
For two interfaces $\cI,\cJ$,
the \emph{excess energy} of $\cI$ with respect to $\cJ$,
denoted $\fm(\cI;\cJ)$, is given by $$\fm(\cI;\cJ):= |\cI|-|\cJ|\,.$$
Evidently, for any valid interface $\cI$, we have that $\fm(\cI; \cL_{0,n})\geq 0$.   
\end{definition}

We relate this notion back to the walls and ceilings, and assign excess energies to walls as follows. 

\begin{definition}\label{def:excess-energy-properties}
For a wall $W$, recall that $\cI_{\Theta_{\textsc{st}}W}$ is the interface whose only wall is the standardization of $W$, and define its excess energy by 
\begin{align*}
    \fm(W)= \fm(\cI_{\Theta_{\textsc{st}}W}; \cL_{0,n})\,.
\end{align*}
This excess energy can alternatively be expressed as $\fm(W) = |W| - |\sF(\rho(W))|$; one then finds that 
\begin{align}\label{eq:excess-energy-wall-relations}
\fm(W) \geq \frac{1}{2} |W|\,, \qquad \mbox{and} \qquad \fm(W) \geq |\rho(W)| = |\sE(\rho(W)|+ |\sF(\rho(W))|\,.
\end{align} 
Evidently, any two faces $x,y\in \cL_{0,n}$ both nested in $W$ must satisfy $d(x,y) \leq \fm(W)$. 
\end{definition}

For a collection of walls $\bW$, we define $\fm(\bW) = \sum_{W\in \bW} \fm(W)$. It then becomes evident that the excess energy of an interface, $\fm(\cI;\cL_{0,n})$ is exactly given by the excess energy of its wall collection. 

\subsection{Wall clusters and rigidity inside ceilings}\label{subsec:wall-clusters}
Up to this point, all of the above consisted of simple reformulations of concepts from the classical work~\cite{Dobrushin72a}. The main result there was exponential tails on the wall through a face $x\in \cL_{0,n}$ for any $x$. In~\cite{GL20}, these ideas were extended and a concept called \emph{wall clusters} was introduced to establish that these exponential tails hold even conditionally on arbitrary nesting walls. 

Towards the definition of wall clusters, we first define a notion of \emph{close nesting} between walls. 

\begin{definition}\label{def:closely-nested}
A wall $W'$ is closely nested in a wall $W$ if $W'$ is nested in $W$ ($W' \Subset W$) and 
\[
d_\rho(W, W') := d(\rho(W),\rho(W'))  \le \fm(W')\,.
\]
\end{definition}

\begin{definition}\label{def:wall-cluster}
For a wall $W$, define the \emph{wall cluster} $\mathsf{Clust}(W)$ as follows:
\begin{enumerate}
    \item Initialize $\mathsf{Clust}(W)$ with $W$.
    \item Iteratively, add to $\mathsf{Clust}(W)$ all walls $W''$ that are closely nested in some wall $W'\in \mathsf{Clust}(W)$. 
\end{enumerate}
\end{definition}

\begin{remark}
The key upshot of wall clusters is that unlike the analogous grouping scheme in~\cite{Dobrushin72a} (called \emph{groups of walls}), the wall cluster of $W$ is completely nested in $W$, i.e., $\rho(\Clust(W))\subset \rho(\hull{W})$.
\end{remark}

In~\cite{GL20} we used the following bounds on the effect on the Ising measure of removing a wall cluster at a point $x\in \cL_{0,n}$, and on the number of possible wall clusters through $x$. 

\begin{lemma}[{\cite[Lemma 3.10]{GL20}}]\label{lem:wall-cluster-weights}
There exists $C>0$ such  that for every $\beta>\beta_0$ the following holds. For every fixed $\cI$, and every $x\in \cL_{0,n}$, if $\cJ$ is the interface obtained (per Lemma~\ref{lem:interface-reconstruction}) by removing $\Theta_{\textsc{st}}\Clust(W_{x})$ from the standard wall representation of $\cI$, then 
\begin{align*}
   \Big|\log\frac{\mu_n^\mp (\cI)}{\mu_n^\mp(\cJ)} + \beta\fm(\Clust(W_{x}))\Big| \le C \fm(\Clust(W_x))\,.
\end{align*}
\end{lemma}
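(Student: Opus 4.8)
The plan is to combine the cluster-expansion representation of Theorem~\ref{thm:cluster-expansion} with the face bijection of Observation~\ref{obs:1-to-1-map-faces}. Write $\bW = \Theta_{\textsc{st}}\Clust(W_x)$ for the removed standard walls and $\bH$ for the faces of $\cJ$ projecting into $\rho(\bW)$, so that Observation~\ref{obs:1-to-1-map-faces} supplies a shift bijection $f\mapsto\theta_{\udarrow}f$ between $\cI\setminus\bW$ and $\cJ\setminus\bH$ in which all faces projecting into one component of $\rho(\bW)^c$ undergo a common vertical shift. By Theorem~\ref{thm:cluster-expansion},
\[
\log\frac{\mu_n^\mp(\cI)}{\mu_n^\mp(\cJ)} \;=\; -\beta\big(|\cI|-|\cJ|\big) \;+\; \Big(\sum_{f\in\cI}\g(f,\cI) - \sum_{f\in\cJ}\g(f,\cJ)\Big)\,.
\]
The first step is to pin down the energy term: since $\fm(\cI;\cL_{0,n}) = |\cI| - |\cL_{0,n}|$ equals the excess energy of the wall collection of $\cI$ (and likewise for $\cJ$), and the walls of $\cJ$ are exactly those of $\cI$ with $\Clust(W_x)$ deleted, additivity of $\fm$ over walls gives $|\cI|-|\cJ| = \fm(\Clust(W_x))$. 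It therefore remains to bound the $\g$-discrepancy in absolute value by $C\fm(\Clust(W_x))$.

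For the $\g$-discrepancy, split it along the bijection as $\sum_{f\in\bW}\g(f,\cI) - \sum_{f\in\bH}\g(f,\cJ) + \sum_{f\in\cI\setminus\bW}\big(\g(f,\cI)-\g(\theta_{\udarrow}f,\cJ)\big)$. The first two sums are controlled by the uniform bound~\eqref{eq:g-uniform-bound} and the excess-energy inequalities~\eqref{eq:excess-energy-wall-relations}: $\bar K|\bW| \le 2\bar K\fm(\bW)$ and $\bar K|\bH| = \bar K|\sF(\rho(\bW))| \le \bar K\fm(\bW)$. For the third sum invoke the decay estimate~\eqref{eq:g-exponential-decay}, reducing matters to $\sum_{f\in\cI\setminus\bW}e^{-\bar c\,\br(f,\cI;\theta_{\udarrow}f,\cJ)}\le C'\fm(\bW)$. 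The geometric input is that $\cI-f$ is congruent to $\cJ-\theta_{\udarrow}f$ on any ball avoiding a fixed $c_0$-neighborhood of $\bW$ together with of the column-set over $\rho(\bW)$: outside that region no deleted face is visible and, by the common-shift property, the two interfaces agree locally. Hence $\br(f,\cI;\theta_{\udarrow}f,\cJ) \ge d(f,D)-c_0$ where $D\subset\cI$ is the defect set of faces $g\in\cI$ with $d(g,\bW)\le c_0$ or $d(\rho(g),\rho(\bW))\le c_0$, for an absolute constant $c_0$.

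It then suffices to establish (a) $|D|\le C\fm(\bW)$, and (b) $\sum_{f\in\cI}e^{-\bar c\,d(f,g)}\le C''$ for every fixed face $g$; together these give $\sum_{f\in\cI}e^{-\bar c\,d(f,D)}\le\sum_{g\in D}\sum_{f\in\cI}e^{-\bar c\,d(f,g)}\le C''|D|\le C''C\fm(\bW)$. Claim (b) is immediate since $\cI\subset\sF(\Z^3)$ has only $O(r^2)$ faces at distance in $[r,r+1)$ from $g$, and $\sum_r O(r^2)e^{-\bar c r}<\infty$. Claim (a) is the main obstacle. The set $D$ contains $\bW$ itself ($|\bW|\le 2\fm(\bW)$) and at most one ceiling face of $\cI$ per column whose projection lies within $c_0$ of $\rho(\bW)$, of which there are $O(\fm(\bW))$ since $|\rho(\bW)|\le\fm(\bW)$; the delicate point is to rule out large contributions from wall faces of walls $W'\notin\Clust(W_x)$ approaching $\rho(\bW)$ (a single tall such column could otherwise spoil the bound). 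Here one uses the wall-cluster mechanism: any wall $W'\Subset\bW$ with $\rho(W')$ within distance $1$ of $\rho(\bW)$ has $d_\rho(\cdot,W')\le 1\le\fm(W')$, hence is closely nested and already in $\Clust(W_x)$ (this is precisely why one removes clusters rather than single walls); walls exterior to $\Clust(W_x)$ that approach $\rho(\bW)$ contribute, after the decay weighting, a sum of terms $|W'|\,e^{-\bar c\,d(\rho(W'),\rho(\bW))}$ that telescopes against $\sum_{W\in\Clust(W_x)}\fm(W)$ via~\eqref{eq:excess-energy-wall-relations}. Assembling the three pieces yields $\big|\log(\mu_n^\mp(\cI)/\mu_n^\mp(\cJ)) + \beta\fm(\Clust(W_x))\big|\le C\fm(\Clust(W_x))$, with $C$ absorbing $\bar K$, $\bar c$, $c_0$ and the geometric constants.
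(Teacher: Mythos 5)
First, a point of reference: the paper does not prove this lemma --- it is quoted verbatim from~\cite[Lemma 3.10]{GL20} --- so there is no in-paper proof to compare against. Your skeleton (cluster expansion via Theorem~\ref{thm:cluster-expansion}, the identity $|\cI|-|\cJ|=\fm(\Clust(W_x))$ from additivity of excess energy over standard walls, the split of the $\g$-discrepancy along the bijection of Observation~\ref{obs:1-to-1-map-faces}, and the bounds $\bar K|\bW|\le 2\bar K\fm(\bW)$, $\bar K|\bH|\le\bar K\fm(\bW)$) is exactly the standard route, and mirrors how the paper itself proves the analogous Proposition~\ref{prop:shift-map-effect} (cf.~\eqref{eq:shift-map-weights}). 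The identity for the energy term and the treatment of the first two sums are correct.

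The gap is in your claim (a), i.e.\ in the step $\sum_{f}e^{-\bar c\,\br(f,\cI;\theta_\udarrow f,\cJ)}\le C\fm(\bW)$. The congruence of $\cI-f$ with $\cJ-\theta_\udarrow f$ fails not only near $\bW\cup\bH$ but also at every face $g$ of $\cI$ lying in a component of $\rho(\bW)^c$ different from that of $f$ (such $g$ undergoes a different vertical shift); the correct consequence is $\br_f\ge\min\big(d(f,\bW\cup\bH),\,d(\rho(f),\rho(\bW))\big)-c_0$, where the second term is a \emph{projected} distance. Your replacement of this by $d(f,D)-c_0$ for a 3D defect set $D$ does not follow: a nearby cross-component disagreement forces $\rho(f)$ to be close to $\rho(\bW)$, but the faces of $\cI$ projecting near $\rho(\bW)$ may sit at completely different heights than $f$, so $d(f,D)$ can exceed $\br_f$ by an unbounded amount. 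More importantly, once one works with the projected-distance term, the sum $\sum_{u\in\rho(\bW)}\sum_{f\in\cI}e^{-\bar c\,d(\rho(f),u)}$ is dominated by tall walls whose projections approach $\rho(\bW)$, and your appeal to the wall-cluster mechanism only disposes of walls \emph{nested in} the cluster (Definition~\ref{def:closely-nested} constrains $W'\Subset W$ only). For a wall $W'$ \emph{exterior} to $\Clust(W_x)$ with $\rho(W')$ at distance $1$ from $\rho(\bW)$ and $|W'|$ arbitrarily large, the quantity $|W'|e^{-\bar c\,d_\rho(W',\bW)}$ is not bounded by $C\fm(\bW)$, and the asserted ``telescoping'' is not an argument. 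The actual resolution is that for a face $f$ high up in such an exterior wall, every genuine disagreement face $g$ at comparable height lies in an interior component, so the wall containing $g$ either belongs to $\Clust(W_x)$ (whence $\br_f\ge d(f,\bW)\gtrsim\hgt(f)$) or, by failure of close nesting, satisfies $\fm\lesssim d_\rho(\cdot,\bW)$, forcing $d(f,g)\gtrsim\hgt(f)$ anyway; this vertical-separation case analysis is what makes the sum converge, and it is missing from your proposal.
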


\begin{lemma}[{\cite[Lemma 3.13]{GL20}}]\label{lem:number-of-wall-clusters}
Fix $x\in \cL_{0,n}$, and fix \emph{any} set of walls $(W_z)_{z\in A}$ for some $A\subset \cL_{0,n}$ with $x\notin A$. There exists a universal constant $C>0$ such that the number of wall clusters $\Clust(W_x)$ compatible with $(W_z)_{z\in A}$, and having $\fm(\Clust(W_x)) = M$ is at most $C^M$. 
\end{lemma}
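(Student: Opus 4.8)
The plan is to give an explicit injective encoding of a wall cluster $\Clust(W_x)$ with $\fm(\Clust(W_x))=M$ into combinatorial data of ``total size'' $O(M)$, and then count the possible data. Enumerate $\Clust(W_x)=\{W^{(0)}=W_x,W^{(1)},\ldots,W^{(r)}\}$ in the order the walls are added in Definition~\ref{def:wall-cluster}, so that for each $i\ge 1$ there is $j_i<i$ with $W^{(i)}$ closely nested in $W^{(j_i)}$; in particular $d(\rho(W^{(i)}),\rho(W^{(j_i)}))\le \fm(W^{(i)})$ by Definition~\ref{def:closely-nested}. First I would record a connected \emph{skeleton} in $\cL_0$: for each $i\ge 1$ fix a shortest lattice path $P_i\subset \sE(\cL_0)\cup\sF(\cL_0)$ from $\rho(W^{(i)})$ to $\rho(W^{(j_i)})$, which uses $O(\fm(W^{(i)}))$ steps, and set $T=\bigcup_i\rho(W^{(i)})\cup\bigcup_{i\ge1}P_i$. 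Since $\rho(W^{(0)})$ is connected (Lemma~\ref{lem:wall-ceiling-bijection}) and each $P_i$ attaches $\rho(W^{(i)})$ to the part of $T$ already built, an induction on $i$ shows $T$ is connected; and since $\rho(W^{(0)})$ is incident to $x$ by the definition of $W_x$ (Remark~\ref{rem:indexing-walls}), $T$ contains one of the $O(1)$ elements of $\sE(\cL_0)\cup\sF(\cL_0)$ incident to $x$. Using $|\rho(W^{(i)})|\le \fm(W^{(i)})$ from~\eqref{eq:excess-energy-wall-relations} together with $|P_i|=O(\fm(W^{(i)}))$, one gets $|T|=O(M)$. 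As $\sE(\cL_0)\cup\sF(\cL_0)$ with its adjacency relation has bounded degree, the number of such connected sets $T$ anchored at $x$ is at most $C_1^{M}$ by the standard lattice-animal bound.

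Given $T$, I would next recover the individual wall projections and then the walls. Mark each element of $T$ according to whether it lies in $\bigcup_i\rho(W^{(i)})$ or is ``filler''; this costs at most $2^{|T|}\le C_2^{M}$. Because distinct walls of an interface have vertex-disjoint projections (Definition~\ref{def:standard-walls} and Lemma~\ref{lem:wall-ceiling-bijection}), the $*$-connected components of the marked subset of $T$ are exactly the sets $\rho(W^{(i)})$. It remains to reconstruct each standard wall $\Theta_{\textsc{st}}W^{(i)}$ from its now-known projection $\rho(W^{(i)})$; note that specifying the admissible collection $\{\Theta_{\textsc{st}}W^{(i)}\}_i$ determines the cluster, via the reconstruction bijection (Lemma~\ref{lem:interface-reconstruction}). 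Here I would use that a standard wall $W^{(i)}$ is a $*$-connected set of at most $2\fm(W^{(i)})$ faces (by~\eqref{eq:excess-energy-wall-relations}) that is $*$-adjacent to its supporting ceiling at height $0$, so every one of its faces lies within Euclidean distance $O(\fm(W^{(i)}))$ of height $0$. Anchoring $W^{(i)}$ at one of its faces projecting onto the lexicographically least element of $\rho(W^{(i)})$ — there are $O(\fm(W^{(i)}))$ choices of height for this anchor — and enumerating the $*$-connected face sets of size at most $2\fm(W^{(i)})$ containing it, there are at most $C_3^{\fm(W^{(i)})}$ possibilities for $W^{(i)}$. Taking the product over $i$ contributes a factor $C_3^{\sum_i\fm(W^{(i)})}=C_3^{M}$, and multiplying the three bounds gives at most $(C_1C_2C_3)^{M}$ wall clusters, i.e.\ the claim with $C=C_1C_2C_3$. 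The hypothesis that the cluster be compatible with $(W_z)_{z\in A}$ is used only implicitly: it can only shrink the family being counted, so the universal bound applies a fortiori; the finitely many walls with $\fm$ below an absolute constant, whose polynomial contributions I absorbed above, are handled crudely since each has an absolutely bounded number of shapes and there are at most $O(M)$ of them.

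I expect the main obstacle to be the bookkeeping around the \emph{vertical} coordinate: the hypotheses~\eqref{eq:excess-energy-wall-relations} and the nesting relations are essentially two-dimensional once one passes to projections, so the real content is checking that (i) each constituent wall of the cluster is determined by its projection together with only $O(\fm)$ extra bits, which rests on a standard wall being $*$-connected to a height-$0$ supporting ceiling and hence of bounded vertical extent relative to its excess energy, and (ii) one does not incur a blow-up of the form $\prod_i|\rho(W^{(j_i)})|$ from having to choose \emph{where} along a parent's projection each child attaches — this is exactly what is avoided by bundling all the attaching paths $P_i$ into one connected skeleton $T$ and counting $T$ in a single lattice-animal estimate. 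A secondary point needing care is the vertex-disjointness of the projections of distinct walls, so that the marked subset of $T$ canonically decomposes into the $\rho(W^{(i)})$; this is immediate from Lemma~\ref{lem:wall-ceiling-bijection} and Definition~\ref{def:standard-walls}.
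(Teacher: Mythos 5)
Your argument is correct, and it is essentially the standard proof: the paper itself imports this lemma from~\cite[Lemma 3.13]{GL20} without proof, and the proof there is exactly this kind of encoding — the union of the projections $\rho(W^{(i)})$ together with connecting paths of length at most $\fm(W^{(i)})$ (guaranteed by the close-nesting criterion) forms a connected witness set of size $O(M)$ rooted near $x$, counted by a lattice-animal bound, after which each constituent standard wall is recovered from its projection by enumerating $*$-connected face sets of size at most $2\fm(W^{(i)})$ anchored within distance $O(\fm(W^{(i)}))$ of height $0$. Your two flagged points (bundling the attachment paths into a single connected skeleton to avoid a $\prod_i|\rho(W^{(j_i)})|$ blow-up, and using vertex-disjointness of projections to decompose the marked set canonically) are precisely the right ones, and both are handled correctly.
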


Combining the above,~\cite[Section 3]{GL20} established rigidity in a set $S \subset \cL_{0,n}$ even conditionally on the shape of the interface outside $S$. In order to phrase this formally, let us introduce some notation. For a collection of walls $\bW$, let $\bI_\bW$ be the set of all interfaces having $\Theta_{\st}\bW$ in its standard wall representation. Notice that if $\bW = (W_z)_{z\notin S}$ and $\rho(\bW) \cap S = \emptyset$, then for every $\cI \in \bI_{\bW}$, there is a single ceiling $\cC_{\bW}$ having $\rho(\cC_{\bW}) \supset S$. In particular, given the event $\{\cI\in\bI_{\bW}\}$, the minimal energy interface is then the one for which 
$W_x = \emptyset$ for all $x\in S$. The next results give exponential tails of walls of $x\in S$ and height oscillations above $x\in S$ about $\hgt(\cC_\bW)$, conditionally on $\bI_{\bW}$. 

\begin{theorem}[{\cite[Thm.~3.1]{GL20}}]
\label{thm:rigidity-inside-wall}
There exists $C>0$ so that for all $\beta>\beta_0$ the following holds. Fix any two admissible collections of walls $(W_1, \ldots, W_r)$, and $\bW = (W_z)_{z\in A}$ such that $\rho(\bW) \cap  \rho(\bigcup_{i\le r} \hull{W}_i)= \emptyset$. Then 
\[
\mu_n^\mp(\bI_{W_1,\ldots,W_r} \mid \bI_\bW) \leq \exp\Big(-(\beta-C)\sum_{i\le r}\fm(W_i)\Big)\,.
\]
\end{theorem}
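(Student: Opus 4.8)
The plan is a Peierls argument carried out in the standard-wall representation, in the spirit of Dobrushin's rigidity proof, with the wall clusters of Definition~\ref{def:wall-cluster} playing the role of Dobrushin's groups of walls; the two workhorses are Lemma~\ref{lem:wall-cluster-weights} (deleting a wall cluster shifts $\log\mu_n^\mp$ by $-(\beta\pm C)$ times its excess energy) and Lemma~\ref{lem:number-of-wall-clusters} (at most $C^M$ wall clusters of excess energy $M$ anchored at a prescribed face, given the surrounding walls). First I would relabel $W_1,\dots,W_r$ so that each wall precedes every wall nested in it (a linear extension of the nesting partial order, outermost walls first); this is harmless since the asserted bound is symmetric in the labelling. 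Writing $\bI_{\le i}:=\bI_{W_1,\dots,W_i}$ (with $\bI_{\le 0}$ the whole space) and telescoping,
\[
\mu_n^\mp\big(\bI_{\le r}\mid\bI_\bW\big)=\prod_{i=1}^{r}\frac{\mu_n^\mp(\bI_{\le i}\cap\bI_\bW)}{\mu_n^\mp(\bI_{\le i-1}\cap\bI_\bW)}\,,
\]
so it suffices to show $\mu_n^\mp(\bI_{\le i}\cap\bI_\bW)\le e^{-(\beta-C)\fm(W_i)}\,\mu_n^\mp(\bI_{\le i-1}\cap\bI_\bW)$ for each $i$, with a single constant $C$.

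Fix $i$, and let $\Phi$ send an interface $\cI\in\bI_{\le i}\cap\bI_\bW$ to the interface obtained by deleting $\Theta_{\st}\Clust_\cI(W_i)$ from its standard-wall representation (a valid interface by Lemma~\ref{lem:interface-reconstruction}). I claim $\Phi$ lands in $\bI_{\le i-1}\cap\bI_\bW$: the wall cluster $\Clust_\cI(W_i)$ is nested in $W_i$ (the remark following Definition~\ref{def:wall-cluster}), hence disjoint from $\bW$ because $\rho(\bW)\cap\rho(\hull{W}_i)=\emptyset$, and disjoint from $W_1,\dots,W_{i-1}$ because, by the chosen ordering, none of these is nested in $W_i$; so deleting $\Clust_\cI(W_i)$ preserves $W_1,\dots,W_{i-1}$ and $\bW$.

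For the weight, Lemma~\ref{lem:wall-cluster-weights} (with $\cJ=\Phi(\cI)$) gives $\mu_n^\mp(\cI)\le e^{-(\beta-C)\fm(\Clust_\cI(W_i))}\mu_n^\mp(\Phi(\cI))$. For the multiplicity, fix $\cJ$ in the range; an $\cI$ with $\Phi(\cI)=\cJ$ is, by Lemma~\ref{lem:interface-reconstruction}, determined by the collection of standard walls $\Clust_\cI(W_i)$ that is re-inserted, and Lemma~\ref{lem:number-of-wall-clusters} bounds by $C^M$ the number of such collections of excess energy $M$ --- this lemma applies because the index face anchoring $W_i$ is assigned no wall of $\cJ$: the only walls of $\cJ$ that could lie near $\rho(W_i)$ are walls of $\cI$ that were nested in $W_i$ but not in $\Clust_\cI(W_i)$, and by the definition of close nesting any such wall $W'$ sits at distance $>\fm(W')\ge|\rho(W')|$ from $\rho(W_i)$, hence never reaches the anchor. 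Since $W_i\in\Clust_\cI(W_i)$ gives $\fm(\Clust_\cI(W_i))\ge\fm(W_i)$, summing over the fibre,
\[
\mu_n^\mp(\bI_{\le i}\cap\bI_\bW)\le\sum_{\cJ}\mu_n^\mp(\cJ)\sum_{M\ge\fm(W_i)}C^M e^{-(\beta-C)M}\le e^{-(\beta-C')\fm(W_i)}\,\mu_n^\mp(\bI_{\le i-1}\cap\bI_\bW)
\]
once $\beta$ is large enough that $Ce^{-(\beta-C)}<\tfrac12$; inserting this in the telescoping product proves the theorem.

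The step I expect to be most delicate is precisely the one glossed in the multiplicity bound. When $\Clust_\cI(W_i)$ is deleted, every wall that was nested in $W_i$ but not closely nested descends by the height of the intervening ceiling gap and reappears in the exposed ceiling of $\cJ$, so the region under $\rho(\hull{W}_i)$ in $\cJ$ is typically far from flat. Verifying (a) that Lemma~\ref{lem:number-of-wall-clusters} is unaffected by these descended walls, and (b) that the error in Lemma~\ref{lem:wall-cluster-weights} is genuinely linear in $\fm(\Clust_\cI(W_i))$ despite the long vertical shifts, is where one must carefully track the face bijection $\theta_{\udarrow}$ of Observation~\ref{obs:1-to-1-map-faces}, the exponential decay~\eqref{eq:g-exponential-decay} of the cluster-expansion weights, and the closure of $\Clust_\cI(W_i)$ under close nesting (so that no interface face outside a bounded neighbourhood of $\bigcup_{W\in\Clust_\cI(W_i)}W$ witnesses the modification). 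These are, however, exactly the facts packaged by Lemmas~\ref{lem:wall-cluster-weights}--\ref{lem:number-of-wall-clusters}, so the outline above is the essence of the proof.
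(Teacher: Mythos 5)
Your proposal is correct and is essentially the argument the paper relies on: the theorem is imported from [GL20], and the paper's own proof of its conditional extension in Section~\ref{sec:extension-estimates-in-ceiling} runs the identical Peierls scheme --- delete the wall clusters $\Clust(W_i)$ from the standard-wall representation, control the weight change via Lemma~\ref{lem:wall-cluster-weights} and the multiplicity via Lemma~\ref{lem:number-of-wall-clusters}, and sum the geometric series. The only (harmless) organizational difference is that you telescope over $i$ and delete one cluster at a time, whereas the paper deletes $\bigcup_i\Theta_\st\Clust(W_i)$ in a single step, enumerating preimages by the total excess energy $\fm(\bigcup_i\Clust(W_i))\ge\sum_i\fm(W_i)$.
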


As a consequence we obtain the following for nested sequences of walls inside $S$. In what follows, for a set $S\subset \cL_{0,n}$ and a site $x\in S$, let $\fW_{x,S}$ be the collection of walls nesting $x$, and themselves fully nested in $S$. Noticing that the number of possible nested wall collections $\fW_{x,S}$ having $\fm(\fW_{x,S}) = M$ is at most $C^M$ for some universal constant $C$, and enumerating over these possible choices, we obtain the following corollary. 

\begin{corollary}[{\cite[Cor.~3.2]{GL20}}]\label{cor:nested-sequence-inside-a-ceiling}
There exists $C>0$ so that for all $\beta>\beta_0$ the following holds.  Let $S_n\subset \cL_{0,n}$ be such that $\cL_{0}\setminus S_n$ is connected, and let $\bW_n = (W_z)_{z\notin S_n}$ be an admissible collection of walls
with $\rho(\bW_n) \subset S_n^c$. 
For every $x\in S_n$ and every $r\ge 1$,
\begin{align*}
    \mu_n^\mp\left( \fm(\fW_{x,S_n}) \ge r \mid \bI_{\bW_n}
    \right) \leq \exp ( -  (\beta - C)r)\,.
\end{align*}
\end{corollary}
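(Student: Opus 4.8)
The statement to prove is Corollary~\ref{cor:nested-sequence-inside-a-ceiling}, deducing a tail bound on $\fm(\fW_{x,S_n})$ from Theorem~\ref{thm:rigidity-inside-wall}.

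The plan is to condition on $\bI_{\bW_n}$ throughout and decompose the event $\{\fm(\fW_{x,S_n}) \ge r\}$ according to the precise nested sequence of walls that nest $x$ and lie fully inside $S_n$. Recall from Definition~\ref{def:nested-sequence-of-walls} that the walls nesting $\rho(x)$ form a totally ordered (by $\Subset$) sequence; write $\fW_{x,S_n} = (W^{(1)}, \ldots, W^{(\ell)})$ for the ones among them that are themselves nested in $S_n$. The first step is the observation that any such collection of walls is in particular admissible, and that—by the definition of wall excess energy and \eqref{eq:excess-energy-wall-relations}—if $\fm(\fW_{x,S_n}) = M$ then in fact we may enumerate over all possible values of the tuple $(W^{(1)},\dots,W^{(\ell)})$: the hint in the paragraph preceding the corollary says that the number of admissible nested wall collections $\fW_{x,S_n}$ with $\fm(\fW_{x,S_n}) = M$ is at most $C^M$ for a universal $C$. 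I would want to justify this counting bound by the standard Peierls-type enumeration: each wall $W^{(i)}$ is a $*$-connected set of faces of total size $|W^{(i)}| \le 2\fm(W^{(i)})$ (by \eqref{eq:excess-energy-wall-relations}), these sizes sum to at most $2M$, the walls are nested in a chain so their "anchor points" are ordered along $\rho$ and forced to be within $M$ of $x$ (since $d(x,y)\le \fm(W)$ for $x,y$ nested in $W$), and a standard encoding of a $*$-connected face set of size $s$ costs at most $c^s$ choices.

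The second step is to apply Theorem~\ref{thm:rigidity-inside-wall}: for a fixed admissible tuple $(W^{(1)}, \ldots, W^{(\ell)})$ with $\rho(\bigcup_i \hull{W^{(i)}})$ disjoint from $\rho(\bW_n)$ (which holds automatically since all $W^{(i)}$ are nested in $S_n$ and $\rho(\bW_n)\subset S_n^c$, so their projections and hulls are separated), the event that the standard wall representation of $\cI$ contains all of $\Theta_{\st}W^{(1)}, \ldots, \Theta_{\st}W^{(\ell)}$ has conditional probability at most $\exp(-(\beta-C)\sum_i \fm(W^{(i)}))$. A small subtlety to check here is that the event $\{\fW_{x,S_n} = (W^{(1)},\dots,W^{(\ell)})\}$ is contained in—not equal to—the event $\bI_{W^{(1)},\dots,W^{(\ell)}}$ appearing in the theorem (the latter only asks that these walls be present, not that they be exactly the walls nesting $x$ in $S_n$), but this inclusion is all we need for an upper bound.

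The third step is to sum. Grouping by $M = \fm(\fW_{x,S_n})$, we get
\[
\mu_n^\mp\big(\fm(\fW_{x,S_n}) \ge r \mid \bI_{\bW_n}\big) \le \sum_{M \ge r} C^M \exp(-(\beta-C)M) \le \sum_{M\ge r} \exp\big(-(\beta - C')M\big),
\]
which for $\beta$ large is at most $\exp(-(\beta - C'')r)$ after summing the geometric series and absorbing constants; relabelling $C''$ as $C$ gives the claimed bound. The main obstacle, and the only place requiring genuine care, is the combinatorial counting estimate in the first step—establishing that at most $C^M$ admissible nested wall collections through $x$ have total excess energy $M$—since everything else is a direct invocation of Theorem~\ref{thm:rigidity-inside-wall} and a geometric sum; however, as noted in the text this counting bound is of the same flavor as Lemma~\ref{lem:number-of-wall-clusters} and follows from the analogous Peierls enumeration together with the geometric constraints in \eqref{eq:excess-energy-wall-relations}.
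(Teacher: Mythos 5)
Your proposal is correct and matches the argument the paper itself indicates (in the sentence immediately preceding the corollary, which is imported from~\cite{GL20}): enumerate the at most $C^M$ nested wall collections through $x$ with excess energy $M$, bound each via Theorem~\ref{thm:rigidity-inside-wall} (noting, as you do, that the hulls of walls nested in $S_n$ are automatically disjoint from $\rho(\bW_n)\subset S_n^c$), and sum the resulting geometric series. The only part requiring care is the $C^M$ counting bound, and your Peierls-type encoding using~\eqref{eq:excess-energy-wall-relations} is the standard and intended justification.
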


We will also use the following extension of Corollary~\ref{cor:nested-sequence-inside-a-ceiling} to the case of $x_1,\ldots,x_N\in S_n$ for some $N\leq r$. 

\begin{corollary}\label{cor:nested-sequence-of-multiple-faces-in-a-ceiling}
In the setting of Corollary~\ref{cor:nested-sequence-inside-a-ceiling}, if $x_1,\ldots,x_N\in S_n$ for some $N\leq r$ then we have
\[ \mu_n^\mp\Big(\fm\big(\bigcup_{i\leq N}\fW_{x_i,S_n}\big)\geq r \mid \bI_{\bW_n}\Big) \leq \exp(-(\beta-C)r)\,.\]
\end{corollary}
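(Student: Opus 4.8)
The plan is to bound the probability by a union bound over the possible values of the random wall collection $\bV := \bigcup_{i\le N}\fW_{x_i,S_n}$, in the spirit of the proof of Corollary~\ref{cor:nested-sequence-inside-a-ceiling} but tracking all $N$ anchor sites at once. Note that $\bV$ is a deterministic function of the interface $\cI$, so the events $\{\bV = \bV_0\}$, as $\bV_0$ ranges over (necessarily admissible) wall collections, are disjoint, and on $\{\bV=\bV_0\}$ every wall of $\bV_0$ is a wall of $\cI$, i.e.\ $\{\bV=\bV_0\}\subseteq \bI_{\bV_0}$.

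First I would record the per-collection estimate. If $\bV_0$ is an admissible collection all of whose walls are nested in $S_n$, then for each $W\in\bV_0$ one has $\rho(\hull{W})\subseteq S_n$: since $\cL_0\setminus S_n$ is connected and disjoint from $\rho(W)$, it lies in the infinite component of $\rho(W)^c$, so all finite components of $\rho(W)^c$, and hence $\rho(\hull{W})$, lie in $S_n$. Therefore $\rho(\bW_n)\subseteq S_n^c$ is disjoint from $\rho\big(\bigcup_{W\in\bV_0}\hull{W}\big)$, and Theorem~\ref{thm:rigidity-inside-wall} applies to give
\[
\mu_n^\mp\big(\bI_{\bV_0}\mid\bI_{\bW_n}\big)\le \exp\big(-(\beta-C)\fm(\bV_0)\big)\,.
\]

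Second, I would count the admissible collections $\bV_0$, all of whose walls are nested in $S_n$ with each wall nesting at least one of $x_1,\dots,x_N$, that have a prescribed excess energy $\fm(\bV_0)=M$; the claim is that their number is at most $C_1^{M}$ for a universal $C_1$, given $N\le M$ (which holds since $N\le r\le M$ in the relevant range). This follows from the same explore-and-encode scheme behind the single-site count used for Corollary~\ref{cor:nested-sequence-inside-a-ceiling}: process $x_1,\dots,x_N$ in order and, for each $i$, trace the nested (hence totally $\Subset$-ordered) sequence $\fW_{x_i,S_n}$ from $x_i$ outward; whenever a wall $W$ of this sequence is encountered for the first time, encode it at a cost at most $C^{\fm(W)}$ — a path in $\cL_0$ from $x_i$ to $\rho(W)$ has length $O(\fm(W))$ by Definition~\ref{def:excess-energy-properties}, the connected set $\rho(W)$ has at most $\fm(W)$ edges and faces, and there are at most $C^{\fm(W)}$ standard walls with a given projection — while walls encountered before are free, as the $\Subset$-order determines unambiguously which wall of the sequence comes next. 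Hence the total cost is $\prod_{W\in\bV_0}C^{\fm(W)}=C^{\fm(\bV_0)}$ up to a $2^{O(M)}$ factor of bookkeeping (compositions of $M$, the $N\le M$ anchors and stopping bits), which gives $C_1^{M}$.

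Combining the two ingredients,
\[
\mu_n^\mp\big(\fm(\bV)\ge r\mid\bI_{\bW_n}\big)\le\sum_{M\ge r}\ \sum_{\substack{\bV_0\ \text{valid}\\ \fm(\bV_0)=M}}\mu_n^\mp\big(\bI_{\bV_0}\mid\bI_{\bW_n}\big)\le\sum_{M\ge r}C_1^{M}e^{-(\beta-C)M}\le e^{-(\beta-C')r}\,,
\]
the last step holding for $\beta$ large enough that $C_1e^{-(\beta-C)}<\tfrac12$, with $C'=C'(C,C_1)$. The main obstacle is the counting step: enumerating the $N$ chains $\fW_{x_i,S_n}$ independently costs $C^{\sum_i\fm(\fW_{x_i,S_n})}$, and $\sum_i\fm(\fW_{x_i,S_n})$ can be far larger than $\fm(\bV)=M$ when the chains share many walls — large enough to swamp the gain $e^{-(\beta-C)M}$ from Theorem~\ref{thm:rigidity-inside-wall}. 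Charging each wall only once brings the cost back down to $C_1^{\fm(\bV)}$, and this is exactly where one uses that each $\fW_{x_i,S_n}$ is a nested sequence, so that resuming the trace after a skipped wall is unambiguous.
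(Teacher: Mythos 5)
Your proposal is correct and follows essentially the same route as the paper: the paper also enumerates the union $\bigcup_{i\le N}\fW_{x_i,S_n}$ at cost $C^{N+M}$ by partitioning the excess energy among the $N$ anchors and encoding only the walls $\fW_{x_i,S_n}\setminus\bigcup_{j<i}\fW_{x_j,S_n}$ not already seen (i.e., charging each wall once), then sums Theorem~\ref{thm:rigidity-inside-wall} over all such collections and all $M\ge r$, using $N\le r\le M$ exactly as you do.
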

 
\begin{proof}
The number of possible collections of nested sequences of walls $\cup_{i\le N} \fW_{x_i,S_n}$ having total excess energy $\fm(\cup_{i\le N} \fW_{x_i,S_n}) = M$ can be seen to be at most $C^{N+M}$ for some universal constant $C$ as follows: there are $2^{N+M}$ many ways to partition the excess energy into $M_1,\ldots,M_N$ amongst the $N$ points, and then one can systematically enumerate over the wall collections $\fW_{x_i,S_n}\setminus \bigcup_{j<i}\fW_{x_i,S_n}$ having excess area $M_i$ for each $i$, costing in total a further $C^M$. Summing Theorem~\ref{thm:rigidity-inside-wall} over all such collections and all $M\ge r$, one then obtains the desired.
\end{proof}

\subsection{The law of the maximum height oscillation in a ceiling}\label{subsec:law-of-max-in-ceiling}
A key ingredient in our proof will be tail bounds on the maximum height oscillations inside a ceiling and conditionally on the interface outside that ceiling. Towards this us introduce some notation. 
For each face $x$, let $\hgt_\cI(x) = \{h: (x_1,x_2,h)\in \cI\}$. 
For a subset $S\subset \cL_{0,n}$, define 
\begin{align*}
       M_S^\uparrow(\cI) := \max_{\substack{f\in S}} \max \hgt_\cI(f)\,, \qquad \mbox{and} \qquad     M_S^\downarrow(\cI) := \min_{\substack{f\in S}} \min \hgt_\cI(f)\,.
\end{align*}
We will be especially interested in studying the behavior of these quantities when conditioning on $\bI_{\bW_n}$ for some collection $\bW = \{W_z:z\notin S\}$ such that $\rho(\bW) \subset S^c$. In this situation, in the interface $\cI_{\bW}$, there is a single ceiling $\cC_{\bW}$ for which $S\subset \rho(\cC_\bW)$; this will be the reference height with which the oscillations inside $h$ will be compared: as such, define the maximal upwards and downwards oscillations about $\hgt(\cC_\bW)$ as 
\begin{align*}
    \bar M_S^{\uparrow}(\cI) = M_S^\uparrow(\cI) - \hgt(\cC_{\bW})\,,\qquad \mbox{and} \qquad \bar M_S^\downarrow(\cI) = \hgt(\cC_{\bW}) - M_S^\downarrow(\cI)\,.
\end{align*}
Also, for a subset $S\subset \cL_{0,n}$ (understood contextually), recall from~\cite{GL20} the following two events for $A \subset S$:
\begin{equation}\label{eq:G-event} 
\cG^{\fm}_A(r)= \bigcap_{x\in A}\left\{ \fm(\fW_{x,S})< r \right\}\,,\qquad 
\cG^{\mathfrak{D}}_A(r)= \bigcap_{x\in A}\left\{ \diam(\fW_{x,S})< r \right\}
\,.
\end{equation}
We begin with providing certain cruder bounds on the behavior of $\bar M^{\uparrow}_S(\cI), \bar M^\downarrow_S(\cI)$ quantities for arbitrary sets $S$, that can be proved using only the conditional exponential tail bounds obtained in Section~\ref{subsec:wall-clusters}.   

The following bounds were primarily formulated in~\cite{GL20} for upwards oscillations $\bar M_{S_n}^{\uparrow}$, but since their application in this paper is for downwards oscillations, we express them for $\bar M_{S_n}^{\downarrow}$: the two formulations are equivalent by reflection symmetry of $\Lambda_n$ about $\cL_{0}$. 

\begin{proposition}[{\cite[Prop. 6.1]{GL20}}]\label{prop:max-generic}
There exist $\beta_0,C>0$ so the following holds for all $\beta>\beta_0$. Let $S_n\subset\cL_{0,n}$ be a simply-connected set such that $|S_n|\to\infty$ with $n$, and let $\bW_n = \{ W_z : z\notin S_n\}$ be such that $\rho(\bW) \subset S_n^c$. Then for every $h = h_n \ge 1$,
\begin{align} 
\mu^\mp_{n}\left( \bar M_{S_n}^\downarrow< h \given \bI_{\bW_n}\right) \geq 
\mu^\mp_{n}\left( \cG^{\fm}_{S_n}(4h) \given \bI_{\bW_n}\right) &\geq \exp\big(- |S_n| e^{-(4\beta -C)h}\big)\,, \label{eq:max-lower-crude}
\end{align}
and
\begin{align}\label{eq:max-upper-crude}
\mu^\mp_{n}\left(\bar M_{S_n}^\downarrow < h \mid \bI_{\bW_n}\right) &\leq  \exp\left(- |S_n| e^{-(4\beta+C)h}\right) \,.
\end{align}
Moreover, identical bounds hold if $\bar M_{S_n}^{\downarrow}$ is replaced by $\bar M_{S_n}^{\uparrow}$. 
\end{proposition}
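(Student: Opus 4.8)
The plan is to prove the two displayed bounds separately; the identical statements for $\bar M_{S_n}^{\uparrow}$ then follow by applying the reflection of $\Lambda_n$ across $\cL_0$ followed by a global spin flip, an involution that preserves $\mu_n^\mp$, the family of admissible collections $\bW_n$ with $\rho(\bW_n)\subset S_n^c$ and the conditioning $\bI_{\bW_n}$, while exchanging $\bar M_{S_n}^{\uparrow}$ with $\bar M_{S_n}^{\downarrow}$. The first inequality in \eqref{eq:max-lower-crude} is purely geometric: by a standard estimate (see \cite{GL20}), a downward oscillation of depth $d$ above a site $x$ forces the nested sequence $\fW_{x,S_n}$ to carry excess energy at least $4d$ — the well of depth $d$ is sheathed by a vertical collar of at least $4d$ faces, which add to $\fm$ but not to the projected area — so that, telescoping through a nested sequence, $\cG^{\fm}_{S_n}(4h)\subseteq\{\bar M_{S_n}^{\downarrow}<h\}$ and hence $\mu_n^\mp(\bar M_{S_n}^{\downarrow}<h\given\bI_{\bW_n})\ge \mu_n^\mp(\cG^{\fm}_{S_n}(4h)\given\bI_{\bW_n})$. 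It remains to bound $\mu_n^\mp(\cG^{\fm}_{S_n}(4h)\given\bI_{\bW_n})$ from below by $\exp(-|S_n|e^{-(4\beta-C)h})$, and $\mu_n^\mp(\bar M_{S_n}^{\downarrow}<h\given\bI_{\bW_n})$ from above by $\exp(-|S_n|e^{-(4\beta+C)h})$.

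For the upper bound \eqref{eq:max-upper-crude} I would use a Peierls/tensorization argument. Fix a maximal $8h$-separated net $y_1,\dots,y_N$ in $S_n$, so $N\gtrsim |S_n|/h^2$, and for each $i$ let $\cA_i$ be the event that $\cI$ contains a face projecting into $B_{h}(y_i)$ at height at most $\hgt(\cC_\bW)-h$, witnessed by a wall $W$ with $\fm(W)\le Ch$ — so that, by \eqref{eq:excess-energy-wall-relations}, $W$ is confined to $B_{Ch}(y_i)$. Clearly $\{\bar M_{S_n}^{\downarrow}<h\}\subseteq\bigcap_i\cA_i^c$. The key point is a uniform conditional lower bound $\mu_n^\mp(\cA_i\given\cF)\ge c\,e^{-(4\beta+C)h}$ valid for every conditioning $\cF\supseteq\bI_{\bW_n}$ measurable with respect to the interface outside $B_{8h}(y_i)$: on the likely event (by the conditional tails of Section~\ref{subsec:wall-clusters}) that no wall other than those forced by $\cF$ reaches $y_i$, one inserts a canonical downward spike of depth $h$ at $y_i$; this map is injective, changes the interface size by $\asymp 4h$, and hence by the two-sided estimate of Lemma~\ref{lem:wall-cluster-weights}, applied to the singleton wall cluster of the inserted spike, multiplies the conditional weight by at least $e^{-(4\beta+C)h}$. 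Revealing the boxes $B_{8h}(y_i)$ in turn — the $8h$-separation ensuring $\cA_1,\dots,\cA_{i-1}$ are determined outside $B_{8h}(y_i)$ — yields $\mu_n^\mp(\bar M_{S_n}^{\downarrow}<h\given\bI_{\bW_n})\le\prod_{i\le N}(1-c\,e^{-(4\beta+C)h})\le\exp(-cN e^{-(4\beta+C)h})$, which is at most $\exp(-|S_n|e^{-(4\beta+C')h})$ after absorbing the polynomial-in-$h$ factor and the constant $c$ into the exponent (using $h\ge 1$), and then relabelling $C'$ as $C$. One wrinkle is that on the rare exteriors for which the local supporting ceiling above some $B_{8h}(y_i)$ has drifted above $\hgt(\cC_\bW)$, the spike must be made deeper or that box discarded; since such upward drifts are exponentially unlikely one restricts to a sub-net of size $\ge N/2$, or absorbs the cost into $C$.

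For the remaining second inequality in \eqref{eq:max-lower-crude}, a union bound over $x\in S_n$ and Corollary~\ref{cor:nested-sequence-inside-a-ceiling} already gives $\mu_n^\mp(\cG^{\fm}_{S_n}(4h)\given\bI_{\bW_n})\ge\tfrac12$ when $h$ is large enough that $|S_n|e^{-(4\beta-C)h}$ is of order $1$; but for small $h$ one must lower bound a probability that is exponentially small in $|S_n|$, which forces a sum over the many interfaces realizing $\cG^{\fm}_{S_n}(4h)$. Here I would invoke the cluster-expansion representation of Theorem~\ref{thm:cluster-expansion} in the conditional framework of Section~\ref{subsec:wall-clusters}: conditionally on $\bI_{\bW_n}$ the walls nested in $\cC_\bW$ form a polymer gas with activities decaying like $e^{-\beta\fm(W)}$ up to the bounded distortion of the $\g$-terms, and $-\log\mu_n^\mp(\cG^{\fm}_{S_n}(4h)\given\bI_{\bW_n})$ equals the free-energy cost $\log Z_{\bI_{\bW_n}}-\log Z^{\cG}_{\bI_{\bW_n}}$ of forbidding every nested stack of walls through a site of $S_n$ from having total excess energy $\ge 4h$. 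By convergence of the cluster expansion this cost is at most $\sum_{x\in S_n}$ of the tail of the polymer weights at size $4h$, i.e.\ at most $|S_n|e^{-(4\beta-C)h}$, where \eqref{eq:excess-energy-wall-relations} is again used to confine every stack counted against $x$ to an $O(h)$-neighborhood of $x$; this estimate holds uniformly in $h\ge 1$ and subsumes the union-bound regime.

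The step I expect to be the main obstacle, in both directions, is the conditional-independence bookkeeping: walls are long-range objects — a wall's interior region is as large as its excess energy permits — so neither the events $\cA_i$ nor the site-events $\{\fm(\fW_{x,S_n})\ge 4h\}$ are genuinely supported in bounded windows, and one must lean on \eqref{eq:excess-energy-wall-relations} to confine the \emph{relevant} walls to $O(h)$-neighborhoods and then reveal the interface in an order that renders the residual dependence one-directional. The accompanying technical nuisance is controlling the rare exteriors on which the local supporting ceiling above a box has shifted relative to $\hgt(\cC_\bW)$, which forces a deeper inserted spike (upper bound) or a larger admissible stack (lower bound) and must be handled by a union bound over such deviations. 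A secondary difficulty is that the small-$h$ regime of the lower bound cannot be reached by first-moment methods and genuinely requires the cluster-expansion comparison.
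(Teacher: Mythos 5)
This proposition is not proved in the paper at all: it is imported verbatim from \cite[Prop.~6.1]{GL20}. However, the proof of Proposition~\ref{prop:max-wall-cluster} in Section~\ref{sec:extension-estimates-in-ceiling} recapitulates the [GL20] argument for \eqref{eq:max-lower-crude}, so a comparison is still possible. Your treatment of the upper bound \eqref{eq:max-upper-crude} --- an $O(h)$-separated net, a canonical spike inserted in each box with conditional cost $e^{-(4\beta+C)h}$, and a sequential revealment so the per-box bounds tensorize, with the $h^{-2}$ density loss absorbed into the constant --- is essentially the [GL20] argument, and your justification of the inclusion $\cG^{\fm}_{S_n}(4h)\subseteq\{\bar M^\downarrow_{S_n}<h\}$ is the right one.

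Where you diverge is the second inequality of \eqref{eq:max-lower-crude}, and there your proposal has both a wrong claim and a gap. The argument used in [GL20] (as sketched in the proof of Proposition~\ref{prop:max-wall-cluster}) involves no cluster-expansion or free-energy comparison: one iteratively exposes the nested wall groups $\fG_{x,S_n}$ along a filtration $\cF_i$ and shows, using only the conditional exponential tails of Corollary~\ref{cor:nested-sequence-inside-a-ceiling} (which hold uniformly over the previously revealed walls), that each per-site conditional probability of $\widehat G_x$ is at least $1-O(h^2e^{-(4\beta-C)h})\geq \exp(-e^{-(4\beta-C')h})$; multiplying over the at most $|S_n|$ revealment steps gives $\exp(-|S_n|e^{-(4\beta-C')h})$ for \emph{every} $h\ge1$. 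So your assertion that the small-$h$ regime ``cannot be reached by first-moment methods and genuinely requires the cluster-expansion comparison'' is false --- a product of sequential conditional bounds, not a union bound, is what does the job. Your proposed polymer-gas route is, as written, a gap rather than a proof: the standard wall representation is not an independent polymer gas (the $\g$-terms of Theorem~\ref{thm:cluster-expansion} couple all walls, and decoupling them requires a further expansion whose clusters are not single walls); the event $\cG^{\fm}_{S_n}(4h)$ constrains nested \emph{stacks} through each site rather than individual polymers (one large wall violates it at many sites, and many small walls can violate it at a single site); and the inequality $\log Z-\log Z^{\cG}\le\sum_{x\in S_n}(\text{tail at }4h)$ would need a genuine argument in this constrained, interacting setting. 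All of this machinery is avoided by the sequential revealment, for which you already listed the needed ingredients.
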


For ``nice" sets $S$, the above bounds on the distributions of $\bar M^{\downarrow}_{S_n}$ and $\bar M^\uparrow_{S_n}$ can be sharpened significantly so that the lower and upper bounds essentially match; indeed this was the main result of~\cite{GL20}. It relies on an identification of the exact rate of a deviation of height $h$ above a point $x\in S_n$ as $\alpha_h$ from~\eqref{eq:alpha-h-def}; without any conditioning that goes back to~\cite{GL19a,GL19b}. To state this refined form, which is crucial to the identification of the height $h_n^*$ in our main theorems, we first formalize what we mean by ``nice" sets. 

\begin{definition*}[isoperimetric dimension of face sets]
A simply-connected subset of faces $S\subset \sF(\cL_{0,n})$ is said to have isoperimetric dimension at most $d$, denoted $\isodim(S) \leq d$, if $|\partial S| \leq |S|^{(d-1)/d}$.
\end{definition*}

For sets $S_n$ with a uniformly bounded $\isodim(S_n)$ we have the following much finer estimates.

\begin{proposition}[{\cite[Prop. 6.2]{GL20}}]\label{prop:max-thick} 
There exist $\beta_0,\kappa_0>0$ such that the following holds for every fixed $\beta>\beta_0$. Let $S_n\subset\cL_{0,n}$ be a sequence of simply-connected sets such that $\isodim(S_n)\leq \sqrt \beta$ and 
 $\lim_{n\to\infty}|S_n|=\infty$. Let $\bW_n = (W_z)_{z\notin S_n}$ be such that $\rho(\bW_n) \subset S_n^c$.
For every $ \sqrt{\log |S_n| } \leq h \leq \frac1{\sqrt\beta}\log |S_n|$,
\begin{align}\label{eq:max-thick-lower-large-h} 
\mu^\mp_{n}\left( \bar M_{S_n}^\downarrow< h \given \bI_{\bW_n}\right) \ge \mu^\mp_{n}\left( \bar M_{S_n}^\downarrow < h \,,\,\cG^\fm_{S_n\setminus S_{n,h}^\circ}
(4h)\,,\, \cG^{\fm}_{S^\circ_{n,h}}
(5h) \mid \bI_{\bW_n}\right) &\geq \exp\left(
- (1+\epsilon_\beta)|S_n| e^{-\alpha_h}\right)\,,
\end{align}
and
\begin{align}\label{eq:max-thick-upper-large-h}
\mu^\mp_{n}\left(\bar M_{S_n}^\downarrow < h\,,\,\cG^{\mathfrak D}_{S_{n,h}^\circ}(e^{\kappa_0 h})  \mid \bI_{\bW_n}\right) &\leq  \exp\left(-(1-\epsilon_\beta) |S_n| e^{-\alpha_h}\right)\,,
\end{align}
where $S_{n,h}^\circ := \{x\in S_n : d(x,\partial S_n) \geq e^{2\kappa_0 h}\}$. Moreover, identical bounds hold if $\bar M_{S_n}^{\downarrow}$ is replaced by $\bar M_{S_n}^{\uparrow}$.
\end{proposition}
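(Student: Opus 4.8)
\emph{Proof idea (cf.~\cite[Prop.~6.2]{GL20}).} The backbone of both bounds is the \emph{sharp single-site rate}: for a face $x\in\cL_{0,n}$ with $d(x,\partial S_n)$ large, conditionally on $\bI_{\bW_n}$, the probability that the interface dips to depth $\ge h$ below $x$ (that is, $\hgt(\cC_{\bW_n})-\min\{s:(x_1,x_2,s)\in\cI\}\ge h$) equals $(1+\epsilon_\beta)^{\pm1}e^{-\alpha_h}$, with $\alpha_h$ as in~\eqref{eq:alpha-h-def}. I would obtain this by passing to the standard wall representation (Lemma~\ref{lem:interface-reconstruction}), using Theorem~\ref{thm:cluster-expansion} to compare the weight of an interface to that of the one obtained by deleting the nested wall sequence $\fW_{x,S_n}$, and recognizing that resumming these weights over all wall shapes forcing depth $\ge h$ below $x$ reproduces---up to a $(1+\epsilon_\beta)$ multiplicative error from the interaction~\eqref{eq:g-exponential-decay} and from the conditioning---exactly the plus-connectivity partition function whose logarithm is $\alpha_h$ (this is the input imported from~\cite{GL19a,GL19b}, where $\alpha_h\le 4\beta h+O(1)$ and the monotonicity $4\beta-C\le\alpha_{h+1}-\alpha_h\le 4\beta+\epsilon_\beta$ are also established). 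It is convenient to restrict attention to spikes supported on walls of diameter $<e^{\kappa_0 h}$: since $\fm(W)\ge\diam(W)$, Corollary~\ref{cor:nested-sequence-inside-a-ceiling} bounds the weight of a wall nesting $x$ of diameter $\ge e^{\kappa_0 h}$ by $e^{-(\beta-C)e^{\kappa_0 h}}\ll e^{-\alpha_h}$ for $\beta$ large, so this costs only a $(1+\epsilon_\beta)$ factor.

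For the upper bound~\eqref{eq:max-thick-upper-large-h}, let $\tilde X$ be the number of faces $x\in S^\circ_{n,h}$ below which there is a depth-$\ge h$ spike supported on a wall of diameter $<e^{\kappa_0 h}$; since $\{\bar M^\downarrow_{S^\circ_{n,h}}<h\}\subseteq\{\tilde X=0\}$ it suffices to bound $\mu_n^\mp(\tilde X=0\mid\bI_{\bW_n})$. The single-site rate gives $\E[\tilde X\mid\bI_{\bW_n}]=(1\pm\epsilon_\beta)|S^\circ_{n,h}|e^{-\alpha_h}$, and the crucial point is that the clumping of deep spikes is negligible: a single wall dipping to depth $\ge h$ below two distinct faces must contain a $2\times1\times h$-type finger, of excess energy $\ge 6h$ rather than $4h$, and its projection must join neighborhoods of the two faces, so the dependency sum $\Delta:=\sum_{x}\sum_{y\neq x}\mu_n^\mp(\text{depth-}\ge h\text{ spike below both }x\text{ and }y\text{ via one wall}\mid\bI_{\bW_n})$ is at most $\sum_x\sum_{y\neq x}e^{-(\beta-C)\max(6h,d(x,y))}=O(|S_n|h^2e^{-6(\beta-C)h})=o(|S_n|e^{-\alpha_h})$ for $\beta$ large, while two depth-$\ge h$ spikes below faces more than $2e^{\kappa_0 h}$ apart occur in disjoint windows and contribute exactly as independent indicators. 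A Janson-type inequality in this (non-product) setting---obtained, as in~\cite{GL20}, by tiling $S^\circ_{n,h}$ into cells of side $\asymp e^{\kappa_0 h}$, using $\cG^{\mathfrak{D}}_{S^\circ_{n,h}}(e^{\kappa_0 h})$ to localize spikes to individual cells, and controlling the residual cell-to-cell interaction through~\eqref{eq:g-exponential-decay}---then yields $\mu_n^\mp(\tilde X=0\mid\bI_{\bW_n})\le\exp(-\E\tilde X+\tfrac12\Delta)\le\exp(-(1-\epsilon_\beta)|S^\circ_{n,h}|e^{-\alpha_h})$. Finally $|S^\circ_{n,h}|\ge(1-o(1))|S_n|$, since the eroded layer has size $\le|\partial S_n|e^{2\kappa_0 h}\le|S_n|^{1-1/\sqrt\beta}|S_n|^{2\kappa_0/\sqrt\beta}=o(|S_n|)$ using $\isodim(S_n)\le\sqrt\beta$, $h\le\tfrac1{\sqrt\beta}\log|S_n|$, and a choice $\kappa_0<\tfrac12$.

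For the lower bound~\eqref{eq:max-thick-lower-large-h} I would produce the explicit good event $\cA=\{\bar M^\downarrow_{S_n}<h\}\cap\cG^\fm_{S_n\setminus S^\circ_{n,h}}(4h)\cap\cG^\fm_{S^\circ_{n,h}}(5h)$ directly. Tile $S_n$ by cells of side $\asymp h$, wide enough to contain any wall with $\fm<5h$. On a cell $Q$, the failure of ``$\bar M^\downarrow_Q<h$ and every $\fW_{x,S_n}$, $x\in Q$, has $\fm$ below the local threshold'' is contained in the union, over the $\asymp h^2$ faces of $Q$, of a depth-$\ge h$ spike or a nested wall with $\fm\ge 4h$ (resp.\ $5h$ for a core cell); a union bound---the single-site rate for the former, Corollary~\ref{cor:nested-sequence-inside-a-ceiling} for the latter (negligible, since $e^{-(5\beta-C)h}\ll h^{-2}e^{-\alpha_h}$ for $\beta$ large)---bounds its probability by $(1+\epsilon_\beta)|Q|e^{-\alpha_h}$, whence $\mu_n^\mp(Q\text{ good}\mid\bI_{\bW_n})\ge e^{-(1+\epsilon_\beta)|Q|e^{-\alpha_h}}$ via $1-t\ge e^{-(1+\epsilon_\beta)t}$ with $t=|Q|e^{-\alpha_h}\to0$. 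Multiplying over the cells---using either the cluster-expansion decoupling as above or the FKG/monotonicity of $\mu_n^\mp(\cdot\mid\bI_{\bW_n})$ under the ``addition of walls'' order, with respect to which the per-cell events are decreasing---gives $e^{-(1+\epsilon_\beta)|S_n|e^{-\alpha_h}}$, and the intersection of the per-cell events is contained in $\cA$; the two $\cG^\fm$ events carry different parameters precisely because cells meeting the thin layer $S_n\setminus S^\circ_{n,h}$ are allowed the weaker threshold $4h$.

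The main obstacle is the sharp single-site rate: pinning the spike probability to $(1+\epsilon_\beta)^{\pm1}e^{-\alpha_h}$, rather than merely between $e^{-(4\beta+C)h}$ and $e^{-(4\beta-C)h}$ as in Proposition~\ref{prop:max-generic}, requires the full cluster-expansion analysis of~\cite{GL19a,GL19b} and, here, its extension to hold conditionally on an arbitrary admissible $\bW_n$. A second genuine difficulty is making the Poissonian heuristic rigorous in the Ising-interface setting, which is not a product space: for the upper bound this is the Janson-type decoupling (localizing spikes via $\cG^{\mathfrak{D}}$ and absorbing the cell-to-cell interaction into the $\epsilon_\beta$), and for the lower bound the monotonicity principle that allows the per-cell probabilities to be multiplied.
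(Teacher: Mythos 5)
This proposition is imported verbatim from \cite[Prop.~6.2]{GL20}; the paper gives no proof of it beyond the citation, so the only basis for comparison is the sketch of that proof recounted in \S\ref{subsec:proof-ideas} and in the proof of Proposition~\ref{prop:max-wall-cluster}. Your proposal matches that argument in all essentials: the sharp single-site rate $e^{-\alpha_h}$ from~\cite{GL19a,GL19b} extended to hold conditionally on $\bI_{\bW_n}$, localization of deep oscillations to boxes of side $\asymp e^{\kappa_0 h}$ via $\cG^{\fD}$ for the upper bound, and a tiling with per-tile conditional estimates multiplied along a revealment filtration for the lower bound (in \cite{GL20} the tiles have side $\sim\tfrac12 e^{2\kappa_0 h}$ rather than $\asymp h$, and the multiplication is done by conditioning along the filtration rather than by any FKG-type monotonicity, which is the reliable of your two suggested routes). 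This is essentially the same proof.
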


\section{Extension of estimates within a ceiling to be conditional on no big walls}\label{sec:extension-estimates-in-ceiling}
In this section we use the observation that the proof of Theorem~\ref{thm:rigidity-inside-wall} is monotone w.r.t.\ wall sizes since it hinges on maps that only delete walls (see~\eqref{eq:Phi-monotonicity} below) in order to elevate Theorem~\ref{thm:rigidity-inside-wall}, and consequently also the lower bounds in Propositions~\ref{prop:max-generic} and~\ref{prop:max-thick}, to apply conditionally on events of the following form:
\begin{equation}\label{eq:def-E-eta}\cE^\eta_{S} := \bigcap_{z\in S} \{ \fm(W_z)\leq \eta_z\}\quad\mbox{for}\quad \eta=\{\eta_z\}_{z\in S}\,.\end{equation}
This extension will be important in the proof of~\eqref{eq:A-supercritical-h} as hinted in Section~\ref{subsec:proof-ideas}. 

The proof of Theorem~\ref{thm:rigidity-inside-wall} in~\cite{GL20} relied on a Peierls-type argument: a map $\Phi$ 
that deletes wall clusters.
Extending it to be conditional on $\cE_S^\eta$ rests on the fact that $\Phi$ clearly satisfies 
\begin{equation}\label{eq:Phi-monotonicity} \fm(W_z(\Phi(\cI)) \leq \fm(W_z(\cI))\qquad\mbox{for every $z$ and every $\cI$}\,,\end{equation}
and so, in particular, if $\cI\in\cE^\eta_S$ for some $\eta$ then necessarily $\Phi(\cI)\in\cE^\eta_S$. We now formalize the extension and include a proof for completeness. 

\begin{theorem} In the setting of Theorem~\ref{thm:rigidity-inside-wall}, for any $\eta=\{\eta_z\}_{z\in A^c}$ we have
\[\mu_n^\mp(\bI_{W_1,\ldots,W_r}\mid\bI_\bW\,,\,\cE^\eta_{A^c}) \leq \exp\Big(-(\beta-C)\sum_{i\leq r}\fm(W_i)\Big)\,.\]
\end{theorem}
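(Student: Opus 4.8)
The plan is to repeat the Peierls argument of Theorem~\ref{thm:rigidity-inside-wall} verbatim, but restricted to the subset of interfaces satisfying the extra conditioning event $\cE^\eta_{A^c}$, using the monotonicity~\eqref{eq:Phi-monotonicity} to guarantee that the Peierls map $\Phi$ does not take us out of this subset. Recall that in~\cite{GL20} the proof of Theorem~\ref{thm:rigidity-inside-wall} proceeds by fixing $\bW$, looking at interfaces $\cI\in \bI_\bW$ that in addition contain the standard walls $\Theta_\st W_1,\ldots,\Theta_\st W_r$, and mapping such $\cI$ to $\Phi(\cI)=\cJ$, the interface obtained by deleting $\Theta_\st W_1,\ldots,\Theta_\st W_r$ (and, as dictated by the wall-cluster mechanism, their wall clusters, but since those are nested in the $W_i$ and the $W_i$ are being deleted anyway, the net effect is the deletion of $\bigcup_i \Clust(W_i)$) from the standard wall representation; the weight ratio is controlled via Lemma~\ref{lem:wall-cluster-weights}, and the map is shown to be at most $\exp(O(\sum\fm(W_i)))$-to-one via the enumeration bound of Lemma~\ref{lem:number-of-wall-clusters}. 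Summing $\exp(-(\beta-C)\sum_i\fm(W_i))$ over the fibers gives the stated bound.

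**First I would** observe that $\Phi$ only ever deletes walls: every wall $W_z(\cJ)$ of the image is either a wall $W_z(\cI)$ of the source (untouched by the deletion) or it is a wall obtained from $W_z(\cI)$ by a vertical shift coming from the standardization of the walls below it (Observation~\ref{obs:1-to-1-map-faces}), and in either case $\fm$ is unchanged — in particular $\fm(W_z(\Phi(\cI)))\le\fm(W_z(\cI))$, which is exactly~\eqref{eq:Phi-monotonicity}. Hence if $\cI\in\cE^\eta_{A^c}$ then $\Phi(\cI)\in\cE^\eta_{A^c}$ as well. **Second I would** carry out the conditional computation: writing $\Gamma=\bI_{W_1,\ldots,W_r}\cap\bI_\bW\cap\cE^\eta_{A^c}$ and $\Gamma'=\Phi(\Gamma)\subset \bI_\bW\cap\cE^\eta_{A^c}$, for each $\cJ\in\Gamma'$ the fiber $\Phi^{-1}(\cJ)\cap\Gamma$ is contained in $\Phi^{-1}(\cJ)\cap\bI_{W_1,\ldots,W_r}\cap\bI_\bW$, which has at most $\exp(C'\sum_i\fm(W_i))$ elements by Lemma~\ref{lem:number-of-wall-clusters} (enumerating the possible wall clusters of $W_1,\ldots,W_r$ compatible with the revealed configuration), and each element $\cI$ of that fiber has $\mu_n^\mp(\cI)\le \exp(-(\beta-C)\sum_i\fm(W_i))\,\mu_n^\mp(\cJ)$ by Lemma~\ref{lem:wall-cluster-weights}. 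Therefore
\[
\mu_n^\mp(\Gamma)=\sum_{\cJ\in\Gamma'}\sum_{\cI\in\Phi^{-1}(\cJ)\cap\Gamma}\mu_n^\mp(\cI)\le \exp\Big(-(\beta-C'')\sum_i\fm(W_i)\Big)\sum_{\cJ\in\Gamma'}\mu_n^\mp(\cJ)\le \exp\Big(-(\beta-C'')\sum_i\fm(W_i)\Big)\mu_n^\mp(\bI_\bW\cap\cE^\eta_{A^c})\,,
\]
absorbing the combinatorial factor into the constant and using $\Gamma'\subset\bI_\bW\cap\cE^\eta_{A^c}$. Dividing through by $\mu_n^\mp(\bI_\bW\cap\cE^\eta_{A^c})$ gives the claim.

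**The only point requiring care** — and what I expect to be the main (though minor) obstacle — is checking that $\Gamma'$, the image, indeed lands inside the conditioning event, i.e.\ that the deletion map never \emph{increases} any $\fm(W_z)$; this is genuinely true because the wall-cluster Peierls map of~\cite{GL20} is built purely from deletions followed by re-standardization (vertical shifts, which are $\fm$-preserving), and this is precisely the content of~\eqref{eq:Phi-monotonicity}. One should also note that the indices $z\in A^c$ on which the conditioning $\cE^\eta_{A^c}$ acts are disjoint from the region being modified (the walls $W_1,\ldots,W_r$ and their clusters are nested in the simply-connected region complementary to $\bW$, hence do not meet $A^c$ where $\rho(\bW)$ lives), so no index in $A^c$ is even touched by $\Phi$ and the monotonicity is trivially maintained there; but even for indices that are touched, monotonicity suffices. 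With these observations in place the remainder is a verbatim transcription of the unconditional proof, so I would keep it brief.
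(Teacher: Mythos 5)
Your proposal is correct and follows essentially the same route as the paper: delete $\bigcup_i\Theta_{\textsc{st}}\Clust(W_i)$, use the monotonicity~\eqref{eq:Phi-monotonicity} to check the image stays in $\cE^\eta_{A^c}$, control the weight ratio and the fiber size via Lemmas~\ref{lem:wall-cluster-weights} and~\ref{lem:number-of-wall-clusters}, and divide by $\mu_n^\mp(\bI_\bW\cap\cE^\eta_{A^c})$. (Your aside in the last paragraph has $A$ and $A^c$ swapped --- the deleted walls live precisely in the region $A^c$ where the conditioning acts, since $\bW=(W_z)_{z\in A}$ --- but as you note yourself, monotonicity handles the touched indices, so the argument is unaffected.)
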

\begin{proof}
Consider a map $\Phi= \Phi_{(W_i)_i}$ which takes an interface $\cI\in \bI_{W_1,\ldots,W_r}\cap\bI_\bW \cap \cE^\eta_{A^c}$, deletes $\bigcup_i \Theta_\st \Clust(W_i)$ from its standard wall representation, and from the resulting standard wall collection, constructs the interface $\Phi(\cI)$. On the one hand, the map satisfies a weight gain of (see~\cite[Lemma 3.10]{GL20})
\begin{align*}
    \Big|\log \frac{\mu_n^\mp (\cI)}{\mu_n^\mp(\Phi(\cI))} + \beta \fm(\cI;\Phi(\cI))\Big| \le C\fm(\cI;\Phi(\cI))= C\fm(\bigcup_i \Clust(W_i))\,.
\end{align*}
On the other hand, for fixed $(W_i)_{i}$, the number of sets $\bigcup_i \Clust(W_i)$ is bounded by $\exp(C\fm(\bigcup_i \Clust(W_i)))$ (see~\cite[Lemma 3.13]{GL20}). 
Furthermore, the resulting interface $\Phi(\cI)$ belongs to $\bI_{\bW} \cap \cE_{A^c}^{\eta}$ since it only decreases the wall collection, and only deletes walls confined to $A^c$. If we let $M = \fm(\bigcup_i \Clust(W_i))$ and apply the map $\Phi$ pointwise to $\cI\in \bI_{W_1,\ldots,W_r}\cap \bI_\bW\cap\cE_{A^c}^\eta$, mapping them to interfaces $\cJ \in \bI_\bW\cap \cE_{A^c}^{\eta}$, we obtain
\begin{align*}
    \mu_n^\mp(\bI_{W_1,\ldots,W_r},\bI_\bW, \cE_{A^c}^{\eta}) \le \sum_{\cJ\in \bI_\bW,\cE_{A^c}^\eta} \mu_n^\mp(\cJ) \sum_{k\ge M} e^{ - (\beta - C)k} \le e^{- (\beta - C)M} \mu_n^\mp(\bI_\bW,\cE_{A^c}^\eta)\,.
\end{align*}
Dividing both sides by the probability on the right-hand side then yields the desired conditional probability bound. 
\end{proof}

\begin{corollary}\label{cor:nested-sequence-inside-a-ceiling-E-eta}
In the setting of Corollary~\ref{cor:nested-sequence-inside-a-ceiling}, for any $\eta=\{\eta_z\}_{z\in S_n}$ we have
\[ \mu_n^\mp(\fm(\fW_{x,S_n})\geq r \mid \bI_{\bW_n}\,,\,\cE^\eta_{S_n}) \leq \exp(-(\beta-C)r)\,.\]
\end{corollary}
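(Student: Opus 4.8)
The plan is to reproduce, essentially verbatim, the derivation of Corollary~\ref{cor:nested-sequence-inside-a-ceiling} from Theorem~\ref{thm:rigidity-inside-wall}, but feeding in the $\cE^\eta$-conditional rigidity bound established just above (the displayed theorem with conclusion $\mu_n^\mp(\bI_{W_1,\ldots,W_r}\mid\bI_\bW,\cE^\eta_{A^c})\le\exp(-(\beta-C)\sum_i\fm(W_i))$) in place of Theorem~\ref{thm:rigidity-inside-wall}. Fix $x\in S_n$, a threshold $r\ge 1$, an admissible collection $\bW_n=(W_z)_{z\notin S_n}$ with $\rho(\bW_n)\subset S_n^c$, and a family $\eta=\{\eta_z\}_{z\in S_n}$. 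The first step is the standard decomposition of the event $\{\fm(\fW_{x,S_n})\ge r\}$ according to the realization of the nested sequence of walls $\fW_{x,S_n}$: on $\{\fW_{x,S_n}=\bV\}$ for an admissible collection $\bV=(V_1,\dots,V_s)$ of walls all nested in $S_n$ and nesting $x$, the interface lies in $\bI_{\bV}$, so
\[
\big\{\fm(\fW_{x,S_n})\ge r\big\}\cap\bI_{\bW_n}\cap\cE^\eta_{S_n}\ \subset\ \bigcup_{\bV:\ \fm(\bV)\ge r}\bI_{\bV}\,,
\]
the union over (standardizations of) admissible collections $\bV$ nested in $S_n$ and nesting $x$.

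Next I would apply the $\cE^\eta$-conditional version of Theorem~\ref{thm:rigidity-inside-wall} to each such $\bV$: since every $V_i\Subset S_n$ forces $\rho(\hull{V}_i)\subset S_n$, which is disjoint from $\rho(\bW_n)\subset S_n^c$, the geometric hypothesis $\rho(\bW_n)\cap\rho(\bigcup_i\hull{V}_i)=\emptyset$ holds, and the theorem (with $A=S_n^c$, hence $A^c=S_n$) gives $\mu_n^\mp(\bI_{\bV}\mid \bI_{\bW_n},\cE^\eta_{S_n})\le \exp(-(\beta-C)\fm(\bV))$. The remaining ingredient is the enumeration bound already recorded in the text: the number of admissible nested collections $\bV$ that can arise as $\fW_{x,S_n}$ and have $\fm(\bV)=M$ is at most $C^M$ for a universal $C$ (imposing the further size caps $\fm(V_i)\le\eta_{z}$ coming from $\cE^\eta_{S_n}$ only shrinks this set, so the bound is unaffected). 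A union bound over $\bV$ and over $M\ge r$ then yields
\[
\mu_n^\mp\big(\fm(\fW_{x,S_n})\ge r\mid\bI_{\bW_n},\cE^\eta_{S_n}\big)\le\sum_{M\ge r}C^M e^{-(\beta-C)M}=\sum_{M\ge r}e^{-(\beta-C-\log C)M}\le e^{-(\beta-C')r}
\]
for all $\beta>\beta_0$ large enough that $\beta-C-\log C>0$, after summing the geometric series and renaming the constant; this is the claimed bound (with the $C$ in the statement taken to be $C'$).

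I do not expect a genuine obstacle: the argument is a routine enumeration combined with the already-proven conditional rigidity estimate. The only points requiring a line of care are (i) verifying that each realization $\bV$ of $\fW_{x,S_n}$ is an admissible collection of walls all nested in $S_n$ whose hulls project into $S_n$, so that the hypotheses of the conditional rigidity theorem are met; and (ii) observing that the $C^M$ enumeration bound is monotone under the extra conditioning $\cE^\eta_{S_n}$, i.e.\ restricting to wall collections obeying the caps $\eta$ can only decrease the count. Both are immediate from the definitions in Section~\ref{sec:prelim}, exactly as in the proof of Corollary~\ref{cor:nested-sequence-inside-a-ceiling}.
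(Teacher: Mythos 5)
Your proposal is correct and follows exactly the route the paper intends: the corollary is stated immediately after the $\cE^\eta$-conditional version of Theorem~\ref{thm:rigidity-inside-wall} precisely because it follows by the same enumeration (at most $C^M$ nested collections $\fW_{x,S_n}$ with $\fm=M$, union bound, geometric series) that derives Corollary~\ref{cor:nested-sequence-inside-a-ceiling} from the unconditional theorem. Your two points of care — that each realization $\bV$ satisfies the geometric hypothesis of the conditional theorem, and that the conditioning only shrinks the enumeration — are exactly the right ones and are handled correctly.
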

We stress that the monotonicity in excess energy of $\Phi$ does not carry to the more complicated maps in~\cite{GL19a,GL19b,GL20} where the quantities $\alpha_h$ governing the large deviation rates in Proposition~\ref{prop:max-thick} arise. Still, we are able to extract the following extension of the lower bounds~\eqref{eq:max-lower-crude} and~\eqref{eq:max-thick-lower-large-h} stated in Propositions~\ref{prop:max-generic} and~\ref{prop:max-thick}, which will be needed in Section~\ref{sec:upper-bound}. 
(In what follows, we retain the definition of $S_{n,h}^\circ$ from the latter proposition in terms of $2\kappa_0$ rather than $\kappa_0$---placed there due to the role of $\kappa_0$ in the upper bound~\eqref{eq:max-thick-upper-large-h}, which we do not include here---for the sake of consistency.)
\begin{proposition}\label{prop:max-wall-cluster}
There exist $\beta_0,C,\kappa_0>0$ so the following holds for all $\beta>\beta_0$. Let $S_n\subset\cL_{0,n}$ be a simply-connected set such that $|S_n|\to\infty$ with $n$, and let $\bW_n = \{ W_z : z\notin S_n\}$ be such that $\rho(\bW_n) \subset S_n^c$. \begin{enumerate}
    \item For every $h = h_n \ge 1$ and every $\eta$,
\begin{align} 
\mu^\mp_{n}\left( \bar M_{S_n}^\downarrow< h \given \bI_{\bW_n}\,,\,\cE^\eta_{S_n}\right) &\geq \exp\big(- |S_n| e^{-(4\beta -C)h}\big)\,, \label{eq:max-lower-crude-wallcluster}
\end{align}
\item Further suppose that $\isodim(S_n)\leq \sqrt \beta$. For every $ \sqrt{\log |S_n| } \leq h \leq \frac1{\sqrt\beta}\log |S_n|$ and every $\eta$ satisfying that
     $\eta_z\geq 5h$ for all $z\in S_{n,h}^\circ := \{x\in S_n : d(x,\partial S_n) \geq e^{2\kappa_0 h}\}$, 
\begin{align}\label{eq:max-thick-lower-large-h-wallcluster} 
\mu^\mp_{n}\left( \bar M_{S_n}^\downarrow < h  \mid \bI_{\bW_n}\,,\,\cE^\eta_{S_n}\right) &\geq \exp\left(
- (1+\epsilon_\beta)|S_n| e^{-\alpha_h}\right)\,.
\end{align}
\end{enumerate}
\end{proposition}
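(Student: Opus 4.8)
The plan is to deduce both bounds from the unconditional lower bounds \eqref{eq:max-lower-crude} and \eqref{eq:max-thick-lower-large-h} already proved in Propositions~\ref{prop:max-generic} and~\ref{prop:max-thick}, exploiting the delete‑only monotonicity recorded above and arranging that the conditioning on $\cE^\eta$ essentially cancels between numerator and denominator. For part~(1), recall that \eqref{eq:max-lower-crude} in \cite{GL20} follows from the \emph{deterministic} inclusion $\cG^{\fm}_{S_n}(4h)\subseteq\{\bar M_{S_n}^\downarrow<h\}$ (a downward oscillation of depth $h$ above a site forces the nested wall sequence through it to have excess energy at least a fixed multiple of $h$), together with a first‑moment / block‑decomposition estimate on $\mu_n^\mp(\cG^{\fm}_{S_n}(4h)\mid\bI_{\bW_n})$ assembled only from the conditional exponential tail bound of Corollary~\ref{cor:nested-sequence-inside-a-ceiling} and the decorrelation furnished by Theorem~\ref{thm:rigidity-inside-wall}. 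The inclusion is unaffected by extra conditioning, and both probabilistic inputs have just been upgraded to hold conditionally on $\cE^{\eta}_{S_n}$ (Corollary~\ref{cor:nested-sequence-inside-a-ceiling-E-eta} and the $\cE^\eta$‑extension of Theorem~\ref{thm:rigidity-inside-wall}); re‑running the argument verbatim with these substitutions gives $\mu_n^\mp(\cG^{\fm}_{S_n}(4h)\mid\bI_{\bW_n},\cE^{\eta}_{S_n})\ge\exp(-|S_n|e^{-(4\beta-C)h})$, which is \eqref{eq:max-lower-crude-wallcluster}.

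For part~(2), set $L:=S_n\setminus S_{n,h}^\circ$ and let $G$ be the good event on the right of \eqref{eq:max-thick-lower-large-h}, so $\mu_n^\mp(G\mid\bI_{\bW_n})\ge\exp(-(1+\epsilon_\beta)|S_n|e^{-\alpha_h})$ and, on $G$, one has $\fm(W_z)\le\fm(\fW_{z,S_n})<5h$ for every $z\in S_n$ and $\fm(W_z)<4h$ for $z\in L$. Since $\eta_z\ge 5h$ throughout $S_{n,h}^\circ$, the first estimate gives $G\subseteq\cE^{\eta}_{S_{n,h}^\circ}$, whence $G\cap\cE^{\eta}_{S_n}=G\cap\cE^{\eta}_{L}$; combined with $\mu_n^\mp(\cE^{\eta}_{S_n}\mid\bI_{\bW_n})\le\mu_n^\mp(\cE^{\eta}_{L}\mid\bI_{\bW_n})$ this yields
\[
\mu_n^\mp\bigl(\bar M_{S_n}^\downarrow<h\mid\bI_{\bW_n},\cE^{\eta}_{S_n}\bigr)\ \ge\ \mu_n^\mp\bigl(G\mid\bI_{\bW_n},\cE^{\eta}_{S_n}\bigr)\ \ge\ \frac{\mu_n^\mp(G\cap\cE^{\eta}_{L}\mid\bI_{\bW_n})}{\mu_n^\mp(\cE^{\eta}_{L}\mid\bI_{\bW_n})}\ =\ \mu_n^\mp\bigl(G\mid\bI_{\bW_n},\cE^{\eta}_{L}\bigr),
\]
and, using in the same way that $\fm(W_z)<4h$ on $L$ on the event $G$ (split $L$ according to whether $\eta_z\ge 4h$), we may assume without loss of generality that $\eta_z<4h$ for every $z\in L$. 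It therefore remains to bound $\mu_n^\mp(G\mid\bI_{\bW_n},\cE^{\eta}_{L})$ below by $\exp(-(1+\epsilon_\beta)|S_n|e^{-\alpha_h})$.

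To finish I would condition on the collection $\mathbf w=\{W_z\}_{z\in L}$ of walls indexed by $L$ and average. On $G$ every such wall has $\fm(W_z)<4h$, hence diameter $O(h)\ll e^{2\kappa_0 h}$, so the unrevealed part of $S_n$ consists of one large ceiling $\cC_{\mathbf w}$ whose projection contains the inner core $\{x\in S_n:d(x,\partial S_n)\ge 2e^{2\kappa_0 h}\}$, together with a collection of small pockets; moreover an $\cE^{\eta}$‑conditional form of Corollary~\ref{cor:nested-sequence-of-multiple-faces-in-a-ceiling} confines the total excess energy of $\mathbf w$, so these pockets have total area $o(|S_n|)$. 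Deleting from $S_n$ the pockets together with a minimal (Euclidean TSP) set of faces joining each of them to $\partial S_n$ leaves a simply‑connected $T_{\mathbf w}\subseteq\rho(\cC_{\mathbf w})$ with $|T_{\mathbf w}|=(1-o(1))|S_n|$, to which \eqref{eq:max-thick-lower-large-h} applies conditionally on $\bI_{\bW_n}$ and $\mathbf w$; since its good event on $T_{\mathbf w}$, together with $\mathbf w$ and a crude flatness bound (part~(1)) on the small pockets, entails $G$, this produces $\mu_n^\mp(G\mid\bI_{\bW_n},\mathbf w)\ge\exp(-(1+\epsilon_\beta)|T_{\mathbf w}|e^{-\alpha_h})\ge\exp(-(1+\epsilon_\beta)|S_n|e^{-\alpha_h})$, and averaging over the (conditionally typical) $\mathbf w$ concludes.

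The main obstacle is precisely this last step: after revealing $\mathbf w$ one must verify that $T_{\mathbf w}$ is simply connected and that $\isodim(T_{\mathbf w})$ stays within the scope of Proposition~\ref{prop:max-thick}. Simple connectedness is the reason for reconnecting the pockets to $\partial S_n$, and the isoperimetric control is where the hypotheses $\isodim(S_n)\le\sqrt\beta$ and $h\le\tfrac1{\sqrt\beta}\log|S_n|$ enter: they force the collar $\{x:d(x,\partial S_n)<2e^{2\kappa_0 h}\}$ to have area $|S_n|^{1-\Omega(1/\sqrt\beta)}$, and the classical Euclidean TSP estimate keeps the connecting set of comparable size, so that the passage from $S_n$ to $T_{\mathbf w}$ changes its area and boundary by only $|S_n|^{1-\Omega(1/\sqrt\beta)}$ and $\isodim(T_{\mathbf w})$ remains bounded by a constant multiple of $\sqrt\beta$—which, after adjusting $\beta_0$ and $\kappa_0$, is all that \eqref{eq:max-thick-lower-large-h} needs. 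This is the same mechanism—reveal small walls, reconnect through a minimal (TSP) set, keep the isoperimetric dimension in check—that underlies the proof of \eqref{eq:A-subcritical-h}, here in the much easier regime where every revealed wall has diameter $O(h)$.
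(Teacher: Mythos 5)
Your argument for part~(1) is the same as the paper's: both re-run the block/iterative-exposure proof of \eqref{eq:max-lower-crude} from~\cite{GL20}, substituting the $\cE^\eta$-conditional tail bound (Corollary~\ref{cor:nested-sequence-inside-a-ceiling-E-eta}) for Corollary~\ref{cor:nested-sequence-inside-a-ceiling}. Your reduction at the start of part~(2) is also correct and is morally the paper's key observation: since the good event $G$ of \eqref{eq:max-thick-lower-large-h} forces $\fm(\fW_{z,S_n})<5h\le\eta_z$ on $S_{n,h}^\circ$, the constraint $\cE^\eta$ is only active on the collar $L=S_n\setminus S_{n,h}^\circ$ (and only where $\eta_z<4h$).

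The gap is in your final step. You condition on the full realization $\mathbf w=\{W_z\}_{z\in L}$ and then invoke \eqref{eq:max-thick-lower-large-h} for $T_{\mathbf w}$ ``conditionally on $\bI_{\bW_n}$ and $\mathbf w$.'' But the event $\{W_z=w_z\ \forall z\in L\}$ carries negative information (many $w_z=\varnothing$, the rest with $\fm(w_z)\le\eta_z$), and this negative information constrains walls indexed by faces of $T_{\mathbf w}$ as well: a wall indexed by $u\in T_{\mathbf w}$ that nests some $z\in L$ and is edge-adjacent to it in projection would force $W_z\neq\varnothing$. So the conditional law on $T_{\mathbf w}$ is not of the form $\mu_n^\mp(\cdot\mid\bI_{\bW'})$ required by Proposition~\ref{prop:max-thick}; it is precisely an $\cE^{\eta'}$-type conditioning, i.e.\ the very extension you are trying to prove. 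This makes the step circular (or at best incomplete). A second problem: the good event $G$ lives on all of $S_n$, and for a nonnegligible set of collar realizations $\mathbf w$ compatible with $\cE^\eta_L$ (e.g.\ a wall with $\fm(W_z)$ close to $4h$ creating a depression of depth $>h$, or pockets at negative relative height), one has $\mu_n^\mp(G\mid\bI_{\bW_n},\mathbf w)=0$; so ``averaging over conditionally typical $\mathbf w$'' requires an a priori bound on the conditional law of $\mathbf w$ given $\cE^\eta_L$ — which again needs the $\cE^\eta$-conditional tail estimates and essentially forces you back into the collar computation. The paper avoids both issues by never reducing to the unconditional statement: it re-runs the tile decomposition of~\cite{GL20}, handling the collar with Corollary~\ref{cor:nested-sequence-inside-a-ceiling-E-eta} and noting that on each bulk tile $Q_i$ the event $\Xi_i$ being constructed is already contained in $\bigcap_{z\in Q_i}\{\fm(\fW_{z,S_n})<5h\}\subseteq\cE^\eta_{Q_i}$, so the extra conditioning cannot decrease $\mu_n^\mp(\Xi_i\mid\cF_i)$.
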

\begin{proof}
The proof will follow the same arguments used to establish~\eqref{eq:max-lower-crude}, and~\eqref{eq:max-thick-lower-large-h} in~\cite[Proofs of (6.1) and (6.3)]{GL20}; hence we will only describe the modifications required to adapt it to the conditional space given $\cE_{S_n}^\eta$.

The proof of~\eqref{eq:max-lower-crude} used the fact that $\{\bar M_{S_n}^\downarrow< h\} \supset \{\cG_{S_n}^{\fm}(4h)\}$ (defined in~\eqref{eq:G-event}) and shows that
\[\mu_n^\mp(\cG^{\fm}_{S_n}(4h) \mid \bI_{\bW_n}) \geq \exp(-|S_n|e^{-(4\beta-C)h})\]
by iteratively exposing $\fG_{x,S_n}$ (the sequence of nested walls $\fW_{x,S_n}$ which nest $x$ in $S_n$, as well as the walls nested in this sequence of walls) along a corresponding filtration $\cF_i$.  One uses the fact that 
\begin{align}\label{eq:G-hat-event}
    \cG_{S_n}^{\fm}(4h)= \bigcap_{x\in S_n} \widehat G_x\,, \qquad \mbox{where}\qquad \widehat G_x=  \bigcap_{u\in \rho(\smhull\fG_{x,S_n})} \{\fm(\fW_{u,S_n})<4h\}\,,
\end{align}
and that, when revealing $\fG_{x,S_n}$, either $\diam(\fG_{z_i,S_n})\geq 4h$, which has probability at most $e^{-(4\beta -C)h}$ under $\mu_n^\mp(\cdot\mid\cF_i)$, or there are $O(h^2)$ sites $z\in\rho(\hull\fG_{x,S_n})$, and $\mu_n^\mp(\fm(\fW_{z,S_n})\geq 4h \mid\cF_i)\leq e^{-(4\beta-C)h}$ for any given~$z$, both bounds due to Corollary~\ref{cor:nested-sequence-inside-a-ceiling}. Combined, $\mu_n^\mp(\widehat G_x \mid\cF_i)\geq 1-O(h^2 e^{-(4\beta -C)h} )\geq \exp(-e^{-(4\beta-C')h})$. 
Applying the exact same argument while appealing to Corollary~\ref{cor:nested-sequence-inside-a-ceiling-E-eta} in lieu of Corollary~\ref{cor:nested-sequence-inside-a-ceiling} now shows that~\eqref{eq:max-lower-crude-wallcluster} holds conditionally on $\cE_{S_n}^\eta$.

For the proof of~\eqref{eq:max-thick-lower-large-h}, we first tiled $S_{n,h}^\circ$ by $L\times L$ boxes $Q_i$ with $L\sim \frac{1}{2}e^{2\kappa_0 h}$, with $Q$ denoting their union. We then applied the above analysis to bound the probabilities of $\widehat G_z$ for $z\in S\setminus Q$. Thereafter, breaking $Q_i$ into its bulk $Q'_i\subset Q_i$ and remainder $Q_i\setminus Q'_i$, one gives a lower bound on an event 
\[D_i :=\bigcap_{z\in Q'_i}\widehat H_z \cap \bigcap_{z\in Q_i\setminus Q'_i}\widehat G^{\fm}_z(4h) \]
where $\widehat H_z$ is some event involving the pillar of $z$ in $S_n$ and is a subset of $\{\fm(\fW_{z,S_n})<5h\}$. In light of this fact, $D_i$ is already a subset of the event $\bigcap_{z\in Q_i} \{\fm(\fW_{z,S_n})<5h\}$. Since $\eta_z \ge 5h$ for all $z\in S_{n,h}^\circ$, this means $D_i$ is already a subset of the event $\mathcal E_{S_n}^\eta$, and thus 
$$\mu_n^{\mp}(D_i\mid \cF_i,\cE_{S_n}^\eta) \ge \mu_n^\mp(D_i\mid\cF_i)\,.$$
With this observation, the bounds from the proof in ~\cite{GL20} can be applied as is.
\end{proof}

\section{Basic bounds using the Ising measure without a floor}\label{sec:rough-bounds-no-floor}

The aim of this section is to establish a priori estimates on the number of total wall faces in the interface, the number of wall faces in any collection of $N\le n$ distinct walls, and the number of faces in ceilings that are small, say smaller than $n^{1.9}$, or not thick in that they have isoperimetric dimension much larger than $2$. To prove these we first lower bound the probability that the interface lies above a floor at height $0$, which gives one side of the bound of Proposition~\ref{prop:prob-positive-upper-lower}. 

\subsection{A map to lift the entire interface by \texorpdfstring{$k$}{k}}
We begin by introducing a simple but recurring tool we will use: a map to lift the entirety of the interface up by a height of $k$. Unlike height function models, even this simple map cannot be done in a bijective manner at the level of Ising interfaces, but the multiplicity of the map and the interactions it induces in the interface via the $\g$ term in Theorem~\ref{thm:cluster-expansion} are mild. 

\begin{definition}\label{def:shift-map}
For any $k\ge 1$, define the map $\Phi_{k}^{\uparrow}$ as the following map: for an interface $\cI$, 
\begin{enumerate}
    \item Let $\theta_\uparrow \cI$ be the shift of $\cI$ by the vector $(0,0,k)$;
    \item Let $B_k  = \sF(\partial \cL_{0,n}\times \llb 0,k\rrb)$, and let $\cI' = \theta_\uparrow \cI \oplus B_k$ (where $\oplus$ denotes the symmetric difference);
    \item Let $\Phi_k^{\uparrow}(\cI)$ be given by taking $\cI'$ and removing all finite $*$-connected components of $\cI' \cup (\cL_0 \setminus \cL_{0,n})$. 
\end{enumerate}
\end{definition}

The main use of this map is the following estimate on its effect on the mass of any set of interfaces when passed through $\Phi_k^\uparrow$, showing it only costs roughly $\exp( 4\beta kn)$ to lift the interface by $k$. 

\begin{proposition}\label{prop:shift-map-effect}
    Fix $\beta>\beta_0$. Consider any set of interfaces $A$, and any $k\ge 1$. Then, 
    \begin{align*}
        \frac{\mu_n^{\mp}(A)}{\mu_n^\mp(\Phi_k^\uparrow(A))}\le \exp( 4(\beta+ C) k n)\,.
    \end{align*}
Moreover, this holds under $\mufloor_n^\sfh$ for any $\sfh \ge 0$. 
\end{proposition}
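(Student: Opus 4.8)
The plan is to estimate, uniformly over interfaces $\cI$, both the multiplicity $|(\Phi_k^\uparrow)^{-1}(\cI')|$ of the map and the ratio of Ising weights $\mu_n^\mp(\cI)/\mu_n^\mp(\Phi_k^\uparrow(\cI))$, and then sum over a preimage. The key geometric observation is that the map $\Phi_k^\uparrow$ changes the interface in a controlled way: after shifting $\cI$ up by $k$ and adding the ``collar'' $B_k$ of $4kn$ vertical faces along $\partial\cL_{0,n}\times\llb 0,k\rrb$ (accounting for the fact that the Dobrushin boundary condition is pinned at height $0$), the only further modification is the deletion of finite $*$-connected components of $\cI'\cup(\cL_0\setminus\cL_{0,n})$, which only \emph{removes} faces. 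Hence $|\Phi_k^\uparrow(\cI)| \le |\cI| + 4kn$, so by Theorem~\ref{thm:cluster-expansion},
\[
\frac{\mu_n^\mp(\cI)}{\mu_n^\mp(\Phi_k^\uparrow(\cI))}
= \exp\Big(-\beta(|\cI| - |\Phi_k^\uparrow(\cI)|) + \sum_{f\in\cI}\g(f,\cI) - \sum_{f\in\Phi_k^\uparrow(\cI)}\g(f,\Phi_k^\uparrow(\cI))\Big)
\le \exp\big(\beta\cdot 4kn + C'kn\big)
\]
for a suitable $C'$: the energy term is at most $4\beta kn$, while the $\g$-terms contribute $O(kn)$ by the uniform bound~\eqref{eq:g-uniform-bound} applied to the at most $4kn$ faces that are created, together with~\eqref{eq:g-exponential-decay} applied to the faces of $\cI$ far from the collar (those at distance $\gg 1$ from $\partial\cL_{0,n}$ see an essentially unchanged local picture, since a global vertical shift does not change congruence classes and the collar is localized near the boundary, so their $\g$-differences are summably small). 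I would absorb all of these $O(kn)$ contributions into the constant, writing the bound as $\exp(4(\beta+C)kn)$ after possibly enlarging $C$.

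Next, I would bound the multiplicity of the map. Given $\cI' = \Phi_k^\uparrow(\cI)$, reconstructing a candidate preimage requires: (a) choosing the finite $*$-connected components of faces that were deleted in step (3)—but such a deleted component, together with $\cI'$, must have formed (a shift of) a valid interface, so it is attached to $\cI'$ and its total size is controlled; more importantly, (b) undoing the shift and the collar is deterministic. The cleanest route is to note that the map is ``almost injective'': the set of faces that $\Phi_k^\uparrow$ can delete beyond the collar is forced by validity of the interface, and a crude counting argument (or, as in analogous arguments in~\cite{GL20}, a Peierls-type enumeration) shows the number of preimages of any fixed $\cI'$ is at most $\exp(Ckn)$ for a universal $C$. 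Then
\[
\mu_n^\mp(A) = \sum_{\cI\in A}\mu_n^\mp(\cI)
\le \sum_{\cI\in A}\mu_n^\mp(\Phi_k^\uparrow(\cI))\,e^{4(\beta+C)kn}
\le e^{4(\beta+C')kn}\sum_{\cI'\in\Phi_k^\uparrow(A)}\mu_n^\mp(\cI')
= e^{4(\beta+C')kn}\,\mu_n^\mp(\Phi_k^\uparrow(A))\,,
\]
which is the claimed bound after renaming the constant.

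Finally, the statement under $\mufloor_n^\sfh$: since $\Phi_k^\uparrow$ only lifts the interface and deletes faces, it maps $\Ifloor$ into itself (lifting by $k\ge 1$ can only help the floor constraint, and deleting $*$-connected components does not lower any surviving face), so $\Phi_k^\uparrow(A\cap\Ifloor)\subseteq \Phi_k^\uparrow(A)\cap\Ifloor$; dividing the already-established inequality $\mu_n^\mp(A\cap\Ifloor)\le e^{4(\beta+C)kn}\mu_n^\mp(\Phi_k^\uparrow(A)\cap\Ifloor)$ through by $\mu_n^\mp(\Ifloor)$ gives the conditional version. I expect the main obstacle to be part (a)/(b) above—making precise why the multiplicity is only $\exp(O(kn))$ and why the $\g$-corrections near the collar sum to $O(kn)$ rather than something larger—since both require a careful but routine accounting of the faces within $O(1)$ of $\partial\cL_{0,n}\times\llb 0,k\rrb$; the energy bookkeeping itself is immediate.
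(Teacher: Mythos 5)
Your overall architecture (weight ratio times multiplicity, then sum over the preimage; floor case via $\Phi_k^\uparrow(\Ifloor)\subset\Ifloor$) matches the paper's, but there is a genuine gap in the multiplicity step, and a related slip in the weight estimate. Step (3) of Definition~\ref{def:shift-map} deletes the finite $*$-connected components of $\cI'\cup(\cL_0\setminus\cL_{0,n})$; each such component must touch the collar $B_k$, but it is \emph{not} confined to a neighborhood of the collar and can contain arbitrarily many faces (a large structure of $\cI$ near $\partial\Lambda_n$ can become a bubble after the shift). Consequently your claimed uniform bound ``the number of preimages of any fixed $\cI'$ is at most $\exp(Ckn)$'' is false: the witness $\cB=\cI'\setminus\cJ$ that reconstructs the preimage satisfies only $|\cB|\le 4kn+|\fm(\cJ;\cI)|$, and $\fm(\cJ;\cI)$ can be arbitrarily negative, so the preimage count at excess-energy level $M$ is $\exp(C(kn+M))$ with $M$ unbounded. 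For the same reason, the $\g$-corrections are not $O(kn)$ but $O(kn+|\fm(\cJ;\cI)|)$, since the symmetric difference $\theta_\uparrow\cI\oplus\cJ$ contains all deleted bubbles (the pointwise ratio bound happens to survive because the energy gain $e^{\beta\fm(\cJ;\cI)}$ dominates, but your justification of it does not).

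The missing idea, which is how the paper closes the argument, is to \emph{stratify the preimage by} $M=\fm(\cJ;\cI)\in(-\infty,4kn]$ and couple the two estimates: preimages at level $M$ have weight ratio at most $e^{\beta M+C(kn+|M|)}$ and multiplicity at most $e^{C(kn+|M|)}$, so the sum over $M$ converges precisely because for $M\to-\infty$ the combined factor is $e^{-(\beta-2C)|M|+Ckn}$. Multiplying a uniform maximal ratio by a (nonexistent) uniform multiplicity, as you propose, cannot work; the energy gain from deleting many faces is what pays for the entropy of enumerating large deleted bubble sets.
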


We begin by arguing that the map $\Phi_k^\uparrow$ is well-defined and describing some of its key properties. 

\begin{lemma}\label{lem:shift-map-properties}
For every $k\ge 1$, $\Phi_k^\uparrow$ is well-defined on the set of all interfaces in $\Lambda_n$. For every $\cI$ it has $\fm(\Phi_k^\uparrow(\cI);\cI) \le 4kn$.
Furthermore, if $\cI \in \Ifloor$, then $\Phi_k^{\uparrow}(\cI)\setminus B_k \subset \cL_{\geq k-\sfh}$, and
 for every ceiling $\cC\in \mathfrak C(\cI)$, its shift up by height~$k$ is a subset of a ceiling of $\Phi_k^{\uparrow}(\cI)$.  
\end{lemma}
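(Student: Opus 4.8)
The plan is to verify each of the three assertions in Lemma~\ref{lem:shift-map-properties} in turn, all of which follow from unpacking Definition~\ref{def:shift-map} of $\Phi_k^\uparrow$ and basic properties of the interface/wall decomposition.

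\textbf{Well-definedness and the bound $\fm(\Phi_k^\uparrow(\cI);\cI)\le 4kn$.} First I would check that $\cI' = \theta_\uparrow\cI \oplus B_k$ has exactly one infinite $*$-connected component (together with $\cL_0\setminus\cL_{0,n}$), so that after deleting all finite $*$-connected components of $\cI'\cup(\cL_0\setminus\cL_{0,n})$ one is left with a genuine interface in $\Lambda_n$ attached to the correct boundary condition at height $0$ outside $\Lambda_n$. The point is that $B_k = \sF(\partial\cL_{0,n}\times\llbracket 0,k\rrbracket)$ is exactly the ``skirt'' of vertical faces that re-attaches the shifted interface $\theta_\uparrow\cI$ (which sits at height $k$ along $\partial\Lambda_n$) back down to $\cL_0$ outside $\Lambda_n$; since $\theta_\uparrow\cI$ agrees with $\cL_k\setminus\sF(\Lambda_n)$ far away, the symmetric difference with $B_k$ produces a face set whose unique infinite component is the desired lifted interface. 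For the excess-energy bound: $\Phi_k^\uparrow(\cI)$ is obtained from $\theta_\uparrow\cI$ (which has $|\theta_\uparrow\cI| = |\cI|$) by adding at most the $|B_k| = 4kn$ faces of $B_k$ and then only \emph{deleting} faces (the finite $*$-connected components), so $|\Phi_k^\uparrow(\cI)| \le |\cI| + 4kn$, giving $\fm(\Phi_k^\uparrow(\cI);\cI) = |\Phi_k^\uparrow(\cI)| - |\cI| \le 4kn$.

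\textbf{The floor property $\Phi_k^\uparrow(\cI)\setminus B_k\subset\cL_{\ge k-\sfh}$ and the ceiling-shift property.} Suppose $\cI\in\Ifloor$, i.e.\ $\cI\subset\cL_{\ge -\sfh}$. Then $\theta_\uparrow\cI\subset\cL_{\ge k-\sfh}$ by construction. The only faces of $\cI' = \theta_\uparrow\cI\oplus B_k$ not coming from $\theta_\uparrow\cI$ (hence possibly below height $k-\sfh$) are those of $B_k$, which sit at heights in $\llbracket 0,k\rrbracket$ over $\partial\cL_{0,n}$; since removing finite components can only delete faces, every face of $\Phi_k^\uparrow(\cI)$ not in $B_k$ lies in $\theta_\uparrow\cI$, hence at height $\ge k-\sfh$. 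For the ceiling property: let $\cC$ be a ceiling of $\cI$, so $\cC$ is a $*$-connected set of horizontal ceiling faces at a common height $\hgt(\cC)$, and its shift $\theta_\uparrow\cC$ is a $*$-connected set of horizontal faces at height $\hgt(\cC)+k$ inside $\theta_\uparrow\cI$. I must check (i) $\theta_\uparrow\cC$ survives the deletion step, and (ii) each such face is still a \emph{ceiling} face of $\Phi_k^\uparrow(\cI)$, i.e.\ no other surviving face projects to the same place. For (i): a ceiling $\cC$ of $\cI$ is $*$-connected to the boundary ceiling $\cL_0\setminus\sF(\Lambda_n)$ through $\cI$ (every face of $\cI$ is, by definition of the interface as an infinite $*$-connected component), so $\theta_\uparrow\cC$ remains $*$-connected to the lifted boundary configuration and is not removed — here one uses that adjacency relations among faces of $\theta_\uparrow\cI$ are preserved under the shift and that the addition of $B_k$ only adds connectivity. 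For (ii): $\rho$ commutes with vertical shifts, so $\rho(\theta_\uparrow\cC)=\rho(\cC)$; if some face $f'\in\Phi_k^\uparrow(\cI)$ had $\rho(f')=\rho(g)$ for $g\in\cC$, then $f'$ is either in $\theta_\uparrow\cI$ (so $\theta_\downarrow f'\in\cI$ projects onto $\rho(\cC)$, contradicting that $\cC$ is a ceiling, i.e.\ the column through $g$ meets $\cI$ only in $g$) or $f'\in B_k$, but $B_k$ consists of vertical faces over $\partial\cL_{0,n}$ whereas ceiling faces of $\cI$ (other than the boundary ceiling) project into the \emph{interior} of $\cL_{0,n}$ — and the boundary ceiling case is handled directly. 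Hence $\theta_\uparrow\cC$ is a set of ceiling faces of $\Phi_k^\uparrow(\cI)$, all at height $\hgt(\cC)+k$ and $*$-connected, so it lies inside a single ceiling of $\Phi_k^\uparrow(\cI)$.

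\textbf{Expected main obstacle.} The routine parts are the $4kn$ excess-energy count and the height bookkeeping for the floor property. The part requiring genuine care is the claim that the shift of a ceiling lands inside a single ceiling of the new interface — specifically arguing that no ceiling face of $\cI$ gets deleted by the ``remove finite $*$-connected components'' step and that no previously-horizontal ceiling face of $\cI$ becomes a non-ceiling (wall) face after the operation because of a newly-coincident projection with a face of $B_k$ or with a face near the boundary. This amounts to a careful analysis of what the deletion step does near $\partial\Lambda_n$ and of the interaction between $B_k$ and the lifted interface, so I would isolate it as a small sub-claim and dispatch it using Lemma~\ref{lem:wall-ceiling-bijection} and the fact that $*$-connectivity to the (lifted) exterior ceiling is preserved by the construction.
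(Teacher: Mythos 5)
Your treatment of well-definedness, the bound $\fm(\Phi_k^\uparrow(\cI);\cI)\le 4kn$, and the floor property $\Phi_k^\uparrow(\cI)\setminus B_k\subset\cL_{\ge k-\sfh}$ matches the paper's proof. The difference — and the problem — is in the ceiling-shift property, exactly the step you flag as the crux but then do not actually carry out.

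Your sub-claim (i), that $\theta_\uparrow\cC$ survives the deletion step because ``$*$-connectivity to the (lifted) exterior ceiling is preserved by the construction,'' has a gap. The set fed into the deletion step is $\cI'=\theta_\uparrow\cI\oplus B_k$, a \emph{symmetric difference}: faces of $\theta_\uparrow\cI$ lying in $B_k$ (i.e.\ faces of $\cI$ on $\partial\cL_{0,n}\times\llb -k,0\rrb$) are \emph{removed}, so it is not true that ``the addition of $B_k$ only adds connectivity.'' Moreover, survival is not about being connected to $\theta_\uparrow\cC$'s old neighbours; it is about lying in the unique infinite $*$-connected component of $\cI'\cup(\cL_0\setminus\cL_{0,n})$, whose exterior part sits at height $0$ while $\theta_\uparrow\cI$ sits $k$ levels higher near $\partial\Lambda_n$ — precisely the region where the construction rearranges faces. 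So the connectivity bookkeeping you defer is the entire content of the claim, and as written the argument does not close. Your sub-claim (ii) (no coincident projections, since $\rho$ commutes with vertical shifts and $B_k$ consists of vertical faces) is fine.

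The paper avoids this by arguing at the level of spin configurations rather than face sets: a horizontal face $f$ is a ceiling face iff every spin in the column through $f$ is plus below $f$ and minus above $f$ (\cite[Observation~4.10]{GL19b}). The deletion step is exactly the flipping of bubbles (finite $*$-connected monochromatic components) of the shifted spin configuration, and no bubble can intersect the column through $f+(0,0,k)$ — the spins there are connected to the infinite plus component below and the infinite minus component above. Hence no spin in that column is flipped, so $f+(0,0,k)$ is again a ceiling face, giving survival and ceiling-ness simultaneously. If you want to keep your face-set formulation, you would need to supply an argument of essentially this strength for why shifted ceiling faces cannot bound a finite component of $\cI'\cup(\cL_0\setminus\cL_{0,n})$; the cleanest fix is to adopt the spin-configuration characterization.
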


\begin{proof}
We begin with the well-definedness of $\Phi_k^{\uparrow}$ on the set of all interfaces on $\Lambda_n$. Consider the spin configuration obtained by taking $\sigma(\cI)$ and shifting it by $(0,0,k)$; this is exactly the spin configuration whose set of separating faces are $\cI' \oplus B_k$. Taking that spin configuration and flipping the spins in all \emph{bubbles}, i.e., finite $*$-connected sets of minus or plus spins, then yields a configuration $\Phi_k^{\uparrow}(\sigma(\cI))$, say, whose set of separating faces is exactly $\Phi_k^{\uparrow}(\cI)$. Moreover, that set of separating faces corresponds to an interface per Remark~\ref{rem:interface-spin-config} as the spin configuration has no finite $*$-connected plus or minus components.

Next we turn to the excess energy generated by the map. Notice that the map consists of the addition of at most $4kn$ faces in step (2), and then removes some faces in step (3). As such, we have for every $\cI$ that  $\fm(\Phi_k^\uparrow(\cI);\cI) = |\Phi_k^\uparrow(\cI)| - |\cI| \le 4kn$. 

Turning now to the properties of the range of the map, consider any $\cI \in \Ifloor$. The map $\Phi_k^\uparrow$ raises all faces of $\cI$ by $k$, and only \emph{adds} faces of $B_k$. Namely, $\Phi_k^\uparrow(\cI) \subset \theta_\uparrow \cI \cup B_k$, and $\theta_\uparrow  \cI \in \Ifloor[\sfh-k]$.  

To prove the last property, recall that a horizontal face $f$ is a ceiling face in $\cI$ if and only if all spins of $\sigma(\cI)$ in the column through $f$ are plus below $f$, and minus above $f$ (see~\cite[Observation 4.10]{GL19b}). Consider a ceiling face $f\in \cI$; from earlier in this proof, in the spin configuration $\sigma(\Phi_k^\uparrow(\cI))$, this entire column is shifted up vertically by $k$ (shifting $f$ to $f+(0,0,k)$) and then all bubbles in the resulting spin configuration are deleted. However, no bubbles can intersect the column through $f + (0,0,k)$ because if they did, that would violate the property that all spins below $f+(0,0,k)$ are plus and all spins above are minus. As such, no spins are flipped in this column, and the face $f+(0,0,k)$ must be a ceiling face of $\Phi_k^\uparrow(\cI)$. If the set of ceiling faces of $\cI$ are a subset of those of $\Phi_k^\uparrow(\cI)$, then evidently any connected component of them in $\cI$ will be a subset of a connected component of them in $\Phi_k^\uparrow(\cI)$, concluding the proof.  
\end{proof}

We now turn to the proof of  Proposition~\ref{prop:shift-map-effect}.   

\begin{proof}[\textbf{\emph{Proof of Proposition~\ref{prop:shift-map-effect}}}]
The proof of the proposition consists of taking into account the weight change from application of the map $\Phi_k^\uparrow$, as well as its multiplicity.

We begin with considering the change in the weight of an interface in $A$ when mapped through $\Phi_k^\uparrow$. More precisely, we first show that for every $\cI$ and every $k\ge 1$, we have 
\begin{align}\label{eq:shift-map-weights}
    \Big|\log \frac{\mu_n^{\mp}(\Phi_k^\uparrow(\cI))}{\mu_n^\mp(\cI)} + \beta \fm(\Phi_k^\uparrow(\cI);\cI) \Big| \le C\big(kn + |\fm(\Phi_k^\uparrow(\cI);\cI)|\big)\,.
\end{align}
To show~\eqref{eq:shift-map-weights}, fix any $\cI$ and for ease of notation let $\cJ = \Phi_k^\uparrow(\cI)$. For $f\in \cI$, let $\theta_\uparrow f$ be its vertical shift by $k$. By Theorem~\ref{thm:cluster-expansion}, the left-hand side of~\eqref{eq:shift-map-weights} is at most  
\begin{align*}
    \Big|\sum_{f\in \cI} \g(f,\cI) - \sum_{f'\in \cJ}\g(f',\cJ)\Big| &  \le  \sum_{f: \theta_\uparrow f\in \theta_\uparrow \cI \setminus \cJ} |\g(f,\cI)| + \sum_{f'\in \cJ \setminus \theta_\uparrow \cI} |\g(f',\cJ)| \\
    & \quad + \sum_{f: \theta_\uparrow f\in \theta_\uparrow \cI \cap \cJ} |\g(f,\cI) - \g(\theta_\uparrow f, \cJ)|\,.
\end{align*}
By~\eqref{eq:g-uniform-bound}, the first sum on the right-hand side is at most $\bar K |\theta_{\uparrow} \cI\setminus \cJ|$ and the second sum is at most $\bar K |\cJ \setminus \theta_\uparrow \cI|$, so that together, they contribute at most $\bar K |\theta_\uparrow \cI \oplus \cJ|$. Turning to the last term,  by~\eqref{eq:g-exponential-decay}, and the fact that the radius $\br(f,\cI; \theta_\uparrow f,\cJ)$ is the same as $\br(\theta_\uparrow f;\theta_\uparrow \cI; \theta_\uparrow f,\cJ)$ which must be attained at a face of $\theta_\uparrow \cI \oplus \cJ$,
\begin{align*}
    \sum_{f: \theta_\uparrow f\in \theta_\uparrow \cI \cap \cJ} |\g(f,\cI) - \g(\theta_\uparrow f, \cJ)| \le \sum_{f'\in \theta_\uparrow \cI \cap \cJ} \bar K e^{ - \bar c \br(f';\theta_\uparrow \cI; f',\cJ)} \le \sum_{f'\in \theta_\uparrow \cI \cap \cJ} \sum_{g\in \theta_\uparrow \cI \oplus \cJ} \bar K e^{ - \bar c d(f',g)}\,.
\end{align*}
This is evidently at most $C \bar K |\theta_\uparrow \cI \oplus \cJ|$. To conclude, notice that 
\begin{align*}
      |\theta_\uparrow \cI \oplus \cJ| = |\cJ \setminus \theta_\uparrow \cI| + |\theta_\uparrow \cI \setminus \cJ|\,,  \qquad \mbox{and}\qquad    \fm(\cJ;\cI) = |\cJ \setminus \theta_\uparrow \cI| - |\theta_\uparrow \cI \setminus \cJ|\,.
\end{align*}
Rearranging the latter equality and taking absolute values, we see that $|\theta_\uparrow \cI \setminus \cJ|\le |\cJ\setminus \theta_\uparrow \cI| + |\fm(\cJ;\cI)|$. Since $|\cJ\setminus \theta_\uparrow \cI|\le 4kn$, we then get $|\theta_{\uparrow} \cI \oplus \cJ|\le 8kn + |\fm(\cJ;\cI)|$, yielding~\eqref{eq:shift-map-weights}.

We next consider the multiplicity of the map $\Phi_k^\uparrow$. The key claim here is that for every $M\ge 0$, for every $\cJ$ in the range of $\Phi_k^\uparrow$, we have 
\begin{align}\label{eq:shift-map-multiplicity}
   \big|\big\{\cI \in (\Phi_k^{\uparrow})^{-1}(\cJ): |\fm(\cJ;\cI)| = M\big\}\big| \le  \exp\big(C(kn+M)\big)\,.
\end{align}
To show this, consider the set of faces $\cB = \cI' \setminus \cJ$, i.e., all bubbles deleted in step (3) of Definition~\ref{def:shift-map}. We first claim that this serves as a witness to the pre-image $\cI$, i.e., for every fixed $\cJ$, given $\cB = \cB(\cI)$ one can uniquely reconstruct $\cI$. This is evidently done by noticing that  $\cI' = \cJ \cup \cB$, then taking $\cI' \oplus B_k$, and shifting it by the vector $(0,0,-k)$ to obtain $\cI$. 

Moreover, for an interface $\cI$ having $|\fm(\cJ;\cI)| = M$, we must have $|\cB| \le 4kn + M$ since 
\begin{align*}
    |\cI'|  - |\cI|  \le 4kn\,, \qquad \mbox{and} \qquad \fm(\cJ; \cI) =  |\cI'| - |\cI| - |\cB| \,.
\end{align*}
It therefore suffices for us to enumerate over possible face sets $\cB$ having at most $4kn + M$ many faces. We first note that every $*$-connected component of $\cB$ must intersect $B_k$; indeed since $\theta_{\uparrow}\cI \oplus \cI' \subset B_k$, if a $*$-connected component of $\cB$ doesn't intersect $B_k$, then its shift by $(0,0,-k)$ was a finite $*$-connected component of $\cI$ bounding a bubble, and therefore $\cI$ would not have been an admissible interface. 

In order to enumerate over the choice of $\cB$, we first choose some $N\le M$ for the number of $*$-connected components of $\cB$ and choose a subset of $N$ faces $f_1,\ldots,f_N$ amongst $B_k$ for some representative face of $B_k$ for each  of those $N$ components. We then choose corresponding values $M_1,\ldots,M_N$ such that $\sum M_i \le 4kn + M$ dictating how many faces belong to each of those components, and finally for each $f_i$, we enumerate over the possible $*$-connected components of faces in $\sF(\Lambda_n)$ of size $M_i$ containing $f_i$. In total, this enumeration counts at most 
\begin{align*}
    M\,  \binom{4kn}{M}\, 2^{N+4kn + M}\, C^{4kn+M}
\end{align*}
many such choices, which is at most $C^{4kn+M}$ for some other $C$, establishing~\eqref{eq:shift-map-multiplicity}. 

We are now in position to conclude the proof of the proposition. For a set $A$ of interfaces, we can rewrite 
\begin{align*}
    \mu_n^\mp(A) =  \sum_{\cI\in A}\mu_n^\mp(\cI) = \sum_{\cJ\in \Phi_k^\uparrow(A)} \mu_n^\mp (\cJ) \sum_{-\infty< M\le 4kn} \sum_{\substack{\cI\in A \cap (\Phi_k^\uparrow)^{-1}(\cJ) \\ \fm(\cJ;\cI) = M}} \frac{\mu_n^\mp(\cI)}{\mu_n^\mp(\cJ)}\,.
\end{align*}
Applying~\eqref{eq:shift-map-weights} and~\eqref{eq:shift-map-multiplicity}, we find 
\begin{align*}
    \mu_n^\mp(A) \le \mu_n^\mp(\Phi_k^\uparrow(A))\sum_{-\infty < M \le 4kn} e^{ \beta M + C(kn + |M|)} \le e^{(4\beta +C)kn}\mu_n^\mp(\Phi_k^\uparrow(A))\,.
\end{align*}
Dividing both sides out by $\mu_n^\mp(\Phi_k^\uparrow(A))$ yields the desired result. 
The fact that this holds under $\mufloor_n^\sfh$ follows from the fact that if $A \subset \Ifloor$, then $\Phi_k^\uparrow(A) \subset \Ifloor$ per Lemma~\ref{lem:shift-map-properties}. 
\end{proof}

\subsection{Lower bound on the probability of lying above the floor}
With the map of Definition~\ref{def:shift-map} in hand, we can now prove the following simple lemma giving a lower bound on the probability of the interface under $\mu^\mp_n$ to lie above height $0$. 
This will give the lower bound from Proposition~\ref{prop:prob-positive-upper-lower}, whereas the matching upper bound up to a factor of $1+\epsilon_\beta$ will be a consequence of our estimates in~\S\ref{sec:lower-bound} (namely, Theorem~\ref{thm:lower-bound}). 

\begin{lemma}\label{lem:prob-positive}
There exists $\beta_0>0$ and a sequence $\epsilon_\beta\to 0$ as $\beta\to\infty$ such that, for every $\beta>\beta_0$,
\begin{align*}
    \mu_n^\mp(\Ifloor[0]) \geq \exp( - (1+\epsilon_\beta) n\log n)\,.
\end{align*}
\end{lemma}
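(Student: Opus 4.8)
The plan is to exhibit a single interface $\cI_0$ lying strictly above height $0$ whose $\mu_n^\mp$-weight is at least $\exp(-(1+\epsilon_\beta)n\log n)$, and then invoke the fact that $\mu_n^\mp(\cI_0)$ is a lower bound for $\mu_n^\mp(\Ifloor[0])$. A natural choice is to take the flat interface $\cL_{0,n}$ and lift it by one unit via the map $\Phi_1^\uparrow$ of Definition~\ref{def:shift-map}: the resulting interface consists of the horizontal sheet at height $1$ over $\llb -\tfrac n2,\tfrac n2\rrb^2$ together with the vertical "skirt" $B_1 = \sF(\partial\cL_{0,n}\times\llb 0,1\rrb)$ of about $4n$ faces connecting it down to the boundary at height $0$. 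This interface visibly satisfies $\cI_0\subset\cL_{\ge 0}$ (in fact $\cL_{\ge 1}$ away from the boundary skirt), so $\cI_0\in\Ifloor[0]$. However, its excess energy over $\cL_{0,n}$ is only $O(n)$, which would give the far-too-strong bound $\exp(-O(n))$ — so this naive attempt cannot be the right $\cI_0$, and indeed it isn't: the point is that the interface must be confined to a \emph{half-space}, and while a particular lifted interface is cheap, the issue is that we need a lower bound on the probability of the whole \emph{event} $\Ifloor[0]$, and the cheapest way the event is realized already costs $\Theta(n\log n)$.

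So the correct approach is different: apply Proposition~\ref{prop:shift-map-effect} in reverse. Take $A = \Ifloor[0]$ and consider instead the event $\mu_n^\mp(\cdot)$ restricted to interfaces that are ``close to flat,'' and push those \emph{down}. Concretely, let $A$ be the set of interfaces $\cI$ with $\cI\subset\cL_{\ge 0}$; I want a lower bound on $\mu_n^\mp(A)$. The idea is to compare with the unconditional measure via a down-shift: the map $\Phi_k^\downarrow$ (the reflection of $\Phi_k^\uparrow$) sends a typical interface $\cI\sim\mu_n^\mp$, which by Dobrushin's rigidity has maximal downward oscillation $M_n^\downarrow = O(\log n)$ with overwhelming probability, to an interface that has been pushed up. More precisely: let $k = k_n$ be a height such that $\mu_n^\mp(\bar M^\downarrow_{\cL_{0,n}} < k)\ge \tfrac12$, say; by the rigidity estimates (or Proposition~\ref{prop:max-generic} applied with $S_n = \cL_{0,n}$, $h = k$, giving $\mu_n^\mp(\bar M^\downarrow < k)\ge \exp(-n^2 e^{-(4\beta-C)k})$, which is $\ge\tfrac12$ once $k\ge \tfrac{C'}{\beta}\log n$) we may take $k = \lceil \tfrac{C'}{\beta}\log n\rceil$. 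On the event $\{\bar M^\downarrow_{\cL_{0,n}}<k\}$ the interface lies above $\cL_{\ge -k+1}$, hence $\Phi_k^\uparrow$ maps this event into $\Ifloor[0]$. By Proposition~\ref{prop:shift-map-effect},
\[
\mu_n^\mp(\Ifloor[0]) \;\ge\; \mu_n^\mp\bigl(\Phi_k^\uparrow(\{\bar M^\downarrow_{\cL_{0,n}}<k\})\bigr) \;\ge\; e^{-4(\beta+C)kn}\,\mu_n^\mp(\bar M^\downarrow_{\cL_{0,n}}<k) \;\ge\; \tfrac12\, e^{-4(\beta+C)kn}\,.
\]
With $k = O\bigl(\tfrac{1}{\beta}\log n\bigr)$ this is $\exp(-O(n\log n))$, but with the \emph{wrong constant}: it gives $\exp(-(C''/\beta)\cdot\beta\cdot n\log n) = \exp(-C'' n\log n)$ rather than $\exp(-(1+\epsilon_\beta)n\log n)$.

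To get the sharp constant $1+\epsilon_\beta$ I need to choose $k$ as small as possible while keeping $\mu_n^\mp(\bar M^\downarrow_{\cL_{0,n}}<k)$ from being super-exponentially small in $n$ — it is fine if it is only $\exp(-o(n\log n))$. Using the lower bound $\mu_n^\mp(\bar M^\downarrow_{\cL_{0,n}}<h)\ge \exp(-|S_n|e^{-(4\beta-C)h})$ from~\eqref{eq:max-lower-crude} with $|S_n| = n^2$: for this to be $\ge \exp(-n^{1+\epsilon_\beta})$ it suffices that $n^2 e^{-(4\beta-C)h}\le n^{1+\epsilon_\beta}$, i.e.\ $h\ge \tfrac{1-\epsilon_\beta}{4\beta-C}\log n$. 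So take $k_n := \lceil \tfrac{1}{4\beta-C}\log n\rceil$. Then
\[
\mu_n^\mp(\Ifloor[0]) \;\ge\; e^{-4(\beta+C)k_n n}\cdot e^{-n^{1+\epsilon_\beta}} \;\ge\; \exp\Bigl(-\tfrac{4(\beta+C)}{4\beta-C}\,n\log n - O(n) - n^{1+\epsilon_\beta}\Bigr)\,.
\]
Since $\tfrac{4(\beta+C)}{4\beta-C}\to 1$ as $\beta\to\infty$, the prefactor is $1+\epsilon_\beta$ for a suitable sequence $\epsilon_\beta\to 0$, and the error terms $O(n)$ and $n^{1+\epsilon_\beta}$ are $o(n\log n)$, which can be absorbed into $\epsilon_\beta$ as well. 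This yields $\mu_n^\mp(\Ifloor[0])\ge \exp(-(1+\epsilon_\beta)n\log n)$.

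The main obstacle — and the reason the sharp constant is delicate — is the matching of the two sources of cost in the exponent: the energetic cost $4\beta k n$ of lifting by $k$ via $\Phi_k^\uparrow$ (Proposition~\ref{prop:shift-map-effect}), and the entropic cost $n^2 e^{-4\beta k}$ of forcing the unconditional interface to have no downward oscillation below depth $k$ (Proposition~\ref{prop:max-generic}). One must choose $k$ at the crossover of these two terms: $4\beta k n \asymp n^2 e^{-4\beta k}$ forces $k\asymp \tfrac{1}{4\beta}\log n$, and then both terms are $\asymp n\log n$ with the \emph{same} leading constant up to the $(1+\epsilon_\beta)$ slack coming from the $\pm C$ corrections in the two propositions. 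Getting the constant to be exactly $1+\epsilon_\beta$ rather than some larger absolute constant requires being careful that $k_n$ is taken essentially equal to $\tfrac{1}{4\beta}\log n$ (not $\tfrac{C}{\beta}\log n$), and checking that the $O(n)$ error from the skirt $B_k$ and the $\g$-corrections, as well as the $n^{1+\epsilon_\beta}$ slack in the entropy bound, are all genuinely lower-order than $n\log n$. Everything else is bookkeeping.
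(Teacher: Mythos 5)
Your argument is correct and essentially identical to the paper's proof: one lifts the event $\{\bar M^\downarrow_{\cL_{0,n}}<h_0\}$ with $h_0=\lceil (4\beta-C_0)^{-1}\log n\rceil$ via $\Phi_{h_0}^\uparrow$, paying $e^{4(\beta+C)h_0 n}$ by Proposition~\ref{prop:shift-map-effect} against the entropic cost $\exp(-n^2e^{-(4\beta-C_0)h_0})=e^{-n}$ from~\eqref{eq:max-lower-crude}, exactly as you do. The one (immaterial) slip is the claim that $n^{1+\epsilon_\beta}=o(n\log n)$ --- it is not, for fixed $\epsilon_\beta>0$ --- but with your actual choice $k_n=\lceil\log n/(4\beta-C)\rceil$ the entropic cost is $n^2e^{-(4\beta-C)k_n}\le n$, so the error term in the exponent is $O(n)$ rather than $n^{1+\epsilon_\beta}$ and the conclusion stands.
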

\begin{proof}
Let $h_0 = (4\beta - C_0)^{-1}\log n$, where $C_0>0$ is the absolute constant from Proposition~\ref{prop:max-generic}.
Recalling that $\Ifloor[h_0] = \{\cI: M_n^\downarrow\leq h_0\}$, we have by~\eqref{eq:max-lower-crude} (with $S_n=\cL_{0,n}$) that
\[ \mu_n^\mp(\Ifloor[h_0]) \geq \exp(-n^2 e^{-(4\beta-C_0)h_0}) = e^{-n}\,.\]
At the same time, we can compare the weight of $\Ifloor[h_0]$ to that of $\Ifloor[0]$ by application of the map $\Phi_{h_0}^{\uparrow}$ since we have $\Phi_{h_0}^\uparrow(\Ifloor[h_0]) \in \Ifloor[0]$ by Lemma~\ref{lem:shift-map-properties}. In particular, by Proposition~\ref{prop:shift-map-effect}, 
\begin{align*}
    \mu_n^\mp(\Ifloor[h_0]) \le e^{(4\beta + C')h_0 n}\mu_n^\mp(\Ifloor[0])\,.
\end{align*}
Combining this with the aforementioned lower bound of $\mu_n^\mp(\Ifloor[h_0]) \geq e^{ - n}$ concludes the proof.
\end{proof}

\subsection{Wall faces in total and in linearly many walls}
Our first application of Lemma~\ref{lem:prob-positive} will establish that the interface $\cI\sim\mufloor_n^{\sfh}$ must contain at most $e^{-2\beta}n^2$ wall faces w.h.p., as given by the following lemma. (N.B.\ it is easy to infer the weaker upper bound of $(C/\beta) n^2$ on the number of such faces from the cluster expansion representation of Theorem~\ref{thm:cluster-expansion}.
To do so, one compares all such interfaces to the completely flat one (that is, analyze the energy gain and multiplicity loss incurred in the map that deletes every wall), with a multiplicity of $\exp(C(n^2 +m))$ for interfaces with $m$ excess faces competing with a probability gain of $\exp(-(\beta-C) m)$.)
\begin{lemma}\label{lem:n2-excess}
There exist $\beta_0,C>0$ such that, for every $\beta>\beta_0$, 
\[
\mu_n^\mp\Big(\sum\{\fm(W)\,:\;\mbox{$W$ is a wall in $\cI$}\} \geq e^{-2\beta} n^2\Big) \leq \exp(-C e^{-4\beta} n^2)\,.
\]
Consequently, the same bound holds true under $\mufloor_n^{\sfh}$ for any $\sfh\geq 0$.
\end{lemma}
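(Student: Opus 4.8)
The plan is to establish the bound under $\mu_n^\mp$ first, and then transfer it to $\mufloor_n^\sfh$ essentially for free: since $\Ifloor[0]\subset\Ifloor$ for every $\sfh\ge 0$, we have $\mufloor_n^\sfh(E)\le \mu_n^\mp(E)/\mu_n^\mp(\Ifloor[0])$, so combining any bound $\mu_n^\mp(E)\le \exp(-2Ce^{-4\beta}n^2)$ with the estimate $\mu_n^\mp(\Ifloor[0])\ge e^{-(1+\epsilon_\beta)n\log n}$ of Lemma~\ref{lem:prob-positive} and the fact that $e^{-4\beta}n^2\gg n\log n$ for $n$ large yields $\mufloor_n^\sfh(E)\le\exp(-Ce^{-4\beta}n^2)$.

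For the statement under $\mu_n^\mp$ I would avoid the crude comparison to the flat interface mentioned in the remark (that route only reaches the threshold $(C/\beta)n^2$, because the entropy $e^{O(n^2)}$ it pays does not shrink as the target excess does, whereas $e^{-2\beta}n^2\ll(C/\beta)n^2$), and instead run a Peierls argument at the level of wall collections. Writing $m:=\lceil e^{-2\beta}n^2\rceil$ and recalling that $\sum\{\fm(W):W\text{ a wall of }\cI\}=\fm(\cI;\cL_{0,n})$ (Definition~\ref{def:excess-energy-properties}), I would apply Theorem~\ref{thm:rigidity-inside-wall} with the empty external collection to get $\mu_n^\mp(\bI_\bV)\le\exp(-(\beta-C)\fm(\bV))$ for every admissible collection $\bV$ of standard walls; partitioning the event $\{\fm(\cI;\cL_{0,n})=j\}$ according to the standard wall representation of $\cI$ then gives $\mu_n^\mp(\fm(\cI;\cL_{0,n})\ge m)\le\sum_{j\ge m}N(j)\,e^{-(\beta-C)j}$, where $N(j)$ is the number of admissible standard wall collections of total excess energy $j$.

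The crux, and the step I expect to cause the most trouble, is the counting bound $N(j)\le\exp\big((\tfrac{\beta}{2}+O(1))j\big)$ in the range $j\ge m$, which is what makes the series converge against $e^{-(\beta-C)j}$. To prove it I would encode a collection of $k$ walls by: a choice of $k$ distinct ``anchors'' (say the lexicographically least element of each $\rho(W)$; these are distinct by admissibility, since the projections are vertex-disjoint), costing at most $\binom{Cn^2}{k}$ as there are only $O(n^2)$ edges and faces in $\cL_{0,n}$; a composition $j=e_1+\dots+e_k$; and, for each wall, one of at most $C^{e_i}$ standard walls with that anchor and excess $e_i$ (since $|\rho(W)|\le\fm(W)$ and $|W|\le 2\fm(W)$). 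Since every wall satisfies $\fm(W)\ge 4$ — the minimal wall being the four vertical faces of a unit bump — we have $k\le j/4$, so for $m\le j\le 2Cn^2$ the binomial sum is at most $2\binom{Cn^2}{j/4}\le 2(4eCn^2/j)^{j/4}$; and here $n^2/j\le e^{2\beta}$, which turns this into $\exp\big((\tfrac14\cdot 2\beta+O(1))j\big)=\exp\big((\tfrac{\beta}{2}+O(1))j\big)$. The essential point is that $\tfrac{\beta}{2}<\beta-C$ for $\beta$ large; this uses $\fm(W)\ge 4>2$ in an essential way (a hypothetical minimal wall excess $\le 2$ would make this fail).

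Finally I would sum: over $m\le j\le 2Cn^2$ the series is geometric with ratio $\le e^{-\beta/4}$ for $\beta$ large, so it is at most $2e^{-\beta m/4}=\exp(-\Theta(\beta e^{-2\beta}n^2))$, while the tail $j>2Cn^2$ is controlled by the crude bound $N(j)\le 2^{Cn^2}(2C)^j$ and is negligible. As $\beta e^{-2\beta}\ge e^{-4\beta}$ for $\beta>\beta_0$, this is of the required form $\exp(-Ce^{-4\beta}n^2)$ for all large $n$, and the $\mufloor_n^\sfh$ bound follows by the division described in the first paragraph.
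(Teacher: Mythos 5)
Your proof is correct, but it takes a genuinely different route from the paper's. You run a global Peierls/union-bound argument: apply Theorem~\ref{thm:rigidity-inside-wall} (with empty conditioning) to each admissible standard wall collection $\bV$, and beat the energy gain $e^{-(\beta-C)j}$ by an entropy count $N(j)\le e^{(\beta/2+O(1))j}$, where the crucial input is that in the regime $j\ge e^{-2\beta}n^2$ the positional entropy $\binom{Cn^2}{k}$ of the $k\le j/4$ anchors is only $e^{(2\beta/4+O(1))j}$ --- so the argument hinges on the threshold being $e^{-2\beta}n^2$ and on $\fm(W)\ge 4>2$, exactly as you note. The paper instead reveals the walls $W_x$ sequentially along a filtration chosen so that Corollary~\ref{cor:nested-sequence-inside-a-ceiling} applies at each step, stochastically dominates $\sum_W\fm(W)$ by a sum of i.i.d.\ shifted geometrics with success probability $1-e^{-(4\beta-C)}$ (here too using $\fm(W_x)\ge 4$ for $W_x\ne\varnothing$), and concludes by binomial concentration. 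Your approach avoids the exposure scheme and the care needed to keep the revealed region's complement suitable for Corollary~\ref{cor:nested-sequence-inside-a-ceiling}, at the price of the combinatorial estimate on $N(j)$ (anchor/composition/lattice-animal encoding), and it actually yields the stronger exponent $\exp(-c\beta e^{-2\beta}n^2)$; the paper's route trades that counting for a soft concentration bound. Your reduction of the $\mufloor_n^\sfh$ statement to the unconditional one via Lemma~\ref{lem:prob-positive} and $\Ifloor[0]\subset\Ifloor$ is exactly what the paper does.
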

\begin{proof}
Order the faces of $\cL_{0,n}$ using labels $1,\ldots,n^2$ by scanning $\cL_{0,n}$ in a connected manner, row by row. 
Roughly put, we will reveal $\{W_{x}\}_{x\in\cL_{0,n}}$ in this order, skipping any $x$ that belongs to $\rho(\smhull W_{x'})$ for some already revealed $W_{x'}$. Formally:
\begin{enumerate}[\indent1.]
    \item \label{it:exposure-init} Initialize $F=\cL_{0,n}$ as the set of unexplored faces.
    \item \label{it:exposure-step} While $F \neq \emptyset$, repeatedly 
        reveal $W_{x}$ for $x\in F$ having the smallest label in $F$,
        and delete $\rho(\hull W_{x})$ from~$F$.
\end{enumerate} (Note that, in this way, our set of revealed walls remains always connected to $\partial\Lambda_n$.) This process terminates at $F=\emptyset$ with a collection of nonempty walls $W_{x}\neq \varnothing$ whose hull is yet unexplored; processing these walls in an arbitrary order, apply to each such $W_{x}$ the same exposure procedure (Step~\ref{it:exposure-step}) from an initial $F = \rho(\hull W_{x})$.
(As before, the set of revealed walls is always connected to $\partial \Lambda_n$.)
Let $(\cF_j)_{j=1}^{n^2}$ be the corresponding filtration.

If $x_{j}$ is about to be revealed in step $j$ of the process, and $S_{j-1}$ is the connected component of faces that contains $x_{j}$ in  
$\cL_{0,n}\setminus \rho(\bigcup_{k<j} (W_{x_{k}} \cup\{x_{k}\}))$, then the fact that the projections of the walls revealed thus far are connected to $\partial\cL_{0,n}$ supports an application of Corollary~\ref{cor:nested-sequence-inside-a-ceiling}, which implies that
\[ \mu_n^\mp(\fm(W_{x_{j}})\geq r\mid\cF_{j-1}) \leq \mu_n^\mp(\fm(\fW_{x_{j},S_{j-1}})\geq r\mid\cF_{j-1}) \leq e^{-(\beta-C)r}\,.\]
It follows that $\sum_{i} \fm(W_{x_i})$ is is stochastically dominated by $\sum_{i=1}^{n^2}(\xi_i-1)$ where the $\xi_i$'s are i.i.d.\ Geometric($p$) for $p=1-e^{-(4\beta-C)}$, using that every $W_x\neq\varnothing$ has $\fm(W_x)\geq 4$. Thus, bounding $\sum_i \xi_i$ via the correspondence between the negative binomial and binomial distributions,
\[ \mu_n^\mp\left(\sum\{\fm(W):\mbox{$W$ is a wall in $\cI$}\}> \lfloor e^{-2\beta} n^2\rfloor \right) \leq 
 \P\left(\bin(\lfloor(1+e^{-2\beta})n^2\rfloor,p)<n^2\right)\,.
\]
The latter binomial random variable has mean $\mu \in [(1+\frac12e^{-2\beta})n^2,(1+e^{-2\beta})n^2]$ provided $\beta$ is large enough, whence the probability that it is less than $\mu-a$ for $a=\frac12e^{-2\beta}n^2$ is at most $\exp(-\frac12 a^2/\mu)$. This establishes the claimed bound for $\sum_W\fm(W)$ under $\mu_n^\mp$, and the analogous bound under $\mufloor_n^{\sfh}$ follows from Lemma~\ref{lem:prob-positive}.
\end{proof}

The next application of Lemma~\ref{lem:prob-positive} rules out having $n$ distinct walls, each with diameter at least $\log n$.
\begin{lemma}
\label{lem:n-disjoint-walls}
There exist $\beta_0,C>0$ so that the following holds for every $\beta>\beta_0$. Let $\mathfrak{E}_n$ be the event that there exist $N\leq n$ distinct faces $x_1,\ldots,x_N\in\cL_{0,n}$ such that $\fm(\bigcup_{i}\fW_{x_i})\geq (6/\beta) n\log n$ in the interface $\cI$.
Then $\mu_n^\mp(\mathfrak{E}_n)\leq \exp(-(5-\epsilon_\beta) n\log n)$, and consequently, $\mufloor_n^{\sfh}(\mathfrak{E}_n) \leq \exp(-3 n\log n)$.
\end{lemma}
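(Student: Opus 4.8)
The plan is a straightforward first–moment bound built on the conditional exponential tail of Corollary~\ref{cor:nested-sequence-of-multiple-faces-in-a-ceiling}, followed by the usual transfer from $\mu_n^\mp$ to $\mufloor_n^\sfh$ via Lemma~\ref{lem:prob-positive}. Set $m = m_n := \lceil (6/\beta)\, n\log n\rceil$. First I would reduce $\mathfrak{E}_n$ to subsets of a \emph{fixed} size: since adjoining extra faces of $\cL_{0,n}$ to $\{x_1,\dots,x_N\}$ only enlarges the union $\bigcup_i\fW_{x_i}$ of nested wall sequences, and $\fm$ of a wall collection is a sum of non-negative terms, such padding cannot decrease $\fm(\bigcup_i\fW_{x_i})$; also $\fm$ is integer-valued, so $\fm(\bigcup_i\fW_{x_i})\ge (6/\beta)n\log n$ forces $\fm(\bigcup_i\fW_{x_i})\ge m$. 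Consequently $\mathfrak{E}_n$ is contained in the union, over the $\binom{n^2}{n}$ subsets $\{x_1,\dots,x_n\}\subset\cL_{0,n}$ of size exactly $n$, of the events $\{\fm(\bigcup_{i\le n}\fW_{x_i})\ge m\}$ (using $N\le n$ and $|\cL_{0,n}|=n^2\ge n$).

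For each fixed such $n$-subset I would invoke Corollary~\ref{cor:nested-sequence-of-multiple-faces-in-a-ceiling} with $S_n=\cL_{0,n}$, so that $\bW_n$ is the empty collection, $\bI_{\bW_n}$ is the whole probability space, and $\fW_{x_i,S_n}=\fW_{x_i}$ for every $x_i\in\cL_{0,n}$; its hypothesis ``$N\le r$'' becomes $n\le m$, which holds once $\log n\ge\beta/6$, and it yields $\mu_n^\mp(\fm(\bigcup_{i\le n}\fW_{x_i})\ge m)\le \exp(-(\beta-C)m)$. Taking a union bound and using the crude estimate $\binom{n^2}{n}\le (n^2)^n/n!\le (en)^n=\exp(n\log n+n)$ gives
\[
\mu_n^\mp(\mathfrak{E}_n)\ \le\ \binom{n^2}{n}\,\exp\big(-(\beta-C)m\big)\ \le\ \exp\!\Big(\!-n\log n\,\big[\,5-\tfrac{6C}{\beta}-\tfrac{1}{\log n}\,\big]\Big)\ \le\ \exp\big(-(5-\epsilon_\beta)n\log n\big)
\]
for $\beta$ large enough and $n$ large (the $6C/\beta$ and the $1/\log n$ terms are absorbed into a sequence $\epsilon_\beta\to0$). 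Finally, for the $\mufloor_n^\sfh$-statement, $\Ifloor\supseteq\Ifloor[0]$ for all $\sfh\ge0$, so Lemma~\ref{lem:prob-positive} gives $\mu_n^\mp(\Ifloor)\ge\mu_n^\mp(\Ifloor[0])\ge\exp(-(1+\epsilon_\beta)n\log n)$, whence
\[
\mufloor_n^\sfh(\mathfrak{E}_n)\ =\ \frac{\mu_n^\mp(\mathfrak{E}_n\cap\Ifloor)}{\mu_n^\mp(\Ifloor)}\ \le\ \frac{\mu_n^\mp(\mathfrak{E}_n)}{\mu_n^\mp(\Ifloor[0])}\ \le\ \exp\big(-(4-2\epsilon_\beta)n\log n\big)\ \le\ \exp(-3n\log n)
\]
for $\beta$ large.

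I do not expect any real obstacle here: once Corollary~\ref{cor:nested-sequence-of-multiple-faces-in-a-ceiling} is available the argument is mechanical. The only thing to watch is the numerology — the entropy $\approx n\log n$ of choosing which $\le n$ faces to track must be dominated by the $\approx 6\,n\log n$ gain of the tail bound (leaving $\approx 5\,n\log n$), and this surplus must still survive division by $\mu_n^\mp(\Ifloor[0])\ge e^{-(1+o(1))n\log n}$, which it does with room to spare. A minor bookkeeping point is the harmless requirement $n\le m$ needed to apply the corollary, valid for all large $n$.
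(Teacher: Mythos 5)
Your proof is correct and follows essentially the same route as the paper's: an application of Corollary~\ref{cor:nested-sequence-of-multiple-faces-in-a-ceiling} (unconditionally, i.e.\ with trivial $\bW_n$), a union bound over the at most $\binom{n^2}{n}\le e^{n\log n+n}$ choices of faces, and the transfer to $\mufloor_n^\sfh$ via Lemma~\ref{lem:prob-positive}. Your padding step to reduce to sets of size exactly $n$, and your explicit check of the hypothesis $N\le r$, are harmless variants of the paper's summation over $N\le n$.
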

\begin{proof}
Fix a set $A$ of at most $n$ distinct faces in $\cL_{0,n}$. 
We infer from Corollary~\ref{cor:nested-sequence-of-multiple-faces-in-a-ceiling} that
\[ \mu_n^\mp\left(\fm(\mbox{$\bigcup_{x\in A} \fW_{x}$})\geq (6/\beta) n\log n\right) \leq e^{-(\beta-C)(6/\beta)n\log n} \leq e^{-(6-\epsilon_\beta) n\log n}
\]
provided $\beta_0$ (thus also $\beta$) is large enough. Substituting the fact that $\binom{n^2}{|A|}\leq \binom{n^2}n\leq \exp(n\log n+n)$ yields
\begin{align*}
  \mu_n^\mp(\mathfrak{E}_n) &\leq \sum_{N\leq n}\binom{n^2}N e^{-(6-\epsilon_\beta) n\log n}\leq e^{-(5-\epsilon'_\beta) n\log n}\,,
\end{align*}
and the conclusion for $\mufloor_n^{\sfh}$ follows from  Lemma~\ref{lem:prob-positive}.
\end{proof}

\subsection{Ceilings with a sub-linear area or a large isoperimetric dimension} We will next use Lemma~\ref{lem:prob-positive} to rule out an excessive total area in ceilings whose hull satisfies $|\cC| \leq c_\beta n^2/\log^2 n$ (Lemma~\ref{lem:small-ceil}), as well as in ceilings with $|\cC|\geq n $ and $\isodim \geq 4$ (Lemma~\ref{lem:thin-ceil}), by showing the corresponding events have probabilities smaller than $\exp(-(1+\epsilon_\beta) n\log n)$. (N.B. that the threshold $|\cC| = O(n^2/\log^2 n)$ is correct when aiming at an error probability of $\exp(-c n\log n)$ for a total excess area of $c n^2$, as the probability of finding a single such ceiling is some $\exp(-c n/\log n)$, and $c\log^2 n$ many such ceilings  would be needed for a total area of $c n^2$.) 
\begin{lemma}\label{lem:small-ceil}
There exist $\beta_0,C>0$ so that for every fixed $\beta>\beta_0$ and every $1\leq A \leq 2\log_2 n - \sqrt{\log n}$, \[ \mu_n^\mp\left(\sum\big\{|\cC|\,:\; \cC\mbox{ is a ceiling with } 2^{A-1}\leq |\cC|\leq\big(\tfrac{n}{e^\beta\log n}\big)^2 \big\} \geq \tfrac{C}{e^\beta A}n^{2}\right) < e^{-5 n\log n}\,,
\]
and the same bound holds under $\mufloor^\sfh_n$.
\end{lemma}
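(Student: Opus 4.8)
Write $U:=\lceil (n/(e^\beta\log n))^2\rceil$ and let $\Gamma_A$ denote the event in the statement. Since $\Ifloor[0]\subset\Ifloor$ and, by Lemma~\ref{lem:prob-positive}, $\mu_n^\mp(\Ifloor[0])\ge \exp(-(1+\epsilon_\beta)n\log n)$, it suffices to prove the stronger estimate $\mu_n^\mp(\Gamma_A)\le \exp(-7n\log n)$: dividing by $\mu_n^\mp(\Ifloor)\ge\mu_n^\mp(\Ifloor[0])$ then also gives $\mufloor_n^\sfh(\Gamma_A)< \exp(-5n\log n)$. So from now on we work under $\mu_n^\mp$. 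The plan has a deterministic (combinatorial) step and a probabilistic step.

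\emph{Deterministic step.} Split the ceilings of $\cI$ of size at most $U$ into dyadic classes $\fC^{(B)}:=\{\cC\in\fC(\cI): 2^{B-1}\le |\cC| <2^{B}\}$ for $A\le B\le \lceil\log_2 U\rceil$, so that on $\Gamma_A$ some class $\fC^{(B)}$ carries area $\ge \tfrac{C n^{2}}{e^\beta A}/(2\log_2 n)$. Fix such a $B$. Every $\cC\in\fC^{(B)}$ that is not the root ceiling is an interior ceiling of a unique innermost wall $W(\cC)$, and its hull projection $S_\cC:=\rho(\smhull\cC)$ is a simply-connected subset of $\sF(\cL_{0,n})$ with $|S_\cC|\ge|\cC|\ge 2^{B-1}$ and $\partial S_\cC\subset\rho(W(\cC))$; by the planar isoperimetric inequality $|\partial S_\cC|\ge c_0\sqrt{|S_\cC|}\ge c_0\sqrt{2^{B-1}}=:\varrho_B$, hence $\fm(W(\cC))\ge |\rho(W(\cC))|\ge\varrho_B$ by~\eqref{eq:excess-energy-wall-relations}. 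Since the hull projections of the interior ceilings of a fixed wall $W$ are the distinct finite components of $\rho(W)^c$, each edge of $\rho(W)$ bordering at most two of them, $W=W(\cC)$ for at most $2\fm(W)/\varrho_B$ ceilings of $\fC^{(B)}$. Assigning to each $\cC\in\fC^{(B)}$ the index face $x_\cC$ with $W_{x_\cC}=W(\cC)$ as in Remark~\ref{rem:indexing-walls}, the $x_\cC$ are distinct and $\fW_{(B)}:=\bigcup_{\cC}\fW_{x_\cC}$ satisfies
\[
\fm(\fW_{(B)})\;\ge\;\!\!\sum_{W\text{ distinct among }W(\cC)}\!\!\fm(W)\;\ge\;\tfrac{\varrho_B}{2}\,|\fC^{(B)}|\;\ge\;\tfrac{\varrho_B}{2^{B+1}}\,\mathrm{area}(\fC^{(B)})\;\ge\;\tfrac{c_0}{4\sqrt2}\,\tfrac{\mathrm{area}(\fC^{(B)})}{\sqrt{2^{B}}}\,.
\]
Using $2^{B}\le U$ and $\mathrm{area}(\fC^{(B)})\ge \tfrac{Cn^2}{2e^\beta A\log_2 n}$ gives $\fm(\fW_{(B)})\ge c_1\tfrac{C}{A}\,n\log n=:E_A$ (absolute $c_1>0$), while by disjointness of the ceilings $N_B:=|\fC^{(B)}|\le n^2/2^{B-1}$.

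\emph{Probabilistic step.} We must bound $\mu_n^\mp$ of the existence of such an $\fW_{(B)}$. If $N_B\le n^{9/10}$ (which holds automatically once $2^{B}\ge n^{11/10}$, i.e.\ for the ``large'' classes), union over the $\le n^{9/10}$ faces $x_\cC$ and apply Corollary~\ref{cor:nested-sequence-of-multiple-faces-in-a-ceiling} (legitimate as $N_B\le 4N_B\le\fm(\fW_{(B)})$): the probability is at most $\binom{n^2}{n^{9/10}}e^{-(\beta-C)E_A}\le e^{2n^{9/10}\log n-(\beta-C)E_A}$, which for $C$ a large absolute constant and $\beta>\beta_0$ is $\le e^{-8n\log n}$, because $E_A= c_1\tfrac{C}{A}n\log n$ and $A\le 2\log_2 n$ force $(\beta-C)E_A\ge 9n\log n$ after enlarging $C$ to absorb the $2n^{9/10}\log n$ term. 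If instead $N_B> n^{9/10}$ (the ``small'' classes, $2^{B}< n^{11/10}$), one cannot afford the location union bound; here one argues as in Lemma~\ref{lem:n2-excess}/Lemma~\ref{lem:n-disjoint-walls}: scan $\cL_{0,n}$ revealing the walls $W_x$ along a filtration, use Corollary~\ref{cor:nested-sequence-inside-a-ceiling} to dominate $\sum_{W:\fm(W)\ge\varrho_B}\fm(W)$ (which bounds $\mathrm{area}(\fC^{(B)})$ from the inequalities above, up to the explicit scale factor) by a sum of $n^2$ i.i.d.\ conditioned geometrics, and conclude that the atypically large value $\ge \tfrac{\varrho_B}{2^{B+1}}\mathrm{area}(\fC^{(B)})$ needed on $\Gamma_A$ has probability $\le e^{-8n\log n}$ for $\beta$ large. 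Finally sum over the $\le\log_2 U\le 2\log_2 n$ values of $B$ to get $\mu_n^\mp(\Gamma_A)\le e^{-7n\log n}$, and transfer to $\mufloor_n^\sfh$ as above.

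\emph{Expected main obstacle.} The delicate point is the bookkeeping in the probabilistic step: one must ensure that, uniformly over the size-scale $B$ and over how the bad area is distributed among ceilings, the Peierls/excess-energy cost $\exp(-(\beta-C)E_A)$ dominates both the location multiplicity $\binom{n^2}{N_B}$ (large only for small $B$) and the wall-shape multiplicities (already folded into the $e^{-(\beta-C)r}$ of Corollaries~\ref{cor:nested-sequence-inside-a-ceiling}--\ref{cor:nested-sequence-of-multiple-faces-in-a-ceiling}). The borderline regime of intermediate ceiling sizes---where $N_B$ is neither polylogarithmic nor of order $n^2$, and $\varrho_B$ is of order $n^{o(1)}$---is the one that pins down the threshold $n^{9/10}$ above and, at the level of the statement, forces the cutoff $|\cC|\le (n/(e^\beta\log n))^2$ and the restriction $A\le 2\log_2 n-\sqrt{\log n}$; verifying that the chosen constant $C$ can be taken absolute while these estimates close for all admissible $A$ is where the bulk of the (routine but careful) work lies.
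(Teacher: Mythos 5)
Your skeleton is the same as the paper's (reduce to $\mu_n^\mp$ via Lemma~\ref{lem:prob-positive}; dyadic classes; isoperimetry converting ceiling area into excess energy of the supporting walls; Corollary~\ref{cor:nested-sequence-of-multiple-faces-in-a-ceiling} plus a union bound over the index faces), but the quantitative heart of the argument does not close, and the gap is structural rather than a constant-chasing issue. Splitting the budget $\tfrac{C}{e^\beta A}n^2$ \emph{uniformly} over the $\sim 2\log_2 n$ dyadic classes gives, for the class at the top cutoff $2^B\approx U=(n/(e^\beta\log n))^2$, an excess energy of only
\[
\frac{\mathrm{area}(\fC^{(B)})}{\sqrt{2^B}}\;\gtrsim\;\frac{Cn^2}{2e^\beta A\log_2 n}\cdot\frac{e^\beta\log n}{n}\;=\;\frac{C\ln 2}{2}\cdot\frac{n}{A}\,,
\]
i.e.\ $E_A=\Theta(n/A)$, not the claimed $c_1\tfrac{C}{A}n\log n$ (you appear to have treated $\log n/\log_2 n$ as $\log n$; it is the constant $\ln 2$). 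The $\log n$ gained from $U^{-1/2}=e^\beta\log n/n$ is exactly cancelled by the $1/\log_2 n$ from the uniform split. With $\beta$ a fixed constant, $e^{-(\beta-C)\Theta(n/A)}$ is nowhere near $e^{-5n\log n}$, so the Peierls cost cannot beat even the target probability, let alone the location entropy. Moreover, even granting your $E_A\propto n\log n/A$, the claim that ``$(\beta-C)E_A\ge 9n\log n$ after enlarging $C$'' fails for $A$ as large as $2\log_2 n-\sqrt{\log n}$: an absolute constant $C$ cannot absorb a factor $A\to\infty$. Finally, the regime $N_B>n^{9/10}$ (which already occurs at $B=A=1$, where $N_B$ can be of order $n^2$) is dispatched in one sentence by analogy with Lemmas~\ref{lem:n2-excess}--\ref{lem:n-disjoint-walls}; but those give concentration of the \emph{total} wall excess at scale $e^{-2\beta}n^2$, not a bound of the form you need, and the real work there is precisely the comparison of the binomial location entropy $\mathsf{H}(p)n^2$ against the excess energy, which you never carry out.

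The paper resolves exactly this tension by allocating the area budget \emph{non-uniformly} across scales: thresholds $(\epsilon_{\beta,k}n)^2$ with $\epsilon_{\beta,k}\propto e^{-\beta/2}/k$ for $2^k\le L_1:=n^2/\log^8 n$ (summing over $k\ge A$ to $\asymp e^{-\beta}n^2/A$, while the worst class, at $L_1$, still carries excess energy $\gtrsim_\beta n\log^2 n$), and thresholds $\propto e^{-\beta}n^2/(\lceil\log_2 L_2\rceil-k+8)^2$ for $L_1\le 2^k\le L_2=e^{-2\beta}n^2/\log^2 n$ (so the top class keeps essentially the full $e^{-\beta}n^2$ budget and excess energy $\gtrsim_\beta n\log n$); it then verifies scale by scale that $\mathsf{H}(\epsilon_{\beta,k}^2 2^{1-k})n^2$ is dominated by $(\beta-C)\epsilon_{\beta,k}^2 2^{-k/2}n^2$. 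To repair your proof you would need to adopt some such scale-dependent weighting and perform that entropy-versus-energy verification; as written, the argument fails at every sufficiently large scale $B$ and for every superconstant $A$.
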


\begin{proof}
We begin by ruling out the contribution of ceilings $\cC$ with $|\cC|\leq L_1$ for $L_1:= n^2/\log^8 n$ via the bound
\begin{equation}\label{eq:polylog-below-macro} \mu_n^\mp\left(\sum\big\{| \cC|\,:\; \cC\mbox{ is a ceiling with }2^{A-1}\leq |\cC|\leq L_1\big\} \geq \tfrac{C }{e^\beta  A}n^{2}\right) \leq \exp(- \epsilon_\beta n \log^2 n)\,,\end{equation}
for some sequence $\epsilon_\beta$ vanishing as $\beta\to\infty$.
Partitioning the set of ceilings in $\cI$ into sets $\mathfrak{A}_1,\mathfrak{A}_2,\ldots$ given by \[ \mathfrak{A}_k = \{ \cC \,:\; 2^{k-1} \leq |\cC| < 2^k \}\,,\] we will argue that, for a suitable absolute constant $C_0>0$, for each $k=1,\ldots,\lceil\log_2 L_1\rceil$ we have
\begin{equation}\label{eq:polylog-below-macro-per-k} \mu_n^\mp\bigg(\sum_{\cC\in\mathfrak{A}_k}|\cC| \geq (\epsilon_{\beta,k} n)^2 \bigg) \leq \exp\bigg(- (\beta-C)\frac{(\epsilon_{\beta,k}n)^2} {2^{k/2+1}} \bigg)\quad\mbox{for}\quad \epsilon_{\beta,k}:=\frac{C_0}{e^{\beta/2} k}\,.\end{equation}
This, as we will later see, will readily imply~\eqref{eq:polylog-below-macro} by a union bound over $k$. 

To establish~\eqref{eq:polylog-below-macro-per-k}, denote the ceilings of $\mathfrak{A}_k$ by $\cC_1,\ldots,\cC_N$ for some $N$, and suppose that the event under consideration holds, i.e.,
$\sum_{i=1}^N |\cC_i| \geq (\epsilon_{\beta,k} n)^2$. 
Further let
\[ \chi := \frac{\sum_{i=1}^N |\cC_i|}{(\epsilon_{\beta,k}n)^2}\qquad(\;\geq 1\;)\,.\]

It then follows, by the definition of $\mathfrak{A}_k$, that 
\[ N \leq  \chi  (\epsilon_{\beta,k} n)^2 2^{1-k}\,.\]
In addition, for each $i$ we have $|\partial \hull\cC_i| \geq 4 |\hull\cC_i|^{1/2} \geq 4 |\cC_i|^{1/2}$ by the isoperimetric inequality in $\Z^2$, whence again the definition of $\mathfrak{A}_k$ implies that
\[ \sum_{i=1}^N |\partial\hull\cC_i| \geq 
4\sum_{i=1}^N |\cC_i|2^{-k/2} \geq \chi(\epsilon_{\beta,k} n)^2 2^{2-k/2}\,.
\]
For any prescribed sequence of faces $x_1,\ldots,x_N$ such that $x_i\in\rho(\cC_i)$ for each $i$, we can appeal to Corollary~\ref{cor:nested-sequence-of-multiple-faces-in-a-ceiling} to find 
that for any $r \geq N$,
\[ \mu_n^\mp\bigg(\fm\Big(\bigcup_{i\leq N} \fW_{x_i}\Big) \geq r\bigg) \leq \exp(-(\beta-C)r)\,.\]
Since $\fm(\bigcup_{i=1}^N \fW_{x_i})\geq \frac12 \sum_{i=1}^N |\partial\hull\cC_i|$ (each face of $\partial\hull\cC_i$ corresponds to a distinct face in its supporting wall~$W$, which may correspond in this way to a face in at most one other $\partial\hull\cC_j$), we can take $r = \chi (\epsilon_{\beta,k}n)^2 2^{1-k/2}$ (which is at least $N 2^{k/2} \geq N$ by the above bound on $N$) and
conclude that
\begin{equation}\label{eq:ceilings-union-bound} \mu_n^\mp\bigg(\sum_{\cC\in\mathfrak{A}_k} |\cC| \geq (\epsilon_{\beta,k} n)^2\bigg) \leq \sum_\chi \sum_{N \leq  \chi (\epsilon_{\beta,k} n)^2 2^{1-k}} \binom{n^2}{N} \exp\left(-(\beta-C) \chi (\epsilon_{\beta,k} n)^2 2^{1-k/2}\right)\,. 
\end{equation}
Using the bound $\sum_{j\leq p m}\binom{m}j \leq \exp(\mathsf{H}(p)m)$ where $\mathsf{H}(p)=p\log \frac1p+(1-p)\log\frac1{1-p}$ is the binary entropy function (with natural base)---valid for every $m$ and $p\leq \frac12$ 
(when $k\ge 2$, $p\le 1/2$ because $\sum_{i=1}^{N} |\cC_i|\le n^2$, and when $k=1$, $p>1/2$ would violate distinctness of the constituent ceilings)---yields that 
\[ \mu_n^\mp\bigg(\sum_{\cC\in\mathfrak{A}_k} |\cC| \geq (\epsilon_{\beta,k} n)^2\bigg) \leq \sum_\chi \exp\left(
\big(\mathsf{H}( \chi\epsilon_{\beta,k}^2 2^{1-k}) 
-2(\beta-C)  \chi \epsilon_{\beta,k}^2 2^{-k/2}\big)n^2
\right)\,.
\]
Hence, in order to establish~\eqref{eq:polylog-below-macro-per-k} it suffices to show that if the absolute constant $C_0>0$ is large enough then
\begin{equation}\label{eq:entropy-vs-excess-org} \mathsf{H}(\chi \epsilon_{\beta,k}^2 2^{1-k}) \leq \frac32(\beta-C)\chi \epsilon_{\beta,k}^2 2^{-k/2}\qquad\mbox{for every $k\geq 1$}\,,\end{equation}
 noting the sum over at most $n^2$ values of $\chi \in [1,\epsilon_{\beta,k}^{-2}]$ adds a $2\log n$ term to the exponent,  which is readily absorbed in the constant $C$ from~\eqref{eq:polylog-below-macro-per-k} since $ (\epsilon_{\beta,k}n)^2 2^{-(k/2+1)}$ is of order at least $n\log ^2 n$ for all $k\leq \lceil \log_2 L_1\rceil$.
Indeed, using that $\mathsf{H}(p)\leq p(\log\frac1p+1)$ for any $0<p<1$, it suffices to show that, for every $k\geq 1$,
\begin{equation}\label{eq:entropy-vs-excess-gen-eps} 2\log(1/\epsilon_{\beta,k}) + \log(1/\chi)+ (\log 2) (k-1)+1\leq \frac34(\beta-C)2^{k/2}\,.
\end{equation}
By the definition of $\epsilon_{\beta,k}$  and the fact that $\chi\geq 1$, it suffices to show, for every $k\geq 1$,
\begin{equation}\label{eq:entropy-vs-excess} \beta + (\log 2) (k-1)+2\log k + 1 - 2\log C_0\leq \frac34(\beta-C)2^{k/2}\,.
\end{equation}
For every $k\geq 1$ we have $\frac34  2^{k/2} \beta \geq 3 \cdot 2^{-3/2} \beta \geq \frac{21}{20}\beta$; thus, in order to establish~\eqref{eq:entropy-vs-excess-org} it suffices to show that
\[ 
(\log 2) (k-1)+2\log k + 1 - 2\log C_0\leq \frac1{20}(\beta-C)2^{k/2}\,,
\]
and indeed this easily holds for every large enough $k$, whereas selecting a large enough $C_0$ allows us to assume that $k$ is sufficiently large. (We note in passing that the $\beta$ term in the left-hand of~\eqref{eq:entropy-vs-excess} is the crux of our final threshold  $Ce^{-\beta}n^2$ in~\eqref{eq:polylog-below-macro}, which corresponds to $(\epsilon_{\beta,1} n)^2$. A term of $2\beta$ on the left-hand of~\eqref{eq:entropy-vs-excess}, for instance, would violate this inequality for $k=1$ and a large enough $\beta$.)

Having established~\eqref{eq:polylog-below-macro-per-k}, we observe that $\epsilon_{\beta,k}^2 2^{-k/2}$ is decreasing in $k$, whence a union bound over $k$ shows that $\sum_{\cC\in\mathfrak{A}_k}|\cC| \leq (\epsilon_{\beta,k} n)^2$ holds for all $k=1,\ldots,\lceil\log_2 L_1\rceil$ except with probability at most 
\[ \lceil \log_2 L_1\rceil\exp\left(-(\beta-C)C_0^2 e^{-\beta} \lceil\log_2 L_1\rceil^{-2} \tfrac1{2\sqrt 2} L_1^{-1/2} n^2\right) \leq \exp\left(-e^{-\beta} n\log^2 n \right)\,,\]
where the $O(\log L_1)$ prefactor as well as the absolute constants in the exponent were offset by the $\beta-C$ term.
On this event, the fraction of $n^2$ in ceilings with $2^{A-1}\leq |\cC|\leq L_1$ is at most $ \sum_{k\geq A} \epsilon_{\beta,k}^2 \leq C_0^2 e^{-\beta} \sum_{k\geq A}k^{-2}$, thereby giving~\eqref{eq:polylog-below-macro}.

Letting $L_2 := e^{-2\beta} n^2/\log^2 n$, we will treat ceilings with $L_1 \leq \cC \leq L_2$ by
establishing that
\begin{equation}\label{eq:polylog-just-below-macro} \mu_n^\mp\left(\big|\bigcup\big\{\rho(\cC)\,:\; \cC\mbox{ is a ceiling with }L_1\leq |\cC|\leq L_2\big\}\big| \geq C e^{-\beta}n^{2}\right) \leq \exp(- c_0\beta n \log n)\end{equation}
for some absolute constants $C,c_0>0$.
This will follow from showing that, for all $k=\lceil\log_2 L_1 \rceil,\ldots,\lceil\log_2 L_2\rceil$,
\begin{equation}\label{eq:polylog-just-below-macro-per-k} \mu_n^\mp\bigg(\sum_{\cC\in\mathfrak{A}_k}|\cC| \geq (\bar\epsilon_{\beta,k} n)^2 \bigg) \leq \exp\Big(- (\beta-C)\frac{(\bar\epsilon_{\beta,k} n)^2}{ 2^{k/2+1}}\Big)\quad\mbox{for}\quad \bar\epsilon_{\beta,k}:=\frac{e^{-\beta/2}}{\lceil \log_2 L_2\rceil - k + 8}\,.\end{equation}
As argued above~\eqref{eq:polylog-just-below-macro-per-k} will follow once we show the analog of~\eqref{eq:entropy-vs-excess-gen-eps} w.r.t.\ the modified quantity $\bar\epsilon_{\beta,k}$, which here translates into showing that for every $\lceil \log_2 L_1\rceil \leq k \leq \lceil 
\log_2 L_2\rceil$,
\[ \beta + 2\log(\lceil\log_2 L_2\rceil - k + 8) + (\log 2) (k-1)+1 \leq \frac34(\beta-C)2^{k/2}\,.\]
This indeed holds as the left-hand is $O(k) = O(\log n)$ while the right-hand has order $2^{k/2}\geq \sqrt{L_1} = n^{1-o(1)}$ for each such $k$,  and~\eqref{eq:polylog-just-below-macro-per-k} is obtained. To identify the dominant term in the union bound over $k$, note that
\[ \frac{\bar\epsilon_{\beta,k}^2 2^{-k/2}}{\bar\epsilon_{\beta,k+1}^2 2^{-(k+1)/2}}=\sqrt2 \Big(1-\frac1{\lceil\log_2 L_2\rceil -k+8}\Big)^2 \geq \sqrt2 \cdot (\tfrac78)^2 > 1\,,
\]
whence $\sum_{\cC\in\mathfrak{A}_k}|\cC| \leq (\bar\epsilon_{\beta,k} n)^2$ holds for all $k=\lceil\log_2 L_1\rceil,\ldots,\lceil\log_2 L_2\rceil$ except with probability 
\[  \lceil\log_2 L_2\rceil\exp\left(-(\beta-C) \tfrac1{64} e^{-\beta} \tfrac1{2\sqrt 2} L_2^{-1/2} n^2\right) \leq \exp\left(-c_0 \beta n\log n \right)\,,\]
using the definition of $L_2$. This establishes~\eqref{eq:polylog-just-below-macro}, thereby completing the proof.
\end{proof}
\begin{lemma}\label{lem:thin-ceil}
There exist $\beta_0,C>0$ so that for every fixed $\beta>\beta_0$,
\begin{equation}\label{eq:thin-ceil} \mu_n^\mp\bigg(\bigcup\Big\{\rho(\hull \cC)\,:\; \cC\mbox{ is a ceiling with }|\hull\cC|\geq n\mbox{ and }\isodim(\hull\cC)\geq 4\Big\} \geq n^{5/3}\bigg) \leq \exp(- (\beta-C) n^{7/6})\,,\end{equation}	
and the same bound holds under $\mufloor_n^\sfh$.
\end{lemma}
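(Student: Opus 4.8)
\emph{Strategy.} The plan is to show that the complement of the event in~\eqref{eq:thin-ceil}, call it $\Gamma$, has overwhelming probability, by converting a large total \emph{area} of ``thin'' macroscopic ceilings into a large total \emph{boundary}, which is expensive via the nested-wall tail bounds of Section~\ref{subsec:wall-clusters}. Call a ceiling $\cC$ \emph{bad} if $|\hull\cC|\geq n$ and $\isodim(\hull\cC)\geq 4$; by the definition of isoperimetric dimension the latter means $|\partial\hull\cC|\geq|\hull\cC|^{3/4}$, and every bad ceiling is the interior ceiling of a unique wall (it is not the floor, whose hull $\cL_{0,n}$ has isoperimetric dimension below $3$). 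Recalling from Definition~\ref{def:nesting-of-walls} and Lemma~\ref{lem:wall-ceiling-bijection} that the projections of ceiling hulls form a nested family (any two are nested or have disjoint projections), the \emph{maximal} bad ceilings $\cC_1,\ldots,\cC_M$ have pairwise vertex-disjoint projections and $\bigcup_{i\leq M}\rho(\hull\cC_i)=\bigcup\{\rho(\hull\cC):\cC\text{ bad}\}$. Hence on $\Gamma$ we have $\sum_{i\leq M}|\hull\cC_i|=\big|\bigcup_{i\leq M}\rho(\hull\cC_i)\big|\geq n^{5/3}$, and since the $\rho(\hull\cC_i)$ are disjoint subsets of $\cL_{0,n}$ with $|\hull\cC_i|\geq n$ for each $i$, necessarily $M\leq n$.

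\emph{Area to boundary.} I would then use subadditivity of the concave function $t\mapsto t^{3/4}$ (vanishing at the origin): on $\Gamma$,
\[
\sum_{i\leq M}|\partial\hull\cC_i|\ \geq\ \sum_{i\leq M}|\hull\cC_i|^{3/4}\ \geq\ \Big(\sum_{i\leq M}|\hull\cC_i|\Big)^{3/4}\ \geq\ n^{5/4}\,.
\]
Picking for each $i$ an index face $x_i\in\rho(\cC_i)$ (these are distinct, as the $\rho(\cC_i)$ are disjoint), the same $2$-to-$1$ wall-to-ceiling-boundary correspondence used in the proof of Lemma~\ref{lem:small-ceil}---each edge of $\partial\hull\cC_i$ lies in the projection of the wall whose interior ceiling $\cC_i$ is, and can arise this way for at most one other $\cC_j$---gives
\[
\fm\Big(\bigcup_{i\leq M}\fW_{x_i}\Big)\ \geq\ \tfrac12\sum_{i\leq M}|\partial\hull\cC_i|\ \geq\ \tfrac12 n^{5/4}\,.
\]

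\emph{Union bound and the floor.} Finally, take $r=\tfrac12 n^{5/4}$, which is at least $M$ for $n$ large. Corollary~\ref{cor:nested-sequence-of-multiple-faces-in-a-ceiling} bounds $\mu_n^\mp(\fm(\bigcup_{i\leq M}\fW_{x_i})\geq r)$ by $e^{-(\beta-C)r}$ for any fixed set of distinct faces $x_1,\ldots,x_M\in\cL_{0,n}$, so summing over the at most $\binom{n^2}{M}$ choices of index faces and over $M\leq n$,
\[
\mu_n^\mp(\Gamma)\ \leq\ \sum_{M\leq n}\binom{n^2}{M}\,e^{-(\beta-C)n^{5/4}/2}\ \leq\ \exp\!\big((1+o(1))n\log n-\tfrac12(\beta-C)n^{5/4}\big)\,,
\]
which is at most $\exp(-(\beta-C)n^{7/6})$ for all large $n$, since $n\log n=o(n^{5/4})$ and $n^{7/6}=o(n^{5/4})$. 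The same bound under $\mufloor_n^\sfh$ follows, exactly as in the preceding lemmas, by dividing by $\mu_n^\mp(\Ifloor[0])\geq\exp(-(1+\epsilon_\beta)n\log n)$ from Lemma~\ref{lem:prob-positive}, a factor that is negligible against $\exp(-\tfrac12(\beta-C)n^{5/4})$.

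\emph{Main obstacle.} The only genuinely new ingredient beyond Lemma~\ref{lem:small-ceil} is combining the hypothesis $\isodim(\hull\cC)\geq 4$ with subadditivity of $t\mapsto t^{3/4}$ to turn total area into total boundary. The one place requiring care is organizing the bad ceilings into the nested family of maximal ones, so that disjointness of projections simultaneously yields $M\leq n$ (keeping the union-bound entropy at $O(n\log n)$, hence dominated by $n^{5/4}$) and the clean estimate $\sum_i|\hull\cC_i|\geq n^{5/3}$; everything else follows the template already used for Lemma~\ref{lem:small-ceil}.
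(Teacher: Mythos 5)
Your proof is correct and follows essentially the same route as the paper's: isolate the outermost bad ceilings (pairwise disjoint projections, hence at most $n$ of them), convert their total area into total boundary using $\isodim(\hull\cC)\geq 4$, and conclude via Corollary~\ref{cor:nested-sequence-of-multiple-faces-in-a-ceiling} with a union bound over the index faces, passing to $\mufloor_n^\sfh$ through Lemma~\ref{lem:prob-positive}. The only cosmetic difference is the area-to-boundary step, where the paper uses $|\partial\hull\cC_i|\geq|\hull\cC_i|^{3/4}\geq n^{-1/2}|\hull\cC_i|$ to land directly on the exponent $n^{7/6}$, while your superadditivity of $t\mapsto t^{3/4}$ yields the slightly stronger intermediate bound $n^{5/4}$; both suffice.
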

\begin{proof}
Let $\cC_1,\ldots,\cC_N$ be the outermost ceilings satisfying $|\hull\cC|\geq n$ and $\isodim(\hull\cC)\geq 4$. Clearly, $N\leq n$ by the lower bound on the area of each of these pairwise disjoint ceilings.
Furthermore, by the assumption on the isoperimetric dimension, $|\partial \hull\cC_i| \geq |\hull\cC_i|^{3/4}$ for every $i$; thus, whenever $\sum_{i=1}^N |\hull\cC_i| \geq n^{5/3}$ we can conclude that
\[ \sum_i |\partial\hull\cC_i|\geq 
n^{-1/2} \sum_i |\hull\cC_i|\geq n^{7/6}\,.\]
By the same argument the led to~\eqref{eq:ceilings-union-bound}, it now follows that the probability in the left-hand of~\eqref{eq:thin-ceil} is at most
\[\sum_{N \leq n} \binom{n^2}{N} \exp\left(-(\beta-C)n^{7/6}\right)\leq \exp\left(-(\beta-C')n^{7/6}\right)\,,
\]
where we absorbed $\sum_{N\leq n}\binom{n^2}N \leq \exp\big((1+o(1))n\log n\big)$ into the larger constant $C'$, as required.
\end{proof}

\section{Bounding the interface histogram below a given height}\label{sec:lower-bound}
Our objective in this section is to obtain the following bound on the height histogram of the interface in the presence of a floor at height $-\sfh$, capturing the essence of entropic repulsion.

\begin{theorem}\label{thm:lower-gen-h}
Let $L_{n,k}$ be the set of $x\in\cL_{0,n}$ such that $\cI\cap (\{x\}\times\R)$ intersects $\cL_{<h_n^*-\sfh-k}$ or is not a singleton.  
There exist $\beta_0,C>0$ so that for all $\beta>\beta_0$ and every $\sfh<h_n^*-1$ and $k\geq 1$, 
\[ \mufloor_n^{\sfh}\left(|L_{n,k}|\geq Ce^{-\beta}n^2\right) \leq \exp\left(-e^{(\beta-C)k}n\right)\,.\]
\end{theorem}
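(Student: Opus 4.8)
\textbf{Step 1 (reduction to a single height, via the a priori estimates).}
Write $L_{n,k}=L_{n,k}^{\mathrm{w}}\cup L_{n,k}^{\mathrm{c}}$, where $L_{n,k}^{\mathrm{w}}$ collects the columns $x\in\cL_{0,n}$ for which $\cI\cap(\{x\}\times\R)$ is not a singleton, and $L_{n,k}^{\mathrm{c}}$ those for which it is a single ceiling face of height below $h_n^*-\sfh-k$. By Lemma~\ref{lem:n2-excess} one has $|L_{n,k}^{\mathrm{w}}|\le e^{-\beta}n^2$ off an event of probability $e^{-\Omega(n^2)}$, while Lemmas~\ref{lem:small-ceil} and~\ref{lem:thin-ceil} show that, off an event of probability $e^{-\Omega(n\log n)}$, all but $\epsilon_\beta n^2$ of the columns of $L_{n,k}^{\mathrm{c}}$ lie in \emph{nice} ceilings $\cC$: simply connected, with $|\hull\cC|\ge n^{1.9}$ and $\isodim(\hull\cC)\le\sqrt\beta$. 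Taking $C$ to be a large enough absolute constant that $Ce^{-\beta}n^2$ exceeds twice the sum of these discarded contributions, on the complement of these events $\{|L_{n,k}|\ge Ce^{-\beta}n^2\}$ forces at least $\tfrac12 Ce^{-\beta}n^2$ of the columns to lie in nice ceilings of height below $h_n^*-\sfh-k$. Since the columns of a fixed height are disjoint, a geometric pigeonhole with ratio $\theta:=e^{-\beta}$ (so $\sum_{j\ge k}\theta^j<\tfrac12 Ce^{-\beta}$) then produces an integer $j\ge k$ for which the nice ceilings $\cC_1,\cC_2,\dots$ of $\cI$ at height $h_n^*-\sfh-1-j$ satisfy $\sum_i|S_i|\ge\theta^j n^2$, writing $S_i:=\rho(\hull\cC_i)$. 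A union bound over $j$ thus reduces the theorem to showing, for every $j\ge k$,
\begin{equation}\label{eq:plan-per-j}
\mufloor_n^\sfh\Big(\textstyle\sum_i|S_i|\ge\theta^j n^2\Big)\ \le\ \exp\!\big(-e^{(\beta-C)j}n\big),
\end{equation}
whose sum over $j\ge k$ is the claim. (We suppress the minor modifications needed when $k$ grows with $n$ fast enough that $e^{-\Omega(n\log n)}$ is not itself within the claimed bound.)

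\textbf{Step 2 (lifting by $j$ and the energy--entropy balance).}
Fix $j\ge k$ and set $g:=h_n^*-\sfh-1-j$. As $\cI\in\Ifloor$, each such $\cC_i$ has $\bar M_{S_i}^\downarrow<h_n^*-j$, the downward oscillation below $\cC_i$ being capped by the floor at depth $g+\sfh=h_n^*-1-j$. Apply the lifting map $\Phi_j^\uparrow$ of Definition~\ref{def:shift-map}: it changes $\mufloor_n^\sfh$-mass by a factor at most $\exp(4(\beta+C)jn)$ (Proposition~\ref{prop:shift-map-effect}, using that $\Phi_j^\uparrow$ maps $\Ifloor$ into $\Ifloor$), sends each $\cC_i$ into a ceiling at the \emph{critical} height $h_n^*-\sfh-1$, and leaves the relative heights below $\cC_i$, hence $\bar M_{S_i}^\downarrow$, unchanged (Lemma~\ref{lem:shift-map-properties}). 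So~\eqref{eq:plan-per-j} reduces to bounding the $\mufloor_n^\sfh$-probability that $\cI$ has nice ceilings at height $h_n^*-\sfh-1$ with $\sum_i|S_i|\ge\theta^j n^2$ and $\bar M_{S_i}^\downarrow<h_n^*-j$ for all $i$. I would reveal a wall collection $\bW$ (chosen as in Step 3) placing these $\cC_i$ at height $h_n^*-\sfh-1$ and leaving the interiors $S_i$ unexplored; then, for \emph{every} admissible such $\bW$ on this event,
\[
\mufloor_n^\sfh\Big(\bigcap_i\{\bar M_{S_i}^\downarrow<h_n^*-j\}\,\Big|\,\bI_\bW\Big)\ \le\ \prod_i\frac{\mu_n^\mp\big(\bar M_{S_i}^\downarrow<h_n^*-j\mid\bI_\bW\big)}{\mu_n^\mp\big(\bar M_{S_i}^\downarrow<h_n^*\mid\bI_\bW\big)}\,,
\]
the denominators absorbing the remaining floor constraint in each $S_i$, and the factors of $\Ifloor$ outside $\bigcup_i S_i$ cancelling since the $S_i$ are mutually separated by $\bW$. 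Bounding the numerators by~\eqref{eq:max-thick-upper-large-h} (the auxiliary event $\cG^{\mathfrak D}_{S_i^\circ}$ being automatic after the lift --- see Step 3) and the denominators below by~\eqref{eq:max-thick-lower-large-h}, and using $\alpha_{h_n^*-j}\le\alpha_{h_n^*}-(4\beta-C)j$ together with $e^{-\alpha_{h_n^*}}\ge e^{-2\beta-\epsilon_\beta}/n$ (cf.~\cite{GL19a,GL19b} and~\eqref{eq:h-star-def}), the product is at most $\exp\!\big(-\tfrac12\sum_i|S_i|e^{-\alpha_{h_n^*-j}}\big)\le\exp\!\big(-e^{-2\beta-\epsilon_\beta}n(\theta e^{4\beta-C})^j\big)$ on $\{\sum_i|S_i|\ge\theta^j n^2\}$. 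As the events $\bI_\bW$ are disjoint, summing against $\sum_\bW\mufloor_n^\sfh(\bI_\bW)\le1$ and inserting the lifting cost gives, with $\theta=e^{-\beta}$,
\[
\mufloor_n^\sfh\Big(\textstyle\sum_i|S_i|\ge\theta^j n^2\Big)\ \le\ \exp\!\big(4(\beta+C)jn-\tfrac12 e^{-2\beta-\epsilon_\beta}n\,e^{(3\beta-C)j}\big)\,,
\]
and for $\beta$ large the $e^{(3\beta-C)j}$ term dominates both $4(\beta+C)j$ and $e^{(\beta-C')j}$, yielding~\eqref{eq:plan-per-j}. For those $j$ with $h_n^*-j<\sqrt{\log|S_i|}$ (outside the range of Proposition~\ref{prop:max-thick}) I would instead invoke the crude bounds~\eqref{eq:max-upper-crude}--\eqref{eq:max-lower-crude} of Proposition~\ref{prop:max-generic}, which carry no auxiliary event and already suffice there.

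\textbf{Step 3 (the main obstacle: revealing mesoscopic walls --- Lemma~\ref{lem:area-of-ceilings-I-dagger}).}
The crux is that~\eqref{eq:max-thick-upper-large-h} holds only in conjunction with $\cG^{\mathfrak D}_{S_i^\circ}(e^{\kappa_0(h_n^*-j)})$, that no wall nested in the bulk of $S_i$ has diameter exceeding $e^{\kappa_0(h_n^*-j)}$; this restriction is genuine, since a single wall of size $\sim(c/\beta)n\log n$ raises an area of order $n^2$ at cost only $e^{-c'n\log n}$, which is not precluded by $\mu_n^\mp(\Ifloor)$ --- indeed this is precisely the repulsion mechanism. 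To legitimize Step 2 I would, before everything else, reveal \emph{every mesoscopic wall} of $\cI$ --- those of diameter larger than $e^{c_0(h_n^*-j)}\vee\log n$ for an appropriate $c_0\le\kappa_0$ --- which incurs no union bound, the conditional estimates of Section~\ref{sec:prelim} holding given an arbitrary admissible outer wall collection. This exposes a landscape of nice ceilings $\cC_i$ at height $g$ with unexplored interiors $S_i$; but mesoscopic walls may nest one another, so the unexplored part of an $S_i$ need not be simply connected. I would remedy this by additionally exposing the (necessarily microscopic) walls along a minimal face set joining the boundaries $\partial S_i$, of size $n^{3/2+o(1)}$ by the Euclidean Traveling Salesman bound applied to $\bigcup_i\partial S_i$ (which has $n^{1+o(1)}$ faces by the a priori estimates): this perturbs each $|S_i|$ and $|\partial S_i|$ by $n^{3/2+o(1)}$, hence preserves $\isodim(S_i)\le\sqrt\beta$ since $|S_i|\ge n^{1.9}$, and renders each unexplored $S_i$ simply connected by construction. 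Taking $\bW$ to be this revealed collection (shifted up by $\Phi_j^\uparrow$ in Step 2), the lifted interface still has no wall of diameter $\ge e^{\kappa_0(h_n^*-j)}$ nested in any $S_i^\circ$ (lifting preserves diameters), so $\cG^{\mathfrak D}_{S_i^\circ}$ holds automatically; and~\eqref{eq:max-thick-lower-large-h} is stated with an event $\cG^{\fm}_{S_i^\circ}(5h)$ that implies $\cG^{\mathfrak D}_{S_i^\circ}$, so the denominators in Step 2 are controlled together with their own auxiliary events. Assembling the simultaneous application of Proposition~\ref{prop:max-thick} to the random collection $\{S_i\}$ in this conditional space is exactly the content of Lemma~\ref{lem:area-of-ceilings-I-dagger}, and I expect that bookkeeping --- not the arithmetic of Step 2 --- to be the main difficulty.
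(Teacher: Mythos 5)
Your proposal is correct and follows essentially the same route as the paper: the reduction to a per-height estimate on large ceilings (the paper's Theorem~\ref{thm:lower-bound}, combined with Lemmas~\ref{lem:n2-excess} and~\ref{lem:small-ceil} and a union bound over heights $j\geq k$), the lift by $\Phi_j^\uparrow$ with the energy--entropy comparison via Propositions~\ref{prop:shift-map-effect} and~\ref{prop:max-thick}, and the revelation of mesoscopic walls plus the TSP connection to restore simple-connectivity, which is precisely the content of Lemma~\ref{lem:area-of-ceilings-I-dagger}. The only differences are organizational (e.g., the paper controls $\isodim$ of the ceilings via the constraint $\fm(\bW^\dagger)\leq n\log n$ from Lemma~\ref{lem:n-disjoint-walls} rather than via Lemma~\ref{lem:thin-ceil}, and it executes the conditional product bound through a filtration with suprema over external walls), and the paper's per-height bound is likewise capped at $\exp(-3n\log n)$ for large $k$, matching the caveat you flag.
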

This will be a consequence of the following estimate on the total area in macroscopic and near macroscopic ceilings below height $h_n^*-\sfh-1$.
\begin{theorem}\label{thm:lower-bound}
There exist $\beta_0,C>0$ so that for every fixed $\beta>\beta_0$, if $\sfh < h_n^* - 1$ and
\[ \fC_{j}(\cI) = \{\cC \mbox{ is a ceiling in $\cI$ with }\hgt(\cC)=j\}
\]
then for every $1\leq k \leq h_n^*-1$, 
\[ \mufloor_n^{\sfh}\bigg(\sum\{|\cC|\,:\; \cC\in\fC_{h_n^*-\sfh-k-1}(\cI)\,, |\cC|\geq  n^{1.9} \} \geq e^{-\beta k}n^{2}\bigg) \leq e^{- \left(e^{(\beta-C)k} \,\wedge\, 3\log n \right)n}\,.
\]
\end{theorem}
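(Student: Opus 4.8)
The plan is to make rigorous the energy-versus-entropy heuristic of Section~\ref{subsec:proof-ideas}. Write $\cB$ for the event in the statement of Theorem~\ref{thm:lower-bound}. The first step is to \emph{lift the interface by $k$}. Applying the shift map $\Phi_k^{\uparrow}$ of Definition~\ref{def:shift-map} together with Proposition~\ref{prop:shift-map-effect} (which holds under $\mufloor_n^\sfh$), and using Lemma~\ref{lem:shift-map-properties} — so that $\Phi_k^{\uparrow}(\cB)$ lies in $\Ifloor$ and consists of interfaces possessing ceilings at height $h_n^*-\sfh-1$ whose combined projected area (together with the finite components they nest) is still at least $e^{-\beta k}n^2$ while the downward oscillation $\bar M_S^{\downarrow}$ over the corresponding set of columns $S$ is unchanged, hence still $<h_n^*-k$ — one obtains
\[ \mufloor_n^\sfh(\cB)\ \le\ e^{4(\beta+C)kn}\,\mufloor_n^\sfh(\cB')\,, \]
where $\cB'$ is the event that the interface carries ceilings at height $h_n^*-\sfh-1$ of total projected area at least $e^{-\beta k}n^2$ among those of area at least $n^{1.9}$, over whose columns $S$ one has $\bar M_S^{\downarrow}<h_n^*-k$. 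The gain from the lift is that on $\Ifloor$ itself such a set $S$ is only forced to satisfy $\bar M_S^{\downarrow}<h_n^*$, so $\cB'$ demands an oscillation that beats the implicit floor by a margin of $k$.

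To control $\mufloor_n^\sfh(\cB')$ I would intersect with the good event of Section~\ref{sec:rough-bounds-no-floor}: by Lemmas~\ref{lem:n2-excess}, \ref{lem:n-disjoint-walls}, \ref{lem:small-ceil} and~\ref{lem:thin-ceil}, up to an error of at most $e^{-3n\log n}$ (the remaining error terms being much smaller) one may assume that every ceiling contributing to $\cB'$ has area at least $n^{1.9}$, boundary at most $n^{1+o(1)}$, and therefore, after a controlled modification, isoperimetric dimension at most $\sqrt\beta$. On this good event I would \emph{reveal every mesoscopic wall} — those of diameter exceeding $e^{c(h_n^*-k)}\vee\log n$, with $c$ a small multiple of $\kappa_0$ — thereby exposing a landscape of ceilings $\cC_i$ with interiors $S_i$, but leaving the unrevealed walls interior to each $S_i$ microscopic, so that the event $\cG^{\mathfrak D}_{S_i^\circ}(e^{\kappa_0(h_n^*-k)})$ holds automatically. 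Since mesoscopic walls may nest one another, the $S_i$ need not be simply-connected; I would repair this by additionally revealing the (necessarily microscopic) walls along a minimal set of faces joining one face of each $\partial S_i$ to the others and to $\partial\Lambda_n$. By the classical Euclidean TSP bound and the $n^{1+o(1)}$ total size of $\bigcup_i\partial S_i$, this connector has size $n^{3/2+o(1)}$, hence perturbs $|S_i|$ and $|\partial S_i|$ by at most $n^{3/2+o(1)}$ and, as $|S_i|\ge n^{1.9}$, preserves both simple-connectivity and $\isodim(S_i)\le\sqrt\beta$. Write $\cF$ for the $\sigma$-algebra generated by all of this revealed information; conditionally on $\cF$, the restrictions of the interface to the distinct $S_i$ are independent.

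Conditionally on $\cF$, on $\cB'$ one has $\bigcap_i\{\bar M_{S_i}^{\downarrow}<h_n^*-k\}$ whereas $\Ifloor$ forces exactly $\bigcap_i\{\bar M_{S_i}^{\downarrow}<h_n^*\}$, so the conditional independence and Proposition~\ref{prop:max-thick} — using~\eqref{eq:max-thick-upper-large-h} for the numerators, whose $\cG^{\mathfrak D}$ event is in force, and~\eqref{eq:max-thick-lower-large-h} for the denominators — give, for $k$ in the range where the proposition applies (roughly $h_n^*-k\geq\sqrt{2\log n}$),
\[ \mufloor_n^\sfh(\cB'\mid\cF)\ \le\ \prod_i\exp\!\Big(-(1-\epsilon_\beta)|S_i|\,e^{-\alpha_{h_n^*-k}}+(1+\epsilon_\beta)|S_i|\,e^{-\alpha_{h_n^*}}\Big)\,, \]
with the cruder Proposition~\ref{prop:max-generic} used in place of this for the remaining (large) values of $k$. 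Inserting $\alpha_{h_n^*-k}\le\alpha_{h_n^*}-(4\beta-C)k$ and $e^{-\alpha_{h_n^*}}\ge e^{-2\beta-C}/n$ (the latter from the minimality in~\eqref{eq:h-star-def} and the increment bounds on $(\alpha_h)$), together with $\sum_i|S_i|\ge e^{-\beta k}n^2$, the right-hand side is at most $\exp(-c\,e^{(3\beta-C)k}n)$; this bound is uniform in $\cF$, so it also bounds $\mufloor_n^\sfh(\cB')$. Multiplying by the lift cost $e^{4(\beta+C)kn}$ (which is negligible for $\beta$ large since $k\geq1$) and adding the a priori error then yields $\mufloor_n^\sfh(\cB)\le\exp(-e^{(\beta-C)k}n)+e^{-3n\log n}$, i.e.\ the claimed $e^{-(e^{(\beta-C)k}\wedge 3\log n)n}$; for the large-$k$ values the crude bound already produces an error below $e^{-3n\log n}$, which is the source of the $3\log n$ cap.

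The main obstacle is the $\cG^{\mathfrak D}_{S^\circ}$ restriction built into the sharp upper bound~\eqref{eq:max-thick-upper-large-h}. This cannot be dropped: a single wall of size $(c/\beta)n\log n$ can rigidly tilt an area of order $n^2$ by height $(c/\beta)\log n$ at a cost $e^{-c'n\log n}$ that still exceeds $\mu_n^\mp(\Ifloor)$ — this being the very mechanism of entropic repulsion — so such walls must be conditioned away. Doing so simultaneously for the whole, randomly located, possibly mutually nested family of macroscopic and near-macroscopic ceilings, while (i) keeping each $S_i$ simply-connected (the TSP connector) and (ii) not inflating $\isodim(S_i)$ beyond $\sqrt\beta$ (which needs both the a priori regularity of $\bigcup_i\partial S_i$ and the $n^{3/2+o(1)}$ control on the connector), is the technical heart of the argument; a further point worth flagging is that phrasing the estimate as a conditional expectation given $\cF$, rather than a union over revealed wall configurations, is what keeps the enumeration of mesoscopic walls from overwhelming the $\exp(-e^{(\beta-C)k}n)$ gain.
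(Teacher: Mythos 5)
Your proposal follows essentially the same route as the paper's proof: lift the interface by $k$ via $\Phi_k^\uparrow$ (Proposition~\ref{prop:shift-map-effect}), restrict to the a priori regular event of Section~\ref{sec:rough-bounds-no-floor}, reveal all mesoscopic walls together with a TSP connector so that the resulting $S_i$'s are simply-connected with bounded isoperimetric dimension, and then play the upper bound~\eqref{eq:max-thick-upper-large-h} at height $h_n^*-k$ against the lower bound~\eqref{eq:max-thick-lower-large-h} at height $h_n^*$ (with the crude Proposition~\ref{prop:max-generic} for large $k$), exactly as in Lemma~\ref{lem:area-of-ceilings-I-dagger}. The only cosmetic difference is that the paper does not assert literal conditional independence across the $S_i$'s but instead bounds the product by successively revealing each $S_i$ and taking a supremum over the external wall configuration at each step, which is the rigorous form of what you describe.
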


\begin{remark}
The threshold of $e^{-\beta k}n^2$ on $\sum|\cC|$ in Theorem~\ref{thm:lower-bound} could be  extended via the same proof into $e^{-a\beta k}n^2$ for any fixed $1<a<2$ (with a bottleneck at $a=2$ due to our reliance on $e^{-a\beta h_n^*} = O(n^{3/2})$, whereas $e^{-2\beta h_n^*}$ is only guaranteed to be $O(n^{3/2+\epsilon_\beta})$ by the best upper bound 
we have for $h_n^*$).
\end{remark}

Observe that the proof of Theorem~\ref{thm:lower-gen-h} follows directly from  Theorem~\ref{thm:lower-bound} along with our results from~\S\ref{sec:rough-bounds-no-floor}:
By Lemma~\ref{lem:small-ceil}, the total area in $\rho(\cC)$ for ceilings $\cC$ with $|\cC|\leq n^2/\log^3 n$ is at most $Ce^{-\beta}n^2$ except with probability $\exp(-5 n \log n)$; thus, for  fixed $\bar k\geq 1$, the size of $\bigcup\{\rho(\cC): \cC \in \fC_{<h_n^*-\sfh-\bar k}\}$ is at most $e^{-\beta \bar k}n^2$ except with probability $\exp(-e^{(\beta-C)\bar k } n)$ via Theorem~\ref{thm:lower-bound} and a union bound over $k=\bar k,\ldots,h_n^*-1$; and there are at most $e^{-2\beta}n^2$ wall faces in $\cI$ except with probability $\exp(-c_\beta n^2)$ by Lemma~\ref{lem:n2-excess}.

The general principle behind the proof is the standard competition between energy and entropy which propels the interface to height $h_n^*-1$. It will be illuminating to consider the case $\sfh=0$ and show, via a straightforward Peierls argument which served as the basis of the sharp results for the SOS model in~\cite{CLMST14,CLMST16}, that the interface in $\mufloor_n^0$ is propelled to height $(1-\epsilon_\beta)h_n^*$. We do so in the next claim, and then explain why one cannot hope for such an argument, albeit effective for SOS, to be applicable for the Ising model.

\begin{claim}\label{clm:lower-bound-1-eps}
There exist $\beta_0,C_0>0$ such that, if $h_0 = \lfloor(4\beta+C_0)^{-1}\log n\rfloor$, then for every $\beta>\beta_0$ and $k\geq 2$,
\[ \mufloor_n^0\bigg(\sum\{|\cC|\,:\; \cC\in\fC_{h_0-k}(\cI) \} \geq e^{-2\beta k}n^{2}\bigg) \leq \exp\left(- e^{\beta k} n\right)\,.
\]
\end{claim}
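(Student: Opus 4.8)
The plan is to run the energy--entropy (Peierls) comparison that underlies the SOS analysis of~\cite{CLMST14,CLMST16}: if a large, essentially flat stretch of the interface sits at a height $h_0-k$ that is well below the typical repelled height, then lifting the whole interface up by $k$ is favourable, because the hard floor then caps the downward oscillations beneath that stretch at depth $h_0$ rather than $h_0-k$, and the extra headroom is entropically worth far more than the energetic price of the lift.

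Concretely, on the event $\Gamma_k:=\{\sum_{\cC\in\fC_{h_0-k}(\cI)}|\cC|\geq e^{-2\beta k}n^2\}$ (it suffices to treat $1\le k\le h_0$, since for larger $k$ there are no ceilings at height $h_0-k<0$ under $\Ifloor[0]$), I would expose the outermost walls of $\cI$; this realises an admissible collection $\bW$ with $\rho(\bW)\subset S^c$, where $S=S(\cI)$ is the union of the projected hulls of the exposed ceilings at height $h_0-k$, so $|S|\ge e^{-2\beta k}n^2$ and the reference ceiling over each component of $S$ sits at height $h_0-k$. Since $\cI\in\Ifloor[0]$ and $h_0-k\ge 0$, on each component $\bar M_S^\downarrow\le h_0-k$. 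Applying $\Phi_k^\uparrow$: by Lemma~\ref{lem:shift-map-properties} this moves each exposed ceiling to height $h_0$ and, being a rigid shift inside $S$, keeps $\bar M_S^\downarrow\le h_0-k$, and by Proposition~\ref{prop:shift-map-effect} it costs a factor $e^{4(\beta+C)kn}$ (valid under $\mufloor_n^{\sfh}$). Writing $\bW'$ for the shifted outer walls, the normalising factor $\mu_n^\mp(\Ifloor[0])$ enters, via the same $S$-independent quantity $\mu_n^\mp(\bI_{\bW'}\cap\{\text{interface above }0\text{ off }S\})$, both in the bound on the image and in the elementary lower bound $\mu_n^\mp(\Ifloor[0])\ge \mu_n^\mp(\bar M_S^\downarrow\le h_0\mid\bI_{\bW'})\cdot(\text{that quantity})$, so it cancels on dividing, leaving
\[
\mufloor_n^{0}(\bI_\bW)\;\le\; e^{4(\beta+C)kn}\,\frac{\mu_n^\mp\!\big(\bar M_S^\downarrow\le h_0-k\mid\bI_{\bW'}\big)}{\mu_n^\mp\!\big(\bar M_S^\downarrow\le h_0\mid\bI_{\bW'}\big)}\,.
\]

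To estimate the ratio I would invoke Proposition~\ref{prop:max-thick} (or, where $k$ is large enough compared with $\log\log n$, the cruder Proposition~\ref{prop:max-generic}): the denominator is $\ge\exp(-(1+\epsilon_\beta)|S|e^{-\alpha_{h_0+1}})$ by~\eqref{eq:max-thick-lower-large-h}, and the numerator is $\le\exp(-(1-\epsilon_\beta)|S|e^{-\alpha_{h_0-k+1}})$ by~\eqref{eq:max-thick-upper-large-h}. Using $\alpha_{h+1}\ge\alpha_h+4\beta-C$ one has $e^{-\alpha_{h_0+1}}\le e^{-(4\beta-C)k}e^{-\alpha_{h_0-k+1}}$, so the denominator's contribution is negligible and the ratio is $\le\exp(-(1-2\epsilon_\beta)|S|e^{-\alpha_{h_0-k+1}})$. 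The reason for the choice $h_0=\lfloor(4\beta+C_0)^{-1}\log n\rfloor$ is that then $\alpha_{h_0}\le(4\beta+C_0)h_0+O(1)\le\log n+O(1)$ (this inequality is forced by comparing the crude bounds of Proposition~\ref{prop:max-generic} with the sharp ones), whence $e^{-\alpha_{h_0-k+1}}\ge c\,e^{(4\beta-C)k}/n$; combined with $|S|\ge e^{-2\beta k}n^2$ this makes the ratio $\le\exp(-c\,e^{(2\beta-C)k}n)$. Absorbing the shift factor $e^{4(\beta+C)kn}$ and summing over the exposures $\bW$ (the shifted events are disjoint inside $\Ifloor[0]$, so the sum is harmless), one gets $\mufloor_n^{0}(\Gamma_k)\le\exp(4(\beta+C)kn-c\,e^{(2\beta-C)k}n)\le e^{-e^{\beta k}n}$ once $\beta$ is large, for every $k\ge 1$.

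The main obstacle is the one flagged in~\S\ref{subsec:proof-ideas}: the sharp upper bound~\eqref{eq:max-thick-upper-large-h} carries the auxiliary event that no wall in the bulk $S_{n,h}^\circ$ has diameter exceeding $e^{\kappa_0 h}$, and this cannot simply be dropped, as its complement has probability only $e^{-n^{o(1)}}$, which is too large to be absorbed by the denominator of the ratio above. The remedy—a small-scale rehearsal of Lemma~\ref{lem:area-of-ceilings-I-dagger}—is to expose not only the outermost walls but every wall of diameter at least $e^{\kappa_0 h_0}=n^{o(1)}$, so that the no-big-wall event holds for the unexposed region of $S$ by construction; this is affordable because, by Lemma~\ref{lem:n2-excess}, there are at most $n^{2-o(1)}$ such walls, and because the a priori regularity estimates of~\S\ref{sec:rough-bounds-no-floor} keep each exposed region simply connected with bounded isoperimetric dimension, as Proposition~\ref{prop:max-thick} requires. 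The competition above is tightest for $k$ of constant order, where the slack in $h_0$ (its being $(1-\epsilon_\beta)h_n^*$ rather than $h_n^*$) is exactly what lets the entropic term $e^{(2\beta-C)k}n$ dominate $4(\beta+C)kn$ and, after the cancellation, the cost of the floor—which is precisely why the argument cannot be pushed up to the critical height $h_n^*$.
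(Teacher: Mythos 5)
Your route is genuinely different from the paper's, and substantially heavier: what you sketch is essentially the strategy of the main result of this section (Theorem~\ref{thm:lower-bound} via Lemma~\ref{lem:area-of-ceilings-I-dagger}), specialized to the subcritical height $h_0$. The paper proves the claim by a far more elementary Peierls argument, which is the whole point of stating it as a warm-up: lift the interface by $1$ (not by $k$) via $\Phi_1^\uparrow$, at a cost $e^{4(\beta+C)n}$ \emph{independent of $k$}; then, for each lifted interface $\cI'$, produce the disjoint families $\bJ_{\cI'}$ obtained by planting a deterministic column of minus spins of depth $h_0-k$ above each $x$ in an arbitrary subset $A\subset A_0(\cI')$. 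Each spike costs $p=e^{(4\beta+C_0)k}/n$ in weight, so $\sum_A p^{|A|}=(1+p)^{|A_0|}\geq\exp(\tfrac12e^{(2\beta+C_0)k}n)$, and since the total mass is at most $1$ this bounds $\mufloor_n^0(\bI')$ directly. No appeal to Propositions~\ref{prop:max-generic} or~\ref{prop:max-thick}, no $\cG^{\fD}$ event, no simple-connectivity or isoperimetric-dimension requirements, and no union bound over wall configurations. The price of this simplicity is that the per-site rate is the crude $e^{-(4\beta+C_0)(h_0-k)}$ rather than $e^{-\alpha_{h_0-k}}$ --- which is exactly why it only reaches $(1-\epsilon_\beta)h_n^*$, i.e.\ exactly what the claim asks.

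Two points in your version are genuine gaps rather than omitted routine detail. First, the fallback to Proposition~\ref{prop:max-generic} cannot cover small $k$: the mismatch between the crude upper rate $e^{-(4\beta+C)h}$ and the crude lower rate $e^{-(4\beta-C)h}$ at $h=h_0$ is a factor $e^{2Ch_0}=n^{2C/(4\beta+C_0)}$, which swamps the gain $e^{(4\beta+C)k}$ unless $k\gtrsim\epsilon_\beta\log n$ --- not $k\gtrsim\log\log n$ as you state. So for $k=O(1)$, where the claim is tightest, you are forced to use~\eqref{eq:max-thick-upper-large-h}, hence to confront the $\cG^{\fD}$ event in earnest; and your remedy (``expose every mesoscopic wall'') is precisely the content of Lemma~\ref{lem:area-of-ceilings-I-dagger}, not a consequence of Section~\ref{sec:rough-bounds-no-floor}. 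In particular the assertion that the a priori estimates ``keep each exposed region simply connected'' is false: nested mesoscopic walls destroy simple-connectivity, which is why the paper needs the Euclidean-TSP connector set $\cP^\dagger$ and the attendant control of its effect on $|S_i|$ and $|\partial S_i|$. Second, your closing arithmetic does not close as written: with only $\alpha_{h_0-k+1}\le\log n+O(\beta)-(4\beta-C)k$, the entropy term is $|S|e^{-\alpha_{h_0-k+1}}\gtrsim e^{(2\beta-C)k-O(\beta)}n$, which at $k=1$ does \emph{not} dominate the lift cost $4(\beta+C)n$ for large $\beta$. What saves the argument is the polynomial slack $e^{-\alpha_{h_0}}\geq n^{-1+\delta}$ with $\delta=(C_0-C)/(4\beta+C_0)>0$ coming from $h_0<h_n^*$; you name this mechanism in prose but your displayed bounds discard the $n^\delta$, so the stated conclusion ``once $\beta$ is large'' does not follow from them.
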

\begin{proof}
Let $k\geq 1$. For $\cI\in\Ifloor[0]$, let \[A_0(\cI)= \bigcup\{\rho(\cC):\cC\in\fC_{h_0-k-1}(\cI)\}\quad,\quad 
A_1(\cI)= \bigcup\{\rho(\cC):\cC\in\fC_{h_0-k}(\cI)\},\,\] and let \[\bI = \{\cI : |A_0(\cI)|\geq e^{-2\beta k}n^2\}\,.\] 
Further let $\bI' = \Phi_1^\uparrow(\bI)$, where we recall from Definition~\ref{def:shift-map} which effectively shifts the interface up by $1$. In particular,
$|A_1(\cI')|\geq e^{-2\beta k} n^2$ for every $\cI'\in\bI'$.

For $\cI'\in\bI'$ and a subset $A \subset A_1(\cI')$, let $\bJ_{\cI'}$ be the interfaces obtained from $\cI'$ by setting all spins of $\sigma(\cI')$ in $A\times (\frac12+\Z_+)$ to be minus (modifying $h_0-k$ spins above each $x\in A$). As $\cI'\cap((\cL_{0,n}\setminus\partial\cL_{0,n})\times\R)) \subset \cL_{\geq 1}$ for any $\cI\in\Ifloor[0]$, by Lemma~\ref{lem:shift-map-properties}, we can recover $\cI'$ from $\cJ\in\bJ_{\cI'}$ (reading $A$ off from~$\cJ\cap\cL_0$). This implies that the interface sets $\{\bJ_{\cI'}:\cI'\in \bI'\}$ are pairwise disjoint.

 Moreover, if $|A|=m$ then it is easy to verify that for some absolute constant $C>1$,
 $$\mufloor_n^0(\cJ) \geq \exp(-(\beta+C)4(h_0-k) m)\mufloor_n^0(\cI'),,$$ 
since any such~$\cJ$ has $\fm(\cJ;\cI') \le 4(h_0-k)m$ by construction.
 Taking $C_0=4C$ in the definition of $h_0$, this translates into $\mufloor_n^0(\cJ) \geq p^{m}\mufloor_n^0(\cI')$ where
 $p=p_{k,n} = e^{(4\beta +C_0)k}/n$ (noting that $p_{k,n}\leq p_{h_0,n} \leq 1$), and so,
 \begin{align*}1 &\geq \sum_{\cI'\in\bI'}\sum_{\cJ\in\bJ_{\cI'}}\mufloor_n^0(\cJ) \geq  \sum_{\cI'\in\bI'}\mufloor_n^0(\cI') \!\! \sum_{A\subset A_1(\cI')}\!\! p^{|A|} 
 =  \sum_{\cI'\in\bI'} \mufloor_n^0(\cI') \left(1+p\right)^{|A_1(\cI')|}
 \\
 &\geq \exp\left( \tfrac12 p e^{-2\beta k} n^2 \right)\mufloor_n^0(\bI') = \exp(\tfrac12 e^{(2\beta + C_0)k} n )\mufloor_n^0(\bI')\,,
\end{align*}
using $1+p \geq \exp(p/2)$ for $p\leq 1$ in the inequality between the lines. By Proposition~\ref{prop:shift-map-effect} we have  $\mufloor_n^0(\bI) \leq \mufloor_n^0(\bI') \exp(4(\beta+C')n)$ for another $C'>0$, completing the proof provided $\beta_0$ is large enough.
\end{proof}
The proof of Claim~\ref{clm:lower-bound-1-eps} captured the effect of entropic repulsion: if an interface $\cI$ is such that $A_0(\cI)$, the projections of the ceilings at height $j=h_0-k$, is too large, one can compare it to the set of (many) interfaces $\cJ$ obtained by elevating all heights via a single $n\times n$ slab of $(+)$ spins (a cost of $\exp(-(4\beta-C) n)$ in energy) while replacing any subset of $A_0$ by spikes reaching down to height $0$ (a gain of $\exp(e^{-(4\beta-C)j}|A_0|)$ in entropy); comparing the two competing terms, the entropy term wins when $j< h_0$, giving the claim.
Unfortunately, the correct rate $\alpha$ of large deviations in the 3D Ising model exceeding height $j$ is realized not by a deterministic shape such as a spike of height $j$ but rather by a \emph{distribution} over pillars $\cP_x$ that behave as decorated random walks (e.g., typically $\diam(\rho(\cP_x))$ has order $\sqrt j$ for such $\cP_x$). For one to be able to modify~$\cI$ to have such downward pillars $\cP_x$ at a subset $A\subset A_0$, they would need to all fit in the corresponding ceilings and with one another; then, one would need to account for their randomness and interactions among themselves and with existing walls. Even more problematic is the fact that comparing the interface $\cJ$ with a given pillar $\cP_x$ of height $j$ to an interface $\cI$ without it via cluster expansion  (Theorem~\ref{thm:cluster-expansion}) incurs a cost of $\exp(\bar K j)$ due the interaction term $\g$. This would result in an error of $n^{\epsilon_\beta}$---thus not improving on Claim~\ref{clm:lower-bound-1-eps}.

One could instead begin by examining the interfaces $\cI'$ obtained from $\cI$ by elevating all height by $1$ yet without planting any spikes or pillars. As mentioned above, $\mufloor_n^0(\cI')$ is at least $\exp(-4(\beta-C)n)\mufloor_n^0(\cI)$, due to the additional $n\times n$ slab of $(+)$ spins. The latter interfaces have their minima within $A_0\times \R$ not reaching to height $0$, which one would expect to have probability at most $\exp(-e^{-\alpha_h}|A_0|)$ via the estimate~\eqref{eq:max-thick-upper-large-h} in Proposition~\ref{prop:max-thick} (proved via the approximate Domain Markov property for ceilings in~\cite{GL20}). If this probability outweighs the $\exp(-4(\beta-C)n)$ factor, we will obtain the desired lower bound.

However, the event $\cG_{S_{n,h}^\circ}$ in~\eqref{eq:max-thick-upper-large-h} complicates matters, and is a real (rather than a technical) obstacle: Mesoscopic walls nested in a ceiling may encapsulate additional ceilings and modify their heights in a complicated way (whereby a single wall supports multiple ceilings in different heights), making some of them more favorable, by the exact same entropic repulsion mechanism that propels the interface to height $h_n^*-1$. Thus, we cannot preclude the existence of such walls, and must resort to an analysis of their subtle effect.

Our approach for handling the delicate effect of mesoscopic walls involves conditioning on them---possibly revealing large ceilings at different heights in the process---then reevaluating the new landscape of ceilings, and applying the aforementioned entropic repulsion argument for a specific height in this conditional space.

Consider an integer $1 \leq k < h_n^*$. With $\kappa_0>0$ the absolute constant from Proposition~\ref{prop:max-thick}, define 
\begin{equation}\label{eq:meso-wall}
\bW^\dagger(\cI) = \left\{ W \,:\; W \mbox{ is a wall of $\cI$ with }\diam(\rho(W)) \geq  (e^{ \kappa_0 (h_n^*-k)}\,\vee\,\log n)\right\}
\end{equation}
(note the dependence on $k$) and associate to every $\cI$ the interface $\cI^\dagger$ consisting only of these walls, i.e.,
\begin{equation}\label{eq:I-dagger-def}
\cI^\dagger := \cI_{\bW^\dagger(\cI)}\,.
\end{equation}
\begin{remark}
\label{rem:ceilings-of-W-dagger}	
If $W' \Supset W$ for some $W \in \bW^\dagger$ then $W'\in\bW^\dagger$ since in that case $\diam(\rho(W'))> \diam(\rho(W))$. (This would not be the case were our inclusion criterion for $\bW^\dagger$ instead been phrased e.g.\ in terms of $\fm(W)$.) Consequently, every face of a ceiling $\cC\in\cI$ supported by some $W\in\bW^\dagger(\cI)$ is also a ceiling face in $\cI^\dagger$ at the exact same height (whereas $\cC$ may be contained in a strictly larger  ceiling $\cC^\dagger\in \cI^\dagger$ with $\hgt(\cC^\dagger)=\hgt(\cC)$).
\end{remark}
With this in mind, we will later use that if the set of ceilings $\fC_{h_n^*-\sfh-k-1}(\cI)$ with $|\hull\cC|\geq n^{1.9}$ is $\{\cC_1,\ldots,\cC_N\}$, then $\fC_{h_n^*-\sfh-k-1}(\cI^\dagger)$ includes ceilings $\{\cC^\dagger_1,\ldots,\cC^\dagger_{N'}\}$ for some $N' \leq N$ where every $\cC_j$ must have $\cC_j\subset\cC^\dagger_{i_j}$ for some $i_j$.  (N.B.\ that $N'\leq N$ as a single ceiling in $\cI^\dagger$ may include multiple $\cC_{j}$'s, as well as extra horizontal wall faces from $\cI$ which were deleted in the transition to $\cI^\dagger$, but distinct ceilings in $\cI^\dagger$ must correspond to disjoint such sets of ceilings in $\cI$ by definition of a ceiling as a $*$-connected component of ceiling faces.)

The key reduction in the proof of Theorem~\ref{thm:lower-bound} is the next bound on the total area of a certain class of ceilings in $\fC_{h_n^*-\sfh-1}(\Phi_k^\uparrow(\cI)^\dagger)$, which we will thereafter use to bound the total area of ceilings in $\fC_{h_n^*-\sfh-k-1}(\cI)$.

\begin{lemma}
	\label{lem:area-of-ceilings-I-dagger}
There exist absolute constants $\beta_0,C>0$ so that, for all $\beta>\beta_0$ and $0\leq \sfh < h_n^*-1$, the following holds.  Let $1 \leq k < h_n^*$, define $\cI^\dagger$ as in~\eqref{eq:I-dagger-def}, and let
\[ \bC^\dagger(\cI) = \left\{ \cC^\dagger\in\fC_{h_n^*-\sfh-1}(\cI^\dagger)\,:\; |\cC^\dagger|\geq n^{1.9}\,,\,\bar M^\downarrow_{\cC^\dagger}(\cI) < h_n^*-k\right\}\,.
\]
 Then for every sufficiently large enough~$n$ we have
\[ \mufloor_n^{\sfh}\bigg(\sum_{\cC^\dagger\in\bC^\dagger(\cI)} |\cC^\dagger| \geq e^{-\beta k}n^{2}\,,\,\fm(\bW^\dagger(\cI))\leq n\log n \bigg) \leq e^{-\left(e^{(\beta-C)k} \,\wedge\, n^{1/5}\right)n}\,.
\]
Furthermore, with $\gamma:=n e^{-\alpha_{h_n^*}}$ 
this remains true when replacing $e^{(\beta-C)k}$ in the right-hand by $\gamma e^{(3\beta-C)k}$, and moreover, it holds  conditionally on the event $\{\bW^\dagger(\cI)=\bW_n\}$ for any $\bW_n$ such that $\fm(\bW_n)\leq n\log n$.
\end{lemma}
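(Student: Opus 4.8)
The plan is to first prove the conditional version: the bound on $\mufloor_n^{\sfh}(\,\cdot \mid \bW^\dagger(\cI)=\bW_n)$ for an arbitrary admissible $\bW_n$ with $\fm(\bW_n)\le n\log n$. The version on $\{\fm(\bW^\dagger(\cI))\le n\log n\}$ then follows at once by writing that event as a disjoint union over the value of $\bW^\dagger(\cI)$ and using $\sum_{\bW_n}\mufloor_n^{\sfh}(\bW^\dagger(\cI)=\bW_n)\le 1$ — in particular no union bound over the (up to $e^{\Theta(n\log n)}$ many) candidates $\bW_n$ is incurred. It also suffices to obtain $\gamma e^{(3\beta-C)k}$ in the exponent, since $\gamma=e^{\lambda_n}\ge e^{-2\beta-\epsilon_\beta}$ by the control on $\lambda_n$ recalled in Remark~\ref{rem:h=h_n-1} and $k\ge 1$ together give $\gamma e^{(3\beta-C)k}\ge e^{(\beta-C')k}$; and one may always replace the exponent by the minimum of it and $n^{1/5}$.

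Fix $\bW_n$. Then $\cI^\dagger=\cI_{\bW_n}$ is determined, as is the family $\cC^\dagger_1,\dots,\cC^\dagger_M$ of its ceilings at height $h_n^*-\sfh-1$ with $|\cC^\dagger_i|\ge n^{1.9}$; set $S_i=\rho(\hull{\cC^\dagger_i})$. Charging each face of $\partial S_i$ to a distinct wall face of a supporting wall (hit at most twice), $\sum_i|\partial S_i|\le C\fm(\bW_n)\le Cn\log n$; with $|S_i|\ge n^{1.9}$ and the planar isoperimetric inequality this forces $M\le n^{1/10}$ and $\isodim(S_i)\le 4$ for $n$ large. Now I would open the regions up and decouple them: let $\Pi\subset\cL_{0,n}$ be a minimal connected set of faces/edges joining $\partial\Lambda_n$, every $\partial S_i$, and the boundary of every finite component of $S_i\setminus\rho(\cC^\dagger_i)$ (there are $O(\fm(\bW_n))$ of these), so that the classical Euclidean traveling-salesman bound gives $|\Pi|\le n^{3/2+\epsilon_\beta}$. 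On $\{\bW^\dagger(\cI)=\bW_n\}$ every $W_x$, $x\in\Pi$, is microscopic (diameter $<e^{\kappa_0(h_n^*-k)}\vee\log n\le n^{\epsilon_\beta}$), so revealing $\bW:=\bW_n\cup\{W_x:x\in\Pi\}$ adds at most $n^{3/2+\epsilon_\beta}$ faces to the revealed projection, and the component $\tilde S_i$ of $\cL_{0,n}\setminus(\rho(\bW)\cup\Pi)$ meeting $\cC^\dagger_i$ is simply-connected, disjoint from the others, satisfies $\rho(\bW)\cap\tilde S_i=\varnothing$, sits beneath a single ceiling of $\cI_\bW$ of height $h_n^*-\sfh-1$, and has $|\tilde S_i|\ge|\cC^\dagger_i|-n^{3/2+\epsilon_\beta}\ge\tfrac12|\cC^\dagger_i|$ and $|\partial\tilde S_i|\le n^{3/2+\epsilon_\beta}$, hence $\isodim(\tilde S_i)\le\sqrt\beta$ for $\beta_0$ large. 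Crucially, $\{\bW^\dagger(\cI)=\bW_n\}$ forces every wall of $\cI$ nested in $\tilde S_i$ to be microscopic (so the relevant $\cG^{\mathfrak D}$ event over $\tilde S_i$ holds), and, given $\bI_\bW$, the interfaces over the distinct $\tilde S_i$ are conditionally independent, each governed by Proposition~\ref{prop:max-thick} (resp.\ Proposition~\ref{prop:max-generic}).

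Fix $i$. On $\{\cC^\dagger_i\in\bC^\dagger(\cI)\}$ one has $\bar M^\downarrow_{\tilde S_i}<h_n^*-k$, while $\cI\in\Ifloor$ forces $\bar M^\downarrow_{\tilde S_i}<h_n^*$ always; hence, since $A:=\{\bar M^\downarrow_{\tilde S_i}<h_n^*-k\}\subseteq B:=\{\bar M^\downarrow_{\tilde S_i}<h_n^*\}$, the conditional probability of $\cC^\dagger_i\in\bC^\dagger$ given $\bI_\bW$, the floor, and the inherited $\cG^{\mathfrak D}$ event is at most
\[
\frac{\mu_n^\mp(\bar M^\downarrow_{\tilde S_i}<h_n^*-k,\;\cG^{\mathfrak D}_{(\tilde S_i)^\circ}\mid\bI_\bW)}{\mu_n^\mp(\bar M^\downarrow_{\tilde S_i}<h_n^*,\;\cG^{\mathfrak D}_{(\tilde S_i)^\circ}\mid\bI_\bW)}\,.
\]
When $h_n^*-k\ge\sqrt{\log|\tilde S_i|}$ the numerator is $\le\exp(-(1-\epsilon_\beta)|\tilde S_i|e^{-\alpha_{h_n^*-k}})$ by~\eqref{eq:max-thick-upper-large-h} (the threshold of the meso-wall definition~\eqref{eq:meso-wall} then equals $e^{\kappa_0(h_n^*-k)}$, which is exactly why the inherited microscopicity aligns with the $\cG^{\mathfrak D}$ event there); otherwise the numerator is $\le\exp(-|\tilde S_i|e^{-(4\beta+C)(h_n^*-k)})$ by the crude~\eqref{eq:max-upper-crude}, which needs no $\cG^{\mathfrak D}$. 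The denominator is $\ge\exp(-(1+\epsilon_\beta)|\tilde S_i|e^{-\alpha_{h_n^*}})$ by~\eqref{eq:max-thick-lower-large-h} with $h=h_n^*$ (applicable for $n$ large), whose event lies inside $B$ and inside the inherited $\cG^{\mathfrak D}$ event because $5h_n^*<\log n$. Plugging in $\alpha_{h_n^*-k}\le\alpha_{h_n^*}-(4\beta-C)k$ from~\cite{GL19b} and $e^{-\alpha_{h_n^*}}=\gamma/n$, the ratio is $\le\exp(-\tfrac12\gamma e^{(4\beta-C)k}|\tilde S_i|/n)$ for $\beta$ large. Revealing $\bI_\bW$, multiplying these conditionally independent estimates over $\{i:\cC^\dagger_i\in\bC^\dagger\}$, and using $\sum_i|\tilde S_i|\ge\tfrac12 e^{-\beta k}n^2$ on $\{\sum_{\cC^\dagger\in\bC^\dagger}|\cC^\dagger|\ge e^{-\beta k}n^2\}$, gives
\[
\mufloor_n^{\sfh}\Big(\textstyle\sum_{\cC^\dagger\in\bC^\dagger(\cI)}|\cC^\dagger|\ge e^{-\beta k}n^2\ \Big|\ \bW^\dagger(\cI)=\bW_n\Big)\ \le\ \exp\!\big(-\tfrac14\gamma e^{(3\beta-C)k}n\big)
\]
uniformly in $\bW_n$, and averaging over $\bW_n$ finishes the proof.

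The main obstacle is the second paragraph: simultaneously turning a possibly deeply nested family of ceilings into simply-connected, conditionally-independent regions by one short connecting set $\Pi$, while (i) keeping every $\isodim(\tilde S_i)\le\sqrt\beta$ despite the $n^{3/2+\epsilon_\beta}$ extra faces that $\Pi$ contributes, and (ii) making the ``all nested walls microscopic'' constraint inherited from $\{\bW^\dagger(\cI)=\bW_n\}$ line up with the $\cG^{\mathfrak D}$ event appearing in~\eqref{eq:max-thick-upper-large-h} — this is precisely why the threshold in~\eqref{eq:meso-wall} is $e^{\kappa_0(h_n^*-k)}\vee\log n$ and why the narrow range $h_n^*-k<\sqrt{\log|\tilde S_i|}$ must be peeled off and treated with the crude bound. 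Everything else is the energy-versus-entropy computation already sketched in~\S\ref{subsec:proof-ideas}, together with the approximate Domain-Markov structure for ceilings underlying Propositions~\ref{prop:max-generic} and~\ref{prop:max-thick}.
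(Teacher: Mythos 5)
Your proposal is correct and follows essentially the same route as the paper's own proof: reveal the mesoscopic walls, use the Euclidean TSP bound to connect their projections through a short set of (necessarily microscopic) walls so that each remaining ceiling interior $\tilde S_i$ is simply connected with bounded isoperimetric dimension, and then bound the probability that each such region stays above $h_n^*-k$ by the ratio of~\eqref{eq:max-thick-upper-large-h} (or the crude~\eqref{eq:max-upper-crude} when $h_n^*-k$ is too small for Proposition~\ref{prop:max-thick}) to the lower bound~\eqref{eq:max-thick-lower-large-h} at $h=h_n^*$, multiplying over the regions via sequential revealment. The only differences are cosmetic (your ``conditional independence'' across the $\tilde S_i$ should be read as the paper's sequential conditioning with a supremum over external wall realizations, and your final display should carry the $\wedge\, n^{1/5}$ truncation in the crude regime), so this matches the paper's argument.
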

(We emphasize that the definition of $\bC^\dagger$ looked at $\bar M_{\cC^\dagger}$ in $\cI$ as opposed to in $\cI^\dagger$.)

We postpone the proof of this lemma in order to show how to derive Theorem~\ref{thm:lower-bound} from it.
\subsection{Proof of Theorem~\ref{thm:lower-bound} modulo Lemma~\ref{lem:area-of-ceilings-I-dagger}}
Consider some $\cI\in\Ifloor$. Since every $\bW^\dagger(\cI)$ consists of distinct walls, each of which has size at least $\log n$, if $\sum_{W\in\bW^\dagger(\cI)}\fm(W)\geq \frac12n\log n$, then any subset of $|\bW^\dagger(\cI)|\wedge n$ many of those walls also has total $\fm$ at least $\frac12 n\log n$. Thus, by Lemma~\ref{lem:n-disjoint-walls} with $N = |\bW^\dagger(\cI)|\wedge n$,
\[ \mufloor_n^{\sfh}(\sum\{\fm(W)\,:\;W\in\bW^\dagger(\cI)\}\geq \tfrac12 n\log n\}) \leq e^{-3n\log n}\,.\]
It will therefore suffice to show that
\begin{equation}\label{eq:Ik-bound}\mufloor_n^h(\bI_k)\leq \exp(-(e^{(\beta-C)k} n)
\end{equation}
 for
\[ \bI_k =
\bigg\{\cI\in\Ifloor\,:\;\sum_{\cC\in\fC_{h_n^*-\sfh-k-1}(\cI)} |\cC|\one_{\{|\cC|\geq n^{1.9}\}}\geq e^{-\beta k}n^{2}~,~
\fm(\bW^\dagger(\cI))\leq \tfrac12 n\log n\bigg\}\,.\]
(N.B.\ we are aiming to bound $\sum |\cC|$, throughout this theorem, as opposed to $\sum|\hull\cC|$; indeed, the latter can include an array of ceilings at different heights, which would be treated by sets $\bI_k$'s as above for different~$k$'s.) 
For each $\cI\in\bI_k$, define
\[ \tilde\cI = \Phi_k^\uparrow(\cI)\]
where $\Phi_k^\uparrow$ is the map given in Definition~\ref{def:shift-map}, which we recall  translates $\cI$ by $(0,0,k)$, adds a subset of the~$4kn$ wall faces $\sF(\partial\cL_{0,n}\times\llb0,k\rrb)$, and then deletes some other subset of wall faces.
By the definition of $\cI\in\bI_k$,
\begin{equation}\label{eq:bW-tilde-bound}\sum_{W\in\bW^\dagger(\tilde\cI)}\fm(W) \leq \sum_{W\in\bW^\dagger(\cI)}\fm(W) +4kn \leq \tfrac12n\log n + 4kn < n\log n \,,\end{equation}
using here that $4kn \leq (1+\epsilon_\beta)\beta^{-1}n\log n$. 

Denote by $\bC $ the set of every ceiling $\cC\in \fC_{h_n^*-\sfh-k-1}(\cI)$ that satisfies $|\cC|\geq n^{1.9}$, and further let $\tilde\bC$ be the analogous subset of the ceilings $\fC_{h_n^*-\sfh-1}(\tilde\cI)$. As no ceiling face was deleted from $\theta_\uparrow\cI$ in forming~$\tilde\cI$, we see that every $\cC\in\bC$ must satisfy $\rho(\cC)\subset\rho(\tilde\cC)$ for some $\tilde \cC\in\fC_{h_n^*-\sfh-1}(\tilde\cI)$, and so $|\tilde\cC|\geq n^{1.9}$ as well, thus $\tilde\cC\in\tilde\bC$. In particular, if $\cI\in\bI_k$ then $\tilde\cI$ satisfies
\[ \sum_{\tilde\cC\in\tilde\bC}|\tilde\cC| \geq e^{-\beta k}n^2\,.\]

Now, if $\tilde\cC\in\tilde\bC$ then its supporting wall $W$ 
must satisfy $\diam(\rho(W))\geq \diam(\tilde\cC)\geq \frac12|\tilde\cC|^{1/2} \geq \frac12 n^{0.95}$, implying that $W\in \bW^\dagger(\tilde\cI)$ for every sufficiently large $n$ (as we have $\exp(\kappa_0(h_n^*-k))\leq n^{\epsilon_\beta}$ whereas $\epsilon_\beta < \frac12$, say, provided $\beta_0$ is suitably large). In particular (invoking Remark~\ref{rem:ceilings-of-W-dagger}), every face in $\tilde\cC\in\tilde\bC$ must be part of some ceiling $\cC^\dagger\in\tilde\cI^\dagger$ at the same height of $h_n^*-\sfh-1$ (which, once again, satisfies $|\cC^\dagger| \geq |\tilde\cC|\geq n^{1.9}$).
Moreover, by Lemma~\ref{lem:shift-map-properties} we know that $\tilde\cI\cap ((\cL_{0,n}\setminus\partial\cL_{0,n})\times\R) \subset \cL_{\geq k-\sfh}$; thus, for every ceiling $\cC^\dagger\in\fC_{h_n^*-\sfh-1}(\tilde\cI^\dagger)$,
\[ M_{\cC^\dagger}^\downarrow(\tilde\cI) \leq (h_n^*-\sfh-1)-(k-\sfh) = h_n^*-k-1\,,\]
which, when combined, imply that $\cC^\dagger\in\bC^\dagger(\tilde\cI)$ as defined in Lemma~\ref{lem:area-of-ceilings-I-dagger}. In conclusion,
\[ \sum_{\cC^\dagger\in\bC^\dagger(\tilde\cI)}|\cC^\dagger| \geq e^{-\beta k} n^2\qquad\mbox{and}\qquad \fm(\bW^\dagger(\tilde\cI))< n\log n\]
holds for all $\tilde\cI\in\Phi_k^\uparrow(\bI_k)$.
We may thus bound $\mufloor_n^\sfh(\Phi_k^\uparrow(\bI_k))$ via
Lemma~\ref{lem:area-of-ceilings-I-dagger}, and combine it with  Proposition~\ref{prop:shift-map-effect} to get that
\[ \mufloor_n^{\sfh}(\bI_k) \leq e^{(4\beta+C)k n}\, \mufloor_n^{\sfh}(\Phi_k^\uparrow(\bI_k)) \leq 
 \exp\left[ \left((4\beta+C)k -\big(\gamma e^{(3\beta-C)k}\wedge n^{1/5}\big)\right) n
\right]\,.
\]
Since $k = O(\log n)$, the case where $\gamma e^{(3\beta-C)k}\geq n^{1/5}$ immediately leads to $\mufloor_n^{\sfh}(\bI_k)\leq \exp[-(1-o(1))n^{6/5}]$, thus it remains the treat the converse case.
Recalling that $\gamma $ satisfies 
\[ \exp(-2\beta-e^{-4\beta})\leq \gamma <\exp(2\beta)\,, \]
we see that $\gamma e^{(3\beta-C)k}\geq e^{(\beta-C)k}$ (with room to spare: we could have taken $C'\exp[\beta+(3\beta-C)(k-1)]$), so
\[
\mufloor_n^{\sfh}(\bI_k) \leq \exp\left[(4\beta+C)kn - e^{(\beta-C)k}n\right] \leq \exp\left[-e^{(\beta-C')k}n\right]\,,
\]
where replacing the constant $C>0$ by some larger absolute constant $C'>0$ in the last transition allowed us to absorb the term $(4\beta+C)kn$. We have thus established~\eqref{eq:Ik-bound}, as required.
\qed

\subsection{Proof of Lemma~\ref{lem:area-of-ceilings-I-dagger}}
Let $\cI\sim\mufloor_n^{\sfh}$, and reveal the wall collection $\bW^\dagger(\cI)$, which by our assumption satisfies $\fm(\bW^\dagger(\cI))\leq n\log n$. We would like to obtain an upper bound on the probability that there are $\epsilon_\beta n^2$ faces in ceilings of $\bC^\dagger(\cI)$ (for an appropriate $\epsilon_\beta$). As our large deviation estimates in Proposition~\ref{prop:max-thick} are only applicable to simply-connected regions, extra care must be taken to modify the ceilings of $\cI^\dagger$ into a simply-connected subset. To do so, we will expose a minimal additional collection of walls, indexed by $\cP^\dagger$ below.

Towards that purpose, define  the random variable $\cP^\dagger$, measurable given $\bW^\dagger(\cI)$, to be
\[ \cP^\dagger=\cP^\dagger(\cI) := \mbox{minimum face subset of $\cL_{0,n}$ such that $\cP^\dagger(\cI) \cup \rho(\bW^\dagger(\cI)) \cup \partial\cL_{0,n}$ is $*$-connected}\]
and the collection $\cS^\dagger_\sfh$ (also measurable given $\bW^\dagger(\cI)$, as the latter determines $\cI^\dagger$) to be
\[ \cS^\dagger_\sfh = \cS^\dagger_
\sfh(\cI) := \left\{ S^\dagger = \rho(\cC^\dagger)\setminus\cP^\dagger\,:\; \cC^\dagger\in\fC_{h_n^*-\sfh-1}(\cI^\dagger)~,~
|\cC^\dagger|\geq  n^{1.9}\right\}\,.\]
(For $\cP^\dagger(\cI)$ to be well-defined, if there exist multiple face subsets of $\cL_{0,n}$, all with the same  cardinality, that turn $\rho(\bW^\dagger(\cI))\cup\partial\cL_{0,n}$ to be $*$-connected, choose the identity of $\cP^\dagger(\cI)$ according to a predefined arbitrary lexicographic ordering of all subsets of $\cL_{0,n}$.)
We will show that
\begin{equation}\label{eq:S-dagger-bound}  \mufloor_n^{\sfh}\bigg(\sum_{S^\dagger\in\cS^\dagger_\sfh} |S^\dagger|\one_{\{\bar M^\downarrow_{S^\dagger}(\cI) < h_n^*-k\}} \geq e^{-\beta k}n^{2} \;\Big|\; \bW^\dagger(\cI)\bigg) \one_{\{\fm(\bW^\dagger(\cI))\leq n\log n\}}\leq e^{-\left(\gamma e^{(3\beta-C)k} \,\wedge\, n^{1/5}\right)n}\,,
\end{equation}
where here we used $\bar M^\downarrow_{S^\dagger}$ to denote the minimum height of $\cI$ within $S^\dagger$ after centering it by 
$\hgt(\cC^\dagger)=h_n^*-\sfh-1$. 

It is important to stress that for every ceiling $\cC^\dagger$ in $\cI^\dagger$, the only obstacles preventing it from being simply-connected would be walls from $\bW^\dagger(\cI)$ nested in it; hence, by the minimality of $\cP^\dagger$,
\[
\rho(\cC^\dagger)\setminus\cP^\dagger\mbox{ is simply-connected for every ceiling $\cC^\dagger$ in $\cI^\dagger$}\,.
\]
(If $\cC^\dagger$ is supported by $W\in\bW^\dagger(\cI)$ and $P=\cP^\dagger\cap\hull\cC^\lowdag$ connects  $x, y\in W$ and  some $W'\in\bW^\dagger(\cI)$ nested in~$\cC^\dagger$, one can delete the part of~$P$ connecting~$x$ to $W$, retaining connectivity of $W,W'$ via the other part of~$P$.)

Recall that each $\cC^\dagger$ has $|\cC^\dagger|\geq n^{1.9}$ and $|\partial\cC^\dagger| \leq n\log n$ (the latter due to our assumption on $\bW^\dagger(\cI)$).
To address the effect of subtracting the set $\cP^\dagger$ from the ceilings $\cC^\dagger$ of $\cI^\dagger$, we will appeal to the following classical result concerning Euclidean Traveling Salesman Problems (TSPs). (For the $d$-dimensional analog of this result, see, e.g., the references in~\cite[\S2]{Steele90}, as well as Lemma~1 there.) 

\begin{fact}\label{fact:tsp}[{\cite{Verblunsky51}}; see also, e.g.,~{\cite[\S2]{Steele90}}]
For every $m$ points $x_1,\ldots,x_m$ in $[0,1]^2$ there exists a path connecting them of length at most $\sqrt{2.8 m}+2$.
\end{fact}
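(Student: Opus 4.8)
The plan is to prove this classical worst-case bound for the Euclidean path travelling-salesman problem by Verblunsky's serpentine strip construction. First I would fix an integer $k\ge 1$ (to be optimised near $\sqrt m$) and cut $[0,1]^2$ into the $k$ horizontal strips $R_j=[0,1]\times[\tfrac{j-1}{k},\tfrac{j}{k}]$, $1\le j\le k$; let $m_j$ be the number of points $x_i$ lying in $R_j$ (splitting ties on strip boundaries however one likes), so that $\sum_{j=1}^k m_j=m$. Inside $R_j$ I would list its points in order of increasing first coordinate, join them in that order by straight segments, and then extend the resulting arc to the left and right edges of the square, placing the two edge endpoints at the bottom corner of $R_j$ on one side and the top corner on the other, and flipping this choice with the parity of $j$ so that the exit endpoint of $R_j$ is exactly the entry endpoint of $R_{j+1}$. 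Concatenating the $k$ arcs produces a single path $\Gamma$ through $x_1,\dots,x_m$ in which all $k-1$ inter-strip junctions have length $0$, and whose very first endpoint is a corner of the square.

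The heart of the matter, and where the constant $2.8$ is extracted, is the bound on $\operatorname{len}(\Gamma\cap R_j)$. Inside $R_j$ the path is monotone in the first coordinate with horizontal extent $1$, and consecutive vertices differ by at most $1/k$ in the second coordinate, so $\sqrt{a^2+b^2}\le a+b$ gives the crude estimate $\operatorname{len}(\Gamma\cap R_j)\le 1+(m_j+1)/k$; summing over $j$ and optimising $k$ then yields only $\operatorname{len}(\Gamma)\le 2\sqrt m+O(1)=\sqrt{4m}+O(1)$, which is strictly weaker than claimed. To reach the stated constant I would replace the single monotone sweep inside a strip by Verblunsky's refined routing: one subdivides the strip and re-routes the traversal (allowing a controlled re-ordering of the points) so that the \emph{vertical} travel needed to collect the $m_j$ points of $R_j$ is at most $\theta\,m_j/k+O(1/k)$ for a constant $\theta\le 0.7$ rather than the crude $m_j/k$ — the input being a combinatorial bound on the number of crossings between the lower and upper portions of the strip that the point configuration forces on an optimally ordered traversal. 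Granting this, $\operatorname{len}(\Gamma\cap R_j)\le 1+\theta m_j/k+O(1/k)$, so summing over the $k$ strips gives $\operatorname{len}(\Gamma)\le k+\theta m/k+O(1)$.

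With this in hand the rest is routine: taking $k=\lceil\sqrt{\theta m}\rceil$ makes $k+\theta m/k+O(1)\le 2\sqrt{\theta m}+O(1)\le\sqrt{2.8 m}+O(1)$, and for every $m$ beyond a fixed absolute $m_0$ the $O(1)$ term is at most $2$, giving the bound. For the finitely many $m\le m_0$ I would verify the inequality directly: already the \emph{crude} serpentine with an optimally chosen number of strips gives $\operatorname{len}(\Gamma)\le\min_{k\ge1}\bigl(k+m/k\bigr)+O(1)$, which one checks is $\le\sqrt{2.8 m}+2$ for all small $m$ outside a bounded range, and on that remaining bounded range one uses either the trivial path (points in arbitrary order, each of the at most $m-1$ steps having Euclidean length $\le\sqrt2$, so $\operatorname{len}\le\sqrt2\,(m-1)$) or a hand-tailored short path, whichever is simpler to check.

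The single genuinely delicate step is the refined within-strip estimate of the second paragraph — pushing the vertical-travel coefficient strictly below $0.7$ — since the crude coefficient $1$ only produces $\sqrt{4m}+2$ and hence does not suffice. Essentially all the work therefore lies in Verblunsky's level-crossing bookkeeping; alternatively one may instead carry out Few's sharper analysis, which attains coefficient $\tfrac12$ and hence the stronger $\sqrt{2m}+O(1)$, which implies the stated bound a fortiori.
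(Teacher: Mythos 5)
Your construction is exactly the strip-and-switchback argument the paper uses: partition $[0,1]^2$ into $k\approx\sqrt m$ horizontal strips, sweep each strip monotonically, and join consecutive strips with zero-cost (or $O(1/k)$) junctions, giving length at most $k+m/k+O(1)$. That crude estimate, which is the part of your write-up that is actually proved, is precisely what the paper includes ``for completeness'': it only derives the weaker bound $2\lceil\sqrt m\rceil$ with strips of height $1/\sqrt m$ and horizontal-plus-vertical moves, and relies on the citation to Verblunsky for the constant in the Fact as stated; the application (bounding $|\cP^\dagger|$ by $O(n^{3/2}\sqrt{\log n})$) is insensitive to the constant, so $2\lceil\sqrt m\rceil$ suffices there. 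The one thing to flag is that the step you ``grant'' --- the refined within-strip routing with vertical-travel coefficient $\theta\le 0.7$ (or Few's $\tfrac12$) --- is not a routine lemma but the entire content of Verblunsky's theorem, and you give no argument for it; moreover, if $\theta=0.7$ exactly then $2\sqrt{\theta m}=\sqrt{2.8m}$ and your $O(1)$ error cannot simply be absorbed for large $m$, so even the bookkeeping around that step needs care. So, read as a self-contained proof of $\sqrt{2.8m}+2$ your proposal has a genuine gap at its central estimate; read as the paper intends the Fact (a cited classical result, with only the crude $2\sqrt m$-type bound reproduced), it matches the paper's treatment.
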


For completeness, we include the short classical argument giving the bound $2\lceil\sqrt m\rceil$: one divides $[0,1]^2$ into strips of height $1/\sqrt m$ each, and proceeds from the bottom strip to the top via switchbacks, connecting the points within each strip from left-to-right, then right-to-left, etc. If we were to use only horizontal and vertical lines (as an upper bound), and there are $k_i$ points in strip $i$, then the total length associated with strip $i$ would be at most $1 + k_i /\sqrt m$, whence the overall total length is at most $\lceil\sqrt m\rceil + \sum k_i /\sqrt m \leq 2\lceil \sqrt m\rceil $.

By our assumption that $\fm(\bW^\dagger(\cI))\leq n\log n$, we can apply Fact~\ref{fact:tsp} to the faces of $\rho(\bW^\dagger(\cI))\cup \partial\cL_{0,n}$ with $m = O(n\log n)$, rescaling $[0,1]^2$ to $\cL_{0,n}$. This yields that there exists a face subset $\cP^\dagger \subset \cL_{0,n}$ connecting $\rho(\bW^\dagger(\cI))\cup \partial\cL_{0,n}$ of size
\[
|\cP^\dagger| = O(n\sqrt{m}) = O( n^{3/2}\sqrt{\log n}) < n^{1.6}
\]
for large enough $n$. In the first equality, the extra $n$ came from rescaling of $[0,1]^2$ to $\cL_{0,n}$. Also the discretization effect of faces amounts to at most $2m=O(n\log n)$ additional faces at the endpoints of each of the paths.

With this bound in hand, we may proceed to show~\eqref{eq:S-dagger-bound}. Write $\bC^\dagger(\cI)=\{\cC^\dagger_1,\ldots,\cC^\dagger_N\}$ for some $N$, as well as $S^\dagger_i = \rho(\cC_i^\dagger)\setminus \cP^\dagger$, and note that we may assume that
\begin{equation}\label{eq:sum-Si-hypothesis} \sum_{i=1}^N |S_i^\dagger| \geq e^{-\beta k} n^2\,,\end{equation}
or else there is nothing left to show. 
For each $z\in \cP^\dagger$, let $U_z$ be $\rho(\cC^\dagger)$ for the ceiling $\cC^\dagger$ in $\cI^\dagger$ with $z\in\rho(\cC_i^\dagger)$, and reveal 
 the walls in $\fG_{z,U_z}$ (where we recall $\fG_{x,S}$ is the sequence of nested walls $\fW_{x,S}$ which nest $x$ in $S$, as well as the every wall nested in this sequence of walls), denoting the set of all walls revealed so far by~$\bW_0$.
For every $i=1,\ldots,N$, 
let
\[ \tilde S_i = S_i^\dagger \setminus \bigcup_{z\in\cP^\dagger} \rho(\smhull\fG_{z,U_z})\,,\]
which is simply-connected since $\hull S_i^\lowdag=S_i^\dagger$ (and $\tilde S_i $ was obtained by deleting from $S_i^\dagger$, at the very most, a collection of simply-connected regions each adjacent to its boundary).
Observe that every $W$ revealed as part of $\fG_{z,U_z}$ for $z\in\cP^\dagger$ necessarily has $\diam(\rho(W)) \leq \exp(\kappa_0 (h_n^*-1-k))\vee \log n \leq \exp(\kappa_0 h_n^*)\vee\log n \leq n^{\epsilon_\beta}$ (otherwise it would have been part of $\bW^\dagger(\cI)$). 
Thus, when obtaining $\tilde S_i$ from $S^\dagger_i$, we subtracted at most $|\cP^\dagger|n^{\epsilon_\beta} \leq n^{1.6+\epsilon_\beta}$ from its area, and similarly added at most $n^{1.6+\epsilon_\beta}$ to its perimeter.
Recalling that $S_i^\dagger$ itself was obtained by deleting $\cP^\dagger$ from $\cC_i^\dagger$ which had $|\cC_i^\dagger|\geq n^{1.9}$ and $|\partial\cC_i^\dagger| \leq n\log n$, it follows that
\[  |\tilde S_i| \geq |\cC_i^\dagger| - |\cP^\dagger|n^{\epsilon_\beta} \geq n^{1.9} - n^{1.6+\epsilon_\beta} \geq \frac12 n^{1.9}\,,\]
and
\[ |\partial \tilde S_i | \leq |\partial \cC^\dagger_i| + n^{1.6+\epsilon_\beta} \leq (1+o(1))n^{1.6+\epsilon_\beta}\,.\]
Combining these inequalities, while recalling that $\tilde S_i$ is simply-connected, shows that for large enough $n$,
\[\isodim(\tilde S_i)\leq 7\] 
provided that $\beta_0$ is large enough (whence $\epsilon_\beta$ becomes small enough).
Finally, note that 
\begin{align}\label{eq:sum-Si-minus-tilde-Si}
\sum_{i=1}^N |S^\dagger_i \setminus \tilde S_i| \leq |\cP^\dagger|(e^{\kappa_0 h_n^*}\vee\log n) \leq n^{1.6+\epsilon_\beta} < \frac12 e^{-\beta k}n^2\,,
\end{align}
where the last inequality is by the fact that $e^{-\beta k}n^2 \geq n^{7/4-\epsilon_\beta}$ for large $n$, since 
$h_n^* \leq \frac1{4\beta - C} \log n + C'$ and so
$e^{-\beta k} \geq e^{-\beta h_n^*} \geq n^{-1/4-\epsilon_\beta}$. 
Combining this with the (deterministic) fact that
\[ \sum_{i=1}^N |S_i^\dagger|\one_{\{\bar M^\downarrow_{S_i^\dagger}<h_n^*-k\}} \leq 
 \sum_{i=1}^N \Big(|\tilde S_i|\one_{\{\bar M^\downarrow_{\tilde S_i}<h_n^*-k\}} + |S^\dagger_i\setminus \tilde S_i|\Big)\,,
\]
it therefore suffices to show that
\[ \mufloor_n^{\sfh}(\mathfrak{E}\mid\bW_0) \leq e^{-(\gamma e^{(3\beta-C)k}\wedge n^{1/5})n}\qquad\mbox{where}\qquad \mathfrak{E} = \Big\{\sum_{i=1}^N |\tilde S_i|\one_{\{\bar M^\downarrow_{\tilde S_i}(\cI)<h_n^*-k\}}\geq  \frac12 e^{-\beta k}n^2\Big\}\,.\]
To this end, we now wish to successively reveal the walls in $\tilde S_i$.
Let $\cF_i$ be the filtration corresponding to initially revealing $\bW_0$, then proceeding to reveal $\bW_i = \{W_z: z\in \tilde S_i\}$ at step $i=1,\ldots,N$. Recalling the definition of the event $\cG$ playing a role in Proposition~\ref{prop:max-thick}, given by
\[ \cG_A(r) = \bigcap_{z\in A} \{ \diam(\rho(W_z)) \leq r\vee\log n\})\,,\]
we stress that every $z\in\cL_{0,n}$ whose $W_z$ was not revealed as part of $\bW_0$ is conditioned to satisfy $\cG_z(e^{\kappa_0\hbar})$ for
\[ \hbar = h_n^*-k\,.\]
By taking a supremum at every step $i$ over the walls outside $\tilde S_i$, it therefore remains to bound the conditional probability that the event $\mathfrak{E}$ occurs given on $\bW_0$, via
\begin{align}\label{eq:mufloor-Psi-bound}
  \mufloor_n^{\sfh}(\mathfrak{E}\mid \bW_0) \leq \prod_{i=1}^N \mufloor_n^{\sfh}( \bar M_{\tilde S_i}^\downarrow<\hbar \mid\cG_{\cL_{0,n}\setminus\{\rho(W):W\in\bW^\dagger\}}(e^{\kappa_0 \hbar})\,,\, \cF_{i-1}) \leq \prod_{i=1}^N \Psi_i
\end{align} 
for
\[ \Psi_i := \sup_{\substack{\bW=\{W_z:z\notin \tilde S_i\} \\ \rho(\{\hull W : W\in\bW\})\subset \tilde S_i}}\mufloor_n^{\sfh}\left(\bar M_{\tilde S_i}^\downarrow < \hbar \mid \cG_{\tilde S_i}(e^{\kappa_0 \hbar})\,,\bI_{\bW} \right)\,.
\]
Recall that if
$S_i^\dagger = \rho(\cC_i^\dagger)\setminus \cP^\dagger$ for some ceiling $\cC_i^\dagger \in \fC_{h_n^*-\sfh-1}(\cI^\dagger)$ then by definition 
$\hgt(\cC_i^\dagger)=h_n^*-\sfh-1$. Therefore, when looking at $\Psi_i$, the implicit conditioning in $\mufloor_n^{\sfh}$ that $\cI\in\Ifloor$ amounts to having $ \bar M_{\tilde S_i}^\downarrow < h_n^*$ via~$\mu_n^\mp$.
With this in mind, for every $\bW=\{W_z \,:\;z\notin \tilde S_i\}$ with $ \rho(\{\hull W : W\in\bW\}\subset \tilde S_i$ we have
\begin{align}
     \mufloor_n^{\sfh}\left(\bar M_{\tilde S_i}^\downarrow < \hbar \mid \cG_{\tilde S_i}(e^{\kappa_0 \hbar})\,,\,\bI_{\bW} \right)
& =
\mu_n^\mp\left(\bar M_{\tilde S_i}^\downarrow < \hbar \mid \bar M_{\tilde S_i}^\downarrow < h_n^*\,,\,\cG_{\tilde S_i}(e^{\kappa_0 \hbar})\,,\,\bI_{\bW} \right) \nonumber\\
&=
\frac{ 
\mu_n^\mp\left(\bar M_{\tilde S_i}^\downarrow < \hbar\,,\, \cG_{\tilde S_i}(e^{\kappa_0 \hbar})\mid\bI_{\bW} \right)
}{
\mu_n^\mp\left(\bar M_{\tilde S_i}^\downarrow < h_n^* \,,\, \cG_{\tilde S_i}(e^{\kappa_0 \hbar})\mid\bI_{\bW} \right)
}\,.  \label{eq:Psi-rewritten}
\end{align}

First consider $1\leq k \leq h_n^*-1-\sqrt{2\log n}$, so that $\sqrt{2\log n} \leq \hbar \leq h_n^*-2$, whence the criterion for $\cG_z(e^{\kappa_0\hbar})$  is dominated by the $\exp(\kappa_0 \hbar)$ term (exceeding, e.g., $\log^2 n$ for every large~$n$).
As $\tilde S_i$ is simply-connected, has $\isodim(\tilde S_i)\leq 7$ and $\hbar \geq \sqrt{2\log n} \geq (\log |\tilde S_i|)^{1/2}$,  we may use~\eqref{eq:max-thick-upper-large-h} from Proposition~\ref{prop:max-thick} for $h=\hbar$, yielding
\[ \mu_n^\mp(\bar M^\downarrow_{\tilde S_i}<\hbar\,,\,\cG_{\tilde S_i}(e^{\kappa_0 \hbar})\mid\bI_{\bW}) \leq \exp(-(1-\epsilon_\beta)|\tilde S_i|e^{-\alpha_\hbar})\,,\]
where we used that the event $\cG_{\tilde S_i}(e^{\kappa_0 \hbar})$ implies  $\cG_{\tilde S_i^\circ}(e^{\kappa_0 \hbar})$ for the subset $\tilde S_i^\circ\subset \tilde S_i$ specified in that proposition.
At the same time, invoking~\eqref{eq:max-thick-lower-large-h} from the same proposition, this time for $h=h_n^*$, we get
\[ \mu_n^\mp(\bar M^\downarrow_{\tilde S_i}<h_n^*\,,\,\cG_{\tilde S_i}(e^{\kappa_0 \hbar})\mid\bI_{\bW}) \geq \exp(-(1+\epsilon_\beta)|\tilde S_i|e^{-\alpha_{h_n^*}})\,,\]
here using that $\cG_{\tilde S_i}(e^{\kappa_0 \hbar})$ is implied by $\cG^{\fm}_{\tilde S_i}(5 h_n^*)$ (and hence also by $\cG^{\fm}_{\tilde S_i}(4 h_n^*)$) since $e^{\kappa_0 \hbar}> \log^2 n > 5 h_n^*$ for every sufficiently large $n$.
Substituting the last two displays in~\eqref{eq:Psi-rewritten}, we find that
\[ \Psi_i \leq \exp\left(-\left ((1-\epsilon_\beta)e^{-\alpha_\hbar}-(1+\epsilon_\beta)e^{-\alpha_{h_n^*}}\right)|\tilde S_i| \right)\,.
\]
We know from~\cite[Corollary~5.2]{GL19b} that
\[ \alpha_{\hbar}+\alpha_{k}-\epsilon_\beta \leq \alpha_{\hbar+k} \qquad\mbox{and}\qquad\alpha_k \geq (4\beta-C)k \,,\] 
which, since $\hbar+k=h_n^*$ and we defined
 $\gamma := n \exp(-\alpha_{h_n^*})$, 
implies that
\begin{align*} \frac{\gamma}n = e^{-\alpha_{h_n^*}} \leq e^{-\alpha_k + \epsilon_\beta} e^{-\alpha_\hbar}\leq e^{-(4\beta-C)k} e^{-\alpha_\hbar}\,.
\end{align*}
Hence, we may absorb the term $(1+\epsilon_\beta)e^{-\alpha_{h_n^*}}=(1+\epsilon_\beta)\gamma/n$ from the above upper bound on $\Psi_i$ into the constant $C>0$ from the lower bound $e^{-\alpha_{\hbar}}\geq e^{(4\beta-C)k}\gamma/n$ (with $k\geq 1$ and $\beta_0$ large) and obtain that 
\begin{equation}\label{eq:Psi-i-upper}
\Psi_i \leq \exp\left(-\gamma e^{(4\beta-C) k} |\tilde S_i|/n\right)\,;
\end{equation}
thus, by plugging in the fact that 
\begin{equation}\label{eq:sum-tilde-Si-hypothesis}
\sum_i |\tilde S_i|\geq \frac12 e^{-\beta k}n^2\,,
\end{equation} obtained from~\eqref{eq:sum-Si-hypothesis} and~\eqref{eq:sum-Si-minus-tilde-Si}, we infer that
\begin{align}\label{eq:Psi-bound-large-h} 
\prod_{i=1}^N \Psi_i &\leq \exp\bigg(-\gamma e^{(4\beta-C)k}\sum_{i=1}^N \frac{|\tilde S_i|}n\bigg) \leq 
\exp\bigg(-\frac12\gamma e^{(3\beta-C) k} n\bigg)\,.
\end{align}

Next, consider $\frac13 h_n^* \leq k \leq h_n^*-1$. Treating this  regime is significantly easier. (The overlap between the regimes is indicative: the latter can be used to yield a lower bound of $(1-\epsilon_\beta)h_n^*$ on the typical height of~$\cI$.)
With the same starting point~\eqref{eq:Psi-rewritten}, we invoke~\eqref{eq:max-upper-crude} from Proposition~\ref{prop:max-generic} to deduce that
\[ \mu_n^\mp(\bar M^\downarrow_{\tilde S_i}<\hbar\,,\,\cG_{\tilde S_i}(e^{\kappa_0 \hbar})\mid\bI_{W}) \leq
\mu_n^\mp(\bar M^\downarrow_{\tilde S_i}<\hbar\mid\bI_{W}) 
\leq \exp(-|\tilde S_i|e^{-(4\beta-C)\hbar})\,,\]
whereas~\eqref{eq:max-lower-crude} from Proposition~\ref{prop:max-generic} implies that
\[ \mu_n^\mp(\bar M^\downarrow_{\tilde S_i}<h_n^*\,,\,\cG_{\tilde S_i}(e^{\kappa_0 \hbar})\mid\bI_{W}) 
\geq \mu_n^\mp(\cG_{\tilde S_i}(4 h_n^*)\mid\bI_{W})
\geq \exp(-|\tilde S_i|e^{-(4\beta+C)h_n^*})\,,\]
since $(e^{\kappa_0 \hbar}\vee \log n)\geq (4h^*_n \vee \log n)= \log n$. Combining these estimates, and using that $\hbar = h_n^*-k$, yields
\[ \Psi_i \leq \exp\left( -|\tilde S_i| e^{(4\beta - C)k - 2C h_n^*}\right) \leq  
\exp\left( - e^{(4\beta - C)k}|\tilde S_i| n^{-1-\epsilon_\beta}\right)\,.
\]
Together with the bound~\eqref{eq:sum-tilde-Si-hypothesis} and the fact that $k\geq \frac13 h_n^* $, this shows that
\begin{align}\label{eq:Psi-bound-small-h} 
 \prod_{i=1}^N \Psi_i \leq \exp\left( -\tfrac14 e^{(3\beta-C)k} n^{1-\epsilon_\beta}\right) \leq \exp\left(-c n^{5/4-\epsilon_\beta}\right)\,.\end{align}

Combining~\eqref{eq:Psi-bound-large-h} and~\eqref{eq:Psi-bound-small-h} with~\eqref{eq:mufloor-Psi-bound} concludes the proof.
\qed

\begin{remark}
\label{rem:h=hn-1-lower}
One can extend Lemma~\ref{lem:area-of-ceilings-I-dagger}---followed by 
Theorem~\ref{thm:lower-bound}, and in turn, Theorem~\ref{thm:lower-gen-h}---to the case $\sfh=h_n^*-1$ if either $k\geq 2$ or the quantity $\gamma = n \exp(-\alpha_{h_n^*})$---which we recall satisfies $
e^{-2\beta - \epsilon_\beta} \leq \gamma <  e^{2\beta}$---is in a given range, for a suitably modified $\epsilon_\beta^k$ replacing $e^{-\beta k}$.
Consider for instance the regime where
\[ \gamma > \beta^2\qquad\mbox{or}\qquad k\geq 2\,.\]
One then defines $\bC^\dagger(\cI)$ from Lemma~\ref{lem:area-of-ceilings-I-dagger} to be 
\[ \bC^\dagger(\cI) = \left\{ \cC^\dagger\in\fC_{h_n^*-\sfh}(\cI^\dagger)\,:\; |\cC^\dagger|\geq n^{1.9}\,,\,\bar M^\downarrow_{\cC^\dagger}(\cI) \leq h_n^*-k\right\}\,,
\]
and argues that, in place of its conclusion, one has
\[ \mufloor_n^{\sfh}\bigg(\sum_{\cC^\dagger\in\bC^\dagger(\cI)} |\cC^\dagger| \geq c_0\beta^{-k}n^{2}\,,\,\fm(\bW^\dagger(\cI))\leq n\log n\bigg) \leq\exp\Big(-\big(5\gamma \beta^{-k} e^{(4\beta-C)(k-1)} \,\wedge\, n^{1/5}\big)n\Big)\,.
\]
Indeed, this follows by showing that, in lieu of~\eqref{eq:S-dagger-bound}, one has
\begin{equation}\label{eq:S-dagger-h=hn-1}  \mufloor_n^{\sfh}\bigg(\sum_{S^\dagger\in\cS^\dagger_\sfh} |S^\dagger|\one_{\{\bar M^\downarrow_{S^\dagger}(\cI) \leq h_n^*-k\}} \geq e^{-\beta k}n^{2} \;\Big|\; \bW^\dagger(\cI)\bigg) \one_{\{\fm(\bW^\dagger(\cI))\leq n\log n\}}\leq e^{-\left(5\gamma \beta^{-k} e^{(4\beta-C)(k-1)} \,\wedge\, n^{1/5}\right)n}\,.
\end{equation}
Following the same argument used to prove Lemma~\ref{lem:area-of-ceilings-I-dagger} yet with $\hbar = h_n^*+1-k$, instead of the bound~\eqref{eq:Psi-i-upper} one arrives at $\Psi_i\leq \exp(-\gamma C e^{(4\beta-C)(k-1)}|\tilde S_i|/n)$. Taking $c_0 := 5 C$ and plugging in $\sum_{i=1}^N|\tilde S_i|/n \geq c_0 \beta^{-k}n $ implies~\eqref{eq:S-dagger-h=hn-1}. In the application of this lemma towards proving Theorem~\ref{thm:lower-bound}, the exponent on the right of~\eqref{eq:S-dagger-h=hn-1} competes with $(4\beta+C)kn$. For $k\geq 2$, the former is the dominant term regardless of $\gamma$, whereas for $k=1$ it is the dominant term if $5 \gamma/\beta > 4\beta+C$, as in our assumption.
\end{remark}

\subsection{Proof of Proposition~\ref{prop:prob-positive-upper-lower}}\label{sec:pf-prop-positive}
The lower bound in the proposition is precisely the statement of Lemma~\ref{lem:prob-positive}, and it remains to show how to derive the upper bound via Theorem~\ref{thm:lower-bound}. If $\bC$ is the set of ceilings in $\cI$ with $|\cC|\geq n^{1.9}$ and height at least $h_n^*-1$, and $\mathfrak{E}$ is the event that $\sum_{\cC\in\bC}|\cC|\geq A:=(1-2e^{-\beta})n^2$, then Theorem~\ref{thm:lower-bound} together with Lemma~\ref{lem:small-ceil} imply that $\mufloor_n^0(\mathfrak E)=1-o(1)$, implying in particular that \[\mu_n^\mp(\Ifloor[0]) = (1-o(1))\mu_n^\mp(\mathfrak E)\,.\]
We have at most $n^{0.1}$ such ceilings, and if $\fW_1,\ldots,\fW_N$ ($N\leq n^{0.1})$ are the nested sequences of walls  supporting them, then  
\[ \fm\Big(\bigcup_{i=1}^N \fW_i\Big) \geq 4\sqrt{A}  (h_n^*-1) \geq \frac{1-e^{-\beta/2}}{\beta+1/4} n\log n = (1-\epsilon_\beta)\beta^{-1} n\log n\]
since $h_n^* \geq (4\beta+e^{-\beta})^{-1}\log n -C$ for large enough $n$, and
an area of $A$ ceiling faces must be supported by a perimeter of at least $4\sqrt{A}$ vertical wall faces at heights $1,\ldots,h_n^*-1$. 
Using $\sum_{N\leq n^{0.1}}\binom{n^2}N \leq \exp(n^{0.1+o(1)})$, we deduce from Theorem~\ref{cor:nested-sequence-of-multiple-faces-in-a-ceiling} that
\[  \mu_n^\mp(\mathfrak E) \leq \exp\left(-(\beta-C)(1-\epsilon_\beta)\beta^{-1}n\log n\right) \sum_{N\leq n^{0.1}}\binom{n^2}{N} \leq \exp\left(-(1-\epsilon'_\beta)n\log n\right)\,,
\]
and the proof is complete.\qed

\section{Bounding the interface histogram above a given height}\label{sec:upper-bound}

Our goal in this section is to establish the following bound on the fraction of the sites in $\cL_{0,n}$ above/below which the interface is not a singleton at height $0$ whenever $\sfh\geq h_n^*$.

\begin{theorem}\label{thm:upper-gen-h}
Let $U_n$ be the set of $x\in\cL_{0,n}$ such that $\cI\cap(\{x\}\times\R) \neq \{0\}$.
There exist $\beta_0,C>0$ so that for all $\beta>\beta_0$ and every $\sfh\geq h_n^*$, we have
\[ \mufloor_n^{\sfh}\left(|U_{n}|\geq  C e^{-\beta}n^2\right) \leq \exp\left(- e^{-\beta} n\right)\,.
\]
\end{theorem}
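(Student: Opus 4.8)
\emph{Overview.} The plan is to run the toy Peierls argument sketched in~\S\ref{subsec:proof-ideas}: once a handful of a priori reductions remove the trivially atypical configurations, the only mechanism that can place many columns outside $\{0\}$ is a family of large ceilings at strictly positive height, and each such ceiling is killed by comparing the interface with the one obtained by deleting the wall (and its wall cluster) supporting it. Since $\sfh\geq h_n^*$, such a deletion keeps the interface above the floor provided the downward oscillation inside the ceiling does not exceed $\sfh-\hgt(\cC)\geq\sfh-1\geq h_n^*-1$; the cost of imposing this is, by Propositions~\ref{prop:max-thick}/\ref{prop:max-wall-cluster}, governed by $e^{-\alpha_{h_n^*+1}}\leq e^{-2\beta+C}/n$ (using~\eqref{eq:h-star-def} and $\alpha_{h+1}\geq\alpha_h+4\beta-C$), which is dominated by the energy $e^{(\beta-C)|\partial S|}$ liberated by the deletion.

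\emph{Step 1 (a priori reductions).} The ``below $0$'' part is soft: $\Ifloor$ is increasing while $\{|\{x:\hgt_\cI(x)<0\}|\geq\epsilon_\beta n^2\}$ is decreasing, so by FKG together with rigidity of $\mu_n^\mp$ (e.g.\ by ruling out, via Theorem~\ref{thm:rigidity-inside-wall} and a union bound, a wall dipping below $0$ over an area $\gtrsim\epsilon_\beta n^2$), this event has $\mufloor_n^\sfh$-probability $\leq\exp(-e^{-\beta}n)$. Next, Lemma~\ref{lem:n2-excess} caps the number of wall faces (hence non-singleton columns) by $e^{-2\beta}n^2$; Lemma~\ref{lem:small-ceil} (with $A=1$) caps the total area in ceilings of area $\leq(n/(e^\beta\log n))^2$ by $Ce^{-\beta}n^2$; Lemma~\ref{lem:thin-ceil} caps the area in ceilings with $|\hull\cC|\geq n$ and $\isodim(\hull\cC)\geq4$ by $n^{5/3}$; and Lemma~\ref{lem:n-disjoint-walls} gives fewer than $n$ walls of diameter $\geq\log n$. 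All these failures carry probability $\leq\exp(-e^{-\beta}n)$, so it remains to prove
\[
\mufloor_n^\sfh\Big(\textstyle\sum\big\{|\cC|:\ \hgt(\cC)\geq1,\ |\cC|\geq \tfrac{n^2}{e^{2\beta}\log^2 n}\big\}\geq \epsilon_\beta n^2\Big)\leq\exp(-e^{-\beta}n)\,,
\]
since then $|U_n|\leq(e^{-2\beta}+Ce^{-\beta}+\epsilon_\beta+o(1))n^2\leq C'e^{-\beta}n^2$ on the good event.

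\emph{Step 2 (the Peierls estimate).} On the event in the last display, let $\cC_1,\dots,\cC_N$ be the \emph{outermost} such ceilings, supported by innermost walls $W_1,\dots,W_N$ with pairwise disjoint (hulls of) projections. Each $\cC_i$ has diameter $>\log n$, so $N<n$ by Lemma~\ref{lem:n-disjoint-walls}; and $\fm(W_i)\geq|\rho(W_i)|\geq 4|\cC_i|^{1/2}$, so $M:=\sum_i\fm(W_i)\geq4(\sum_i|\cC_i|)^{1/2}\geq4\sqrt{\epsilon_\beta}\,n$ while also $N\leq Me^\beta\log n/(4n)$; the latter yields the key multiplicity bound $n^{2N}\leq e^{M/2}$ for large $n$. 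As in the proof of Lemma~\ref{lem:area-of-ceilings-I-dagger} one reveals the $W_i$ (together with the parts of their wall clusters that will be deleted) and, if needed, excises a minimal TSP-type family of faces so that the still-random regions $S_i$ are simply connected, have $|S_i|\geq\tfrac12|\cC_i|$, and --- by Lemma~\ref{lem:thin-ceil} --- satisfy $\isodim(S_i)\leq4\leq\sqrt\beta$; the conditioning then amounts inside $S_i$ to $\bI_{\bW}\cap\cE^{\eta^{(i)}}_{S_i}$ with $\eta^{(i)}_z\geq5(h_n^*+1)$ on the bulk $S_{i,h}^\circ$. Writing $\bI$ for this configuration event and $\tilde\bI=\bI\cap\bigcap_i\{\bar M^\downarrow_{S_i}\leq h_n^*\}$, decompose $\mufloor_n^\sfh(\bI)=\mufloor_n^\sfh(\tilde\bI)/\mufloor_n^\sfh(\tilde\bI\mid\bI)$. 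For the numerator, delete $\bigcup_i\Theta_\st\Clust(W_i)$: on $\tilde\bI$ each $\cC_i$ drops by $\hgt(\cC_i)$ and the oscillations inside descend to minimum height $-\bar M^\downarrow_{S_i}\geq-h_n^*\geq-\sfh$, so the image lies in $\Ifloor$; Lemmas~\ref{lem:wall-cluster-weights}--\ref{lem:number-of-wall-clusters} and $n^{2N}\leq e^{M/2}$ then give $\mufloor_n^\sfh(\tilde\bI)\leq e^{-(\beta-C)M}$. For the denominator, after also revealing the interface outside $\bigcup_iS_i$ the floor conditioning amounts in $S_i$ to $\{\bar M^\downarrow_{S_i}<\hgt(\cC_i)+\sfh\}\supset\{\bar M^\downarrow_{S_i}\leq h_n^*\}$; dropping it and invoking the monotone-conditioning lower bound~\eqref{eq:max-thick-lower-large-h-wallcluster} of Proposition~\ref{prop:max-wall-cluster}(2) at $h=h_n^*+1$ for each $S_i$ gives $\mufloor_n^\sfh(\tilde\bI\mid\bI)\geq\exp(-(1+\epsilon_\beta)(\sum_i|S_i|)e^{-\alpha_{h_n^*+1}})\geq\exp(-2e^{-2\beta+C}n)$, using $\sum_i|S_i|\leq n^2$ and $e^{-\alpha_{h_n^*+1}}\leq e^{-2\beta+C}/n$. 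Summing over the $\leq e^{O(M)}$ admissible collections with $\sum_i\fm(W_i)=M$ over $M\geq4\sqrt{\epsilon_\beta}\,n$,
\[
\mufloor_n^\sfh\big(\textstyle\sum_i|\cC_i|\geq\epsilon_\beta n^2\big)\ \leq\ e^{2e^{-2\beta+C}n}\sum_{M\geq4\sqrt{\epsilon_\beta}\,n}e^{-(\beta-C')M}\ \leq\ \exp\big(-(\beta-C'')\sqrt{\epsilon_\beta}\,n\big)\ \leq\ \exp(-e^{-\beta}n)
\]
for $\beta$ large, since $\sqrt{\epsilon_\beta}=c\,e^{-\beta/2}\gg e^{-\beta}$ while $(\beta-C'')\sqrt{\epsilon_\beta}\gg e^{-2\beta}$.

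\emph{Main obstacle.} The delicate point is the denominator in Step~2. The Peierls map cannot delete $W_i$ alone; the long-range interactions triggered by the deletion force us to remove its entire wall cluster $\Clust(W_i)$, and once that cluster is revealed the interface inside $S_i$ is no longer free --- every unexposed wall there is constrained to be smaller than its distance to the cluster. Thus the clean lower bound~\eqref{eq:max-thick-lower-large-h} is unavailable and one must use the extension in Proposition~\ref{prop:max-wall-cluster}(2), whose proof hinges on the monotonicity of the deletion-only Peierls maps of~\cite{GL20} under the conditioning $\cE^\eta_S$ --- precisely the content of~\S\ref{sec:extension-estimates-in-ceiling}. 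The remaining difficulties are bookkeeping: running the estimate simultaneously over the family $\{S_i\}$; balancing $N$, $M$ and $\sum_i|\cC_i|$ so that the $n^{2N}$ multiplicity of the inverse map is absorbed into $e^{M/2}$ and the entropic cost $\sum_i|S_i|e^{-\alpha_{h_n^*+1}}=O(e^{-2\beta}n)$ stays below the Peierls gain $e^{(\beta-C)M}$ with $M\gtrsim\sqrt{\epsilon_\beta}\,n$; and keeping each $S_i$ simply connected with $\isodim(S_i)\leq\sqrt\beta$ (via the a priori estimates of~\S\ref{sec:rough-bounds-no-floor} and a TSP-type excision as in~\S\ref{sec:lower-bound}) so that the sharp oscillation estimates apply.
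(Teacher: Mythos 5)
Your overall architecture is the paper's: the a priori reductions of Section~\ref{sec:rough-bounds-no-floor} (FKG for the below-zero part, Lemmas~\ref{lem:n2-excess},~\ref{lem:small-ceil},~\ref{lem:thin-ceil}), followed by a Peierls step that deletes the wall cluster supporting a large positive-height ceiling and controls the ratio $\mufloor_n^\sfh(\bI)=\mufloor_n^\sfh(\tilde\bI)/\mufloor_n^\sfh(\tilde\bI\mid\bI)$, with the denominator handled by the monotone-conditioning lower bounds of Proposition~\ref{prop:max-wall-cluster} -- you correctly identify that as the crux. The paper packages the Peierls step as Theorem~\ref{thm:upper-bound} (individual outermost walls with $\fm(W_0)\geq r\geq n^{9/10}$) plus a dyadic decomposition over scales, but that difference is organizational. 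Two points in your Step~2, however, are genuine gaps.

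First, you take $W_i$ to be the wall of which the outermost positive-height ceiling $\cC_i$ is an interior ceiling and assert that deleting $\Clust(W_i)$ drops $\cC_i$ by exactly $\hgt(\cC_i)$. This presumes the supporting (exterior) ceiling of $W_i$ lies at height $0$, which can fail: $\cC_i$ may be the flat top of a tower standing inside a crater whose ceiling sits at height $-j<0$; $\cC_i$ is still outermost among positive-height ceilings, yet after deletion it lands at $-j$ and its interior oscillations reach $-j-\bar M^\downarrow_{S_i}$, violating the floor even when $\bar M^\downarrow_{S_i}\leq h_n^*=\sfh$. Your FKG step does not rescue this, since it bounds the area of columns dipping below zero, not the area of the \emph{hulls} of negative-height ceilings (the tower's top is not counted). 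The paper's remedy is to phrase the estimate for outermost walls of $\cI$ (supporting ceiling in $\fC_0$) and to control $|\rho(\hull W)|$, which dominates the area of every nested ceiling at whatever height. Second, and more substantially: for the Peierls image to lie in $\Ifloor$ -- which is what makes $\sum_{\cJ\in\Phi(\tilde\bI)}\mufloor_n^\sfh(\cJ)\le1$ legitimate -- the event $\tilde\bI$ must control the downward oscillations under \emph{every} interior ceiling of $\cI_{\Clust(W_i)}$ nested in $W_i$, of which there are typically many at assorted heights, most small or of large isoperimetric dimension, not only under the single large region $S_i\approx\rho(\cC_i)$. The denominator then acquires, for the non-thick ceilings, contributions at the cruder rate $e^{-(4\beta-C)h_n^*}\approx n^{-1+\epsilon_\beta}$ from~\eqref{eq:max-lower-crude-wallcluster} rather than $e^{-\alpha_{h_n^*+1}}\approx e^{-2\beta}/n$ from~\eqref{eq:max-thick-lower-large-h-wallcluster}; making these affordable is exactly what forces the paper to introduce the assumption~\eqref{eq:def-good-wall} on the area of thin interior ceilings, the deterministic Fact~\ref{fact:number-of-small-ceilings} for the small ones, and the restriction $r\geq n^{9/10}$. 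Your bound $\mufloor_n^\sfh(\tilde\bI\mid\bI)\geq\exp(-2e^{-2\beta+C}n)$, which uses only $\sum_i|S_i|\leq n^2$ at the good rate, omits this accounting entirely.
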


This will be a consequence of the following estimate on encountering walls with a near linear excess energy.
\begin{theorem}\label{thm:upper-bound}
There exist $\beta_0,C>0$ so the following holds for all $\beta>\beta_0$. For every $\sfh\geq h_n^*$ and $r\geq n^{9/10}$, 
\[  \mufloor_n^{\sfh}\left(\exists\mbox{ a wall $W$ in $\cI$ with }|\rho(\hull W)|\geq r^2
\right) \leq e^{-(\beta-C)(r \,\wedge\, n\log n)}\,.\]
Furthermore, for any $k\geq 1$,
\[  \mufloor_n^{\sfh}\left(\exists\mbox{ walls $\{W_i\}_{i=1}^k$ in $\cI$ with disjoint hulls and }\min_i |\rho(\hull W_i)|\geq r^2
\right) \leq e^{-(\beta-C)(k r \,\wedge\, n\log n)}\,.\]

\end{theorem}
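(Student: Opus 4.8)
The strategy is a Peierls argument driven by the monotone-conditioning estimates of Section~\ref{sec:extension-estimates-in-ceiling}, exactly along the lines of the toy computation in the ``Idea of proof of~\eqref{eq:A-supercritical-h}'' paragraph. Fix $\sfh\geq h_n^*$ and a wall $W$ with $|\rho(\hull W)|\geq r^2$; write $S=\rho(\hull W)$ (or more precisely the interior of the supporting ceiling adjacent to $W$, i.e.\ a simply-connected face set of size $\geq r^2$), and let $\cC$ be the ceiling of $\cI$ incident to $W$ on the inside, with $\hgt(\cC)\geq 1$. The comparison map deletes $W$ together with its \emph{wall cluster} $\bW:=\Clust(W)$ (and the necessary standard-wall-representation surgery of Lemma~\ref{lem:interface-reconstruction}), producing $\cJ$ with $\fm(W)\leq\fm(\bW)\leq\log\frac{\mu_n^\mp(\cI)}{\mu_n^\mp(\cJ)}/( \beta-C)$ via Lemma~\ref{lem:wall-cluster-weights}, while Lemma~\ref{lem:number-of-wall-clusters} controls the multiplicity by $C^{\fm(\bW)}$. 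The subtlety, flagged in the introduction, is that revealing $\bW$ imposes a nontrivial constraint $\cE^\eta_S$ on the remaining walls inside $S$ (each unexposed $W'$ nested in $S$ has $\fm(W')$ bounded by its distance to $\bW$); this is precisely the event form~\eqref{eq:def-E-eta}, and it is exactly what Proposition~\ref{prop:max-wall-cluster} was built to handle.

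Concretely: let $\bI_W\subset\Ifloor$ be the interfaces containing $W$ (with its internal ceiling at some height $\hgt(\cC)=j\geq 1$), and $\tilde\bI_W\subset\bI_W$ those additionally satisfying $\bar M^\downarrow_S\leq h_n^*$. On $\bI_W$, the floor constraint forces $\bar M^\downarrow_S< j+\sfh$, hence $\tilde\bI_W$ covers the relevant part; more carefully one first observes that outside $\tilde\bI_W$ one is asking for a downward oscillation of depth exceeding $h_n^*$ below $S$, and $\mu_n^\mp(\bar M^\downarrow_S> h_n^*\mid \bI_{\bW},\cE^\eta_S)$ is tiny by the upward-oscillation half of Proposition~\ref{prop:max-generic}/\ref{prop:max-thick} (so only $\tilde\bI_W$ matters, up to a negligible factor). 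For the main term, the Peierls map gives
\[
\mufloor_n^{\sfh}(\tilde\bI_W)\leq \sum_{\bW\ni W}C^{\fm(\bW)}e^{-(\beta-C)\fm(\bW)}\,\mu_n^\mp\!\left(\bar M^\downarrow_S\leq h_n^*\,,\,\bI_{\bW\setminus W},\,\cE^\eta_S\right),
\]
and then, reducing $\mu_n^\mp(\bar M^\downarrow_S\leq h_n^*\mid \bI_{\bW\setminus W},\cE^\eta_S)$ to a probability under the post-deletion measure, I apply~\eqref{eq:max-thick-lower-large-h-wallcluster} (when $\isodim(S)\leq\sqrt\beta$, using the a priori regularity of Lemmas~\ref{lem:small-ceil}--\ref{lem:thin-ceil} to restrict to such $S$) or~\eqref{eq:max-lower-crude-wallcluster} otherwise, for $h=h_n^*$, to get a \emph{lower} bound $\exp(-(1+\epsilon_\beta)|S|e^{-\alpha_{h_n^*}})$ on that probability---so dividing keeps us from losing anything. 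By the definition~\eqref{eq:h-star-def} of $h_n^*$, $e^{-\alpha_{h_n^*}}\leq e^{2\beta}/n$, so the entropic penalty is at most $|S|e^{2\beta}/n$. Using $\fm(W)\geq|\rho(W)|\geq|\partial S|$ (Definition~\ref{def:excess-energy-properties}) and the planar isoperimetric inequality $|S|\leq |\partial S| n/4$, the per-$W$ bound becomes $\exp(-(\beta-C)|\partial S| + |S|e^{2\beta}/n)\leq \exp(-(\beta-C+e^{-2\beta+C'})|\partial S|)$, i.e.\ genuinely exponentially small in $|\partial S|$. Since $|\rho(\hull W)|\geq r^2$ forces $|\partial S|\gtrsim r$ (and $|\partial S|$ is also $\leq$ total wall faces $\lesssim n^2$, capped at $n\log n$ for the union-bound regime by Lemma~\ref{lem:n2-excess}), a union bound over the $O(n^2)$ choices of a face through which $W$ passes, over $\fm(\bW)\geq |\partial S|$, and over the height $j$ (at most $n$ values, absorbed) gives the first displayed bound with the stated $r\wedge n\log n$ cutoff.

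For the second statement (disjoint hulls $W_1,\dots,W_k$ with $\min_i|\rho(\hull W_i)|\geq r^2$), I iterate: reveal and delete $\Clust(W_1),\dots,\Clust(W_k)$ successively along a filtration, and at each step apply the above one-wall estimate conditionally on everything revealed so far---this is legitimate because Lemma~\ref{lem:wall-cluster-weights}, Lemma~\ref{lem:number-of-wall-clusters}, and Proposition~\ref{prop:max-wall-cluster} are all stated conditionally on an arbitrary outside wall collection (resp.\ an arbitrary monotone event $\cE^\eta$), and the disjointness of the hulls means the $S_i$'s are disjoint so the per-step penalties multiply. One gets $\prod_{i\leq k}\exp(-(\beta-C)|\partial S_i|)\leq \exp(-(\beta-C)\sum_i|\partial S_i|)$, and $\sum_i|\partial S_i|\gtrsim kr$ (again truncated at $n\log n$ via Lemma~\ref{lem:n2-excess} in the union-bound step), yielding $e^{-(\beta-C)(kr\wedge n\log n)}$ after the union bound over the $k$-tuples of reference faces (a factor $\binom{n^2}{k}\leq e^{k(n\log n)/\ldots}$ absorbed into the constant). \textbf{The main obstacle} is the bookkeeping around the conditioning event $\cE^\eta_S$ created by deleting the wall cluster: one must check that the event $\{\bar M^\downarrow_S\leq h_n^*\}$ after deletion genuinely sits inside the scope of Proposition~\ref{prop:max-wall-cluster}---in particular that the thresholds $\eta_z$ forced on sites in the bulk $S^\circ_{n,h_n^*}$ are all $\geq 5h_n^*$ (true because such $z$ are far from $\bW$, so their distance-to-$\bW$ constraint is weak), and that $S$ (after trimming away the at most polynomially-many faces touched by the wall-cluster surgery, exactly as in the TSP argument of Lemma~\ref{lem:area-of-ceilings-I-dagger}) remains simply-connected with $\isodim\leq\sqrt\beta$. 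Everything else is a routine union bound.
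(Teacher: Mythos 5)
Your overall architecture matches the paper's: delete $W$ together with $\Clust(W)$, recognize that revealing the wall cluster imposes the monotone constraint $\cE^\eta_{S}$ with $\eta_z=d_\rho(z,\partial W)$, invoke Proposition~\ref{prop:max-wall-cluster} to lower-bound the conditional probability of $\{\bar M^\downarrow_S\leq h_n^*\}$, divide, and union-bound; the multi-wall case by conditional iteration is also how the paper proceeds. However, there is one genuine quantitative gap at the heart of the energy--entropy comparison. You bound the entropic penalty by $|S|e^{-\alpha_{h_n^*}}\leq |S|e^{2\beta}/n$, and then claim $\exp(-(\beta-C)|\partial S|+|S|e^{2\beta}/n)\leq\exp(-(\beta-C+e^{-2\beta+C'})|\partial S|)$. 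With the isoperimetric bound $|S|\leq|\partial S|\,n/4$ this penalty is $e^{2\beta}|\partial S|/4$, which for large $\beta$ \emph{overwhelms} the energy gain $(\beta-C)|\partial S|$: the exponent becomes positive and the Peierls argument collapses. The definition~\eqref{eq:h-star-def} only guarantees $e^{-\alpha_{h_n^*}}<e^{2\beta}/n$, which is not small enough. The fix --- and what the paper actually does --- is to observe that $\{\bar M^\downarrow_S\leq h_n^*\}=\{\bar M^\downarrow_S<h_n^*+1\}$, so the relevant rate is $\alpha_{h_n^*+1}$, and to import the increment estimate $\alpha_{h+1}\geq\alpha_h+4\beta-C$ from~\cite{GL19b}, giving $e^{-\alpha_{h_n^*+1}}\leq e^{-2\beta+C}/n$ and hence a penalty $e^{-2\beta+C}|\partial S|/4\ll(\beta-C)|\partial S|$. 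Without this extra level the one-wall bound does not close.

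Two smaller remarks. First, your parenthetical that $\mu_n^\mp(\bar M^\downarrow_S>h_n^*\mid\cdots)$ is ``tiny'' is backwards: for $|S|$ of macroscopic area the typical value of $\bar M^\downarrow_S$ is about $2h_n^*$, so $\{\bar M^\downarrow_S\leq h_n^*\}$ is itself an exponentially unlikely event; the correct mechanism (which you do then implement) is that its conditional probability is not \emph{too} small, so dividing by it costs only the controlled factor $\exp((1+\epsilon_\beta)|S|e^{-\alpha_{h_n^*+1}})$. Second, treating $S$ as a single simply-connected region with $\isodim(S)\leq\sqrt\beta$ glosses over the fact that the interior of $\hull W$ is fragmented by $\Clust(W)$ into many ceilings at various heights, some of which have bad isoperimetry or small area; the paper handles these separately via the a priori bound~\eqref{eq:def-good-wall} (from Lemma~\ref{lem:thin-ceil}) and Fact~\ref{fact:number-of-small-ceilings}, applying the crude estimate~\eqref{eq:max-lower-crude-wallcluster} to the bad ceilings and the sharp one only to the good ones, and it must also connect the cluster's components by a short face path (the $|\cP|\leq\fm(\Clust(W))$ bound) before the simple-connectivity hypothesis applies. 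You gesture at both points, so this is bookkeeping rather than a missing idea, but the $\alpha_{h_n^*}$ versus $\alpha_{h_n^*+1}$ issue above is not.
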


Theorem~\ref{thm:upper-gen-h} readily follows from Theorem~\ref{thm:upper-bound} and  our results from Section~\ref{sec:rough-bounds-no-floor}; to see this, argue as follows:
Lemma~\ref{lem:small-ceil} guarantees that the total area in $\rho(\hull\cC)$ for ceilings $\cC$ with $|\hull\cC|\leq n^2/\log^3 n$ is at most $Ce^{-\beta}n^2$ except with probability $\exp(-5 n \log n)$. 
To treat the remaining ceilings having $|\hull\cC|>n^2/\log^3 n$, define
\[ \bW_\ell(\cI) = \{\mbox{outermost walls $W$ in $\cI$ with $e^{-\ell} \leq |\rho(\hull W)|/n^2 \leq e^{1-\ell}$}\}\qquad (\ell \in \llb 2\beta, 3\log \log n\rrb)\,.\]
An application of Theorem~\ref{thm:upper-bound} with $k=e^{\ell/2}$ and $r=n e^{-\ell/2}$ shows that $|\bW_\ell(\cI)| \leq e^{\ell/2}$ (reflecting a total area of at most $n^2 e^{1-\ell/2}$) except with probability $\exp(-(\beta-C)n)$. A union bound then shows that, except with probability $\exp(-(\beta-C)n)$, we have $\bigcup_\ell \bigcup_{W\in\bW_\ell} |\rho(\hull W)| \leq Cn^2 e^{-\beta}$.
The probability that a single outermost $W$ exists with $| \rho(\hull W)|\geq e^{-2\beta} n^2 $ (and hence $\fm(W)\geq e^{-\beta}n$) is, by another application of Theorem~\ref{thm:upper-bound}, at most $\exp(-(\beta-C) e^{-\beta} n)$, the dominant term in our final estimate.
Finally, by Lemma~\ref{lem:n2-excess}, the total number of wall faces is at most $e^{-2\beta}n^2$ except with probability $\exp(-c_\beta n^2)$.

It will be illuminating to describe how the analogue of the vanilla SOS approach can yield an estimate on the total area in ceilings at height at least $1$ when $\sfh \geq (1+\epsilon_\beta)h_n^*$
once we combine it with the approximate Domain Markov property for ceilings established in~\cite{GL20}.  More generally, we have the following claim, applicable to any $\sfh\geq 0$ (the above mentioned estimate corresponds to the special case $\sfh = h_1$, which in turn is $(1+\epsilon_\beta)h_n^*$; at the other end, the case $\sfh=0$ gives a bound on $\sum\{|\cC|:\cC\in\fC_{>h_1}(\cI)\}$ for $\cI\in\mufloor_n^0$, a counterpart for Claim~\ref{clm:lower-bound-1-eps}).

\begin{claim}\label{clm:upper-bound-1+eps}
There exist $\beta_0,C_0>0$ such that for every  $h_1 \geq (1+\frac1{\sqrt \beta})\frac1{4\beta-C_0}\log n$ and all $\beta>\beta_0$ and $\sfh\geq 0$,

\[\mufloor_n^\sfh\Big(\sum\{|\cC|:\; \cC\in\fC_{>h_1-\sfh}(\cI)\} \geq (1/\sqrt\beta) n^2\Big) \leq O\left(\exp\left(-3  n \log n\right)\right) \,. \]
\end{claim}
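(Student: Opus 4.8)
The plan is to run the standard SOS-type energy-entropy argument in the conditional space, lifting it to the Ising interface via the approximate Domain Markov property of~\cite{GL20} encoded in Proposition~\ref{prop:max-generic}. Fix $\cI\sim\mufloor_n^\sfh$ and let $A(\cI)=\bigcup\{\rho(\cC):\cC\in\fC_{>h_1-\sfh}(\cI)\}$, and let $\bI=\{|A(\cI)|\geq (1/\sqrt\beta)n^2\}$. The point is that every such ceiling is supported by a wall whose vertical wall faces contribute $4|\cC|^{1/2}$ excess energy per ceiling (isoperimetry in $\Z^2$), and these heights are all $> h_1-\sfh\geq 1-\sfh$; when $\sfh$ is large the hard floor permits nothing and the gain is pure, but for general $\sfh$ we instead \emph{lower} the supporting walls and must pay an entropic cost coming from the downward oscillation constraint. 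Concretely, reveal all walls except those supporting ceilings in $\fC_{>h_1-\sfh}$, identify a subcollection of ceilings $\cC_1,\dots,\cC_N$ whose projections $S_i$ (minus a connecting face set $\cP$, as in the proof of Lemma~\ref{lem:area-of-ceilings-I-dagger}) are simply-connected with total area $\geq \frac12(1/\sqrt\beta)n^2$, and for each consider the map that deletes the supporting wall down to height $h_1-\sfh$ — or, more robustly and in the spirit of~\S\ref{sec:upper-bound}, deletes the entire wall together with its wall cluster. This gains $\exp((\beta-C)\sum_i |S_i|^{1/2})$ in energy but is only valid as an element of $\Ifloor$ on the event that the resulting downward oscillation stays above $-\sfh$, i.e.\ $\bar M^\downarrow_{S_i} < h_1$.

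The key estimates are then: (i) by Proposition~\ref{prop:max-generic}, equation~\eqref{eq:max-upper-crude}, $\mu_n^\mp(\bar M^\downarrow_{S_i}<h_1\mid\bI_\bW)\leq \exp(-|S_i|e^{-(4\beta+C)h_1})$; and the resulting interfaces for distinct subsets of $\bigcup_i S_i$ are distinguishable (one reads off the chosen subset from the level-$0$ slice), so summing over subsets as in Claim~\ref{clm:lower-bound-1-eps} is legitimate. Using Proposition~\ref{prop:max-wall-cluster} (the wall-cluster extension of the lower bounds) to handle the conditioning induced by revealing wall clusters — exactly the monotonicity device set up in~\S\ref{sec:extension-estimates-in-ceiling} — one obtains a lower bound of the form $\exp(-|S_i|e^{-(4\beta-C)h_1})$ for the denominator. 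Since $h_1\geq (1+\tfrac1{\sqrt\beta})\tfrac1{4\beta-C_0}\log n$, we have $e^{-(4\beta+C)h_1}\cdot n = n^{1-(1+\frac1{\sqrt\beta})\frac{4\beta+C}{4\beta-C_0}}\to 0$ super-polynomially once $C_0$ is chosen appropriately and $\beta_0$ large, so the energy gain $\exp((\beta-C)|S_i|^{1/2})$ beats the entropic cost $\exp(|S_i|e^{-(4\beta+C)h_1})$ for each $i$ with $|S_i|$ not too small; aggregating and using the isoperimetric inequality $|S_i|\leq |\partial S_i| n/4$ exactly as in the toy computation in~\S\ref{subsec:proof-ideas} converts this into a bound $\exp(-(\beta-C)\sum_i|S_i|^{1/2})$, and $\sum_i|S_i|^{1/2}\gtrsim n$ (worst case is one ceiling of area $\sim n^2$) yields $\exp(-(\beta-C)n)$ — comfortably below $\exp(-3n\log n)$ is \emph{not} automatic, so one instead takes a union bound over the $\leq n$ possible ceilings and uses that each ceiling with $|\cC|\geq n^{2}/\log^{c}n$ already forces $|\partial\cC|\geq n/\log^{c}n$, giving probability $\exp(-(\beta-C)n/\log^c n)$ per ceiling; combined with Lemma~\ref{lem:small-ceil} to discard small ceilings and with $\binom{n^2}{N}\leq e^{(1+o(1))n\log n}$ over the choice of which $N\leq n$ ceilings occur, the $n\log n$ from the entropy of locations is what we must dominate, which forces the slightly larger threshold $3n\log n$ in the statement and is precisely why $h_1$ must exceed $(1+\tfrac1{\sqrt\beta})$ times the naive critical scale rather than just $(1+o(1))$ times it.

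Finally, the shift map $\Phi_1^\uparrow$ of Definition~\ref{def:shift-map} is used, as in Claim~\ref{clm:lower-bound-1-eps}, only to ensure that the modified interfaces genuinely lie in $\Ifloor$ and that the comparison is between valid configurations under $\mufloor_n^\sfh$; its cost $\exp(O(n))$ is negligible against $\exp(-3n\log n)$. Assembling: $\mufloor_n^\sfh(\bI)\leq \mufloor_n^\sfh(\text{small ceilings contribute }\geq\tfrac12\sqrt\beta^{-1}n^2) + \sum_{N\leq n}\binom{n^2}{N}\max_i(\text{per-ceiling bound}) \leq \exp(-5n\log n)+\exp(-(1-\epsilon_\beta)\cdot 4 n\log n)$, giving the claimed $O(\exp(-3n\log n))$.

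\textbf{Main obstacle.} The delicate point is the same one flagged throughout~\S\ref{subsec:proof-ideas}: after deleting a wall and its wall cluster to lower a ceiling, the revealed wall cluster imposes a conditioning (walls nested in $S_i$ must be small relative to their distance to the cluster), so the denominator $\mu_n^\mp(\bar M^\downarrow_{S_i}<h_1\mid\cdots)$ is no longer the clean quantity bounded in~\cite{GL20}. This is exactly what Proposition~\ref{prop:max-wall-cluster} was built to overcome — one checks the conditioning event $\cE^\eta_{S_i}$ is of the monotone form there and that its thresholds $\eta_z$ dominate the required $5h$ on the bulk $S_{i,h}^\circ$ — so the obstacle is real but already resolved by the machinery of~\S\ref{sec:extension-estimates-in-ceiling}; the remaining work is bookkeeping of the simply-connectedness and isoperimetric-dimension reductions (subtracting $\cP$, controlling it via the Euclidean TSP bound), which is identical to the proof of Lemma~\ref{lem:area-of-ceilings-I-dagger} and can be cited rather than repeated.
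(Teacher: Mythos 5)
Your overall skeleton is the right one and matches the paper's: lower the offending ceilings via a Peierls map, and pay for the hard-floor constraint by first restricting to the event that the downward oscillations beneath them are small, which by the crude lower bound \eqref{eq:max-lower-crude} costs only a factor $e^{O(n)}$. But your energy accounting has a genuine gap that the rest of the writeup does not repair. You credit the map with a gain of $\exp((\beta-C)\sum_i|S_i|^{1/2})$, i.e.\ one slab's worth of perimeter; since a single ceiling of area $\asymp n^2/\sqrt\beta$ has perimeter as small as $\asymp n/\beta^{1/4}$, this is at best $\exp(-cn)$, which cannot beat the $e^{(1+o(1))n\log n}$ entropy of choosing which of up to $n$ ceilings occur, let alone produce the target $e^{-3n\log n}$. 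You notice the shortfall, but the proposed patch (a per-ceiling union bound using $|\partial\cC|\ge n/\log^c n$) weakens the bound further, and the assertion that the location entropy is ``what forces the threshold $3n\log n$'' is backwards. The missing idea is that the gain must be (perimeter)$\times$(number of slabs): a ceiling at height $>h_1-\sfh$ is supported by a nested sequence of walls contributing at least $|\partial\hull{\cC_i}|$ vertical faces in \emph{each} of the $h_1-h_0\ge h_0/\sqrt\beta\asymp\log n/\beta^{3/2}$ slabs above $h_0:=\lfloor(4\beta-C_0)^{-1}\log n\rfloor$. The paper's map deletes the entire nested sequences $\fW_{x_i}$ and replaces them by cylinders truncated at height $h_0-\sfh$ (one cannot simply ``truncate a wall'' in the Ising model, as the remark following the claim explains), gaining $\beta\,\fm(\cI;\Phi(\cI))\ge\beta^{1/4}nh_0$; this is exactly where the hypothesis $h_1\ge(1+\beta^{-1/2})h_0$ enters---to dominate the cluster-expansion interaction term $\bar K\,\fm(\bigcup_i\fW_{x_i})\le 30\bar K nh_0$, not the location entropy.

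Two further points. The combinatorial entropy of locations is handled not by your union bound but by restricting attention to outermost ceilings with $|\hull{\cC}|\ge n^{1.9}$ (so there are at most $n^{0.1}$ of them), discarding the smaller ones via Lemma~\ref{lem:small-ceil}, and bounding $\fm(\bigcup_i\fW_{x_i})\le 30nh_0$ via Lemma~\ref{lem:n-disjoint-walls}. And the heavy machinery you import---wall clusters and Proposition~\ref{prop:max-wall-cluster}, the TSP connector $\cP$, the sharp bounds of Proposition~\ref{prop:max-thick}---is not needed for this off-critical claim: the crude bound \eqref{eq:max-lower-crude} applied to $S=\bigcup_i\rho(\hull{\cC_i})$ already gives $\mufloor_n^\sfh(\bar M^\downarrow_S\le h_0\mid\bI_\bW)\ge e^{-n}$, which is all that is required. (Your citation of the upper bound \eqref{eq:max-upper-crude} in step~(i) is in any case pointed the wrong way: one needs a \emph{lower} bound on the probability of small downward oscillations, since its reciprocal is the cost of the restriction.)
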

\begin{proof}
For readability, let us prove the hardest case of $\sfh =0$, with the modifications being clear (additive $-\sfh$ everywhere) in the general case. Let $\beta_0,C_0>0$ be given by Proposition~\ref{prop:max-generic}. 
Further set $h_0=\lfloor\frac1{4\beta-C_0}\log n\rfloor$. 
For every interface $\cI$, let $\{\cC_1,\ldots,\cC_N\}$ denote the set of outermost ceilings in $\fC_{>h_1}(\cI)$ with $|\hull\cC|\geq n^{1.9}$ (note that $N\leq n^{0.1}$). Lemma~\ref{lem:small-ceil} allows us to disregard the contribution from ceilings with smaller areas, whereas if  $x_i$ is any face in the wall supporting $\cC_i$ then Lemma~\ref{lem:n-disjoint-walls} (actually its analogue for wall clusters) allows us to preclude any $\cI$ in which
 $\fm(\bigcup_i\Clust(\fW_{x_i}))>(6/\beta) n\log n$.
With this in mind, let $\bI$ be the set of interfaces where $\sum_{i }|\cC_i|\geq (1/\sqrt\beta)n^2$ and in addition $\fm(\bigcup_i \Clust(\fW_{x_i}))\leq 30 n h_0$. 

Reveal the outermost walls of $\cI\in\bI$, and continue the process inductively in supported ceilings, while not revealing any walls nested in the~$\cC_i$'s. 
Applying Proposition~\ref{prop:max-generic} iteratively on $S_i=\rho(\hull\cC_i)$ for each $\cC_i$, we see that for $S = \bigcup_i S_i$ and any realization $\bW$ of the walls outside of $S$ (subject to having $\cI\in\Ifloor[0]$) we have 
\[ \mufloor_n^0(\bar M_S^\downarrow \leq h_0 \mid \bI_\bW) \geq \mu_n^\mp(\bar M^\downarrow_S \leq  h_0 \mid \bI_{\bW})
\geq \exp\big(- |S| (e^{-(4\beta-C_0)(h_0+1)} \big) \geq e^{-|S|/n} \geq e^{-n}\,,\] using that, given the wall set $\bW$, the event $\cI\in\Ifloor$ is implied by $\bar M_S^\downarrow \leq h_1$ (which is implied by $\bar M_S^\downarrow\leq h_0$) in the first inequality. So, if $\bI' $ is the set of $\cI\in\bI$ such that $\bar M_S^\downarrow \leq h_0$ for $S=\bigcup_i \rho(\hull\cC_i)$, then 
$$ \mufloor_n^0(\bI') \geq \exp(-n)\mufloor_n^\sfh(\bI)\,.$$

We now define the following map on interfaces in $\mathbf{I}'$. The map $\Phi$ takes an interface $\cI \in \mathbf{I}'$, and 
\begin{enumerate}
    \item deletes (from the wall representation) every wall in $\bigcup_i \Clust(\mathfrak{W}_{x_i})$;
    \item adds back for every outermost ceiling $\cC$ supported by a wall $W\in \bigcup_i \Clust(\mathfrak{W}_{x_i})$, the ``simplified wall" faces $\tilde W_\cC$ which consists of vertical faces along $\partial \hull \cC$ at heights stretching from the height of the floor of $W$ to $\hgt(\cC)$, \emph{except} if $\cC\in \{\cC_1,\ldots,\cC_N\}$ in which case $\tilde W_\cC$ caps off at height $h_0$ (and if $h_0$ is lower than the height of the floor of $W$, say $\tilde W_\cC$ is empty).   
\end{enumerate}
This is a valid map on $\mathbf{I}'$ because all ceilings and walls not in $\bigcup_{i}\Clust (\mathfrak{W}_{x_i})$ are placed back at the original height they were at, except those nested in $\{\mathcal C_1,\ldots, \mathcal C_N\}$ which get shifted to be based at height $h_0$. But in that case, the fact that $\bar M_S^\downarrow \le h_0$ ensures the positivity constraint is respected. 

We now consider the weight change and entropy lost from this operation. The ceilings $\{\mathcal C_i\}_{i=1}^{N}$ accounted for at least $(\beta^{-1/2} n^2)^{1/2}$ vertical wall faces in each of the slabs $h_0+1,\ldots,h_1$ in $\cI$. The interface $\Phi(\cI)$ has at least this many fewer faces than $\cI$ due to the truncation of $\tilde W_\cC$ at height $h_0$ for each of these ceilings. In the other walls that are modified, there is no gain of faces, because every $W$ modified must have at least $\sum  |\partial \hull \cC|\cdot (\hgt(\cC) - \hgt(\text{floor}(W))$ many faces, where the sum is over ceilings $\cC$ it supports. In particular, 
in the cluster expansion (namely, Theorem~\ref{thm:cluster-expansion}), we have 
$$ \beta\fm(\cI;\Phi(\cI))\geq \beta^{3/4}n (h_1-h_0)   \geq \beta^{1/4} n h_0\,. $$
The contribution from the interaction terms $\g$ in that theorem are at most $\bar K |\bigcup_i \Clust(\fW_{x_i})| \leq 30\bar K n h_0$ (using the definition of wall clusters, as done in the rigidity proof using wall clusters). The number of interfaces in $\mathbf{I}'$ that get mapped to the same $\mathbf{I}$ can be upper bounded by $\binom{n^2}{N}\le e^{O(n^{0.1}\log n)}$ for locating the root points $x_{i}$ and then $C^{30 n h_0}$ for enumerating over the realization of the faces in $\bigcup_{i}\Clust(\mathfrak{W}_{x_i})$. 
Altogether, if $\beta_0 $ is large enough then $$\mufloor_n^0(\bI)\leq \exp(n) \mufloor_n^0(\bI') \leq \exp(-(1-\epsilon_\beta)\beta^{1/4}n h_0)\,,$$ as desired.
 \end{proof}

\begin{remark}
We now explain why one cannot hope for the proof of Claim~\ref{clm:upper-bound-1+eps} to hold all the way to $h_1=(1+o(1))h_n^*$, while pointing out a subtle but important difference from the Peierls argument used in the SOS model. Whereas in the SOS model (being a distribution over height functions) one can always lower a level line by $1$ via decreasing the heights in its interior, the analog of a level line in the Ising model---a ceiling $\cC_0$ supported by a wall $W_0$---might not be (in fact often will not be) consistent with such an operation. 
Indeed, even if we suppose~$\cC_0$ is the only ceiling supported by $W_0$ (to simplify matters), it may be (and in fact often will be the case) that the ceiling $\cC_0$ is a part of a larger connected set $F\subset \cI$ of horizontal faces at height $\hgt(\cC_0)$, which are categorized as wall (rather than ceiling) faces due to the shape of the interface far (say, at distance $\epsilon \log n$) below them. It is the set $F$ that one would want to trim so as to gain $|\partial \cC|$ in energy (since, locally around $\hgt(\cC_0)$, that set is the analogue of an SOS level line). However, such an operation might shift the thermal fluctuations of the interface above $\rho(F\setminus \cC_0)$ and have them clash with other wall faces. Further complicating matters is the fact that the wall $W_0$ might be tilted, whence its horizontal fluctuations (on account of which faces in $F\setminus\cC_0$ are not ceiling faces) are not well understood. To bypass these issues, the proof of Claim~\ref{clm:upper-bound-1+eps} ``straightened'' the walls supporting the ceilings under consideration into cylinders; e.g., in the case described above with a single $\cC_0$, the energetic cost of modifying $W_0$ in $k\sim h_n^*$ slabs would have order at least $k |\partial \cC_0|$. To offset this cost and rule out a ceiling $\cC_0$ with $|\partial\cC_0|\asymp n$, one would need to gain at least $(1/\beta) k n$ deleted walls from the trimming operation; with $\partial \cC_0$ contributing  $O(n)$ such wall faces in every slab being shrunk, this argument would need a leeway of $h_1\geq (1 + c/\beta) k \geq (1+\epsilon_\beta) h_n^*$ slabs.
\end{remark}

Whereas these obstacles are highly nontrivial, in the setting of Theorem~\ref{thm:upper-bound} there is a single wall $W_0$ whose entire wall cluster may be successfully deleted in the regime $\sfh\geq h_n^*$---with the caveat that it will require the extension given in~\S\ref{sec:extension-estimates-in-ceiling} of the bounds on the maximum within a ceiling obtained in~\cite{GL20}.

\subsection{Proof of Theorem~\ref{thm:upper-bound}}
Let us first consider the situation of a single wall $W_0$; the case of multiple walls $W_0^{(1)},\ldots,W_0^{(\ell)}$ will be obtained by  iteratively applying this argument.

We will show the following stronger statement, applicable to any $\sfh\geq 0$.
Recalling that the supporting ceiling of a wall $W_0$ is the ceiling whose faces are adjacent to $\hull W_0$ (which for outermost walls necessarily belongs to $\fC_0$), and letting 
\[ \fI_r := \left\{\cI\,:\;\mbox{$\exists$ a wall $W_0$ in $\cI$ supported by  $\widehat{\cC}_0\in\fC_{\geq h_n^*-\sfh}(\cI)$
such that $\fm(W_0)\geq r$}\right\}\,,\]
we will argue that
\[  \mufloor_n^{\sfh}\left(\fI_r\right) \leq e^{-(\beta-C)(r \,\wedge\, n\log n)}\qquad\mbox{for every $r\geq n^{9/10}$}\,.\]
In the case $\sfh \geq h_n^*$, this will immediately imply the required result, as every $W$ with $\fm(\rho(\hull W))\geq r^2$ is contained in some outermost wall $W_0$ with $\fm(W_0) \geq |\rho(\hull W_0)|^{1/2} \geq |\rho(\hull W)|^{1/2} \geq r$, for which the above estimate holds.

Consider a standard wall $W_0$, as well as a realization of its complete wall cluster $\bF_0 = \mathsf{Clust}(W_0)$, such that $\fm(\bF_0)=r$ for some $r \geq n^{9/10}$.
Further let
\[ \fI_{\bF_0} := \left\{ \cI \,:\; 
\begin{array}{l}
W_0\in\mbox{standard wall collection of $\cI$, its wall cluster is $\bF_0$}\\
\mbox{and it is supported by a ceiling $\widehat\cC_0\in\fC_{\geq h_n^*-\sfh}(\cI)$}
\end{array}
\right\}\,,\]
emphasizing that we did not restrict $\cI\in\Ifloor$.
It will suffice to show for every $\bF_0$ where $\mufloor_n^\sfh(\fI_{\bF_0})>0$ that 
\begin{equation}\label{eq:fI-W0-bound}
\mufloor_n^{\sfh}(\fI_{\bF_0}) \leq e^{-(\beta -C) (r \,\wedge\, n\log n)}\,,
\end{equation}
since there are $O(n^2)=e^{o(r)}$ locations for the placement of $W_0$, and at most $e^{\bar c r}$ wall clusters $\bF$ with $\fm(\bF) \leq r$ for some $\bar c>0$ (independent of $\beta$), whence a union bound over~\eqref{eq:fI-W0-bound} will conclude the proof.

 If $r\geq n\log n$ then this is readily implied by~\cite[Lemma 3.10]{GL20}, as $\mu_n^\mp(\fI_{\bF_0^x}) \leq \exp(-(\beta-C)\fm(\bF_0^x))$ which is then at most $\exp(-(\beta-C)n\log n)$, hence extends to $\mufloor_n^{\sfh}$ via Lemma~\ref{lem:prob-positive}. Assume therefore that $r\leq n\log n$.

Further assume---this time, an assumption that will require extra justification---that $W_0$ satisfies 
\begin{equation}\label{eq:def-good-wall}
 \sum\left\{|\cC|\,:\;\cC\mbox{ is a ceiling of $\cI_{W_0}$ with $\cC\Subset W_0$, $\isodim(\cC)>4$}\right\}\leq n^{7/4}\,.
\end{equation}
(We will later show that, except with probability $O(\exp(-3n\log n))$, every wall $W_z\in\cI$ satisfies this.)

Let $\widehat\cC_0$ denote the ceiling that supports $W_0$, and let 
\[\bC_0 = \left \{\cC \mbox{ is a ceiling of $\cJ = \cI_{\bF_0}$ with $\cC\Subset W_0$ and $\hgt(\cC) \ge h_n^* - \sfh$}\right\} \quad \text{and} \quad S_0  = \bigcup_{\cC\in \mathbf{C}_0} \rho(\cC)\]
(Note that $\bC_0$ consists not of ceilings of $\cI_{W_0}$ but rather those of the interface $\cJ$ comprising the wall cluster~$\bF_0$. Whereas the former are simply-connected, a ceiling $\cC\in\bC_0$ might not be, due to nested walls in $\mathsf{Clust}(W_0)$.) Define $\bar S_0 = S_0 \cup \rho(\bF_0)$ to also include the supporting walls' projections.
Further let
\[ \bar\fI_{\bF_0} := \left\{ \cI\in\fI_{\bF_0}\,:\; \bar M^\downarrow_{S_0}\leq h_n^* 
\right\}\,.
\]
Let $\bV_0 = \{W_z: z\notin \bar S_0\}$ be an arbitrary realization of the set of walls of $\bar S_0^c$ compatible with $\bF_0$ and such that $\cI\cap (\bar S_0^c \times \mathbb R) \subset \cL_{\ge -\sfh}$ and generating ceiling $\widehat C_0$ supporting $W_0$ at height $\hgt(\widehat \cC_0)\ge h_n^* -\sfh$. 

The key to the proof of~\eqref{eq:fI-W0-bound} (and in turn, the entire theorem) is establishing that for any such $\bV_0$,
\begin{equation}
    \label{eq:barI-conditional-lower-bound-floor}
    \mufloor_n^{\sfh}\left(\bar\fI_{\bF_0}\mid \fI_{\bF_0}\cap\bI_{\bV_0}\right) \geq \exp(-C  r)\quad\mbox{for an absolute constant $C>0$.}
\end{equation}
We now observe that for any interface $\cI \in \fI_{\bF_0} \cap \bI_{\bV_0}$, the portions exterior to $ S_0$ are entirely above height $-\sfh$. Therefore, on these events, the event $\bar\fI_{\bF_0}$ implies $\Ifloor$. With that, we can write 
\begin{align*}
 \mufloor_n^\sfh\left(\bar\fI_{\bF_0}\mid \fI_{\bF_0}\cap\bI_{\bV_0}\right) = \frac{\mu_n^\sfh  \left(\bar\fI_{\bF_0} \cap \Ifloor \mid \fI_{\bF_0}\cap\bI_{\bV_0}\right)}{\mu_n^\sfh(\Ifloor\mid \fI_{\bF_0}\cap\bI_{\bV_0})}  =  \frac{\mu_n^\sfh  \left(\bar\fI_{\bF_0} \mid \fI_{\bF_0}\cap\bI_{\bV_0}\right)}{\mu_n^\sfh(\Ifloor\mid \fI_{\bF_0}\cap\bI_{\bV_0})} \ge \mu_n^\sfh  \left(\bar\fI_{\bF_0} \mid \fI_{\bF_0}\cap\bI_{\bV_0}\right)\,.
\end{align*}
Therefore, in order to establish~\eqref{eq:barI-conditional-lower-bound-floor}, it is sufficient to establish 
\begin{equation}\label{eq:barI-conditional-lower-bound}
        \mu_n^{\mp}\left(\bar\fI_{\bF_0}\mid \fI_{\bF_0}\cap\bI_{\bV_0}\right) \geq \exp(-C  r)\quad\mbox{for an absolute constant $C>0$.}
\end{equation}
It is important to stress the effect of conditioning on $\fI_{\bF_0}\cap \bI_{\bV_0}$. Conditioning on $\bI_{\bV_0}$---namely on the walls $\bV_0$ external to $W_0$ or internal to ceilings based at heights below $h_n^* - \sfh$---will not introduce significant complication as far as the extremal behavior of $\cI$ within $\cC\in\bC_0$ is concerned, thanks to our results~\cite{GL20} on 
the rigidity and approximate Domain Markov property for ceilings. However, the conditioning on the realization of the wall cluster, $\fI_{\bF_0}$ does need to be handled with care, as it is equivalent to conditioning on
\[\fm(W_z) \leq d_\rho(z,\partial W_0)
 \qquad\mbox{for every $z\in S_0$}\,,
\]
on top having the walls in $\bF_0$ belong to the interface; in other words, in terms of the event $\cE^\eta_S$ from~\eqref{eq:def-E-eta},
\begin{equation}\label{eq:F0x-conditioning}
\mu_n^{\mp}(\cdot\mid \fI_{\bF_0}\cap\bI_{\bV_0}) = \mu_n^{\mp}(\cdot \mid \cE^\eta_{S_0}\,, \bI_{\bF_0}\cap\bI_{\bV_0})\quad\mbox{ with $\eta_z=d_\rho(z,\partial W_0)$ for all $z\in S_0$}\,.
\end{equation}

Before analyzing $\cI$ in this delicate conditional space towards~\eqref{eq:barI-conditional-lower-bound}, let us show how to infer~\eqref{eq:fI-W0-bound} from~it.
Let $\Phi$ be the map on interfaces $\cI\in\bar\fI_{\bF_0}$ that deletes all of $\bF_0$ from the standard wall representation of~$\cI$. Clearly, $\Phi$ is a bijection ($\bF_0$ being fixed), and the crux of the definition of $\bar\fI_{\bF_0}$ is that $\cI':=\Phi(\cI) \in \Ifloor$ holds for all $\cI\in \bar\fI_{\bF_0}$ (in fact even if we had $\cI$ intersect $S_0\cap\cL_{<-\sfh}$). Indeed, 
$\cI$ does not intersect $\rho(\smhull W_0)^c\times \cL_{<-\sfh}$, and having deleted $\bF_0$ from it to reach $\cI'$, clearly $\cI'$ does not intersect $(S_0)^c\times\cL_{<-\sfh}$, whereas in $S_0\times \R$, since we have $\hgt(\widehat\cC_0)\geq h_n^*-\sfh$ (by the definition of $\bV_0$), it has a minimum height of at least $ h_n^* -\sfh-\bar M^\downarrow_{S_0}(\cI) \geq -\sfh$ by the definition of $\bar\fI_{\bF_0}$. 
Recalling that $\fm(\bF_0)=r$, we infer from~\cite[Lemma 3.10]{GL20} that \begin{align*}
    \mufloor_n^{\sfh}(\bar\fI_{\bF_0} \mid \bI_{\bV_0}) &= 
    \sum_{\cI' : \Phi^{-1}(\cI')\in \bar\fI_{\bF_0}}\mufloor_n^{\sfh}(\Phi^{-1}(\cI') \mid \bI_{\bV_0}) \leq  \sum_{\cI':\Phi^{-1}(\cI')\in\bar\fI_{\bF_0}} e^{-(\beta-C)r}\mufloor_n^{\sfh}( \cI' \mid\bI_{\bV_0})\leq e^{-(\beta-C)r}\,.
\end{align*} 
(Here we only conditioned on $\bI_{\bV_0}$ rather than on $\fI_{\bF_0}\cap\bI_{\bV_0}$.) Via the key inequality~\eqref{eq:barI-conditional-lower-bound-floor}, implied by~\eqref{eq:barI-conditional-lower-bound} which we will establish, this implies
\begin{equation}\label{eq:I_F0-upper-bound} \mufloor_n^{\sfh}(\fI_{\bF_0}\mid \bI_{\bV_0}) = \frac{\mufloor_n^{\sfh}(\bar \fI_{\bF_0} \mid \bI_{\bV_0})}
{\mufloor_n^{\sfh}(\bar \fI_{\bF_0} \mid \fI_{\bF_0} \cap \bI_{\bV_0})} \leq e^{-(\beta - C') r}\,,
\end{equation}
which, given that this holds for every $\bV_0$ compatible with interfaces in $\fI_{\bF_0}$, establishes~\eqref{eq:fI-W0-bound}. 

To prove~\eqref{eq:barI-conditional-lower-bound}, let $\cP\subset\cL_{0,n}$ be a minimum set of faces that makes $\{\rho(W):W\in\bF_0\} \cup \rho(W_0)$ connected (we read $\cP$ deterministically from $\bF_0$ via an arbitrarily predefined tie breaking between sets of equal size). As we next observe, the criterion for a wall $W$ to be part of $\mathsf{Clust}(W_0)$ readily implies that 
\[|\cP|\leq \fm(\bF_0) = r\,;\] 
indeed, initializing $\hat\bF = \{W_0\}$ and $\hat\cP=\emptyset$, consider the process of adding to $\hat\bF$ walls $W\in \bF_0 \setminus \hat\bF$ one at a time, in tandem with adding to $\hat\cP$ the shortest path between $\rho(W)$ and $\rho(\hat\bF)$, until arriving at $\hat\bF=\bF_0$ and $\hat\cP$ which connects every $\{\rho(W):W\in\hat\bF\}$ to $\rho(W_0)$.
Every $W\in \bF_0\setminus \hat\bF$ must satisfy $d_\rho(W,\hat\bF)\leq \fm(W)$
by the definition of $\mathsf{Clust}(W_0)$, hence incurs an addition of at most $\fm(W)$ faces to $\hat\cP$. The process terminates with $|\hat\cP|\leq\fm(\bF_0)$, hence the same bound applies to the minimum face set $\cP$ connecting these walls.

Ordering the faces of $\cP$ as $z_1,z_2,\ldots$, we proceed to expose $\fG_{z_i,S_0}$ for $i=1,2,\ldots$ conditionally on $\fI_{\bF_0}\cap\bI_{\bV_0}$ and the walls already revealed along this process for $j<i$, denoting the corresponding filtration by~$\cF_i$.
Recalling~\eqref{eq:G-hat-event} from the proof of Proposition~\ref{prop:max-wall-cluster}, we again define
\[ \widehat G_x=  \bigcap_{u\in \rho(\smhull\fG_{x,S_0})} \{\fm(\fW_{u,S_0}) < 4h_n^*\}\,,\]
and note that by the exact same argument that followed~\eqref{eq:G-hat-event}, now with $\eta_{z}=d_\rho(z,\partial W_0)$, we have
\[ \mu_n^\mp(\widehat G_{z_i} \mid\cE^\eta_{S_0}\,,\bI_{\bF_0}\cap\bI_{\bV_0}\,, \cF_i) \geq 1-e^{-(4\beta-C)h_n^*}\,.\]
The result of iterating this bound for all $z_i\in\cP$, followed by the bound $1-x\geq e^{-\frac32 x}$ for $x<\frac12$, is that for
\[ \cG^{\fm}_\cP(4h_n^*) = \bigcap_{z\in\cP} \widehat{G}_z\]
we have, in light of~\eqref{eq:F0x-conditioning}, that 
\begin{align}\label{eq:bar-M-along-P}
     \mu_n^{\mp}\left(\cG^{\fm}_\cP(4h_n^*)\mid \fI_{\bF_0}\cap \bI_{\bV_0}\right) &\geq \left(1-e^{-(4\beta-C')h_n^*}\right)^{|\cP|} \geq \exp\left(-|\cP|e^{-(4\beta-C'')h_n^*}\right) \geq \exp\left(- r n^{-1+\epsilon_\beta}\right)\,,
\end{align}
where we used that $ h_n^* \geq (4\beta + e^{-4\beta})^{-1} \log n - C$ (see~\cite[Corollary 5.2]{GL19b}). 
Letting $\bW_\cP = \bigcup\{\hull\fG_z : z\in\cP\}$, note that the interface cannot have $\bar M_{\rho(\cC)}^\downarrow \geq h_n^*$ at $\{z\}\times \R$ for $z\in\rho(\cC)$ without also having $\fm(\fW_{z,\rho(\cC)})\geq 4h_n^*$. Conditional on  $\cG^{\fm}_\cP(4h_n^*)$, we next look at events of the form $\{\bar M_S^\downarrow \leq h_n^*\}$ for $S = \rho(\cC)\setminus \rho(\bW_{\cP})$ with $\cC\in\bC_0$.

Partition $\cC\in\bC_0$ into two subsets $\bC'_0,\bC''_0$ as follows:
\begin{equation*}
\bC'_0 := \{\cC\in\bC_0 :\mbox{$|\cC|\geq n^{7/4}$ and $\isodim(\rho(\cC))\leq 8$}\}\quad,\quad\bC_0'':=\bC_0\setminus \bC_0'\,,
\end{equation*}
consider $S=\rho(\cC)\setminus \rho(\bW_\cP)$ for $\cC\in\bC_0$,
and let $\bW$ be an arbitrary realization of the walls of $S^c$ under $\mufloor_n^\sfh$ that is compatible with $\bI_{\bF_0}\cap \bI_{\bV_0}$, $\cG^{\fm}_\cP(4h_n^*)$ and $\cE^\eta_{\bC_0}$. 
Note that, by the minimality of $\cP$, the set $S$ is simply-connected. 
If $\cC\in\bC_0'$ then $|S|\geq |\cC|-|\cP|\geq (1-o(1))|\cC|$ (recalling that $|\cP|\leq r\leq n\log n$ and $|\cC|\geq n^{7/4}$), whereas $|\partial S| \leq |\partial \cC|+|\cP|\leq (1+o(1))|\cC|^{7/8}$ (as $|\partial \cC|\leq |\cC|^{7/8}$ due to $\isodim(\cC)\leq 8$, while $r\leq n\log n$ and $|\cC|^{7/8}> n^{3/2}$). In particular, $\isodim(S)\leq 8+o(1)$ in this case.
By Proposition~\ref{prop:max-wall-cluster}, we may thus apply~\eqref{eq:max-thick-lower-large-h-wallcluster}, deducing that
\begin{align}\label{eq:bar-M-high-thick-ceilings}
 \mu_n^\mp(\bar M^\downarrow_S \leq h_n^* \mid
    \bI_\bW\,,\, \cE^\eta_S) 
    \geq \exp\left(-(1+\epsilon_\beta)|S| e^{-\alpha_{h_n^*+1}}\right)
    \geq \exp\left(-(1+\epsilon_\beta)|\cC| e^{-\alpha_{h_n^*+1}}\right)
    \,,
\end{align}
noting that we meet the assumptions for every $n$ large enough there since  $h_n^* \geq (4\beta+\epsilon_\beta)^{-1}\log n  \geq \sqrt{\log |S|}$, and  for every $z$ that satisfies $d(z,\partial S)\geq e^{2\kappa_0 h_n^*}$ our constraint in $\eta$ is set to $\eta_z = d(z,\partial W_0) \geq e^{2\kappa_0 h_n^*} > 5h_n^*$.
For the remaining ceilings of $\bC_0$, namely $\bC\in\bC_0''$, we may appeal to~\eqref{eq:max-lower-crude-wallcluster} and derive 
\begin{align}\label{eq:bar-M-high-thin-ceilings} \mu_n^\mp(\bar M^\downarrow_S \leq h_n^* \mid \bI_\bW\,,\, \cE^\eta_S) \geq \exp\left(-|S| e^{-(4\beta-C)h_n^*}\right)
\geq \exp\left(-|\cC| e^{-(4\beta-C)h_n^*}\right)\,.
\end{align}
Combining~\eqref{eq:bar-M-along-P}--\eqref{eq:bar-M-high-thin-ceilings}, we conclude that
\begin{equation}\label{eq:bar-M-total} \mu_n^{\mp}(\bar\fI_{\bF_0} \mid \fI_{\bF_0}\cap\bI_{\bV_0}) \geq \exp\bigg(-n^{-1+\epsilon_\beta}r-(1+\epsilon_\beta)\sum_{\cC\in\bC_0'}|\cC| e^{-\alpha_{h_n^*+1}} - \sum_{\cC\in\bC_0''}|\cC| e^{-(4\beta-C)h_n^*}\bigg)\,,\end{equation}
and it remains to account for the total area of the ceilings in $\bC_0'$ and $\bC_0''$. We will use the following straightforward deterministic fact.
\begin{fact}
\label{fact:number-of-small-ceilings} Let $a,r$ be integers, let $\bW$ be a set of walls with a total of $r$ faces, and suppose there are $m$ distinct ceilings $\{\cC_i\}_{i=1}^m$ in $\cI_{\bW}$ such that $\rho(\cC_i)\subset \bigcup_{W\in\bW}\rho(\hull W)$ and $|\cC_i|\leq a$ for all $i$. Then $\sum_{i=1}^m|\cC_i|\leq 3 r\sqrt{a}$.
\end{fact}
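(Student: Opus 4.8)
The plan is a double-counting argument bridging the discrete isoperimetric inequality in $\Z^2$ with the standard wall/ceiling correspondence of~\cite{Dobrushin72a}: bound each $|\cC_i|$ by $\tfrac14 a^{1/2}$ times the perimeter of its hull, sum up, and then observe that these perimeters are collectively carried by the faces of the walls in $\bW$, with bounded overlap. First I would fix, for each $i$, the wall $W_i\in\bW$ that supports $\cC_i$; this is well defined by Lemma~\ref{lem:wall-ceiling-bijection}, since the hypothesis $\rho(\cC_i)\subset\bigcup_{W\in\bW}\rho(\hull W)$ forces $\cC_i$ to be an interior ceiling of some wall of $\bW$ rather than a supporting (``floor'') ceiling. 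Applying the isoperimetric inequality in $\Z^2$ to the simply-connected region $\rho(\hull\cC_i)$ gives $|\partial\hull\cC_i|\geq 4|\hull\cC_i|^{1/2}\geq 4|\cC_i|^{1/2}$, so that, using $|\cC_i|\leq a$,
\[ |\cC_i| = |\cC_i|^{1/2}\cdot|\cC_i|^{1/2}\leq \tfrac14\, a^{1/2}\,|\partial\hull\cC_i|\,. \]
Summing over $i$ reduces the statement to the purely combinatorial bound $\sum_{i=1}^m|\partial\hull\cC_i|\leq 2r$.

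For that bound I would reuse the geometric observation already invoked in the proof of Lemma~\ref{lem:small-ceil}: every boundary face of $\hull\cC_i$ corresponds injectively to a face of the supporting wall $W_i$. The key point is then that a given face $f$ of a wall $W\in\bW$ can be the image, under this correspondence, of boundary faces of at most two of the $\cC_i$'s. Indeed, for $f$ to be such an image one must have $W_i=W$ (a face lies in at most one wall, and the projections of the walls of $\bW$ are pairwise vertex-disjoint by admissibility), so only ceilings supported by $W$ are relevant; and since $\rho(f)$ is a single edge of $\cL_0$, bordering exactly two faces of $\cL_0$, while distinct interior ceilings of $W$ have disjoint hulls (they are distinct finite components of $\rho(W)^c$, cf.\ the Observation following Definition~\ref{def:hull}), at most one such hull touches $\rho(f)$ from each side. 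Hence $\sum_{i=1}^m|\partial\hull\cC_i|\leq 2\sum_{W\in\bW}|W|=2r$, and combining with the previous display gives $\sum_{i=1}^m|\cC_i|\leq \tfrac12 r a^{1/2}\leq 3r a^{1/2}$.

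The only point requiring any care — and the sole source of friction I anticipate — is justifying the correspondence ``boundary of $\hull\cC_i$ $\hookrightarrow$ faces of $W_i$'', i.e.\ that the boundary of the finite component of $\rho(W_i)^c$ representing $\cC_i$ is contained in $\rho(W_i)$ and that each bounding element is witnessed, injectively, by an actual wall face. This is exactly the Dobrushin wall/ceiling structure used in Lemma~\ref{lem:wall-ceiling-bijection} and exploited verbatim in the proof of Lemma~\ref{lem:small-ceil} (``each face of $\partial\hull\cC_i$ corresponds to a distinct face in its supporting wall $W$, which may correspond in this way to a face in at most one other $\partial\hull\cC_j$''), so I would simply cite it there rather than reprove it. Everything else is elementary arithmetic, and the slack between the bound $\tfrac12 r a^{1/2}$ one actually obtains and the stated $3r a^{1/2}$ comfortably absorbs any discretization or boundary-of-domain effects.
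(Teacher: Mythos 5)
Your proof is correct and follows essentially the same route as the paper's: bound the total perimeter $\sum_i|\partial\hull\cC_i|$ by $2r$ via the wall--ceiling adjacency (each wall face borders at most two ceilings), then convert perimeter to area by isoperimetry. The only difference is cosmetic — you write $|\cC_i|\le|\cC_i|^{1/2}\cdot|\cC_i|^{1/2}\le \tfrac14 a^{1/2}|\partial\hull\cC_i|$ uniformly, whereas the paper splits the ceilings according to whether $|\partial\cC_i|\le\sqrt a$; your version even yields the slightly better constant $\tfrac12$ in place of $3$.
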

\begin{proof}
As every $\cC_i$ is bounded by wall faces in $\bW$, and each face of $W\in\bW$ bounds at most 2 distinct ceiling faces, we have $\sum_{i=1}^m |\partial \cC_i|\leq 2r$. So, via the isoperimetric inequality $|\cC_i| \leq |\partial\cC_i|^2 $ (with a factor $4$ to spare),
\[ 
\sum_{i=1}^m |\cC_i| \leq \sqrt a \sum_{i : |\partial\cC_i|\leq\sqrt a}|\partial \cC_i| + \left|\left\{i\,:\;|\partial\cC_i|\geq\sqrt a \right\}\right|a \leq  \sqrt{a}r + \frac{2r}{\sqrt{a}}a = 3 r\sqrt{a}\,.
\qedhere
\]
\end{proof}
For $\bC'_0$ we use (along the same vein as the proof of the fact above) that $|\cC|\leq |\partial\cC|n/4$ and 
$\sum |\partial\cC|\leq 2r$ to deduce that 
\[\sum_{\cC\in\bC_0'} |\cC| \leq n r/2\,.\] 
For $\bC_0''$, recall that $\cC_0,\ldots,\cC_L$ are the ceilings of $\cI_{W_0}$ nested in $W_0$ (as opposed to $\bC_0$, which  takes $\mathsf{Clust}(W_0)$ into consideration).
For every $\cC\in\bC_0''$ there is a unique $i\geq 1$ such that $\rho(\cC)\subset \rho(\cC_i)$, so 
\[ \sum_{\cC\in\bC_0''} |\cC|\one_{\{\exists i \,:\; \rho(\cC)\subset\rho(\cC_i)\,,\,\isodim(\cC_i)>4\}}
\leq
\sum_{i} |\cC_i|\one_{\{\isodim(\cC_i)>4\}}\leq n^{7/4}\]
by construction of $W_0$, whereas Fact~\ref{fact:number-of-small-ceilings} (and having $\fm(\bF_0)=r\leq n\log n$) shows that
\[ \sum_{\cC\in\bC_0''}|\cC|\one_{\{|\cC|\leq n^{7/4}\}} = O(n^{15/8}\log n)\,.\]
Finally, if $|\cC|> n^{7/4}$ yet $\rho(\cC)\subset\rho(\cC_i)$ for some $i$ such that $\isodim(\rho(\cC_i))\leq 4$, then $|\partial \cC|\leq |\partial\cC_i|+\fm(\bF_0)$ which is at most $|\cC_i|^{3/4}+r \leq (1+o(1))n^{3/2}$, so $\isodim(\rho(\cC))\leq 7+o(1)$. Therefore, $\cC\in\bC_0'$ for large $n$.

Substituting these bounds in~\eqref{eq:bar-M-total}, along with the facts
$\alpha_{h_n^*+1}\geq \alpha_{h_n^*}+4\beta-C\geq \log n +2\beta-C$ and $h_n^* \geq (4\beta+e^{-4\beta})^{-1}\log n - C$ from~\cite[Corollary~5.2]{GL19b}, it now follows that, for some other absolute $C>0$,
\begin{equation}\label{eq:I_F0-lower-bound}
\mu_n^{\mp}(\bar\fI_{\bF_0} \mid \fI_{\bF_0}\cap\bI_{\bV_0}) \geq \exp\left(-e^{-2\beta+C}r - n^{7/8+\epsilon_\beta}\right)  \geq \exp\left(-C'e^{-2\beta}r\right)\,,
\end{equation}
where the term $n^{-1+\epsilon_\beta}r$ was absorbed into the $e^{-2\beta +C}r$ term, and the final inequality used that $r\geq n^{9/10}.$
This establishes~\eqref{eq:barI-conditional-lower-bound}, thereby concluding the proof for all $W_0$ satisfying~\eqref{eq:def-good-wall}.

It remains to justify the assumption~\eqref{eq:def-good-wall} for $W_0$; recalling that we have $\fm(W_0)\leq n\log n$, this property will be obtained as a consequence of our results from Section~\ref{sec:rough-bounds-no-floor}, as we have the following:
\begin{enumerate}[(a)]
    \item if $\{\cC_i\}$ is the set of ceilings of $\cI\sim\mufloor_n^\sfh$ with $|\hull\cC_i|\geq n$ and $\isodim(\hull\cC_i)\geq 4$, then $| \bigcup_i\rho(\hull\cC_i)| < n^{5/3}$ 
    with probability at least $1-\exp(-(\beta-C)n^{7/6})$ by Lemma~\ref{lem:thin-ceil}; 
    \item if $W_0$ is a wall with $\fm(W_0)\leq n\log n$, and $\{\cC_i\}$ is the set of ceilings of $\cI_{W_0}$ that are nested in $W_0$ and satisfy $|\cC_i|\leq n$, then Fact~\ref{fact:number-of-small-ceilings} shows that, deterministically, $\sum_i |\cC_i| = O(n^{3/2}\log n)$. 
\end{enumerate}
Combining these (along with a union bound on $W_0$ and its location) we see that $W_0$ satisfies~\eqref{eq:def-good-wall} (we arrived at an upper bound of $n^{5/3}+n^{3/2+o(1)} < n^{7/4}$) except with probability $O(\exp(-n^{7/6-o(1)}))$, as claimed.

To obtain the result for a collection of walls $W_0^{(1)},\ldots,W_0^{(\ell)}$, observe that our bound on the probability of finding a given such $W_0$ was conditional on $\bV_0$, an arbitrary realization of walls in $\rho(\hull W_0)^c$. Therefore, for any fixed collection of such walls, we may iteratively bound the probability of $W_0^{(i)}$ conditional on the occurrence of $\{W_0^{(j)}\}_{j<i}$, provided that the hulls of said walls are disjoint (guaranteed by our hypothesis), and obtain an overall bound of $\exp(-(\beta-C)(\ell r \wedge n\log n))$. A union bound over the location and structure of these walls thus concludes the proof.
\qed

\begin{remark}
\label{rem:h=hn-1-upper}
One can extend Theorem~\ref{thm:upper-bound}---and in turn, Theorem~\ref{thm:upper-gen-h}---to the case of $\sfh=h_n^*-1$, in the same vein as Remark~\ref{rem:h=hn-1-lower}, provided that the quantity $\gamma = n \exp(-\alpha_{h_n^*})$ (which satisfies $
e^{-2\beta - \epsilon_\beta} \leq \gamma <  e^{2\beta}$) is in a given range, e.g., 
\[ \gamma \leq \beta\,.\]
We claim that~\eqref{eq:fI-W0-bound} holds for $\fI_{\bF_0}$ defined w.r.t.\ a supporting ceiling $\widehat \cC_0\in\fC_{\geq 0}$. We accordingly modify $\bar\fI_{\bF_0}$ to feature a strict inequality:
$\bar\fI_{\bF_0}=\{\cI\in\fI_{\bF_0}: \bar M^\downarrow_{S_0} < h_n^*\}$, and claim that the arguments hold true thereafter. Indeed, since $\hgt(\widehat\cC_0)\geq 0$, having $\bar M^\downarrow_{S_0}\leq h_n^* - 1 = \sfh$ would guarantee that $\Phi(\cI)\in \Ifloor$ for every $\cI\in\fI_{\bF_0}$, whence~\eqref{eq:I_F0-upper-bound} remains true. 
We conclude that~\eqref{eq:bar-M-total} holds for this case when replacing $\alpha_{h_n^*+1}$ by $\alpha_{h_n^*}$, and so~\eqref{eq:I_F0-lower-bound} holds true if the term $e^{-2\beta+C}r$ is to be replaced by $(1+\epsilon_\beta) \gamma r/2$. Our assumption $\gamma \leq \beta$ thus supports the analogue of~\eqref{eq:barI-conditional-lower-bound} with a lower bound of $\exp(-(1+\epsilon_\beta) (\beta/2) r)$, which is outweighted by the term $\exp(-(\beta-C)r)$ from~\eqref{eq:I_F0-upper-bound} to give a final probability estimate of $\exp(-(\beta/2-C)r \wedge n\log n)$.
\end{remark}

\subsection*{Acknowledgements}
The authors thank Joseph Chen for helpful comments.
R.G.\ thanks the Miller Institute for Basic Research in Science for its support. The research of E.L.\ was supported in part by NSF grants DMS-1812095 and DMS-2054833. 

\bibliographystyle{abbrv}

\bibliography{references}

\end{document}